\newtheorem{theorem}{Theorem}[section]
\newtheorem{remark}{Remark}[section]
\newtheorem{definition}{Definition}[section]
\newtheorem{proposition}{Proposition}[section]
\newtheorem{lemma}{Lemma}[section]
\algnewcommand{\LineComment}[1]{\State \(\triangleright\) #1}
\newcommand{\bd}{\boldsymbol}
\renewcommand{\tilde}{\widetilde}
\renewcommand{\hat}{\widehat}
\newcommand{\vct}[1]{\bd{#1}}
\newcommand{\CalP}{{\mathcal{P}}}
\newcommand{\OO}{\mathbb{O}}
\newcommand{\RR}{\mathbb{R}}
\newcommand{\Or}{\mathcal{O}}
\newcommand{\Cov}{\text{Cov}}
\newcommand{\Cost}{\text{Cost}}
\DeclareMathOperator{\diag}{diag}
\newcommand{\rd}{\mathrm{d}}
\title{A sparse decomposition of low rank symmetric positive semi-definite matrices}
\author{Thomas Y. Hou}
\thanks{Applied and Comput. Math, Caltech, Pasadena, CA 91125. {\it Email: hou@cms.caltech.edu.}}
\author{Qin Li}
\thanks{Math, UW-Madison, Madison, WI 53705. {\it Email: qinli@math.wisc.edu.}}
 \author{Pengchuan Zhang}
\thanks{Applied and Comput. Math, Caltech, Pasadena, CA 91125. {\it Email: pzzhang@cms.caltech.edu.}}
\date{\today}
\begin{document}
\maketitle

\begin{abstract}
Suppose that $A \in \RR^{N \times N}$ is symmetric positive semidefinite with rank $K \le N$. Our goal is to decompose $A$ into $K$ rank-one matrices $\sum_{k=1}^K g_k g_k^T$ where the modes $\{g_{k}\}_{k=1}^K$ are required to be as sparse as possible. In contrast to eigen decomposition, these sparse modes are not required to be orthogonal. Such a problem arises in random field parametrization where $A$ is the covariance function and is intractable to solve in general. In this paper, we partition the indices from 1 to $N$ into several patches and propose to quantify the sparseness of a vector by the number of patches on which it is nonzero, which is called patch-wise sparseness. Our aim is to find the decomposition which minimizes the total patch-wise sparseness of the decomposed modes. We propose a domain-decomposition type method, called intrinsic sparse mode decomposition (ISMD), which follows the ``local-modes-construction + patching-up" procedure. The key step in the ISMD is to construct local pieces of the intrinsic sparse modes by a joint diagonalization problem. Thereafter a pivoted Cholesky decomposition is utilized to glue these local pieces together. Optimal sparse decomposition, consistency with different domain decomposition and robustness to small perturbation are proved under the so called regular-sparse assumption (see Definition~\ref{def:regular_sparse}). We provide simulation results to show the efficiency and robustness of the ISMD. We also compare the ISMD to other existing methods, e.g., eigen decomposition, pivoted Cholesky decomposition and convex relaxation of sparse principal component analysis~\cite{LaiLuOsher_2014, vu2013fantope}.
\end{abstract}

\section{Introduction}
Many problems in science and engineering lead to huge \textit{symmetric} and \textit{positive semi-definite} (PSD) matrices. Often they arise from the discretization of self-adjoint PSD operators or their kernels, especially in the context of data science and partial differential equations.

Consider a symmetric PSD matrix of size $N \times N$, denoted as $A$. Since $N$ is typically large, this causes serious obstructions when dealing numerically with such problems. Fortunately in many applications the discretization $A$ is low-rank or approximately low-rank, i.e., there exists $\{\psi_1, \dots, \psi_K\} \subset \RR^N$ for $K \ll N$ such that
\begin{equation*}
	A = \sum_{k=1}^K \psi_k \psi_k^T \quad \text{or} \quad \|A - \sum_{k=1}^K \psi_k \psi_k^T\|_2 \le \epsilon,
\end{equation*}
respectively. Here, $\epsilon>0$ is some small number and $\|A\|_2 = \lambda_{max}(A)$ is the largest eigenvalue of $A$. To obtain such a low-rank decomposition/approximation of $A$, the most natural method is perhaps the eigen decomposition with $\{\psi_k\}_{k=1}^K$ as the eigenvectors corresponding to the largest $K$ eigenvalues of $A$. An additional advantage of the eigen decomposition is the fact that eigenvectors are orthogonal to each other. However, eigenvectors are typically dense vectors, i.e., every entry is typically nonzero. 

For a symmetric PSD matrix $A$ with rank $K \ll N$, the aim of this paper is to find an alternative decomposition
\begin{equation} \label{eqn:sparsedecom}
	A = \sum_{k=1}^K g_k g_k^T.
\end{equation}
Here the number of components is still its rank $K$, which is optimal, and the modes $\{g_k\}_{k=1}^K$ are required to be as sparse as possible. In this paper, we work on the symmetric PSD matrices, which are typically the discretized self-adjoint PSD operators or their kernels. We could have just as well worked on the self-adjoint PSD operators. This would correspond to the case when $N=\infty$. Much of what will be discussed below applies equally well to this case.

Symmetric PSD matrices/operators/kernels appear in many science and engineering branches and various efforts have been made to seek sparse modes. In statistics, sparse Principal Component Analysis (PCA) and its convex relaxations~\cite{Jolliffe_2003, Zou_PCA_06, dAspremont_sparsePCA, vu2013fantope} are designed to sparsify the eigenvectors of data covariance matrices. In quantum chemistry, Wannier functions~\cite{wannier1937structure, kohn1959image} and other methods~\cite{marzari1997maximally, weinan2010localized, marzari2012maximally, ozolins_compressed_2013, LaiLuOsher_2014} have been developed to obtain a set of functions that approximately span the eigenspace of the Hamitonian, but are spatially localized or sparse. In numerical homogenization of elliptic equations with rough coefficients~\cite{hou_multiscale_1997, hou_PetrovGalerkin_2004, efendiev2011multiscale, owhadi_polyharmonic_2014, owhadi2015multi}, a set of multiscale basis functions is constructed to approximate the eigenspace of the elliptic operator and is used as the finite element basis to solve the equation. In most cases, sparse modes reduce the computational cost for further scientific experiments. Moreover, in some cases sparse modes have a better physical interpretation compared to the global eigen-modes. Therefore, it is of practical importance to obtain sparse (localized) modes.

\subsection{Our results} \label{sec:ourresults}
The number of nonzero entries of a vector $\psi \in \RR^N$ is called its $l^0$ norm, denoted by $\|\psi\|_0$. Since the modes in \eqref{eqn:sparsedecom} are required to be as sparse as possible, the sparse decomposition problem is naturally formulated as the following optimization problem
\begin{equation}\label{opt:minsupport}
	\boxed{\min_{\psi_1, \dots, \psi_K \in \RR^N}\quad \sum_{k=1}^{K}\|\psi_k\|_0 \quad \mathrm{s.t.} \quad A = \sum_{k=1}^K \psi_k \psi_k^T\, .}
\end{equation}
However, this problem is rather difficult to solve because: first, minimizing $l^0$ norm results in a combinatorial problem and is computationally intractable in general; second, the number of unknown variables is $K\times N$ where $N$ is typically a huge number. Therefore, we introduce the following patch-wise sparseness as a surrogate of $\|\psi_k\|_0$ and make the problem computationally tractable.

\begin{definition}[Patch-wise sparseness] \label{def:gsparseness}
Suppose that $\CalP = \{P_m\}_{m=1}^M$ is a disjoint partition of the $N$ nodes, i.e., $[N] \equiv \{1, 2, 3, \dots, N\} = \sqcup_{m=1}^M P_m$. The patch-wise sparseness of $\psi \in \RR^N$ with respect to the partition $\CalP$, denoted by $s(\psi;\CalP)$, is defined as
\begin{equation*}
s(\psi;\CalP) = \#\{P \in \CalP : \psi|_{_{P}} \neq \vct{0}\}.
\end{equation*}
\end{definition}

Throughout this paper, $[N]$ denotes the index set $\{1, 2, 3, \dots, N\}$; $\vct{0}$ denotes the vectors with all entries equal to 0; $|P|$ denotes the cardinality of a set $P$; $\psi|_{_{P}} \in \RR^{|P|}$ denotes the restriction of $\psi \in \RR^N$ on patch $P$. Once the partition $\CalP$ is fixed, smaller $s(\psi;\CalP)$ means that $\psi$ is nonzero on fewer patches, which implies a sparser vector. With patch-wise sparseness as a surrogate of the $l^0$ norm, the sparse decomposition problem \eqref{opt:minsupport} is relaxed to
\begin{equation}\label{opt:minsparseness}
	\boxed{\min_{\psi_1, \dots, \psi_K \in \RR^N}\quad \sum_{k=1}^{K} s(\psi_k;\CalP) \quad \mathrm{s.t.} \quad A = \sum_{k=1}^K \psi_k \psi_k^T\, .}
\end{equation}
If $\{g_k\}_{k=1}^K$ is an optimizer for \eqref{opt:minsparseness}, we call them a set of {\it intrinsic sparse modes} for $A$ under partition $\CalP$. Since the objective function of problem \eqref{opt:minsparseness} only takes nonnegative integer values, we know that for a symmetric PSD matrix $A$ with rank $K$, there exists at least one set of intrinsic sparse modes.

It is obvious that the intrinsic sparse modes depend on the domain partition $\CalP$. Two extreme cases would be $M = N$ and $M=1$. For $M = N$, $s(\psi;\CalP)$ recovers $\|\psi\|_0$ and the patch-wise sparseness minimization problem \eqref{opt:minsparseness} recovers the original $l^0$ minimization problem \eqref{opt:minsupport}. Unfortunately, it is computationally intractable. For $M = 1$, every non-zero vector has sparseness one, and thus the number of nonzero entries makes no difference. However, in this case the problem \eqref{opt:minsparseness} is computationally tractable. For instance, a set of (unnormalized) eigenvectors is one of the optimizers. We are interested in the sparseness defined in between, namely, a partition with a meso-scale patch size. Compared to $\|\psi\|_0$, the meso-scale partition sacrifices some resolution when measuring the support, but makes the optimization \eqref{opt:minsparseness} efficiently solvable. Specifically, Problem~\eqref{opt:minsparseness} with the following \textit{regular-sparse} partitions enjoys many good properties. These properties enable us to design a very efficient algorithm to solve Problem~\eqref{opt:minsparseness}.
\vspace{-1.1mm}
\begin{definition}[regular-sparse partition]\label{def:regular_sparse}
The partition $\CalP$ is regular-sparse with respect to $A$ if there exists a decomposition $A = \sum_{k=1}^K g_k g_k^T$ such that all nonzero modes on each patch $P_m$ are linearly independent.
\end{definition}
\vspace{-1.1mm}
If two intrinsic sparse modes are non-zero on exactly the same set of patches, which are called unidentifiable modes in Definition~\ref{def:identifiable}, it is easy to see that any rotation of these unidentifiable modes forms another set of intrinsic sparse modes. From a theoretical point of view, if a partition is regular-sparse with respect to A, the intrinsic sparse modes are unique up to rotations of unidentifiable modes, see Theorem~\ref{thm:ISMD}. Moreover, as the partition gets refined, the original identifiable intrinsic sparse modes remain unchanged, while the original unidentifiable modes become identifiable and become sparser (in the sense of $l^0$ norm), see Theorem~\ref{thm:ISMDconsistency}. In this sense, the intrinsic sparse modes are independent of the partition that we use. From a computational point of view, a regular-sparse partition ensures that the restrictions of the intrinsic sparse modes on each patch $P_m$ can be constructed from rotations of local eigenvectors. Following this idea, we propose the intrinsic sparse mode decomposition (ISMD), see Algorithm~\ref{alg:ISMD}. In Theorem~\ref{thm:ISMD}, we have proved that the ISMD solves problem~\eqref{opt:minsparseness} exactly on regular-sparse partitions. We point out that, even when the partition is not regular-sparse, numerical experiments show that the ISMD still generates a sparse decomposition of $A$.

The ISMD consists of three steps. In the first step, we perform eigen decomposition of $A$ restricted on local patches $\{P_m\}_{m=1}^M$, denoted as $\{A_{mm}\}_{m=1}^M$, to get $A_{mm} = H_m H_m^T$. Here, columns of $H_m$ are the unnormalized local eigenvectors of $A$ on patch $P_m$. In the second step, we recover the local pieces of intrinsic sparse modes, denoted by $G_m$, by rotating the local eigenvectors $G_m = H_m D_m$. The method to find the right local rotations $\{D_m\}_{m=1}^M$ is the core of the ISMD. All the local rotations are coupled by the decomposition constraint $A = \sum_{k=1}^K g_k g_k^T$ and it seems impossible to solve $\{D_m\}_{m=1}^M$ from this big coupled system. Surprisingly, when the partition is regular-sparse, this coupled system can be decoupled and every local rotation $D_m$ can be solved independently by a joint diagonalization problem~\eqref{opt:jointDm}. In the last ``patch-up" step, we identify correlated local pieces across different patches by the pivoted Cholesky decomposition of a symmetric PSD matrix $\Omega$ and then glue them into a single intrinsic sparse mode. Here, $\Omega$ is the projection of $A$ onto the subspace spanned by all the local pieces $\{G_m\}_{m=1}^M$, see Eqn.~\eqref{def:omega}. This step is necessary to reduce the number of decomposed modes to the optimal $K$, i.e., the rank of $A$. The last step also equips the ISMD the power to identify long range correlation and to honor the intrinsic correlation structure hidden in $A$. The popular $l^1$ approach typically does not have this property. 

The ISMD has very low computational complexity.  There are two reasons for its efficiency: first of all, instead of computing the expensive global eigen decomposition, we compute only the local eigen decompositions of $\{A_{mm}\}_{m=1}^M$; second, there is an efficient algorithm to solve the joint diagonalization problems for the local rotations $\{D_m\}_{m=1}^M$. Moreover, because both performing the local eigen decompositions and solving the joint diagonalization problems can be done independently on each patch, the ISMD is embarrassingly parallelizable.

The stability of the ISMD is also explored when the input data $A$ is mixed with noises. We study the small perturbation case, i.e., $\hat{A} = A + \epsilon \tilde{A}$. Here, $A$ is the noiseless rank-$K$ symmetric PSD matrix, $\tilde{A}$ is the symmetric additive perturbation and $\epsilon > 0$ quantifies the noise level. A simple thresholding step is introduced in the ISMD to achieve our aim: \textit{to clean up the noise $\epsilon \tilde{A}$ and to recover the intrinsic sparse modes of $A$}. Under some assumptions, we can prove that sparse modes $\{\hat{g}_k\}_{k=1}^K$, produced by the ISMD with thresholding, exactly capture the supports of $A$'s intrinsic sparse modes $\{g_k\}_{k=1}^K$ and the error $\|\hat{g}_k - g_k\|$ is small. See Section~\ref{sec:perturbation} for a precise description. 

We have verified all the theoretical predictions with numerical experiments on several synthetic covariance matrices of high dimensional random vectors. Without parallel execution, for partitions with a large range of patch sizes, the computational cost of the ISMD is comparable to that of the partial eigen decomposition~\cite{sorensen1992implicit, lehoucq1996deflation}. For certain partitions, the ISMD could be 10 times faster than the partial eigen decomposition. We have also implemented the convex relaxation of sparse PCA~\cite{LaiLuOsher_2014, vu2013fantope} and compared these two methods. It turns out that the convex relaxation of sparse PCA fails to capture the long range correlation, needs to perform (partial) eigen decomposition on matrices repeatedly for many times and is thus much slower than the ISMD. Moreover, we demonstrate the robustness of the ISMD on partitions which are not regular-sparse and on inputs which are polluted with small noises.

\subsection{Applications}\label{sec:applications}
The ISMD leads to a sparse-orthogonal matrix factorization for any matrix. Given a matrix $X \in \RR^{N \times M}$ of rank $K$ and a partition $\CalP$ of the index set $[N]$, the ISMD tries to solve the following optimization problem:
\begin{equation}\label{eqn:ismd_svd}
	\boxed{\min_{\stackrel{g_1, \dots, g_K \in \RR^N}{u_1, \dots, u_K \in \RR^M}}\quad \sum_{k=1}^{K} s(g_k;\CalP) \quad \mathrm{s.t.} \quad X = \sum_{k=1}^K g_k u_k^T\,, \quad u_k^T u_{k'} = \delta_{k,k'} \,~~~~ \forall 1 \le k, k' \le K,}
\end{equation}
where $s(g_k;\CalP)$ is the patch-wise sparseness defined in Definition~\eqref{def:gsparseness}. Compared to the bi-orthogonal property of SVD, the ISMD requires orthogonality only in one dimension and requires sparsity in the other dimension. The method to obtain the decomposition~\eqref{eqn:ismd_svd} consists of three steps: first, compute $A = X X^T$; second, apply the ISMD to $A$ to get $\{g_k\}_{k=1}^K$; third, project $X$ on to $\{g_k\}_{k=1}^K$ to obtain $\{u_k\}_{k=1}^K$.

The sparse-orthogonal matrix factorization~\eqref{eqn:ismd_svd} has potential applications in statistics, machine learning and uncertainty quantification. In statistics and machine learning, latent factor models with sparse loadings have found many applications ranging from DNA microarray analysis~\cite{gao2012sparse}, facial and object recognition~\cite{wagner2012toward}, web search models~\cite{agarwal2009regression} and etc. Specifically, latent factor models decompose a data matrix $X \in \RR^{N\times M}$ by product of the loading matrix $G \in \RR^{N\times K}$ and the factor value matrix $U \in \RR^{M \times K}$, with possibly small noise $E \in \RR^{N\times M}$, i.e.,
\begin{equation} \label{eqn:latentfactormodel}
	X = G U^T + E.
\end{equation} 
The sparse-orthogonal matrix factorization~\eqref{eqn:ismd_svd} tries to find the optimal sparse loadings $G$ under the condition that latent factors are {\it normalized} and {\it uncorrelated}, i.e., columns in $U$ are orthonormal. In practice, the uncorrelated latent factors make lots of sense, but is not guaranteed by many existing matrix factorization methods, e.g., non-negative matrix factorization (NMF)~\cite{lee1999learning}, sparse PCA~\cite{Jolliffe_2003, Zou_PCA_06, dAspremont_sparsePCA}, structured sparse PCA~\cite{jenatton2010structured}.

In uncertainty quantification (UQ), we often need to parametrize a random field, denoted as $\kappa(x,\omega)$, with a finite number of random variables. Applying the ISMD to its covariance function, denoted by $\Cov(x,y)$, we can get a parametrization with $K$ random variables:
\begin{equation} \label{eqn:sparseKL}
	\kappa(x,\omega) = \bar{\kappa}(x) + \sum_{k=1}^K g_k(x) \eta_k(\omega), 
\end{equation} 
where $\bar{\kappa}(x)$ is the mean field, the physical modes $\{g_k\}_{k=1}^K$ are sparse/localized, and the random variables $\{\eta_k\}_{k=1}^K$ are {\it centered}, {\it uncorrelated}, and have {\it unit variance}. The parametrization~\eqref{eqn:sparseKL} has a form similar to the widely used Karhenen-Lo\`eve (KL) expansion~\cite{karhunen_uber_1947,loeve_probability_1977}, but in the KL expansion the physical modes $\{g_k\}_{k=1}^K$ are eigenfunctions of the covariance function and are typically nonzero everywhere. Obtaining a sparse parametrization is important to uncover the intrinsic sparse feature in a random field and to achieve computational efficiency for further scientific experiments. In~\cite{hou_LocalgPC_2016}, such sparse parametrization methods are used to design efficient algorithms to solve partial differential equations with random inputs.

\subsection{Connection with the sparse matrix factorization problem}\label{subsec:connections}
Given a matrix $X \in \RR^{N\times M}$ of $M$ columns corresponding to $M$ observations in $\RR^N$, a sparse matrix factorization problem is to find a matrix $G=[g_1, \dots, g_r] \in \RR^{N \times r}$, called {\it dictionary}, and a matrix $U=[u_1, \dots, u_r] \in \RR^{M \times r}$, called {\it decomposition coefficients}, such that $G U^T$ approximates $X$ well and the columns in $G$ are sparse.

In~\cite{lee2006efficient, witten2009penalized, mairal2010online}, the authors formulated this problem as an optimization problem by penalizing the l1 norm of G, i.e. $\|G\|_1 := \sum_{k=1}^r \|g_k\|_1$, to enforce the sparsity of the dictionary. This can be written as
\begin{equation}\label{eqn:matrix_factorize}
	\boxed{\min_{G \in \RR^{N \times r}, U \in \RR^{M \times r}}\quad \|X - G U^T\|_F^2 + \lambda \|G\|_1  \quad \mathrm{s.t.} \quad \|u_k\|_2 \le 1 \,~~~~ \forall 1 \le k \le r,}
\end{equation}
where the parameter $\lambda>0$ controls to what extent the dictionary $G$ is regularized. We point out that the l1 penalty can be replaced by other penalties. For example,  the structured sparse PCA~\cite{jenatton2010structured} uses certain l1/l2 norm of $G$ to enforce sparsity with specific structures, e.g. rectangular structure on a grid. Problem~\eqref{eqn:matrix_factorize} is not jointly convex in $(G, U)$. Certain specially designed algorithms have been developed to solve this optimization problem. We will discuss one of these methods in Section~\ref{sec:othermethods}.

There are two major differences between the optimization problem~\eqref{eqn:ismd_svd} and the optimization problem~\eqref{eqn:matrix_factorize}. First, the ISMD, which is designed to solve~\eqref{eqn:ismd_svd}, requires that the decomposition coefficients $U$ be orthonormal, while many other methods, including sparse PCA and structured sparse PCA, which are designed to solve~\eqref{eqn:matrix_factorize}, only normalize every columns in $U$. One needs to decide whether the orthogonality in $U$ is necessary in her application and choose the appropriate method. Second, the number of modes $K$ in the ISMD must be the rank of the matrix, while the number of modes $r$ in problem~\eqref{eqn:matrix_factorize} is picked by users and can be any number. In other words, the ISMD is seeking an {\it exact matrix decomposition}, while other methods make a trade-off between the accuracy $\|X - G U^T\|_F$ and the sparsity $\|G\|_1$ by recovering the matrix approximately instead of obtaining an exact recovery. Although the ISMD can be modified to do matrix approximation (with the orthogonality constraint on $U$), see Algorithm~\ref{alg:ISMDm2}, the optimal sparsity of the dictionary $G$ is not guaranteed anymore. {\it Based on these two differences, we recommend the ISMD for sparse matrix factorization problems where the orthogonality in decomposition coefficients $U$ is required and an exact (or nearly exact) decomposition is desired.} In our upcoming paper~\cite{hou_sparseOC1_2016,hou_sparseOC2_2016}, we will present our recent results on solving Problem~\eqref{eqn:matrix_factorize}.

\subsection{Outlines}
In Section~\ref{sec:algorithm} we present our ISMD algorithm for low rank matrices, analyze its computational complexity and talk about its relation with other methods for sparse decomposition or approximation. In Section~\ref{sec:exactdecom} we present our main theoretical results, i.e., Theorem~\ref{thm:ISMD} and Theorem~\ref{thm:ISMD}. In Section~\ref{sec:perturbmodify}, we discuss the stability of the ISMD by performing perturbation analysis. We also provide two modified ISMD algorithms: Algorithm~\ref{alg:ISMDm1} for low rank matrix with small noise, and Algorithm~\ref{alg:ISMDm2} for sparse matrix approximation. Finally, we present a few numerical examples in Section~\ref{sec:numericalExamples} to demonstrate the efficiency of the ISMD and compare its performance with other existing methods.

\section{Intrinsic Sparse Mode Decomposition}\label{sec:algorithm}
In this section, we present the algorithm of the ISMD and analyze its computational complexity. Its relation with other matrix decomposition methods is discussed in the end of this section. In the rest of the paper, $\OO(n)$ denotes the set of real unitary matrices of size $n\times n$; $\mathbb{I}_{n}$ denotes the identity matrix with size $n \times n$.

\subsection{ISMD}\label{sec:ISMDalg}
Suppose that we have one symmetric positive symmetric matrix, denoted as $A \in \RR^{N\times N}$, and a partition of the index set $[N]$, denoted as $\CalP = \{P_m\}_{m=1}^M$. The partition typically originates from the physical meaning of the matrix $A$. For example, if $A$ is the discretized covariance function of a random field on domain $D \subset \RR^d$, $\CalP$ is constructed from certain domain partition of $D$. The submatrix of $A$, with row index in $P_m$ and column index in $P_n$, is denoted as $A_{mn}$. To simplify our notations, we assume that indices in $[N]$ are rearranged such that $A$ is written as below:
\begin{align}\label{def:Ablock}
A = \left[\begin{array}{c:c:c:c}
A_{11} & A_{12} & \cdots & A_{1M}\\
\hdashline
A_{21}& A_{22} & \cdots & A_{2M}\\
\hdashline
\vdots & \vdots & \ddots & \vdots \\
\hdashline
A_{M1} & A_{M2} & \cdots & A_{MM} \end{array}\right] \,.
\end{align}
Notice that when implementing the ISMD, there is no need to rearrange the indices as above. The ISMD tries to find the optimal sparse decomposition of $A$ w.r.t. partition $\CalP$, defined as the minimizer of problem~\eqref{opt:minsparseness}. The ISMD consists of three steps: local decomposition, local rotation, and global patch-up.

In the first step, we perform eigen decomposition 
\begin{equation} \label{eqn:localeig}
	A_{mm} = \sum_{i=1}^{K_m} \gamma_{m,i} h_{m,i} h_{m,i}^T \equiv H_m H_m^T,
\end{equation}
where $K_m$ is the rank of $A_{mm}$ and $H_m = [\gamma_{m,1}^{1/2} h_{m,i}\,, \gamma_{m,2}^{1/2} h_{m,2}\,, \dots\, \gamma_{m,K_m}^{1/2} h_{m,K_m}]$. If $A_{mm}$ is ill-conditioned, we truncate the small eigenvalues and a truncated eigen decomposition is used as follows:
 \begin{equation} \label{eqn:localeigtruncate}
	A_{mm} \approx \sum_{i=1}^{K_m} \gamma_{m,i} h_{m,i} h_{m,i}^T \equiv H_m H_m^T.
\end{equation}

Let $K_{(t)} \equiv \sum_{m=1}^M K_m$ be the total local rank of $A$. We extend columns of $H_m$ into $\RR^N$ by adding zeros, and get the block diagonal matrix 
\begin{equation*}
	H_{ext} = \text{diag}\{H_1, H_2, \cdots, H_M\}.
\end{equation*}
The correlation matrix with basis $H_{ext}$, denoted by $\Lambda \in \RR^{K_{(t)} \times K_{(t)}}$, is the matrix such that 
\begin{equation} \label{def:lambda}
	A = H_{ext} \Lambda H_{ext}^T.
\end{equation}
Since columns of $H_{ext}$ are orthogonal and span a space that contains $\text{range}(A)$, $\Lambda$ exists and can be computed block-wisely as follows:
\begin{align}\label{def:Lambda}
\Lambda = \left[\begin{array}{c:c:c:c}
\Lambda_{11} & \Lambda_{12} & \cdots & \Lambda_{1M}\\
\hdashline
\Lambda_{21}& \Lambda_{22} & \cdots & \Lambda_{2M}\\
\hdashline
\vdots & \vdots & \ddots & \vdots \\
\hdashline
\Lambda_{M1} & \Lambda_{M2} & \cdots & \Lambda_{MM} \end{array}\right] \, ,  \quad 
\Lambda_{mn} = H_m^{\dagger} A_{mn} \left(H_n^{\dagger}\right)^T \in \RR^{K_m \times K_n}\,.
\end{align}
where $H_m^{\dagger} \equiv (H_m^T H_m)^{-1} H_m^T$ is the (Moore-Penrose) pseudo-inverse of $H_m$.

In the second step, on every patch $P_m$, we solve the following joint diagonaliziation problem to find a local rotation $D_m$:
\begin{equation}\label{opt:jointDm}
\boxed{\begin{split}
	\min_{V\in \OO(K_m)}\quad \sum_{n=1}^{M} \sum_{i \neq j} |(V^T \Sigma_{n;m} V)_{i,j}|^2 \,,
\end{split}}
\end{equation}
in which 
\begin{equation}\label{def:Sigma}
	\Sigma_{n;m} \equiv \Lambda_{mn}\Lambda_{mn}^T.
\end{equation}
We rotate the local eigenvectors with $D_m$ and get $G_m = H_m D_m$. Again, we extend columns of $G_m$ into $\RR^N$ by adding zeros, and get the block diagonal matrix 
\begin{equation*}
	G_{ext} = \text{diag}\{G_1, G_2, \cdots, G_M\}.
\end{equation*}
The correlation matrix with basis $G$, denoted by $\Omega \in \RR^{K_{(t)} \times K_{(t)}}$, is the matrix such that 
\begin{equation} \label{def:omega}
	A = G_{ext} \Omega G_{ext}^T.
\end{equation}
With $\Lambda$ in hand, $\Omega$ can be obtained as follows:
\begin{equation}\label{eqn:Omega}
	\Omega = D^T \Lambda D\,, \qquad D = \text{diag}\{D_1, D_2, \cdots, D_M\}.
\end{equation}
Joint diagonalization has been well studied in the blind source separation (BSS) community. We present some relevant theoretical results in Appendix~\ref{appendix:jointDiagonalization}. A Jacobi-like algorithm~\cite{cardoso_blind_1993,bunse-gerstner_numerical_1993}, see Algorithm~\ref{alg:jointD}, is used in our paper to solve problem~\eqref{opt:jointDm}. For most cases, we may want to normalize the columns of $G_{ext}$ and put all the magnitude information in $\Omega$, i.e.,
\begin{equation} \label{eqn:renormalize}
	G_{ext} = \bar{G}_{ext} E, \quad \bar{\Omega} = E \Omega E^T,
\end{equation}
where $E$ is a diagonal matrix with $E_{ii}$ being the $l^2$ norm of the $i$-th column of $G_{ext}$, $\bar{G}_{ext}$ and $\bar{\Omega}$ will substitute the roles of $G$ and $\Omega$ in the rest of the algorithm.

In the third step, we use the pivoted Cholesky decomposition to patch up the local pieces $G_m$. Specifically, suppose the pivoted Cholesky decomposition of $\Omega$ is given as
\begin{equation}\label{eqn:OmegaPivotChol}
	\Omega = PLL^T P^T\,,
\end{equation}
where $P\in \RR^{K_{(t)} \times K_{(t)}}$ is a permutation matrix and $L \in \RR^{K_{(t)}\times K}$ is a lower triangular matrix with positive diagonal entries. Since $A$ has rank $K$, both $\Lambda$ and $\Omega$ have rank $K$. This is why $L$ only has $K$ nonzero columns. However, we point out that the rank $K$ is automatically identified in the algorithm instead of given as an input parameter. Finally, $A$ is decomposed as
\begin{equation}\label{eqn:ISMD}
	A = G G^T \equiv G_{ext}PL(G_{ext}PL)^T\,.
\end{equation}
The columns in $G$ ($G_{ext}PL$) are our decomposed sparse modes. 

The full algorithm is summarized in Algorithm~\ref{alg:ISMD}. We point out that there are two extreme cases for the ISMD:
\begin{itemize}
\item The coarsest partition $\CalP = \{[N]\}$. In this case, the ISMD is equivalent to the standard eigen decomposition.
\item The finest partition $\CalP = \left\{\{i\} : i \in [N] \right\}$. In this case, the ISMD is equivalent to the pivoted Cholesky factorization on $\bar{A}$ where $\bar{A}_{ij} = \frac{A_{ij}}{\sqrt{A_{ii}A_{jj}}}$. If the normalization~\eqref{eqn:renormalize} is applied, the ISMD is equivalent to the pivoted Cholesky factorization of $A$ in this case. 
\end{itemize}
In these two extreme cases, there is no need to use the joint diagonalization step and it is known that in general neither the ISMD nor the pivoted Cholesky decomposition generates sparse decomposition. When $\CalP$ is neither of these two extreme cases, the joint diagonalization is applied to rotate the local eigenvectors and thereafter the generated modes are patch-wise sparse. Specifically, when the partition is regular-sparse, the ISMD generates the optimal patch-wise sparse decomposition as stated in Theorem~\ref{thm:ISMD}.

\begin{algorithm}
\caption{Intrinsic sparse mode decomposition}\label{alg:ISMD}
\begin{algorithmic}[1]
\Require{$A \in \RR^{N\times N}$: symmetric and PSD; $\CalP=\{P_m\}_{m=1}^M$: partition of index set $[N]$}
\Ensure{$G = [g_1, g_2, \cdots, g_K]$: $K$ is the rank of $A$, $A = G G^T$}
\LineComment{Local eigen decomposition} \label{line:start}
\For{$m=1,2,\cdots,M$}
	\State Local eigen decomposition: $A_{mm} = H_m H_m^T$
\EndFor
\LineComment{Assemble correlation matrix $\Lambda$}
\State Assemble $\Lambda = H_{ext}^{\dagger} A \left(H_{ext}^{\dagger}\right)^T$ block-wisely as in Eqn.~\eqref{def:Lambda} 
\LineComment{Joint Diagonalization}
\For{$m=1,2,\cdots,M$}
	\For{$n=1,2,\cdots,M$}
		\State $\Sigma_{n;m} = \Lambda_{mn}\Lambda_{mn}^T$
	\EndFor
	\State Solve the joint diagonalization problem~\eqref{opt:jointDm} for $D_m$ \Comment{Use Algorithm~\ref{alg:jointD}}
\EndFor  \label{line:end}
\LineComment{Assemble correlation matrix $\Omega$ and its pivoted Cholesky decomposition}
\State $\Omega = D^T \Lambda D$  \label{line:assembleOmega}
\State $\Omega = P L L^T P^T$  \label{line:pivotChol}
\LineComment{Assemble the intrinsic sparse modes $G$}
\State $G = H_{ext} D P  L$
\end{algorithmic}
\end{algorithm}

\begin{remark}
One can interpret $H_m$ as the patch-wise amplitude and $D_m$ as the patch-wise phase. The patch-wise amplitude is easy to obtain using a local eigen decomposition~\eqref{eqn:localeig}, while the patch-wise phase is obtained by the joint diagonalization~\eqref{opt:jointDm}.

In fact, the ISMD solves the following optimization problem where we jointly diagonalize $A_{mn}$:
\begin{equation}\label{opt:jointGm0}
\boxed{\begin{split}
	\min_{G_m \in \RR^{|P_m| \times K_m}} \quad & \sum_{n=1}^{M} \sum_{i \neq j} |B_{n;m}(i,j)|^2 \, \\
	\text{s.t.}   \quad & G_m G_m^T = A_{mm}\,, \\
			& G_m B_{n;m} G_m^T = A_{mn} A_{nn}^{\dagger} A_{mn}^T,
\end{split}}
\end{equation}
in which $A_{nn}^{\dagger} = \sum_{i=1}^{K_n} \gamma_{n,i}^{-1} h_{n,i} h_{n,i}^T$ is the (Moore-Penrose) pseudo-inverse of $A_{nn}$. Eqn.~\eqref{opt:jointGm0} is not a unitary joint diagonalization problem, i.e., the variable $G_m$ is not unitary. The ISMD solves this non-unitary joint diagonalization problem in two steps:
\begin{enumerate}
\item Perform a local eigen decomposition $A_{mm} = H_m H_m^T$. Then the feasible $G_m$ can be written as $H_m D_m$ with a unitary matrix $D_m$.
\item Find the rotation $D_m$ that solves the unitary joint diagonalization problem~\eqref{opt:jointDm}.
\end{enumerate}
\end{remark}

\subsection{Computational complexity} \label{sec:complexity}
The main computational cost of the ISMD comes from the local KL expansion, the joint diagonalization, and the pivoted Cholesky decomposition. To simplify the analysis, we assume that the partition $\CalP$ is uniform, i.e., each group has $\frac{N}{M}$ nodes. On each patch, we perform eigen decomposition of $A_{mm}$ of size $N/M$ and rank $K_m$. Then, the cost of the local eigen decomposition step is
\begin{equation*}
	\Cost_1 = \sum_{m=1}^M \Or\left( (N/M)^2 K_m \right) = (N/M)^2 \Or(\sum_{m=1}^M K_m).
\end{equation*}
For the joint diagonalization, the computational cost of Algorithm~\ref{alg:jointD} is
\begin{equation*}
	\sum_{m=1}^M N_{corr, m} K_m^3 N_{iter,m}\,.
\end{equation*}
Here, $N_{corr, m}$ is the number of nonzero matrices in $\{\Sigma_{n;m}\}_{n=1}^M$. Notice that $\Sigma_{n;m} \equiv \Lambda_{mn}\Lambda_{mn}^T = 0$ if and only if $A_{mn} = 0$. Therefore, $N_{corr,m}$ may be much smaller than $M$ if $A$ is sparse. Nevertheless, we take an upper bound $M$ to estimate the cost. $N_{corr, m} K_m^3$ is the computational cost for each sweeping in Algorithm~\ref{alg:jointD} and $N_{iter,m}$ is the number of iterations needed for the convergence. The asymptotic convergence rate is shown to be quadratic~\cite{bunse-gerstner_numerical_1993}, and we see no more than 6 iterations needed in our numerical examples. Therefore, we can take $N_{iter, m} = \Or(1)$ and in total we have
\begin{equation*}
	\Cost_2 = \sum_{m=1}^M M \Or(K_m^3) = M \Or(\sum_{m=1}^M K_m^3).
\end{equation*}
Finally, the pivoted Cholesky decomposition of $\Omega$, which is of size $\sum_{k=1}^M K_m$, has cost
\begin{equation*}
	\Cost_3 = \Or\left( (\sum_{k=1}^M K_m) K^2 \right) = K^2 \Or(\sum_{m=1}^M K_m).
\end{equation*}
Combining the computational costs in all three steps, we conclude that the total computational cost of the ISMD is 
\begin{equation} \label{eqn:cost1}
	\Cost_{\text{ISMD}} =  \left( (N/M)^2 + K^2 \right) \Or(\sum_{m=1}^M K_m) + M \Or(\sum_{m=1}^M K_m^3)\,.
\end{equation}
Making use of $K_m \le K$, we have an upper bound for $\Cost_{ISMD}$
\begin{equation} \label{eqn:cost2}
	\Cost_{\text{ISMD}} \le  \Or(N^2 K/M) + \Or(M^2 K^3)\,.
\end{equation}
When $M = \Or((N/K)^{2/3})$, $\Cost_{\text{ISMD}} \le \Or(N^{4/3} K^{5/3})$. Comparing to the cost of partial eigen decomposition~\cite{sorensen1992implicit, lehoucq1996deflation}, which is about $\Or(N^2 K)$ \footnote{The cost can be reduced to $\Or(N^2\log(K))$ if a randomized SVD with some specific technique is applied.}, the ISMD is more efficient for low-rank matrices. 

For matrix $A$ which has a sparse decomposition, the local ranks $K_m$ are much smaller than its global rank $K$. An extreme case is $K_m = \Or(1)$, which is in fact true for many random fields, see~\cite{xiu2015, hou_LocalgPC_2016}. In this case, 
\begin{equation} \label{eqn:cost3}
	\Cost_{\text{ISMD}} =  \Or(N^2 / M) + \Or(M^2) + \Or(M K^2)\,.
\end{equation}
When the partition gets finer ($M$ increases), the computational cost first decreases due to the saving in local eigen decompositions. The computational cost achieves its minimum around $M=\Or(N^{2/3})$ and then increases due to the increasing cost for the joint diagonalization. This trend is observed in our numerical examples, see Figure~\ref{fig:2d_compare2}.

We point out that the $M$ local eigen decompositions~\eqref{eqn:localeig} and the joint diagonalization problems~\eqref{opt:jointDm} are solved independently on different patches. Therefore, our algorithm is embarrassingly parallelizable. This will save the computational cost in the first two steps by a factor of $M$, which makes the ISMD even faster.

\subsection{Connection with other matrix decomposition methods}\label{sec:othermethods}
Sparse decompositions of symmetric PSD matrices have been studied in different fields for a long time. There are in general two approaches to achieve sparsity: rotation or $L^1$ minimization.

The rotation approach begins with eigenvectors. Suppose that we have decided to retain and rotate $K$ eigenvectors. Define $H = [h_1, h_2, \dots, h_K]$ with $h_k$ being the $k$-th eigenvector. We post-multiply $H$ by a matrix $T \in \RR^{K\times K}$ to obtain the rotated modes $G = [g_1, g_2, \dots, g_K] = H T$. The choice of $T$ is determined by the rotation criterion we use. In data science, for the commonly-used varimax rotation criterion~\cite{krzanowski1995multivariate, jolliffe2002simplified}, $T$ is an orthogonal matrix chosen to maximize the variance of squared modes within each column of $G$. This drives entries in $G$ towards 0 or $\pm 1$. In quantum chemistry, every column in $H$ and $G$ corresponds to a function over a physical domain $D$ and certain specialized sparse modes -- localized modes -- are sought after. The most widely used criterion to achieve maximally localized modes is the one proposed in \cite{marzari1997maximally}. This criterion requires $T$ to be unitary, and then minimizes the second moment: 
\begin{equation} \label{eqn:wannier}
	\sum_{k=1}^K \int_D (x - x_k)^2 |g_k(x)|^2 \rd x\,,
\end{equation}
where $x_k = \int_D x |g_k(x)|^2 \rd x$. More recently, a method weighted by higher degree polynomials is discussed in \cite{weinan2010localized}. While these criteria work reasonably well for simple symmetric PSD functions/operators, they all suffer from non-convex optimization -- which requires a good starting point to converge to the global minimum. In addition, these methods only care about the eigenspace spanned by $H$ instead of the specific matrix decomposition, and thus they cannot be directly applied to solve our problem~\eqref{opt:minsparseness}.

The ISMD proposed in this paper follows the rotation approach. The ISMD implicitly finds a unitary matrix $T \in \RR^{K\times K}$ to construct the intrinsic sparse modes
\begin{equation}\label{eqn:rotationview}
	[g_1, g_2, \dots, g_K] = [\sqrt{\lambda_1} h_1, \sqrt{\lambda_2} h_2, \dots, \sqrt{\lambda_K} h_K ] ~T.
\end{equation}
Notice that we rotate the unnormalized eigenvector $\sqrt{\lambda_k} h_k$ to satisfy the decomposition constraint $A = \sum_{k=1}^K g_k g_k^T$. The criterion of the ISMD is to minimize the total patch-wise sparseness as in \eqref{opt:minsparseness}. The success of the ISMD lies in the fact that as long as the domain partition is regular-sparse, the optimization problem~\eqref{opt:minsparseness} can be exactly and efficiently solved by Algorithm~\ref{alg:ISMD}. Moreover, the intrinsic sparse modes produced by the ISMD are optimally localized because we are directly minimizing the total patch-wise sparseness of $\{g_k\}_{k=1}^K$. 

The $L^1$ minimization approach, pioneered by ScotLass~\cite{Jolliffe_2003}, has a rich literature in solving the sparse matrix factorization problem~\eqref{eqn:matrix_factorize}, see~
\cite{Zou_PCA_06, dAspremont_sparsePCA, zhang2012sparse, vu2013fantope, ozolins_compressed_2013, LaiLuOsher_2014}. Problem~\eqref{eqn:matrix_factorize} is highly non-convex in $(G, U)$, and there has been a lot of efforts (see e.g.~\cite{dAspremont_sparsePCA, vu2013fantope, LaiLuOsher_2014}) in relaxing it to a convex optimization. First of all, since there are no essential constraints on $U$, one can get rid of $U$ by considering the variational form~\cite{Jolliffe_2003, Zou_PCA_06, ozolins_compressed_2013}:
\begin{equation}\label{eqn:eigvariationalL1_matrix}
	\boxed{\min_{G \in \RR^{N\times K}}\quad -\mathrm{Tr}( G^T A G ) + \mu \|G\|_1  \quad \mathrm{s.t.} \quad G^T G = \mathbb{I}_{K},}
\end{equation}
where $A = X X^T$ is the covariance matrix as in the ISMD~\eqref{opt:minsparseness} and $\mathrm{Tr}$ is the trace operator on square matrices. Notice that the problem is still non-convex due to the orthogonality constraint $G^T G = \mathbb{I}_{K}$. In the second step, the authors in~\cite{vu2013fantope} proposed the following semi-definite programming to obtain the sparse density matrix $W \in \RR^{n\times n}$, which plays the same role as $G G^T$ in \eqref{eqn:eigvariationalL1_matrix}:
\begin{equation} \label{eqn:eigvariationalL1_relax}
\boxed{\min_{W \in \RR^{N\times N}}\quad -\mathrm{Tr}( A W ) + \mu \|W\|_1  \quad \mathrm{s.t.} \quad 0 \preceq W \preceq \mathbb{I}_{N},\, \mathrm{Tr}(W) = K.}
\end{equation}
Here, $0 \preceq W \preceq \mathbb{I}_{N}$ means that both $W$ and $\mathbb{I}_{N} - W$ are symmetric and positive semi-definite. Finally, the first $K$ eigenvectors of $W$ are used as the sparse modes $G$. An equivalent formulation was proposed in \cite{LaiLuOsher_2014}, and the authors proposed to pick $K$ columns of $W$ as the sparse modes $G$.

We will compare the advantages and disadvantages of the ISMD and the convex relaxation of sparse PCA in Section~\ref{subsec:compareL1ISMD} and Section~\ref{sec:exponential}.

\section{Theoretical results with regular-sparse partitions}\label{sec:exactdecom}
In this section, we present the main theoretical results of the ISMD, i.e., Theorem~\ref{thm:ISMD}, Theorem~\ref{thm:ISMDconsistency} and its perturbation analysis. We first introduce a domain-decomposition type presentation of any feasible decomposition $A = \sum_{k=1}^K \psi_k \psi_k^T$. Then we discuss the regular-sparse property and use it to prove our main results. When no ambiguity arises, we denote patch-wise sparseness $s(g_k; \CalP)$ as $s_k$.

\subsection{A domain-decomposition type presentation}\label{sec:sparseness}
For an arbitrary decomposition $A = \sum_{k=1}^K \psi_k \psi_k^T$, denote $\Psi \equiv [\psi_1, \dots, \psi_K]$ and $\Psi|_{_{P_m}} \equiv [\psi_1|_{_{P_m}}, \dots, \psi_K|_{_{P_m}}]$. For a sparse decomposition, we expect that most columns in $\Psi|_{_{P_m}}$ are zero, and thus we define the local dimension on patch $P_m$ as follows. 
\begin{definition}[Local dimension]\label{def:localdim}
The local dimension of a decomposition $A = \sum_{k=1}^K \psi_k \psi_k^T$ on patch $P_m$ is the number of nonzero modes when restricted to this patch, i.e.,
\begin{equation*}
	d(P_m; \Psi) = |S_m|, \qquad S_m = \{ k ~:~  \psi_k|_{_{P_m}} \neq 0 \}.
\end{equation*}
\end{definition}
When no ambiguity arises, $d(P_m; \Psi)$ is written as $d_m$. We enumerate all the elements in $S_m$ as $\{k_{i}^{m}\}_{i=1}^{d_m}$, and group together all the nonzero local pieces on patch $P_m$ and obtain
\begin{equation} \label{eqn:localmodes}
	\Psi_m \equiv [\psi_{m, 1}, \dots, \psi_{m, d_m}]\,, \qquad \psi_{k_{i}^{m}}|_{_{P_m}} = \psi_{m, i}\,.
\end{equation}
Therefore, we have
\begin{equation} \label{eqn:defLm}
\Psi|_{_{P_m}}= \Psi_m L_m^{(\psi)}\,,
\end{equation}
where $L_m^{(\psi)}$ is a matrix of size $d_m\times K$ with the $k_{i}^{m}$-th column being $\vct{e}_i$ for $i \in [d_m]$ and other columns being $\vct{0}$. Here, $\vct{e}_i$ is the $i$-th column of $\mathbb{I}_{d_m}$. $L_m^{(\psi)}$ is called the \textit{local indicator matrix} of $\Psi$ on patch $P_m$. Restricting the decomposition constraint $A = \Psi \Psi^T$ to patch $P_m$, we have $A_{mm} = \Psi|_{_{P_m}} \left(\Psi|_{_{P_m}}\right)^T$ where $A_{mm}$ is the restriction of $A$ on patch $P_m$, as in \eqref{def:Ablock}. Since $\Psi_m$ is obtained from $\Psi|_{_{P_m}}$ by deleting zero columns, we have
\begin{equation}\label{eqn:localConstraint}
	A_{mm} = \Psi_m \Psi_m^T.
\end{equation}

We stack up $\Psi_m$ and $L_m^{(\psi)}$ as follows,
\begin{equation*}
\Psi_{ext} \equiv \text{diag}\{\Psi_1, \Psi_2,\cdots, \Psi_M\}\,,\quad L^{(\psi)} \equiv \left[L_1^{(\psi)} ; L_2^{(\psi)} ; \cdots ; L_M^{(\psi)}\right]\,,
\end{equation*}
and then we have:
\begin{equation} \label{eqn:gdecom}
\Psi = [\Psi|_{_{P_1}}; \dots; \Psi|_{_{P_M}}] = \Psi_{ext} L^{(\psi)}\,.
\end{equation}
The intuition in Eqn.~\eqref{eqn:gdecom} is that the local pieces $\Psi_m$ are linked together by the indicator matrix $L^{(\psi)}$ and the modes $\Psi$ on the entire domain $[N]$ can be recovered from $\Psi_{ext}$ and $ L^{(\psi)}$. We call $L^{(\psi)}$ the \textit{indicator matrix} of $\Psi$.

We use a simple example to illustrate the patch-wise sparseness, the local dimension and Eqn.~\eqref{eqn:gdecom}. In this case, $\Psi \in \RR^{N\times K}$ ($N=100, K=2$) is the discretized version of two functions on $[0,1]$ and $\CalP$ partitions $[0,1]$ uniformly into four intervals as shown in Figure~\ref{fig:gdecom}. $\psi_1$, the red starred mode, is nonzero on the left two patches and $\psi_2$, the blue circled mode, is nonzero on the right three patches. The sparseness of $\psi_1$ is 2, the sparseness of $\psi_2$ is 3, and the local dimensions of the four patches are 1, 2, 1, and 1 respectively, as we comment in Figure~\ref{fig:gdecom}.
\begin{figure}\label{fig:gdecom}
\centering
\includegraphics[height = 0.2\textheight, width = 0.5\textwidth]{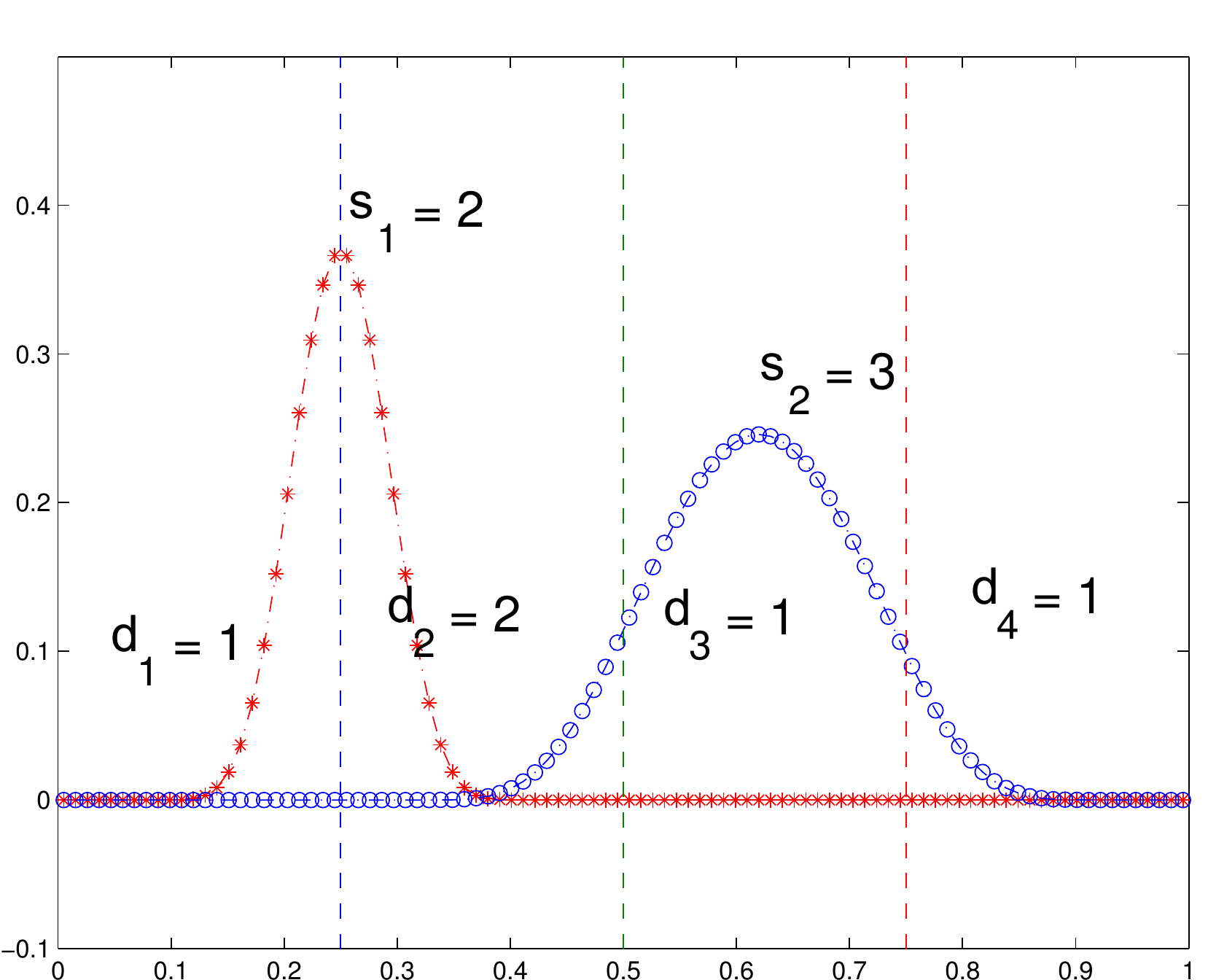}
\caption{Illustration of sparseness, local dimension and $\Psi = \Psi_{ext} L^{(\psi)}$.}
\end{figure}
Following the definitions above, we have $\Psi_1 = \psi_1|_{_{P_1}}$, $L_1^{(\psi)} = [1, 0]$, $\Psi_2 = [\psi_1|_{_{P_2}}, \psi_2|_{_{P_2}}]$, $L_2^{(\psi)} = [1, 0; 0, 1]$, $\Psi_3 = \psi_2|_{_{P_3}}$, $L_3^{(\psi)}  = [0, 1]$, $\Psi_4 = \psi_2|_{_{P_4}}$ and $L_4^{(\psi)} = [0, 1]$. Finally, we get
\begin{align*}
\left[\psi_1,\psi_2\right] = \Psi_{ext}L^{(\psi)} \equiv \left[\begin{array}{c:cc:c:c}
\psi_{1,1} & 0 & 0 & 0 & 0\\
\hdashline
0& \psi_{1,2} & \psi_{2,2} & 0 & 0\\
\hdashline
0& 0 & 0 & \psi_{2,3} & 0\\
\hdashline
0& 0 & 0 & 0 & \psi_{2,4} \end{array}\right] \left[\begin{array}{cc}
1 & 0\\
\hdashline
1 & 0\\
0 & 1\\ 
\hdashline
0 & 1\\ 
\hdashline
0 & 1\end{array}\right].
\end{align*}

With this domain-decomposition type representation of $\Psi$, the decomposition constraint is rewritten as:
\begin{equation}\label{eqn:constraint2}
	A = \Psi \Psi^T = \Psi_{ext} \Omega^{(\psi)} \Psi_{ext}^T\,, \qquad \Omega^{(\psi)} \equiv L^{(\psi)} \left( L^{(\psi)} \right)^T \,.
\end{equation}
Here, $\Omega^{(\psi)}$ has a role similar to that of $\Omega$ in the ISMD. It can be viewed as the correlation matrix of $A$ under basis $\Psi_{ext}$, just like how $\Lambda$ and $\Omega$ are defined.

Finally, we provide two useful properties of the local indicator matrices $L_m^{(\psi)}$, which are direct consequences of their definitions. Its proof is elementary and can be found in Appendix~\ref{appendix:proofs}.
\begin{proposition}\label{prop:localGL}
For an arbitrary decomposition $A = \Psi \Psi^T$, 
\begin{enumerate}
\item 
The $k$-th column of $L^{(\psi)}$, denoted as $l_k^{(\psi)}$, satisfies $\|l_k^{(\psi)}\|_1 = s_k$ where $s_k$ is the patch-wise sparseness of $\psi_k$, as in Definition~\ref{def:gsparseness}. Moreover, different columns in $L^{(\psi)}$ have disjoint supports.
\item Define 
\begin{equation}\label{def:B}
B_{n;m}^{(\psi)} \equiv \Omega^{(\psi)}_{mn} \left(\Omega^{(\psi)}_{mn}\right)^T \,,
\end{equation} 
where $\Omega^{(\psi)}_{mn} \equiv L_m^{(\psi)}(L^{(\psi)}_n)^T$ is the $(m,n)$-th block of $\Omega^{(\psi)}$. $B_{n;m}^{(\psi)}$ is diagonal with diagonal entries either 1 or 0. Moreover, $B^{(\psi)}_{n;m}(i,i) = 1$ if and only if there exists $k \in [K]$ such that $\psi_k|_{_{P_m}} = \psi_{m,i}$ and $\psi_k|_{_{P_n}} \neq \vct{0}$.
\end{enumerate}
\end{proposition}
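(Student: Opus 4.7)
The plan is to treat both claims as bookkeeping statements about the indicator matrices $L_m^{(\psi)}$ and $L^{(\psi)}$ and to derive them by directly unpacking the definitions in Section~3.1. Since each $L_m^{(\psi)}$ is a very sparse $\{0,1\}$-matrix (one unit vector per nonzero mode on patch $P_m$, zero elsewhere), the substantive content is combinatorial rather than analytical: it rests on the injectivity of the enumerations $i \mapsto k_i^m$ of the sets $S_m = \{k : \psi_k|_{_{P_m}} \neq \vct{0}\}$.

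For part (1), I would first observe that the $k$-th column of $L_m^{(\psi)}$ is by construction either a standard basis vector $\vct{e}_i \in \RR^{d_m}$ (when $k = k_i^m$ for some $i \in [d_m]$, equivalently $k \in S_m$) or the zero vector (when $k \notin S_m$). Stacking over $m$, the $k$-th column $l_k^{(\psi)}$ of $L^{(\psi)}$ therefore contains exactly one nonzero entry in block $m$ when $k \in S_m$ and is zero in block $m$ otherwise. Summing the $\ell^1$-contribution over blocks gives $\|l_k^{(\psi)}\|_1 = \#\{m : k \in S_m\} = \#\{m : \psi_k|_{_{P_m}} \neq \vct{0}\} = s_k$. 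For disjoint supports, within each block $m$ the nonzero columns of $L_m^{(\psi)}$ are $\{\vct{e}_i\}_{i=1}^{d_m}$ placed at pairwise distinct column indices $k_1^m, \ldots, k_{d_m}^m$; hence no two columns of $L_m^{(\psi)}$ share a nonzero row, and the same holds after stacking.

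For part (2), I would compute entries explicitly. Using $L_m^{(\psi)}(i,k) = \mathbf{1}_{\{k = k_i^m\}}$, the $(i,j)$-entry of $\Omega^{(\psi)}_{mn} = L_m^{(\psi)}(L_n^{(\psi)})^T$ equals $\sum_{k} \mathbf{1}_{\{k = k_i^m\}} \mathbf{1}_{\{k = k_j^n\}} = \mathbf{1}_{\{k_i^m = k_j^n\}}$, so $\Omega^{(\psi)}_{mn}$ is a $\{0,1\}$-matrix. Then $B_{n;m}^{(\psi)}(i,i') = \sum_j \Omega^{(\psi)}_{mn}(i,j)\Omega^{(\psi)}_{mn}(i',j)$ is nonzero only if some $j$ satisfies $k_j^n = k_i^m = k_{i'}^m$, which forces $i = i'$ by injectivity of $i \mapsto k_i^m$; this gives the diagonality. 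Finally, $B_{n;m}^{(\psi)}(i,i) = \sum_j \mathbf{1}_{\{k_j^n = k_i^m\}}$, which by injectivity of $j \mapsto k_j^n$ is either $0$ or $1$, and equals $1$ exactly when the index $k := k_i^m$ also appears in $S_n$. Translating back through the definitions, this says $\psi_k|_{_{P_m}} = \psi_{m,i}$ and $\psi_k|_{_{P_n}} \neq \vct{0}$, which is the claimed characterization.

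There is no real obstacle here: both statements are essentially restatements of how $L_m^{(\psi)}$ was defined, so the only thing to be careful about is keeping track of the two levels of indexing (the local index $i \in [d_m]$ versus the global mode index $k \in [K]$) and invoking the injectivity of $i \mapsto k_i^m$ at the right step. I would keep the argument self-contained and short, writing out the indicator-function identity $L_m^{(\psi)}(i,k) = \mathbf{1}_{\{k = k_i^m\}}$ explicitly since both parts reduce to manipulations of this single formula.
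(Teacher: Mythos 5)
Your proof is correct and follows essentially the same route as the paper's: part (1) by summing the per-patch $\ell^1$ contributions of $l_k^{(\psi)}$ and noting the columns of each $L_m^{(\psi)}$ are distinct standard basis vectors, and part (2) by reducing $B_{n;m}^{(\psi)}$ to the injectivity of the enumerations $i \mapsto k_i^m$ and $j \mapsto k_j^n$. The only difference is presentational — you work entrywise with the indicator identity $L_m^{(\psi)}(i,k) = \mathbf{1}_{\{k = k_i^m\}}$ where the paper sums outer products $l_{m,k_j^n} l_{m,k_j^n}^T$ — but the underlying argument is identical.
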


Since different columns in $L^{(\psi)}$ have disjoint supports, $\Omega^{(\psi)} \equiv L^{(\psi)} \left( L^{(\psi)} \right)^T$ has a block-diagonal structure with $K$ blocks. The $k$-th diagonal block is the one contributed by $l_k^{(\psi)}\left(l_k^{(\psi)}\right)^T$. Therefore, as long as we obtain $\Omega^{(\psi)}$, we can use the pivoted Cholesky decomposition to efficiently recover $L^{(\psi)}$. The ISMD follows this rationale: we first construct local pieces $\Psi_{ext} \equiv \text{diag}\{\Psi_1, \Psi_2,\cdots, \Psi_M\}$ for certain set of intrinsic sparse modes $\Psi$; then from the decomposition constraint~\eqref{eqn:constraint2} we are able to compute $\Omega^{(\psi)}$; finally, the pivoted Cholesky decomposition is applied to obtain $L^{(\psi)}$ and the modes are assembled by $\Psi = \Psi_{ext} L^{(\psi)}$. Obviously, the key step is to construct $\Psi_{ext}$, which are local pieces of a set of intrinsic sparse modes -- this is exactly where the regular-sparse property and the joint diagonalization come into play.

\subsection{regular-sparse property and local modes construction}\label{sec:LocalSpace}
In this and the next subsections (Section~\ref{sec:LocalSpace} - Section~\ref{sec:consistency}), we assume that the submatrices $A_{mm}$ are well conditioned and thus the exact local eigen decomposition~\eqref{eqn:localeig} is used in the ISMD.

Combining the local eigen decomposition~\eqref{eqn:localeig} and local decomposition constraint~\eqref{eqn:localConstraint}, there exists $D_m^{(\psi)} \in \RR^{K_m\times d_m}$ such that
\begin{equation}\label{eqn:lineartrans}
	\Psi_m = H_mD_m^{(\psi)}.
\end{equation}
Moreover, since the local eigenvectors are linearly independent, we have
\begin{equation}\label{eqn:dmgeKm}
	d_m \ge K_m\,, \qquad D_m^{(\psi)}\left(D_m^{(\psi)}\right)^T = \mathbb{I}_{K_m}.
\end{equation}
We see that $d_m = K_m$ if and only if columns in $\Psi_m$ is also linearly independent. In this case, $D_m^{(\psi)}$ is unitary, i.e., $D_m^{(\psi)} \in \OO(K_m)$. This is exactly what is required by the regular-sparse property, see Definition~\ref{def:regular_sparse}. It is easy to see that we have the following equivalent definitions of regular-sparse property.
\begin{proposition}\label{prop:regularsparse1}
The following assertions are equivalent.
\begin{enumerate}
\item The partition $\CalP$ is regular-sparse with respect to $A$.
\item There exists a decomposition $A = \sum_{k=1}^K \psi_k \psi_k^T$ such that on every patch $P_m$ its local dimension $d_m$ is equal to the local rank $K_m$, i.e., $d_m = K_m$.
\item The minimum of problem~\eqref{opt:minsparseness} is $\sum_{m=1}^M K_m$.
\end{enumerate}
\end{proposition}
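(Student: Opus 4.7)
The plan is to establish the equivalences by exploiting the domain-decomposition representation from Section~\ref{sec:sparseness} together with the inequality $d_m \ge K_m$ from \eqref{eqn:dmgeKm}. The key bookkeeping identity I would use repeatedly is
\begin{equation*}
  \sum_{k=1}^K s(\psi_k;\CalP) \;=\; \sum_{k=1}^K \|l_k^{(\psi)}\|_1 \;=\; \sum_{m=1}^M d_m,
\end{equation*}
since both sides simply count the pairs $(m,k)$ with $\psi_k|_{_{P_m}} \neq \vct{0}$; the first equality is Proposition~\ref{prop:localGL}(1), and the second follows from the definition of $d_m$ in Definition~\ref{def:localdim}.

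For (1)$\Leftrightarrow$(2), I would unpack Definition~\ref{def:regular_sparse}: the nonzero local pieces on $P_m$ are precisely the columns of $\Psi_m$, which has $d_m$ columns and satisfies $\Psi_m \Psi_m^T = A_{mm}$ by \eqref{eqn:localConstraint}. Linear independence of these columns is equivalent to $\mathrm{rank}(\Psi_m)=d_m$, and combined with $\mathrm{rank}(\Psi_m\Psi_m^T)=K_m$ this is equivalent to $d_m = K_m$. Applying this patchwise gives the equivalence.

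For (2)$\Rightarrow$(3), any feasible decomposition satisfies $d_m \ge K_m$ by \eqref{eqn:dmgeKm}, so the identity above yields $\sum_k s(\psi_k;\CalP) \ge \sum_m K_m$ as a universal lower bound on problem~\eqref{opt:minsparseness}. A decomposition meeting (2) attains this bound, so the minimum equals $\sum_m K_m$. Conversely, for (3)$\Rightarrow$(2), any minimizer $\Psi$ must satisfy $\sum_m d_m = \sum_m K_m$ with each summand satisfying $d_m \ge K_m$, which forces equality on every patch; hence (2) holds.

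I do not anticipate any real obstacle: the argument is essentially a dimension count backed by the representation $\Psi = \Psi_{ext} L^{(\psi)}$ and the already-proven fact $d_m \ge K_m$. The only care needed is to make explicit that the universal lower bound $\sum_m K_m$ is independent of the decomposition, which is exactly what makes the equivalence with the attainment condition meaningful.
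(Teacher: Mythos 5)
Your argument is correct and is exactly the elementary proof the paper omits: the equivalence (1)$\Leftrightarrow$(2) is already contained in the discussion surrounding \eqref{eqn:dmgeKm} (linear independence of the $d_m$ columns of $\Psi_m$ with $\Psi_m\Psi_m^T=A_{mm}$ is the same as $d_m=\mathrm{rank}(A_{mm})=K_m$), and the double-counting identity $\sum_k s(\psi_k;\CalP)=\sum_m d_m$ combined with the universal bound $d_m\ge K_m$ gives (2)$\Leftrightarrow$(3) by attainment and forced patchwise equality. No gaps.
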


The proof is elementary and is omitted here. By Proposition~\ref{prop:regularsparse1}, for regular-sparse partitions local pieces of a set of intrinsic sparse modes can be constructed from rotating local eigenvectors, i.e., $\Psi_m = H_m D_m^{(\psi)}$. All the local rotations $\{D_m^{(\psi)}\}_{m=1}^M$ are coupled by the decomposition constraint $A = \Psi \Psi^T$. At first glance, it seems impossible to find such $D_m$ from this big coupled system. However, the following lemma gives a necessary condition that $D_m^{(\psi)}$ must satisfy so that $H_m D_m^{(\psi)}$ are local pieces of a set of intrinsic sparse modes. More importantly, this necessary condition turns out to be sufficient, and thus provides us a criterion to find the local rotations.

\begin{lemma}\label{lem:necessaryD}
Suppose that $\CalP$ is regular-sparse w.r.t. $A$ and that $\{\psi_k\}_{k=1}^K$ is an arbitrary set of intrinsic sparse modes. Denote the transformation from $H_m$ to $\Psi_m$ as $D_m^{(\psi)}$, i.e., $\Psi_m = H_m D_m^{(\psi)}$. Then $D_m^{(\psi)}$ is unitary and jointly diagonalizes $\{\Sigma_{n;m}\}_{n=1}^M$, which are defined in~\eqref{def:Sigma}. Specifically, we have
\begin{equation} \label{eqn:BDSigma}
	B_{n;m}^{(\psi)} = \left(D_m^{(\psi)}\right)^T \Sigma_{n;m} D_m^{(\psi)}, \quad m = 1, 2, \dots, M,
\end{equation}
where $B_{n;m}^{(\psi)} \equiv \Omega^{(\psi)}_{mn} \left(\Omega^{(\psi)}_{mn}\right)^T$, defined in~\eqref{def:B}, is diagonal with diagonal entries either 0 or 1.
\end{lemma}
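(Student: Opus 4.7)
The plan is to read off the desired identity by an algebraic manipulation, after first installing the consequence of the regular-sparse hypothesis that makes the computation go through. First I would invoke Proposition~\ref{prop:regularsparse1}: since $\CalP$ is regular-sparse and $\{\psi_k\}_{k=1}^K$ is a set of intrinsic sparse modes, the local dimension $d_m$ equals the local rank $K_m$ for every $m$. Consequently the matrix $D_m^{(\psi)}$ in $\Psi_m = H_m D_m^{(\psi)}$ is square of size $K_m \times K_m$, and combined with the identity $D_m^{(\psi)}(D_m^{(\psi)})^T = \mathbb{I}_{K_m}$ from \eqref{eqn:dmgeKm}, it is unitary: $D_m^{(\psi)} \in \OO(K_m)$.

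Next I would unwind the decomposition constraint block by block. Starting from the identity $A = \Psi_{ext}\,\Omega^{(\psi)}\,\Psi_{ext}^T$ in \eqref{eqn:constraint2} and using the block-diagonal structure of $\Psi_{ext}$, the $(m,n)$-block gives
\begin{equation*}
A_{mn} = \Psi_m\, \Omega^{(\psi)}_{mn}\, \Psi_n^T = H_m D_m^{(\psi)}\, \Omega^{(\psi)}_{mn}\, \big(D_n^{(\psi)}\big)^T H_n^T.
\end{equation*}
Applying the pseudo-inverses $H_m^\dagger$ on the left and $(H_n^\dagger)^T$ on the right, and using $H_m^\dagger H_m = \mathbb{I}_{K_m}$, I get
\begin{equation*}
\Lambda_{mn} = D_m^{(\psi)}\, \Omega^{(\psi)}_{mn}\, \big(D_n^{(\psi)}\big)^T.
\end{equation*}

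The joint-diagonalization identity is now immediate. Using the unitarity of $D_n^{(\psi)}$ to cancel $(D_n^{(\psi)})^T D_n^{(\psi)} = \mathbb{I}_{K_n}$ in the middle, I compute
\begin{equation*}
\Sigma_{n;m} = \Lambda_{mn}\Lambda_{mn}^T = D_m^{(\psi)}\, \Omega^{(\psi)}_{mn} \big(\Omega^{(\psi)}_{mn}\big)^T \big(D_m^{(\psi)}\big)^T = D_m^{(\psi)}\, B_{n;m}^{(\psi)}\, \big(D_m^{(\psi)}\big)^T,
\end{equation*}
which rearranges to the claimed formula $(D_m^{(\psi)})^T \Sigma_{n;m} D_m^{(\psi)} = B_{n;m}^{(\psi)}$. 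Finally, the fact that each $B_{n;m}^{(\psi)}$ is diagonal with entries in $\{0,1\}$ is already supplied by part~2 of Proposition~\ref{prop:localGL}, so no further work is needed.

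There is no genuine obstacle here; the only place where the hypothesis is used essentially is in promoting $D_m^{(\psi)}$ (and in particular $D_n^{(\psi)}$) from an isometry $\RR^{K_m}\to\RR^{d_m}$ to a unitary matrix, which is precisely what regular-sparseness buys. Without that, the cancellation $(D_n^{(\psi)})^T D_n^{(\psi)} = \mathbb{I}$ would fail on the right factor and $\Sigma_{n;m}$ would pick up an unwanted projector. Thus the careful point to highlight in the write-up is simply that regular-sparseness is exactly the condition under which the local rotations become square unitaries, after which the identity is a one-line computation.
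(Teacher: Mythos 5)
Your proof is correct and follows essentially the same route as the paper's: regular-sparseness forces $d_m=K_m$ so that $D_m^{(\psi)}$ is a square unitary, the decomposition constraint yields $\Lambda_{mn}=D_m^{(\psi)}\Omega^{(\psi)}_{mn}\bigl(D_n^{(\psi)}\bigr)^T$, and the middle cancellation plus Proposition~\ref{prop:localGL} gives \eqref{eqn:BDSigma}. The only cosmetic difference is that you extract $\Lambda_{mn}$ block-by-block via pseudo-inverses rather than from the global identity $\Lambda = D^{(\psi)}\Omega^{(\psi)}\bigl(D^{(\psi)}\bigr)^T$; the two are equivalent.
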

\begin{proof}
From item 3 in Proposition~\ref{prop:regularsparse1}, any set of intrinsic sparse modes must have local dimension $d_m = K_m$ on patch $P_m$. Therefore, the transformation $D_m^{(\psi)}$ from $H_m$ to $\Psi_m$ must be unitary. Combining $\Psi_m = H_m D_m^{(\psi)}$ with the decomposition constraint~\eqref{eqn:constraint2}, we get
\begin{equation*}
A = H_{ext} D^{(\psi)} \Omega^{(\psi)} \left(D^{(\psi)}\right)^T H_{ext},
\end{equation*}
where $D^{(\psi)} = \text{diag}\{D_1^{(\psi)}, D_2^{(\psi)}, \dots, D_M^{(\psi)}\}$. Recall that $A = H_{ext} \Lambda H_{ext}$ and that $H_{ext}$ has linearly independent columns, we obtain
\begin{equation}\label{eqn:defLLT}
\Lambda = D^{(\psi)} \Omega^{(\psi)} \left(D^{(\psi)}\right)^T,
\end{equation}
or block-wisely,
\begin{equation}\label{eqn:OmegaPsi}
\Lambda_{mn} = D_m^{(\psi)} \Omega^{(\psi)}_{mn} \left(D_n^{(\psi)}\right)^T.
\end{equation}
Since $D_n^{(\psi)}$ is unitary, Eqn.~\eqref{eqn:BDSigma} naturally follows the definitions of $B_{n;m}^{(\psi)}$ and $\Sigma_{n;m}$. By item 2 in Proposition~\ref{prop:localGL}, we know that $B_{n;m}^{(\psi)}$ is diagonal with diagonal entries either 0 or 1.
\end{proof}

Lemma~\ref{lem:necessaryD} guarantees that $D_m^{(\psi)}$ for an arbitrary set of intrinsic sparse modes is the minimizer of the joint diagonalization problem~\eqref{opt:jointDm}. In the other direction, the following lemma guarantees that any minimizer of the joint diagonalization problem~\eqref{opt:jointDm}, denoted as $D_m$, transforms local eigenvectors $H_m$ to $G_m$, which are the local pieces of certain intrinsic sparse modes. 

\begin{lemma}\label{lem:sufficientD}
Suppose that $\CalP$ is regular-sparse w.r.t. $A$ and that $D_m$ is a minimizer of the joint diagonalization problem~\eqref{opt:jointDm}. As in the ISMD, define $G_m = H_m D_m$. Then there exists a set of intrinsic sparse modes such that its local pieces on patch $P_m$ are equal to $G_m$.
\end{lemma}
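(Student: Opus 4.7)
The plan is first to show that the optimal value of \eqref{opt:jointDm} equals zero. By Proposition~\ref{prop:regularsparse1} the regular-sparse hypothesis supplies a set of intrinsic sparse modes $\{\psi_k\}_{k=1}^K$, and Lemma~\ref{lem:necessaryD} then exhibits a unitary $D_m^{(\psi)}$ that zeroes the objective, since each $B_{n;m}^{(\psi)}$ is diagonal. Any minimizer $D_m$ therefore satisfies $D_m^T \Sigma_{n;m} D_m$ diagonal for every $n$, so $D_m$ jointly diagonalizes the family $\{\Sigma_{n;m}\}_{n=1}^M$.

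Setting $R_m := (D_m^{(\psi)})^T D_m \in \OO(K_m)$ so that $G_m = H_m D_m = \Psi_m R_m$, the identity $R_m^T B_{n;m}^{(\psi)} R_m$ diagonal holds for every $n$. Because each $B_{n;m}^{(\psi)}$ is $0/1$-diagonal (Proposition~\ref{prop:localGL}) the family pairwise commutes, and partitioning $[K_m]$ into equivalence classes by the signature $i \mapsto (B_{n;m}^{(\psi)}(i,i))_{n=1}^M$ a standard simultaneous-diagonalization argument forces $R_m$ to be block-diagonal with respect to this partition. Proposition~\ref{prop:localGL} identifies two indices $i,j$ as equivalent precisely when $\psi_{m,i}$ and $\psi_{m,j}$ are local pieces of modes from a single unidentifiable group $\alpha$, i.e., modes sharing the same global support.

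With the block structure of $R_m$ in hand, I would follow the ISMD's patch-up step. From \eqref{eqn:Omega} and \eqref{eqn:constraint2} one computes $\Omega = D^T \Lambda D = R^T \Omega^{(\psi)} R = (R^T L^{(\psi)})(R^T L^{(\psi)})^T$, and the disjoint-support property of the columns of $L^{(\psi)}$ (Proposition~\ref{prop:localGL}) together with the block-diagonality of $R$ forces the columns of $\tilde L := R^T L^{(\psi)}$ to split into disjoint blocks indexed by the unidentifiable groups $\alpha$. Consequently $\Omega$ decomposes, after a reordering of indices, as a direct sum of rank-$|\alpha|$ blocks, and any pivoted Cholesky factor $PL$ of $\Omega$ inherits the same block structure. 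Uniqueness of rank-$K$ Cholesky factors up to right multiplication by an orthogonal matrix then yields $PL = \tilde L Q$ for some $Q \in \OO(K)$ that, by comparing block supports, must be block-diagonal with blocks $Q_\alpha \in \OO(|\alpha|)$.

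Finally, unpacking $G = G_{ext} PL$ via the identity $G_{ext} R^T = \Psi_{ext}$ produces the formula $g_k = \sum_{k' \in \alpha}(Q_\alpha)_{k',k}\,\psi_{k'}$ for each $k \in \alpha$. Because this is a rotation within a single unidentifiable group, the supports of the modes and the decomposition $A = \sum_k g_k g_k^T$ are preserved, so $\{g_k\}_{k=1}^K$ is again a set of intrinsic sparse modes; in particular $\sum_k s(g_k;\CalP) = \sum_\alpha |\alpha|\,|S_\alpha| = \sum_m K_m$, which is optimal by Proposition~\ref{prop:regularsparse1}. The identities $G_m = \Psi_m R_m$ and $g_k|_{_{P_m}} = \sum_{k' \in \alpha}(Q_\alpha)_{k',k}\,\psi_{k'}|_{_{P_m}}$ then match, via the block structures of $R_m$ and $Q$, the nonzero local pieces of $\{g_k\}$ on $P_m$ with the columns of $G_m$, completing the proof. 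The principal obstacle I anticipate is propagating the block-diagonal structure of $R$ through the pivoted Cholesky factorization to conclude that $Q$ itself inherits that block-diagonal form; once that transfer is secured the remainder is careful bookkeeping.
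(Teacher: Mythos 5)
Your argument up to the block structure of $R_m$ is essentially the paper's proof: the paper also observes that $D_m^{(\psi)}$ achieves objective value zero (Lemma~\ref{lem:necessaryD}), and then invokes the uniqueness theorem for joint diagonalizers (Theorem~\ref{thm:jointDunique}) to conclude $D_m\Pi_m = D_m^{(\psi)}V_m$ with $V_m$ block-diagonal over the unidentifiable classes --- which is exactly your statement $G_m=\Psi_m R_m$ with $R_m$ block-diagonal up to a column permutation (a permutation you initially drop but later acknowledge; note also that ``the family pairwise commutes'' is not the operative fact, since diagonal matrices always commute --- what you actually need is the characterization of \emph{all} joint diagonalizers, i.e.\ Theorem~\ref{thm:jointDunique}). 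At that point the paper finishes in one step: fix the patch $P_m$, rotate each global unidentifiable group $\Psi_\alpha$ by the corresponding block $R_{m,\alpha}$; this preserves the decomposition constraint and the supports, hence yields another set of intrinsic sparse modes, and its local pieces on $P_m$ are $\Psi_{m,\alpha}R_{m,\alpha}=G_{m,\alpha}$, as required. Different patches may require different such sets, which is all the lemma asks for.

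Your continuation through the pivoted Cholesky patch-up is where the argument breaks. The global set you construct is $G=G_{ext}PL=\Psi Q$ with $Q$ block-diagonal, so its local pieces on $P_m$ within group $\alpha$ are $\Psi_{m,\alpha}Q_\alpha$, whereas $G_{m,\alpha}=\Psi_{m,\alpha}R_{m,\alpha}$. There is no reason for $Q_\alpha$ to equal $R_{m,\alpha}$: the joint diagonalization on different patches can select \emph{different} rotations $R_{m,\alpha}\neq R_{m',\alpha}$ within the same unidentifiable group (the paper explicitly warns that the local pieces on different patches may correspond to different sets of intrinsic sparse modes), and a single $Q_\alpha$ cannot match them all. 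Concretely, $(PL)_{m,\alpha}=R_{m,\alpha}^TQ_\alpha$ is generally not a signed permutation, so the nonzero columns of $G|_{_{P_m}}$ are nontrivial mixtures of the columns of $G_m$, not the columns themselves. So your final ``matching'' claim fails, and what you have actually proved is (a version of) Theorem~\ref{thm:ISMD} rather than the lemma. The fix is to discard the patch-up step entirely and use the per-patch rotation argument above.
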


Before we prove this lemma, we examine the uniqueness property of intrinsic sparse modes. It is easy to see that permutations and sign flips of a set of intrinsic sparse modes are still a set of intrinsic sparse modes. Specifically, if $\{\psi_k\}_{k=1}^K$ is a set of intrinsic sparse modes and $\sigma : [K] \to [K]$ is a permutation, $\{\pm \psi_{\sigma(k)}\}_{k=1}^K$ is another set of intrinsic sparse modes. Another kind of non-uniqueness comes from the following concept -- identifiability. 

\begin{definition}[Identifiability]\label{def:identifiable}
 For two modes $g_1, g_2 \in \RR^N$, they are unidentifiable on partition $\CalP$ if they are supported on the same patches, i.e., $\{P \in \CalP : g_1|_{_{P}} \neq \vct{0}\} = \{P \in \CalP : g_2|_{_{P}} \neq \vct{0}\}$. Otherwise, they are identifiable. For a collection of modes $\{g_i\}_{i=1}^k \subset \RR^N$, they are unidentifiable iff any pair of them are unidentifiable. They are pair-wisely identifiable iff any pair of them are identifiable.
\end{definition}

It is important to point out that the identifiability above is based on the resolution of partition $\CalP$. Unidentifiable modes for partition $\CalP$ may have different supports and become identifiable on a refined partition. Unidentifiable intrinsic sparse modes lead to another kind of non-uniqueness for intrinsic sparse modes. For instance, when two intrinsic sparse modes $\psi_m$ and $\psi_n$ are unidentifiable, then any rotation of $[\psi_m, \psi_n]$ while keeping other intrinsic sparse modes unchanged is still a set of intrinsic sparse modes. 

Local pieces of intrinsic sparse modes inherit this kind of non-uniqueness. Suppose $\Psi_m\equiv [\psi_{m, 1}, \dots, \psi_{m, d_m}]$ are the local pieces of a set of intrinsic sparse modes $\Psi$ on patch $P_m$. First, if $\sigma : [d_m] \to [d_m]$ is a permutation, $\{\pm \psi_{m,\sigma(i)}\}_{i=1}^{d_m}$ are local pieces of another set of intrinsic sparse modes. Second, if $\psi_{m, i}$ and $\psi_{m, j}$ are the local pieces of two unidentifiable intrinsic sparse modes, then any rotation of $[\psi_{m, i}, \psi_{m, j}]$ while keeping other local pieces unchanged are local pieces of another set of intrinsic sparse modes. It turns out that this kind of non-uniqueness has a one-to-one correspondence with the non-uniqueness of joint diagonalizers for problem~\eqref{opt:jointDm}, which is characterized in Theorem~\ref{thm:jointDunique}. Keeping this correspondence in mind, the proof of Lemma~\ref{lem:sufficientD} is quite intuitive. 

\begin{proof}\textbf{[Proof of Lemma~\ref{lem:sufficientD}]}
Let $\Psi \equiv [\psi_1, \dots, \psi_K]$ be an arbitrary set of intrinsic sparse modes. We order columns in $\Psi$ such that unidentifiable modes are grouped together, denoted as $\Psi = [\Psi_{1}, \dots, \Psi_{Q}]$, where $Q$ is the number of unidentifiable groups. Accordingly on patch $P_m$, $\Psi_m = [\Psi_{m,1}, \dots, \Psi_{m,Q_m}]$ where $Q_m$ is the number of nonzero unidentifiable groups. Denote the number of columns in each group as $n_{m,i}$, i.e., there are $n_{m,i}$ modes in $\{\psi_k\}_{k=1}^K$ that are nonzero and unidentifiable on patch $P_m$.

Making use of item 2 in Proposition~\ref{prop:localGL}, one can check that $\psi_{m,i}$ and $\psi_{m,j}$ are unidentifiable if and only if $B_{n;m}^{(\psi)}(i,i) = B_{n;m}^{(\psi)}(j,j)$ for all $n \in [M]$. Since unidentifiable pieces in $\Psi_m$ are grouped together, the same diagonal entries in $\{B_{n;m}^{(\psi)}\}_{n=1}^M$ are grouped together as required in Theorem~\ref{thm:jointDunique}. Now we apply Theorem~\ref{thm:jointDunique} with $M_k$ replaced by $\Sigma_{n;m}$, $\Lambda_k$ replaced by $B_{n;m}^{(\psi)}$, $D$ replaced by $D_m^{(\psi)}$, the number of distinct eigenvalues $m$ replaced by $Q_m$, eigenvalue's multiplicity $q_i$ replaced by $n_{m,i}$ and the diagonalizer $V$ replaced by $D_m$. Therefore, there exists a permutation matrix $\Pi_m$ and a block diagonal matrix $V_m$ such that
\begin{equation}\label{eqn:sufficientD1}
D_m \Pi_m = D_m^{(\psi)} V_m \,, \qquad V_m = \text{diag}\{V_{m,1}, \dots, V_{m,Q_m}\}\,.
\end{equation}
Recall that $G_m = H_m D_m$ and $\Psi_m = H_m D_m^{(\psi)}$, we obtain that
\begin{equation}\label{eqn:sufficientD2}
	G_m \Pi_m = \Psi_m V_m = [\Psi_{m,1} V_{m,1}\,, \dots\,, \Psi_{m,Q_m} V_{m,Q_m}]\,.
\end{equation}
From Eqn.~\eqref{eqn:sufficientD2}, we can see that identifiable pieces are completely separated and the small rotation matrices, $V_{m,i}$, only mix unidentifiable pieces $\Psi_{m,i}$. $\Pi_m$ merely permutes the columns in $G_m$. From the non-uniqueness of local pieces of intrinsic sparse modes, we conclude that $G_m$ are local pieces of another set of intrinsic sparse modes.
\end{proof}

We point out that the local pieces $\{G_m\}_{m=1}^M$ constructed by the ISMD on different patches may correspond to different sets of intrinsic sparse modes. Therefore, the final ``patch-up" step should further modify and connect them to build a set of intrinsic sparse modes. Fortunately, the pivoted Cholesky decomposition elegantly solves this problem.

\subsection{Optimal sparse recovery and consistency of the ISMD}\label{sec:consistency}
As defined in the ISMD, $\Omega$ is the correlation matrix of $A$ with basis $G_{ext}$, see~\eqref{def:omega}. If $\Omega$ enjoys a block diagonal structure with each block corresponding to a single intrinsic sparse mode, just like $\Omega^{(\psi)} \equiv L^{(\psi)} \left( L^{(\psi)} \right)^T$, the pivoted Cholesky decomposition can be utilized to recover the intrinsic sparse modes. 

It is fairly easy to see that $\Omega$ indeed enjoys such a block diagonal structure when there is one set of intrinsic sparse modes that are pair-wisely identifiable. Denoting this identifiable set as $\{\psi_k\}_{k=1}^K$ (only its existence is needed), by Eqn.~\eqref{eqn:sufficientD1}, we know that on patch $P_m$ there is a permutation matrix $\Pi_m$ and a diagonal matrix $V_m$ with diagonal entries either 1 or -1 such that $D_m \Pi_m = D_m^{(\psi)} V_m$. Recall that $\Lambda = D \Omega D^T = D^{(\psi)} \Omega^{(\psi)} \left(D^{(\psi)}\right)^T$, see \eqref{eqn:Omega} and \eqref{eqn:OmegaPsi}, we have 
\begin{equation}\label{eqn:OmegaRel}
\Omega = D^T D^{(\psi)} \Omega^{(\psi)} \left(D^{(\psi)}\right)^TD = \Pi V^T \Omega^{(\psi)} V \Pi^T,
\end{equation}
in which $V = \diag\{V_1, \dots, V_m\}$ is diagonal with diagonal entries either 1 or -1 and $\Pi = \diag\{\Pi_1, \dots, \Pi_m\}$ is a permutation matrix. Since the action of $\Pi V^T$ does not change the block diagonal structure of $\Omega^{(\psi)}$, $\Omega$ still has such a structure and the pivoted Cholesky decomposition can be readily applied. In fact, the action of $\Pi V^T$ exactly corresponds to the column permutation and sign flips of intrinsic sparse modes, which is the only kind of non-uniqueness of problem~\eqref{opt:minsparseness} when the intrinsic sparse modes are pair-wisely identifiable. For the general case when there are unidentifiable intrinsic sparse modes, $\Omega$ still has the block diagonal structure with each block corresponding to a group of unidentifiable modes, resulting in the following theorem.
\begin{theorem}\label{thm:ISMD}
Suppose the domain partition $\CalP$ is regular-sparse with respect to $A$. Let $A = G G^T$ be the decomposition given by the ISMD~\eqref{eqn:ISMD} and $\Psi \equiv [\psi_1, \dots, \psi_K]$ be an arbitrary set of intrinsic sparse modes. Let columns in $\Psi$ be ordered such that unidentifiable modes are grouped together, denoted as $\Psi = [\Psi_{1}, \dots, \Psi_{Q}]$, where $Q$ is the number of unidentifiable groups and $n_q$ is the number of modes in $\Psi_q$. Then there exists $Q$ rotation matrices $U_q \in \RR^{n_q\times n_q}$ ($1\le q \le Q$) such that
\begin{equation}\label{eqn:uniqueness}
	G = [\Psi_{1} U_1, \dots, \Psi_{Q} U_Q],
\end{equation} 
with reordering of columns in $G$ if necessary. It immediately follows that
\begin{itemize}
\item the ISMD generates one set of intrinsic sparse modes. 
\item the intrinsic sparse modes are unique up to permutations and rotations within unidentifiable modes.
\end{itemize}
\end{theorem}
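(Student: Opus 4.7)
The plan is to track how the block structure of $\Omega^{(\psi)}$ gets preserved through each step of the ISMD and is ultimately recovered by the pivoted Cholesky factorization of $\Omega$. I would begin by applying the proof of Lemma~\ref{lem:sufficientD} patch by patch to the joint diagonalizer $D_m$ actually computed by the ISMD: on each patch $P_m$ it supplies a permutation $\Pi_m$ and a rotation $V_m$, both of which are block diagonal with blocks indexed by the unidentifiable groups touching $P_m$ (the group-structure of $\Pi_m$ follows from Theorem~\ref{thm:jointDunique}), such that $G_m\Pi_m = \Psi_m V_m$. Stacking these patch-wise relations yields $G_{ext} = \Psi_{ext}M$ with $M = V\Pi^T$ block diagonal in the \emph{(patch, unidentifiable group)} sense.

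Next I would establish that $\Omega$ is block diagonal with $Q$ blocks indexed by the unidentifiable groups, having ranks $n_1,\dots,n_Q$. Proposition~\ref{prop:localGL} says the columns of $L^{(\psi)}$ have pairwise disjoint supports, so $\Omega^{(\psi)}=L^{(\psi)}(L^{(\psi)})^T$ decomposes as a sum of $K$ rank-one pieces with pairwise disjoint row/column supports; grouping modes by unidentifiable class (two modes in different classes cannot share a slot on any patch, since a slot belongs to a unique local piece), the supports of different classes remain disjoint, giving $\Omega^{(\psi)}$ a coarser group-wise block-diagonal form of rank $n_q$ per block. Because $M$'s (patch, group) block structure refines this coarser partition, conjugation by $M$ preserves group-wise block-diagonality, so $\Omega = M^T\Omega^{(\psi)}M$ inherits the $Q$-block structure.

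The main obstacle is the structural claim that pivoted Cholesky of a block-diagonal PSD matrix produces a factor whose columns are each supported in a single block. I would prove this by induction on the elimination step, using that the Schur complement stays block diagonal throughout (off-diagonal block zeros are never filled in because each pivot row/column lies entirely within one block). This yields exactly $n_q$ columns of $PL$ supported on the rows of the $q$-th group block. Now both $PL$ and $M^T L^{(\psi)}$ are $K$-column, full-rank square-root factors of the rank-$K$ matrix $\Omega$, so there exists $U\in\OO(K)$ with $MPL = L^{(\psi)}U$. Matching the block supports on the two sides (after possibly permuting the columns of $PL$ to group them by block) forces $U$ itself to be block diagonal with orthogonal blocks $U_q\in\OO(n_q)$. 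Since $G = G_{ext}PL = \Psi_{ext}L^{(\psi)}U = \Psi U$, reordering the columns of $G$ accordingly gives $G = [\Psi_1 U_1,\dots,\Psi_Q U_Q]$, and the two corollaries (that the ISMD output is itself a set of intrinsic sparse modes, and uniqueness up to permutation and within-group rotation) follow immediately from this identification.
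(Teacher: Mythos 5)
Your proposal is correct and follows essentially the same route as the paper's proof: invoke Lemma~\ref{lem:sufficientD} to write $G_{ext}=\Psi_{ext}M$ with $M$ block diagonal by unidentifiable group, deduce the group-wise block-diagonal structure of $\Omega$ from that of $\Omega^{(\psi)}$, and let the pivoted Cholesky factorization respect that structure. The only difference is that you supply two details the paper leaves implicit --- the Schur-complement induction showing that pivoted Cholesky of a block-diagonal PSD matrix yields columns supported in single blocks, and the explicit orthogonal matrix $U$ relating the two square-root factors $PL$ and $M^T L^{(\psi)}$ of $\Omega$ --- which strengthens rather than changes the argument.
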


\begin{proof} By Eqn.~\eqref{eqn:sufficientD1}, Eqn.~\eqref{eqn:OmegaRel} still holds true with block diagonal $V_m$ for $m\in[M]$. Without loss of generality, we assume that $\Pi = I$ since permutation does not change the block diagonal structure that we desire. Then from Eqn.~\eqref{eqn:OmegaRel} we have
\begin{equation}\label{eqn:OmegaRel2}
\Omega = V^T \Omega^{(\psi)} V = V^T L^{(\psi)} \left( L^{(\psi)} \right)^T V.
\end{equation}
In terms of block-wise formulation, we get
\begin{equation}\label{eqn:OmegaRel3}
\Omega_{mn} = V_m^T \Omega_{mn}^{(\psi)} V_n = V_m^T L_m^{(\psi)} \left( L_n^{(\psi)} \right)^T V_n.
\end{equation}
Correspondingly, by \eqref{eqn:sufficientD2} the local pieces satisfy
\begin{equation*}
	G_m = [G_{m,1}\,, \dots\,, G_{m, Q_m} ]=  [\Psi_{m,1} V_{m,1}\,, \dots\,, \Psi_{m,Q_m} V_{m,Q_m}]\,.
\end{equation*}

Now, we prove that $\Omega$ has the block diagonal structure in which each block corresponds to a group of unidentifiable modes. Specifically, $G_{m,i}= \Psi_{m,i} V_{m,i}$ and $G_{n,j}= \Psi_{n,j} V_{n,j}$ are two identifiable groups, i.e., $\Psi_{m,i}$ and $\Psi_{n,j}$ are from two identifiable groups, and we want to prove that the corresponding block in $\Omega$, denoted as $\Omega_{m,i; n,j}$, is zero. From Eqn.~\eqref{eqn:OmegaRel3}, one gets $\Omega_{m,i; n,j} = V_{m,i}^T L_{m,i}^{(\psi)} \left(L_{n,j}^{(\psi)}\right)^T V_{n,j}$, where $L_{m,i}^{(\psi)}$ are the rows in $L_m^{(\psi)}$ corresponding to $\Psi_{m,i}$. $L_{n,j}^{(\psi)}$ is defined similarly. Due to identifiability between $\Psi_{m,i}$ and $\Psi_{n,j}$, we know $L_{m,i}^{(\psi)} \left(L_{n,j}^{(\psi)}\right)^T = 0$ and thus we obtain the block diagonal structure of $\Omega$.

In \eqref{eqn:OmegaPivotChol}, the ISMD performs the pivoted Cholesky decomposition $\Omega = P L L^T P^T$ and generates sparse modes $G = G_{ext} P L$. Due to the block diagonal structure in $\Omega$, every column in $P L$ can only have nonzero entries on local pieces that are not identifiable. Therefore, columns in $G$ have identifiable intrinsic sparse modes completely separated and unidentifiable intrinsic sparse modes rotated (including sign flip) by certain unitary matrices. Therefore, $G$ is a set of intrinsic sparse modes.

Due to the arbitrary choice of $\Psi$, we know that the intrinsic sparse modes are unique to permutations and rotations within unidentifiable modes.
\end{proof}

\begin{remark}\label{rem:pivotCholRem}
From the proof above, we can see that it is the block diagonal structure of $\Omega$ that leads to the recovery of intrinsic sparse modes. The pivoted Cholesky decomposition is one way to explore this structure. In fact, the pivoted Cholesky decomposition can be replaced by any other matrix decomposition that preserves this block diagonal structure, for instance, the eigen decomposition if there is no degeneracy. 
\end{remark}

Despite the fact that the intrinsic sparse modes depend on the partition $\CalP$, the following theorem guarantees that the solutions to problem \eqref{opt:minsparseness} give consistent results as long as the partition is regular-sparse. 
\begin{theorem}\label{thm:ISMDconsistency}
Suppose that $\CalP_c$ is a partition,  $\CalP_f$ is a refinement of $\CalP_c$ and that $\CalP_f$ is regular-sparse. Suppose $\{g^{(c)}_{k}\}_{k=1}^K$ and $\{g^{(f)}_{k}\}_{k=1}^K$ (with reordering if necessary) are the intrinsic sparse modes produced by the ISMD on $\CalP_c$ and $\CalP_f$, respectively. Then for every $k \in \{1,2, \dots, K\}$, 
in the coarse partition $\CalP_c$ $g^{(c)}_{k}$ and $g^{(f)}_{k}$ are supported on the same patches, while in the fine partition $\CalP_f$ the support patches of $g^{(f)}_{k}$ are contained in the support patches of $g^{(c)}_{k}$, i.e.,
\begin{equation*}
\begin{split}
	\{P \in \CalP_c : g^{(f)}_{k}|_{_{P}} \neq \vct{0}\} &= \{P \in \CalP_c : g^{(c)}_{k}|_{_{P}} \neq \vct{0}\}, \\
	\{P \in \CalP_f : g^{(f)}_{k}|_{_{P}} \neq \vct{0}\} &\subset \{P \in \CalP_f : g^{(c)}_{k}|_{_{P}} \neq \vct{0}\}.
\end{split}
\end{equation*}
Moreover, if $g^{(c)}_{k}$ is identifiable on the coarse patch $\CalP_c$, it remains unchanged when the ISMD is performed on the refined partition $\CalP_f$, i.e., $g^{(f)}_{k} = \pm g^{(c)}_{k}$.
\end{theorem}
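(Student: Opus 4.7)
The plan is to reduce the consistency claim to the uniqueness statement in Theorem~\ref{thm:ISMD} applied on the coarse partition $\CalP_c$. Since $\CalP_f$ is regular-sparse with respect to $A$, I first verify that $\CalP_c$ inherits this property: any linear dependency among the nonzero restrictions $\{g^{(f)}_k|_{_{P^c_m}}\}$ can be further restricted to each fine patch $P^f_i \subset P^c_m$, where the corresponding fine restrictions are linearly independent by assumption, forcing the coefficients to vanish. Consequently $\CalP_c$ is regular-sparse, and on each coarse patch $P^c_m$ the number of $g^{(f)}_k$'s that do not vanish on $P^c_m$ equals $\mathrm{rank}(A_{mm}) = K^{(c)}_m$, the coarse local rank.

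With this in hand, the total coarse sparseness of $\{g^{(f)}_k\}$ equals $\sum_m K^{(c)}_m$, which by Proposition~\ref{prop:regularsparse1} is the minimum value of~\eqref{opt:minsparseness} on $\CalP_c$. Thus $\{g^{(f)}_k\}$ is \emph{also} a set of intrinsic sparse modes on $\CalP_c$. Applying Theorem~\ref{thm:ISMD} on $\CalP_c$ to the two intrinsic sparse decompositions $\{g^{(c)}_k\}$ and $\{g^{(f)}_k\}$ then yields, after a suitable column reordering, unitary matrices $U^q \in \RR^{n_q \times n_q}$ (one for each coarse-unidentifiable group $q$) such that $g^{(f)}_k = \sum_{j \in q} U^q_{jk}\, g^{(c)}_j$ for each $k \in q$. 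Since modes in a coarse-unidentifiable group share the same coarse support, the first assertion of the theorem follows immediately; so does the last assertion, because if $g^{(c)}_k$ is identifiable on $\CalP_c$ then $n_q = 1$, $U^q = \pm 1$, and $g^{(f)}_k = \pm g^{(c)}_k$.

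The main obstacle is establishing the fine-patch containment. Within each coarse-unidentifiable group $q$ I further partition the indices into \emph{fine}-unidentifiable sub-groups $q^{(f)}_1,\ldots,q^{(f)}_r$ of sizes $n^{(f)}_i$, grouping together those $g^{(f)}_k$'s that share a common fine support $T_i$. Using the regular-sparse property on $\CalP_f$, the nonzero restrictions $\{g^{(f)}_k|_{_{P^f_l}}\}$ on any fine patch $P^f_l$ are linearly independent, so inspecting the identity $g^{(c)}_j|_{_{P^f_l}} = \sum_k U^q_{jk}\, g^{(f)}_k|_{_{P^f_l}}$ shows that $g^{(c)}_j|_{_{P^f_l}} \neq \vct{0}$ iff $U^q_{j,\, q^{(f)}_i}$ has a nonzero entry for some $i$ with $P^f_l \in T_i$; in particular $\{P \in \CalP_f : g^{(c)}_j|_{_P} \neq \vct{0}\} = \bigcup_{i \in N_j} T_i$, where $N_j := \{ i : U^q_{j,\, q^{(f)}_i} \text{ has a nonzero entry}\}$. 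The desired containment then reduces to producing a bijection $\sigma : q \to q$ with $i(k) \in N_{\sigma(k)}$ for each $k$ (writing $i(k)$ for the fine sub-group containing $k$). I would establish this via Hall's matching theorem: for any $S \subset q$ with $I(S) := \{i(k) : k \in S\}$, its neighborhood $\{j : N_j \cap I(S) \neq \emptyset\}$ is precisely the set of nonzero rows of the column block $U^q_{:,\, \sqcup_{i \in I(S)} q^{(f)}_i}$, which by the unitarity of $U^q$ has full column rank $\sum_{i \in I(S)} n^{(f)}_i \geq |S|$. This rank-based verification of Hall's condition is the delicate step; the rest is bookkeeping layered on top of Theorem~\ref{thm:ISMD}.
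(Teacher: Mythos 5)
Your proposal is correct, and its skeleton coincides with the paper's: show that $\CalP_c$ inherits regular-sparseness from $\CalP_f$ (your restriction-to-fine-patches argument is exactly the paper's Lemma~\ref{lem:refinepartition}), observe that $\{g^{(f)}_k\}$ achieves the minimal coarse sparseness $\sum_m K_m^{(c)}$ and is therefore also a set of intrinsic sparse modes on $\CalP_c$, and then invoke the uniqueness statement of Theorem~\ref{thm:ISMD} on $\CalP_c$ to relate the two families by block rotations $U_q$ within coarse-unidentifiable groups. The difference is in how much is actually proved: the paper dismisses everything after this reduction as ``a simple corollary,'' whereas the fine-patch containment $\{P \in \CalP_f : g^{(f)}_{k}|_{_{P}} \neq \vct{0}\} \subset \{P \in \CalP_f : g^{(c)}_{k}|_{_{P}} \neq \vct{0}\}$ is not automatic from the block-rotation structure alone --- a naive union bound only gives the reverse inclusion for the supports of the $g^{(c)}_j$'s. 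Your Hall's-theorem argument fills precisely this gap: using fine regular-sparseness to read off the fine support of $g^{(c)}_j$ as $\bigcup_{i\in N_j} T_i$, and then verifying Hall's condition through the full column rank of the orthonormal column blocks of $U_q$, you produce the within-group bijection that makes the containment hold mode-by-mode. This is a genuine addition rather than mere bookkeeping, and it is the right way to make the paper's claim rigorous; the remaining assertions (equality of coarse supports, and $g^{(f)}_k = \pm g^{(c)}_k$ for coarsely identifiable modes via $n_q = 1$) follow exactly as you say.
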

\begin{proof}
Given the finer partition $\CalP_f$ is regular-sparse, it is easy to prove the coarser partition $\CalP_c$ is also regular-sparse.\footnote{We provide the proof in supplementary materials, see Lemma~\ref{lem:refinepartition}.} Notice that if two modes are identifiable on the coarse partition $\CalP_c$, they must be identifiable on the fine partition $\CalP_f$. However, the other direction is not true, i.e., unidentifiable modes may become identifiable if the partition is refined. Based on this observation, Theorem~\ref{thm:ISMDconsistency} is a simple corollary of Theorem~\ref{thm:ISMD}. 
\end{proof}

Finally, we provide a necessary condition for a partition to be regular-sparse as follows.
\begin{proposition}\label{prop:necessaryRS}
If $\CalP$ is regular-sparse w.r.t. $A$, all eigenvalues of $\Lambda$ are integers. Here, $\Lambda$ is computed in the ISMD by Eqn.~\eqref{def:Lambda}.
\end{proposition}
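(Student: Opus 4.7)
The plan is to reduce the eigenvalue problem for $\Lambda$ to that of the much more transparent matrix $\Omega^{(\psi)}=L^{(\psi)}(L^{(\psi)})^T$ attached to a set of intrinsic sparse modes, and then read off the spectrum from the combinatorial structure of $L^{(\psi)}$.

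First, I would invoke the regular-sparse hypothesis: by Proposition~\ref{prop:regularsparse1}, there exists a set of intrinsic sparse modes $\Psi=[\psi_1,\dots,\psi_K]$ whose local dimension on each patch $P_m$ equals the local rank $K_m$. By Lemma~\ref{lem:necessaryD} the associated local transformations $D_m^{(\psi)}$ defined by $\Psi_m=H_m D_m^{(\psi)}$ are then genuinely unitary (not just having orthonormal rows), so the global block-diagonal matrix $D^{(\psi)}=\diag\{D_1^{(\psi)},\dots,D_M^{(\psi)}\}$ lies in $\OO(K_{(t)})$. Equation~\eqref{eqn:defLLT} therefore gives the similarity
\[
\Lambda \;=\; D^{(\psi)}\,\Omega^{(\psi)}\,(D^{(\psi)})^T,
\]
from which $\Lambda$ and $\Omega^{(\psi)}=L^{(\psi)}(L^{(\psi)})^T$ have identical spectra.

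Next I would compute the eigenvalues of $\Omega^{(\psi)}$ directly. Writing $L^{(\psi)}=[l_1^{(\psi)},\dots,l_K^{(\psi)}]$, we have $\Omega^{(\psi)}=\sum_{k=1}^K l_k^{(\psi)}(l_k^{(\psi)})^T$. By Proposition~\ref{prop:localGL}(1), each column $l_k^{(\psi)}$ has entries in $\{0,1\}$ (so $\|l_k^{(\psi)}\|_2^2=\|l_k^{(\psi)}\|_1=s_k$, an integer), and distinct columns have disjoint supports. Disjoint supports imply $(l_k^{(\psi)})^T l_{k'}^{(\psi)}=0$ for $k\neq k'$, so the rank-one summands are mutually orthogonal. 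Consequently the nonzero eigenvalues of $\Omega^{(\psi)}$ are exactly $\{s_1,s_2,\dots,s_K\}$, each a nonnegative integer, and the remaining eigenvalues are $0$. Combining this with the similarity above, every eigenvalue of $\Lambda$ is a nonnegative integer (in particular, an integer).

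The only subtle point—and therefore what I would flag as the main thing to get right—is the justification that the rank-one pieces $l_k^{(\psi)}(l_k^{(\psi)})^T$ contribute independent eigenvalues rather than interfering. This is purely the disjoint-support statement of Proposition~\ref{prop:localGL}(1); once that is in hand, the spectrum of $\Omega^{(\psi)}$ is read off with no further work, and the integrality of the spectrum of $\Lambda$ follows from unitary similarity. No perturbation or quantitative estimate is needed, so the argument is essentially an application of the earlier structural lemmas.
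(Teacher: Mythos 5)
Your proposal is correct and follows essentially the same route as the paper's proof: both pass from $\Lambda$ to $\Omega^{(\psi)}=L^{(\psi)}(L^{(\psi)})^T$ via the unitary similarity $\Lambda=D^{(\psi)}\Omega^{(\psi)}(D^{(\psi)})^T$ guaranteed by regular-sparsity, and then read off the spectrum $\{s_k\}$ from the disjoint-support, $\{0,1\}$-entry structure of the columns $l_k^{(\psi)}$. Your explicit remark that disjoint supports make the rank-one summands mutually orthogonal is exactly the content of the paper's observation that $\sum_k l_k^{(\psi)}(l_k^{(\psi)})^T$ is already the eigen decomposition of $\Omega^{(\psi)}$.
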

\begin{proof}
Let $\{\psi_k\}_{k=1}^K$ be a set of intrinsic sparse modes. Since $\CalP$ is regular-sparse, $D^{(\psi)}$ in Eqn.~\eqref{eqn:defLLT} is unitary. Therefore, $\Lambda$ and $\Omega^{(\psi)} \equiv L^{(\psi)} \left( L^{(\psi)} \right)^T$ share the same eigenvalues. Due to the block-diagonal structure of $\Omega^{(\psi)}$, one can see that 
\begin{equation*}
\Omega^{(\psi)} \equiv L^{(\psi)} \left( L^{(\psi)} \right)^T = \sum_{k=1}^K l_k^{(\psi)} \left( l_k^{(\psi)} \right)^T
\end{equation*}
is in fact the eigen decomposition of $\Omega^{(\psi)}$. The eigenvalue corresponding to the eigenvector $l_k^{(\psi)}$ is $\|l_k^{(\psi)}\|_2^2$, which is also equal to $\|l_k^{(\psi)}\|_1$ because $L^{(\psi)}$ only elements 0 or 1. From item 1 in Proposition~\ref{prop:localGL}, $\|l_k^{(\psi)}\|_1 = s_k$, which is the patch-wise sparseness of $\psi_k$.
\end{proof}

Combining Theorem~\ref{thm:ISMD}, Theorem~\ref{thm:ISMDconsistency} and Proposition~\ref{prop:necessaryRS}, we can develop a hierarchical process that gradually finds the finest regular-sparse partition and thus obtains the sparsest decomposition using the ISMD. This sparsest decomposition can be viewed as another definition of intrinsic sparse modes, which are independent of partitions. In our numerical examples, our partitions are all uniform but with different patch sizes. We see that even when the partition is not regular-sparse, the ISMD still produces a nearly optimal sparse decomposition.

\section{Perturbation analysis and two modifications}\label{sec:perturbmodify}
In real applications, data are often contaminated by noises. For example, when measuring the covariance function of a random field, sample noise is inevitable if a Monte Carlo type sampling method is utilized. A basic requirement for a numerical algorithm is its stability with respect to small noises. In Section~\ref{sec:perturbation}, under several assumptions, we are able to prove that the ISMD is stable with respect to small perturbations in the input $A$. In Section~\ref{sec:stability}, we provide two modified ISMD algorithms that effectively handle noises in different situations.  

\subsection{Perturbation analysis of the ISMD}\label{sec:perturbation}
We consider the additive perturbation here, i.e., $\hat{A}$ is an approximately low rank symmetric PSD matrix that satisfies
\begin{equation}\label{eqn:noisyCov}
	\hat{A} = A + \epsilon \tilde{A}, \qquad \|\tilde{A}\|_2 \le 1.
\end{equation}
Here, $A$ is the noiseless rank-$K$ symmetric PSD matrix and $\tilde{A}$ is the symmetric additive perturbation and $\epsilon > 0$ quantifies the noise level. We divide $\tilde{A}$ into blocks that are conformal with blocks of $A$ in \eqref{def:Ablock} and thus $\hat{A}_{mn} = A_{mn} + \epsilon \tilde{A}_{mn}$. In this case, we need to apply the truncated local eigen decomposition~\eqref{eqn:localeigtruncate} to capture the correct local rank $K_m$. Suppose the eigen decomposition of $\hat{A}_{mm}$ is 
\begin{equation*}
	\hat{A}_{mm} = \sum_{i=1}^{K_m} \hat{\gamma}_{m,i} \hat{h}_{n,i} \hat{h}_{n,i}^T + \sum_{i > K_m} \hat{\gamma}_{m,i} \hat{h}_{n,i} \hat{h}_{n,i}^T.
\end{equation*}
In this subsection, we assume that the noise level is very small with  $\epsilon \ll 1$ such that there is an energy gap between $\hat{\gamma}_{m,K_m}$ and $\hat{\gamma}_{m,K_m+1}$. Therefore, the truncation~\eqref{eqn:localeigtruncate} captures the correct local rank $K_m$, i.e.,
\begin{equation}\label{eqn:truncatedAmm}
	\hat{A}_{mm} \approx \hat{A}_{mm}^{(t)} \equiv \sum_{i=1}^{K_m} \hat{\gamma}_{m,i} \hat{h}_{n,i} \hat{h}_{n,i}^T \equiv \hat{H}_m \hat{H}_m^T.
\end{equation}
In the rest of the ISMD, the perturbed local eigenvectors $\hat{H}_m$ is used as $H_m$ in the noiseless case. We expect that our ISMD is stable with respect to this small perturbation and generates slightly perturbed intrinsic sparse modes of $A$.

To carry out this perturbation analysis, we will restrict ourselves to the case when intrinsic sparse modes of $A$ are pair-wisely identifiable and thus it is possible to compare the error between the noisy output $\hat{g}_k$ with $A$'s intrinsic sparse mode $g_k$. When there are unidentifiable intrinsic sparse modes of $A$, it only makes sense to consider the perturbation of the subspace spanned by those unidentifiable modes and we will not consider this case in this paper. The following lemma is a preliminary result on the perturbation analysis of local pieces $G_m$.
\begin{lemma} \label{lem:Gmperturb}
Suppose that partition $\CalP$ is regular-sparse with respect to $A$ and all intrinsic modes are identifiable with each other. Furthermore, we assume that for all $m \in [M]$ there exists $E_m^{(eig)}$ such that 
\begin{equation}\label{eqn:Annperturb}
	\hat{A}_{mm}^{(t)} = (I + \epsilon E_m^{(eig)}) A_{mm} \left(I + \epsilon (E_m^{(eig)})^T \right) \quad \text{and} \quad \|E_m^{(eig)}\|_2 \le C_{eig}.
\end{equation}
Here $C_{eig}$ is a constant depending on $A$ but not on $\epsilon$ or $\tilde{A}$. Then there exists $E_m^{(jd)} \in \RR^{K_m \times K_m}$ such that
\begin{equation}\label{eqn:Gmperturb}
	\hat{G}_m = (I + \epsilon E_m^{(eig)}) G_m (I + \epsilon E_m^{(jd)} + \Or(\epsilon^2)) J_m \quad \text{and} \quad \|E_m^{(jd)}\|_F \le C_{jd},
\end{equation} 
where $G_m$ and $\hat{G}_m$ are local pieces constructed by the ISMD with input $A$ and $\hat{A}$ respectively, $J_m$ is the product of a permutation matrix with a diagonal matrix having only $\pm 1$ on its diagonal, and $C_{jd}$ is a constant depending on $A$ but not on $\epsilon$ or $\tilde{A}$. Here, $\|\bullet\|_2$ and $\|\bullet\|_F$ are matrix spectral norm and Frobenius norm, respectively.
\end{lemma}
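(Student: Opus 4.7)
\emph{Proof proposal.} The plan is to propagate the perturbation bound through the three stages of the ISMD pipeline on a single patch: the truncated local eigendecomposition, the formation of the off-diagonal block $\hat\Lambda_{mn}$, and the joint diagonalization. The hypothesis~\eqref{eqn:Annperturb} already provides a concrete rank-$K_m$ square root $B_m := (I+\epsilon E_m^{(eig)})H_m$ of $\hat A_{mm}^{(t)}$. The actual $\hat H_m$ returned by the truncated eigen step is another such square root of the same rank, so for $\epsilon$ small enough both have full column rank $K_m$ and must be related by a right orthogonal factor $U_m\in\OO(K_m)$; that is, $\hat H_m = B_m U_m$. Because the joint diagonalization problem~\eqref{opt:jointDm} is covariant under replacing $\hat H_m$ by $\hat H_m U_m^{-1}$ (the $\hat\Sigma_{n;m}$ are conjugated by $U_m$ and a minimizer becomes $U_m\hat D_m$), the assembled local piece $\hat G_m = \hat H_m \hat D_m$ is invariant. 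This invariance lets me work with $B_m$ in place of $\hat H_m$ throughout, which is what makes the $(I+\epsilon E_m^{(eig)})$ factor appear on the left of~\eqref{eqn:Gmperturb} exactly as stated.

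Next I expand $\hat\Lambda_{mn} = B_m^\dagger \hat A_{mn} (B_n^\dagger)^T$ using the first-order Moore--Penrose pseudoinverse expansion $B_m^\dagger = H_m^\dagger(I-\epsilon E_m^{(eig)}) + \Or(\epsilon^2)$, which is legitimate because $H_m$ has full column rank. Multiplying out against $\hat A_{mn}=A_{mn}+\epsilon\tilde A_{mn}$ gives $\hat\Lambda_{mn} = \Lambda_{mn}+\epsilon F_{mn}+\Or(\epsilon^2)$ and therefore $\hat\Sigma_{n;m} = \Sigma_{n;m}+\epsilon\tilde\Sigma_{n;m}+\Or(\epsilon^2)$, with the Frobenius norm of $\tilde\Sigma_{n;m}$ controlled by $C_{eig}$, $\|\tilde A\|_2$, and the conditioning of $\{H_m\}$. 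Hence the noisy joint diagonalization is an $\Or(\epsilon)$ perturbation of an exactly jointly diagonalizable family.

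The heart of the argument is the perturbation of the joint diagonalizer itself. Under the pair-wise identifiability assumption, item~2 of Proposition~\ref{prop:localGL} tells me that the noiseless matrices $B_{n;m}^{(\psi)}$ carry pair-wisely distinct $0/1$ diagonal signatures across $n$; Theorem~\ref{thm:jointDunique} then asserts that the exact joint diagonalizer $D_m$ is unique up to a signed permutation $J_m$. Because the Jacobi-type objective in~\eqref{opt:jointDm} is smooth on the compact manifold $\OO(K_m)$ and the distinct signatures give a strict gap $\min_{i\ne j}\sum_n(B_{n;m}^{(\psi)}(i,i)-B_{n;m}^{(\psi)}(j,j))^2>0$, the Hessian of the objective is positive definite transversal to the $J_m$-orbit. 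A standard implicit-function-theorem argument at the orthogonal critical point then yields $\hat D_m = D_m J_m^{-1}(I+\epsilon E_m^{(jd)})+\Or(\epsilon^2)$, with $\|E_m^{(jd)}\|_F$ bounded by an inverse-gap constant times $\sum_n\|\tilde\Sigma_{n;m}\|_F$; this quantity depends only on $A$, giving the desired $C_{jd}$.

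Finally I assemble: $\hat G_m = B_m(U_m\hat D_m) = (I+\epsilon E_m^{(eig)})H_m D_m J_m^{-1}(I+\epsilon E_m^{(jd)})+\Or(\epsilon^2)$, and using $G_m = H_m D_m$ together with $J_m^{-1}(I+\epsilon E_m^{(jd)}) = (I+\epsilon\, J_m^{-1}E_m^{(jd)}J_m)J_m^{-1}$ relocates the signed permutation to the right as required; the Frobenius norm is preserved under conjugation by the orthogonal $J_m$, so the bound on $\|E_m^{(jd)}\|_F$ survives. The main obstacle is the quantitative perturbation analysis of joint diagonalization: the qualitative uniqueness from Theorem~\ref{thm:jointDunique} is not enough, and I expect to spend most of the effort computing the Hessian of the Jacobi objective on $\OO(K_m)$ at $D_m$, translating the signature gap into an explicit spectral lower bound on that Hessian, and absorbing the $\Or(\epsilon^2)$ remainder in a way that keeps all constants dependent only on $A$.
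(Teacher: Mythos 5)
Your proposal is correct and follows essentially the same route as the paper's proof in Appendix~\ref{appendix:lemproof}: reduce to a perturbed joint diagonalization via an orthogonal factor (the paper writes $\hat{G}_m = F_m U_m$ with $F_m=(I+\epsilon E_m^{(eig)})G_m$, which is your $B_m U_m$ up to the harmless conjugation by $D_m$), propagate the noise through the pseudo-inverses to show the matrices being diagonalized are $\Or(\epsilon)$-perturbations of an exactly jointly diagonalizable family, and then apply first-order sensitivity of the joint diagonalizer under the unicity assumption supplied by pairwise identifiability. The one place you diverge is that the step you expect to spend most of your effort on --- the Hessian/implicit-function analysis of the Jacobi objective on $\OO(K_m)$ --- is exactly the content of Proposition~\ref{prop:jointDsensitivity}, quoted from~\cite{cardoso_perturbation_1994} in Appendix~\ref{appendix:jointDiagonalization}, which the paper simply cites; its explicit formula for $E_m^{(jd)}$, with denominators bounded below by $1$ thanks to the $0/1$ signature gap, is what yields the constant $C_{jd}$, so you can import that result rather than re-deriving it.
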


Lemma~\ref{lem:Gmperturb} ensures that local pieces of intrinsic sparse modes can be constructed with $\Or(\epsilon)$ accuracy up to permutation and sign flips (characterized by $J_m$ in~\eqref{eqn:Gmperturb}) under several assumptions. The identifiability assumption is necessary. Without such assumption, these local pieces are not uniquely determined up to permutations and sign flips. The assumption~\eqref{eqn:Annperturb} holds true when eigen decomposition of $A_{mm}$ is well conditioned, i.e., both eigenvalues and eigenvectors are well conditioned. We expect that a stronger perturbation result is still true without making this assumption. The proof of Lemma~\ref{lem:Gmperturb} is an application of perturbation analysis for the joint diagonalization problem~\cite{cardoso_perturbation_1994}, and is presented in Appendix~\ref{appendix:lemproof}.

Finally, $\hat{\Omega}$ is the correlation matrix of $\hat{A}$ with basis $\hat{G}_{ext} = \text{diag}\{\hat{G}_1, \hat{G}_2, \dots, \hat{G}_M\}$. Specifically, the $(m,n)$-th block of $\hat{\Omega}$ is given by
\begin{equation*}
	\hat{\Omega}_{mn} = \hat{G}_m^{\dagger} \hat{A}_{mn} \left( \hat{G}_n^{\dagger} \right)^T.
\end{equation*}
Without loss of generality, we can assume that $J_m = \mathbb{I}_{K_m}$ in~\eqref{eqn:Gmperturb}.\footnote{One can check that $\{J_m\}_{m=1}^M$ only affect the sign of recovered intrinsic sparse modes $[\hat{g}_1, \hat{g}_2, \dots, \hat{g}_K]$ if pivoted Cholesky decomposition is applied on $\hat{\Omega}$.} Based on the perturbation analysis of $G_m$ in Lemma~\ref{lem:Gmperturb} and the standard perturbation analysis of pseudo-inverse, for instance see Theorem 3.4 in~\cite{stewart1977perturbation}, it is straightforward to get a bound of the perturbations in $\hat{\Omega}$, i.e.,
\begin{equation}\label{eqn:omegaperturb}
	\|\hat{\Omega} - \Omega \|_2 \le C_{ismd} \epsilon.
\end{equation}
Here, $C_{ismd}$ depends on the smallest singular value of $G_m$ and the constants $C_{eig}$ and $C_{jd}$ in Lemma~\ref{lem:Gmperturb}. Notice that when all intrinsic modes are identifiable with each other, the entries of $\Omega$ are either 0 or $\pm1$. Therefore, when $C_{ismd} \epsilon$ is small enough, we can exactly recover $\Omega$ from $\hat{\Omega}$ as below:
\begin{equation}\label{eqn:threshold1}
	\Omega_{ij} = \begin{cases}
-1, & \text{for } \hat{\Omega}_{ij} < -0.5, \\
0, & \text{for } \hat{\Omega}_{ij} \in [-0.5, 0.5],\\
1, & \text{for } \hat{\Omega}_{ij} > 0.5.
\end{cases}
\end{equation}
Following Algorithm~\ref{alg:ISMD}, we get the pivoted Cholesky decomposition $\Omega = P L L^T P^T$ and output the perturbed intrinsic sparse modes 
\begin{equation*}
	\hat{G} = \hat{G}_{ext} P L.
\end{equation*}
Notice that the patch-wise sparseness information is all coded in $L$ and we can reconstruct $L$ exactly due to the thresholding step~\eqref{eqn:threshold1}, $\hat{G}$ has the same patch-wise sparse structure as $G$. Moreover, because the local pieces $\hat{G}_{ext}$ are constructed with $\Or(\epsilon)$ error, we have
\begin{equation}\label{eqn:Gperturb}
	\|\hat{G} - G \|_2 \le C_{g} \epsilon,
\end{equation}
where the constant $C_{g}$ only depends the constants $C_{eig}$ and $C_{jd}$ in Lemma~\ref{lem:Gmperturb}.

\subsection{Two modified ISMD algorithms}\label{sec:stability}
In Section~\ref{sec:perturbation}, we have shown that the ISMD is robust to small noises under the assumption of regular sparsity and identifiability. In this section, we provide two modified versions of the ISMD to deal with the cases when these two assumptions fail. The first modification aims at constructing  intrinsic sparse modes from noisy input $\hat{A}$ in the small noise region, as in Section~\eqref{sec:perturbation}, but it does not require the regular sparsity and identifiability. The second modification aims at constructing a simultaneous low-rank and sparse approximation of $\hat{A}$ when the noise is big. Our numerical experiments demonstrate that these modified algorithms are quite effective in practice.

\subsubsection{ISMD with thresholding}
In the general case when unidentifiable pairs of intrinsic sparse modes exist, the thresholding idea~\eqref{eqn:threshold1} is still applicable but the threshold $\epsilon_{th}$ should be learnt from the data, i.e., the entries in $\hat{\Omega}$. Specifically, there are $\Or(1)$ entries in $\hat{\Omega}$ corresponding to the slightly perturbed nonzero entries in $\Omega$; there are also many $\Or(\epsilon)$ entries that are contributed by the noise $\epsilon \tilde{A}$. If the noise level $\epsilon$ is small enough, we can see a gap between these two group of entries, and a threshold $\epsilon_{th}$ is chosen such that it separates these two groups. A simple 2-cluster algorithm is able to identify the threshold $\epsilon_{th}$. In our numerical examples we draw the histogram of absolute values of entries in $\hat{\Omega}$ and it clearly shows the 2-cluster effect, see Figure~\ref{fig:hatOmega}. Finally, we set all the entries in $\hat{\Omega}$ with absolute value less than $\epsilon_{th}$ to 0. In this approach we do not need to know the noise level $\epsilon$ a priori and we just learn the threshold from the data. To modify Algorithm~\ref{alg:ISMD} with this thresholding technique, we just need to add one line between assembling $\Omega$ (Line~\ref{line:assembleOmega}) and the pivoted Cholesky decomposition (Line~\ref{line:pivotChol}), see Algorithm~\ref{alg:ISMDm1}.

\begin{algorithm}
\caption{Intrinsic sparse mode decomposition with thresholding}\label{alg:ISMDm1}
\begin{algorithmic}[1]
\Require $A \in \RR^{N\times N}$: symmetric and PSD; $\CalP=\{P_m\}_{m=1}^M$: partition of index set $[N]$
\Ensure $G = [g_1, g_2, \cdots, g_K]$: $A \approx G G^T$
\State The same with Algorithm~\ref{alg:ISMD} from Line~\ref{line:start} to Line~\ref{line:end}
\LineComment{Assemble $\Omega$, thresholding and its pivoted Cholesky decomposition}
\State $\Omega = D^T \Lambda D$
\State Learn a threshold $\epsilon_{th}$ from $\Omega$ and set all the entries in $\Omega$ with absolute value less than $\epsilon_{th}$ to 0
\State $\Omega = P L L^T P^T$
\LineComment{Assemble the intrinsic sparse modes $G$}
\State $G = H_{ext} D P  L$
\end{algorithmic}
\end{algorithm}

It is important to point out that when the noise is large, the $\Or(1)$ entries and $\Or(\epsilon)$ entries mix together. In this case, we cannot identify such a threshold $\epsilon_{th}$ to separate them, and the assumption that there is an energy gap between $\hat{\gamma}_{m,K_m}$ and $\hat{\gamma}_{m,K_m+1}$ is invalid. In the next subsection, we will present the second modified version to overcome this difficulty. 

\subsubsection{Low rank approximation with ISMD}
In the case when there is no gap between $\hat{\gamma}_{m,K_m}$ and $\hat{\gamma}_{m,K_m+1}$ (i.e., no well-defined local ranks), or when the noise is so large that the threshold $\epsilon_{th}$ cannot be identified, we modify our ISMD to give a low-rank approximation of $A \approx G G^T$, in which $G$ is observed to be patch-wise sparse from our numerical examples.

In this modification, the normalization~\eqref{eqn:renormalize} is applied and thus we have:
\begin{equation*}
	A \approx \bar{G}_{ext} \bar{\Omega} \bar{G}_{ext}^T.
\end{equation*}
It is important to point out that $\bar{\Omega}$ has the same block diagonal structure as $\Omega$ but has different eigenvalues. Specifically, for the case when there is no noise and the regular-sparse assumption holds true, $\bar{\Omega}$ has eigenvalues $\{\|g_k\|_2^2\}_{k=1}^K$ for a certain set of intrinsic sparse modes $g_k$, while $\Omega$ has eigenvalues $\{s_k\}_{k=1}^K$ (here $s_k$ is the patch-wise sparseness of the intrinsic sparse mode). We first perform eigen decomposition $\bar{\Omega} = \bar{L}\bar{L}^T$ and then assemble the final result by $G = \bar{G}_{ext} \bar{L}$. The modified algorithm is summarized in Algorithm~\ref{alg:ISMDm2}.

\begin{algorithm}
\caption{Intrinsic sparse mode decomposition for low rank approximation}\label{alg:ISMDm2}
\begin{algorithmic}[1]
\Require $A \in \RR^{N\times N}$: symmetric and PSD; $\CalP=\{P_m\}_{m=1}^M$: partition of index set $[N]$
\Ensure $G = [g_1, g_2, \cdots, g_K]$: $A \approx G G^T$
\State The same with Algorithm~\ref{alg:ISMD} from Line~\ref{line:start} to Line~\ref{line:end}
\LineComment{Assemble $\Omega$, normalization and its eigen decomposition}
\State $\Omega = D^T \Lambda D$
\State $G_{ext} = \bar{G}_{ext} E, \quad \bar{\Omega} = E \Omega E^T$ as in~\eqref{eqn:renormalize}
\State $\bar{\Omega} = \bar{L} \bar{L}^T$ 
\LineComment{Assemble the intrinsic sparse modes $G$}
\State $G =  \bar{G}_{ext}  \bar{L}$
\end{algorithmic}
\end{algorithm}

Here we replace the pivoted Cholesky decomposition of $\Omega$ in Algorithm~\ref{alg:ISMD} by eigen decomposition of $\bar{\Omega}$. From Remark~\ref{rem:pivotCholRem}, this modified version generates exactly the same result with Algorithm~\ref{alg:ISMD} if all the intrinsic sparse modes have different $l^2$ norm (there are no repeated eigenvalues in $\bar{\Omega}$). The advantage of the pivoted Cholesky decomposition is its low computational cost and the fact that it always exploits the (unordered) block diagonal structure of $\Omega$. However, it is more sensitive to noise compared to eigen decomposition. In contrast, eigen decomposition is much more robust to noise. Moreover, eigen decomposition gives the optimal low rank approximation of $\bar{\Omega}$. Thus Algorithm~\ref{alg:ISMDm2} gives a more accurate low rank approximation for $A$ compared to Algorithm~\ref{alg:ISMD} and Algorithm~\ref{alg:ISMDm1} that use the pivoted Cholesky decomposition.

\section{Numerical experiments} \label{sec:numericalExamples}
In this section, we demonstrate the robustness of our intrinsic sparse mode decomposition method and compare its performance with that of the eigen decomposition, the pivoted Cholesky decomposition, and the convex relaxation of sparse PCA. All our computations are performed using MATLAB R2015a (64-bit) on an Intel(R) Core(TM) i7-3770 (3.40 GHz). The pivoted Cholesky decomposition is implemented in MATLAB according to Algorithm~3.1 in~\cite{lucas2004lapack}.

We will use synthetic covariance matrices of a random permeability field, which models the underground porous media, as the symmetric PSD input $A$. This random permeability model is adapted from the porous media problem~\cite{ghommem_mode_2013,galvis_domain_2010} where the physical domain $D$ is two dimensional. The basic model has a constant background and several localized features to model the subsurface channels and inclusions, i.e.,
\begin{equation} \label{eqn:example2dsparse}
\kappa(x,\omega) = \kappa_0 + \sum_{k = 1}^{K}\eta_k(\omega) g_k(x), \quad x \in [0,1]^2,
\end{equation}
where $\kappa_0$ is the constant background, $\{g_k\}_{k=1}^K$ are characteristic functions of channels and inclusions and $\eta_k$ are the associated uncorrelated latent variables controlling the permeability of each feature. Here, we have $K=35$, including 16 channels and 18 inclusions. Among these modes, there is one artificial smiling face mode that has disjoint branches. It is used here to demonstrate that the ISMD is able to capture long range correlation. For this random medium, the covariance function is
\begin{equation} \label{eqn:example2dsparseA}
	a(x,y) = \sum_{k = 1}^{K} g_k(x) g_k(y), \quad x, y \in [0,1]^2.
\end{equation}
Since the length scales of channels and inclusions are very small, with width about $1/32$, we need a fine grid to resolve these small features. Such a fine grid is also needed when we do further scientific experiments~\cite{ghommem_mode_2013,galvis_domain_2010, hou_LocalgPC_2016}. In this paper, the physical domain $D=[0,1]^2$ is discretized using a uniform grid with $h_x = h_y = 1/96$, resulting in $A \in \RR^{N\times N}$ with $N = 96^2$. One sample of the random field (and the bird's-eye view) and the covariance matrix are plotted in Figure~\ref{fig:2d}. It can be seen that the covariance matrix is sparse and concentrates along the diagonal since modes in the ground-truth media are all localized functions.
\begin{figure}[h]
\centering
\includegraphics[width = 0.30\textwidth]{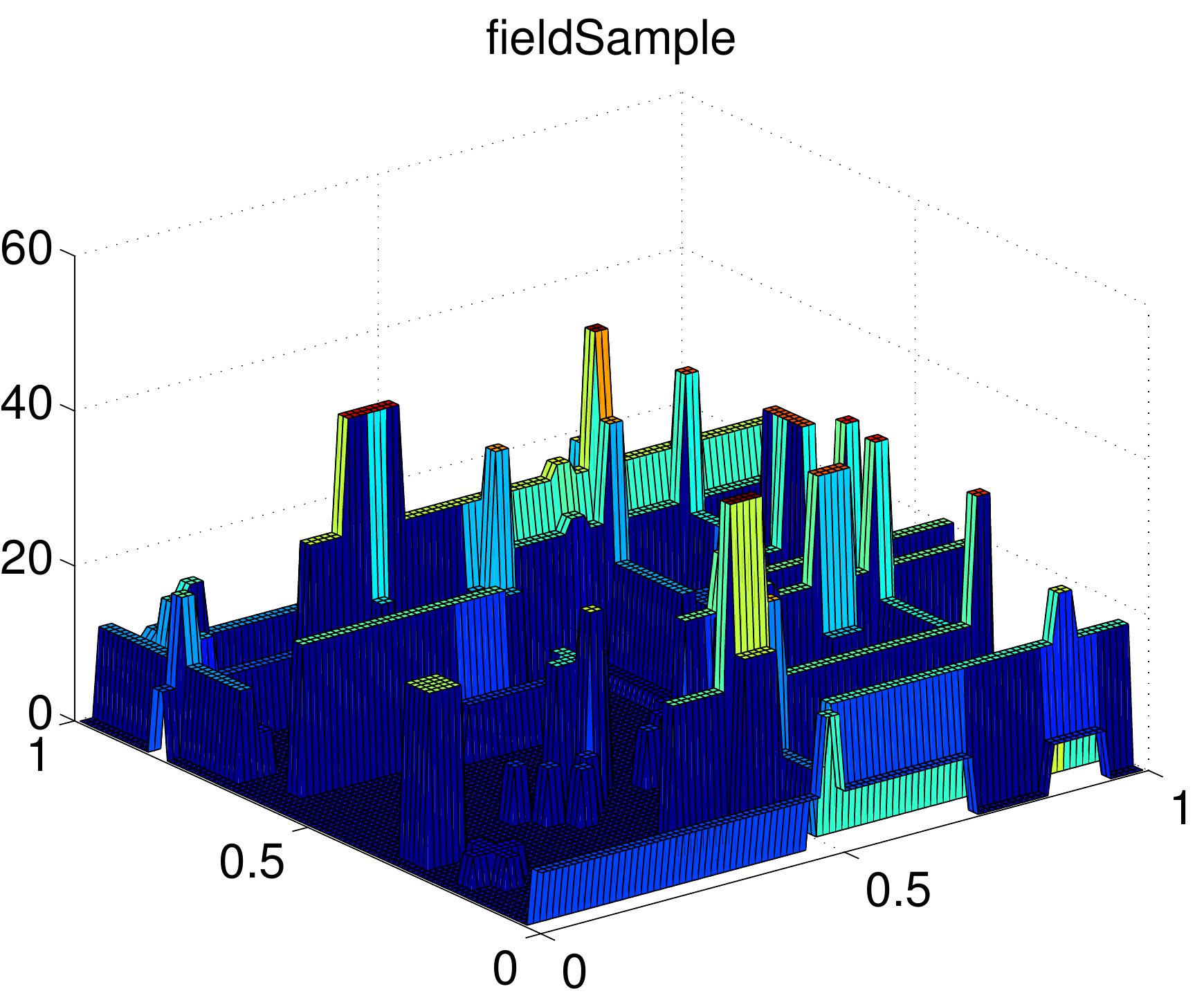}
\includegraphics[width = 0.30\textwidth]{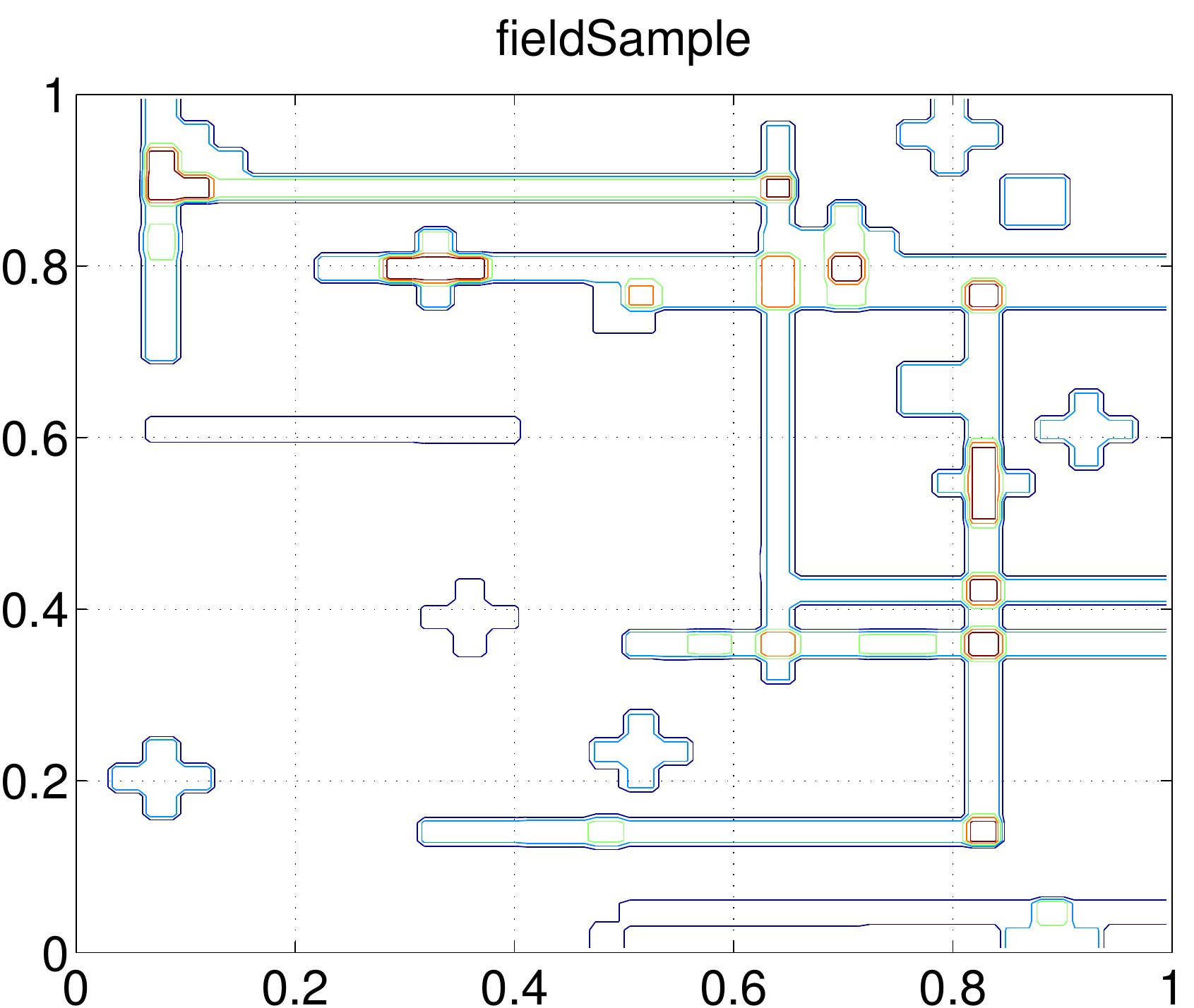}
\includegraphics[width = 0.30\textwidth]{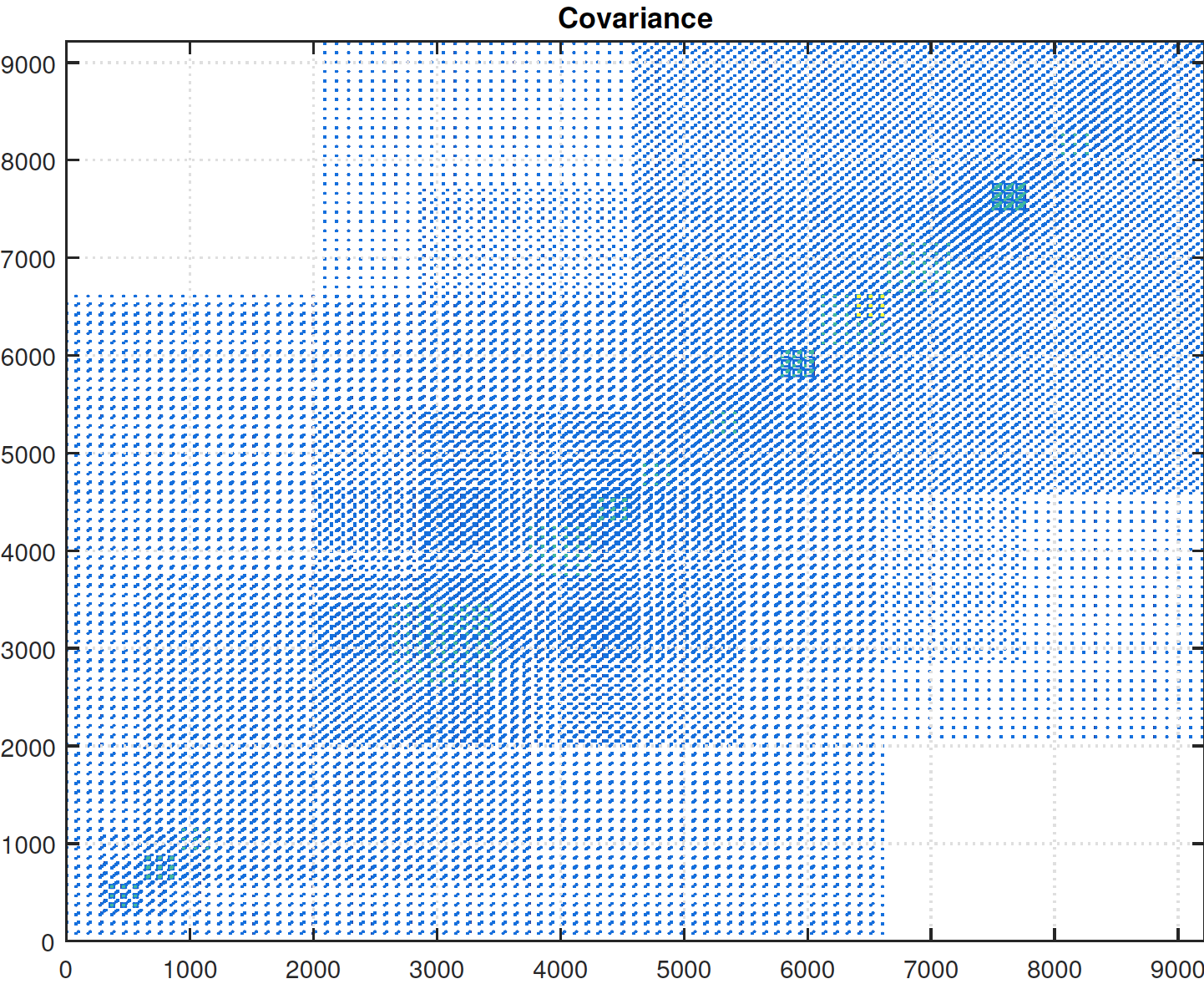}
\caption{One sample and the bird's-eye view. The covariance matrix is plotted on the right.}\label{fig:2d}
\end{figure}

Note that this example is synthetic because we construct $A$ from a sparse decomposition~\eqref{eqn:example2dsparseA}. We would like to test whether different matrix factorization methods, like eigen decomposition, the Cholesky decomposition and the ISMD, are able to recover this sparse decomposition, or even find a sparser decomposition for $A$.

\subsection{ISMD}
The partitions we take for this example are all uniform domain partition with $H_x = H_y = H$. We run the ISMD with patch sizes $H \in \{1, 1/2, 1/3, 1/4, 1/6, 1/8, 1/12, 1/16, 1/24, 1/32, 1/48, 1/96\}$ in this section. For the coarsest partition $H=1$, the ISMD is exactly the eigen decomposition of $A$. For the finest partition $H = 1/96$, the ISMD is equivalent to the pivoted Cholesky factorization on $\bar{A}$ where $\bar{A}_{ij} = \frac{A_{ij}}{\sqrt{A_{ii}A_{jj}}}$. The pivoted Cholesky factorization on $A$ is also implemented. It is no surprise that all the above methods produce 35 modes. The number of modes is exactly the rank of $A$. We plot the first 6 modes for each method in Figure~\ref{fig:2d_compare1}. We can see that both the eigen decomposition (ISMD with $H = 1$) and the pivoted Cholesky factorization on $A$ generate modes which mix different localized feathers together. On the other hand, the ISMD with $H = 1/8$ and $H=1/32$ exactly recover the localized feathers, including the smiling face. 

\begin{figure}
\centering
\includegraphics[width = 0.15\textwidth,height = 0.15\textheight]{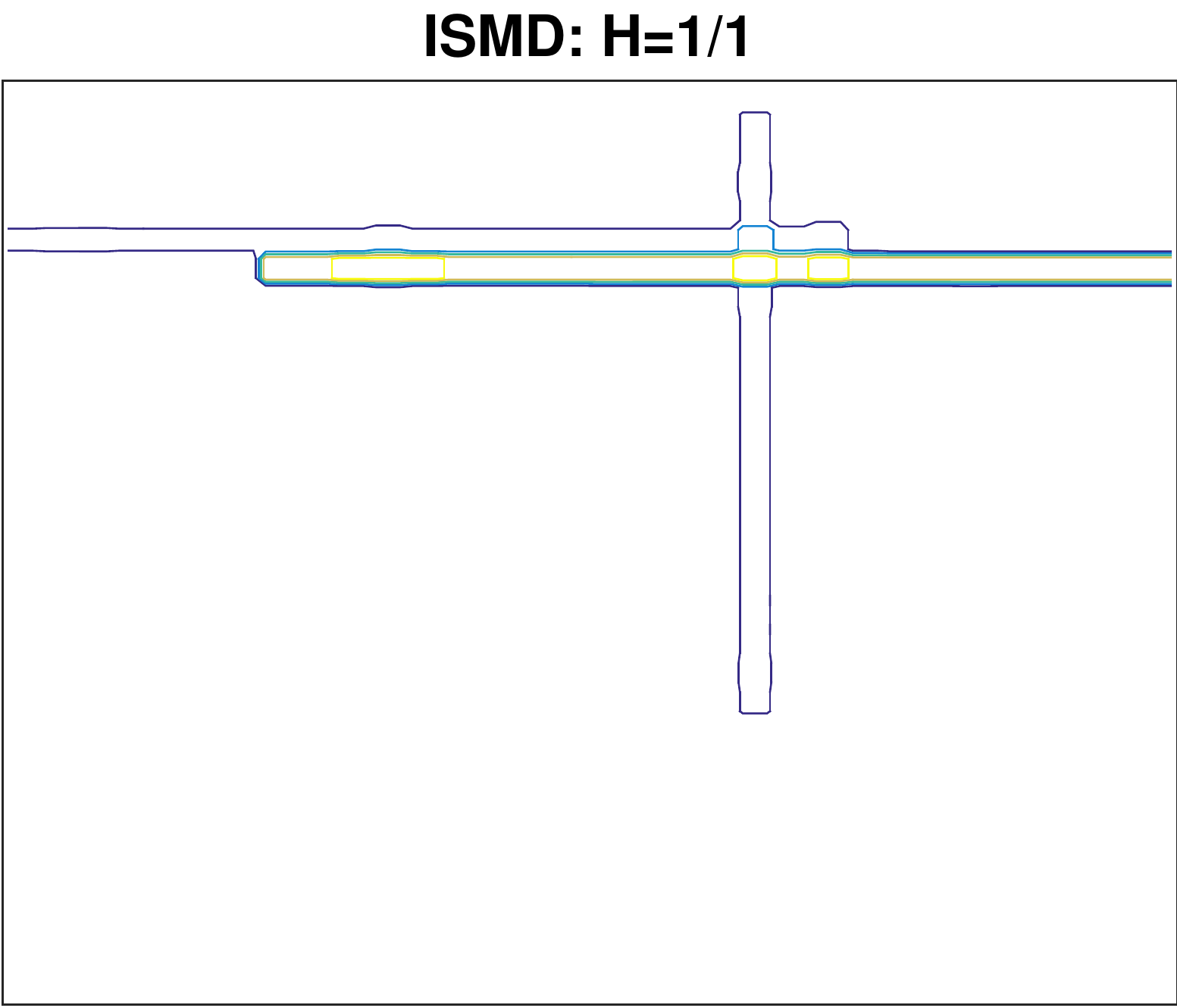}
\includegraphics[width = 0.15\textwidth,height = 0.15\textheight]{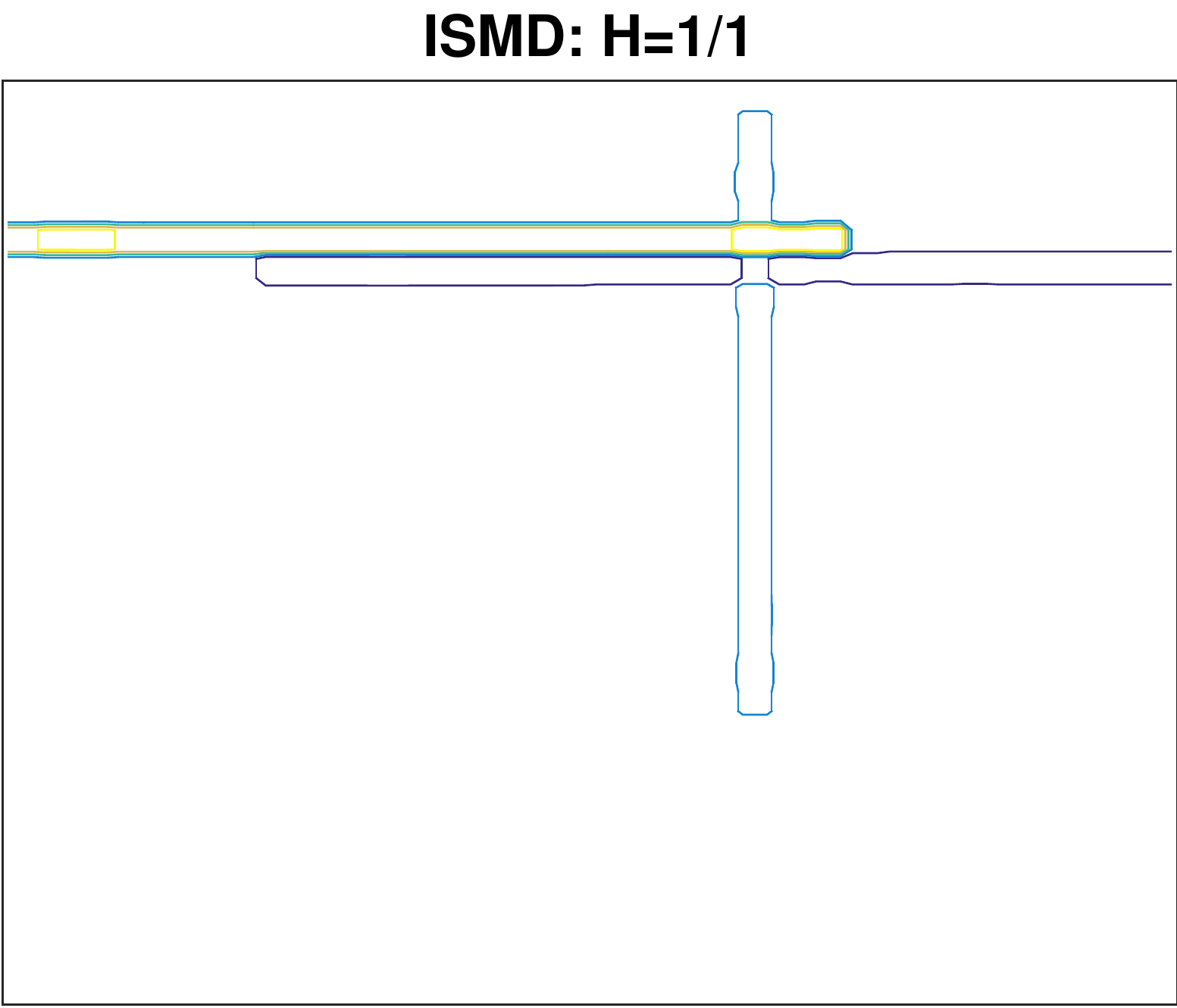}
\includegraphics[width = 0.15\textwidth,height = 0.15\textheight]{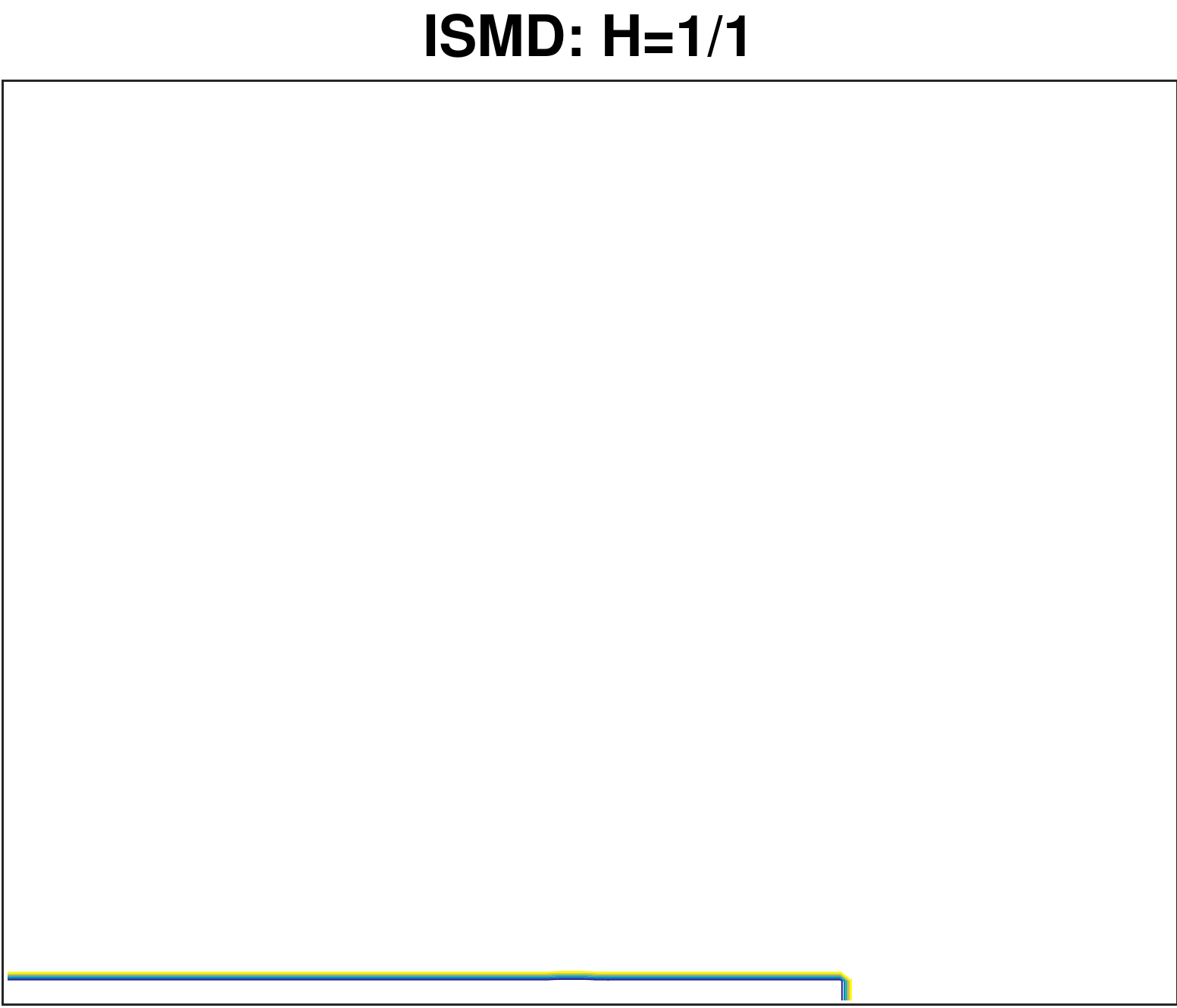}
\includegraphics[width = 0.15\textwidth,height = 0.15\textheight]{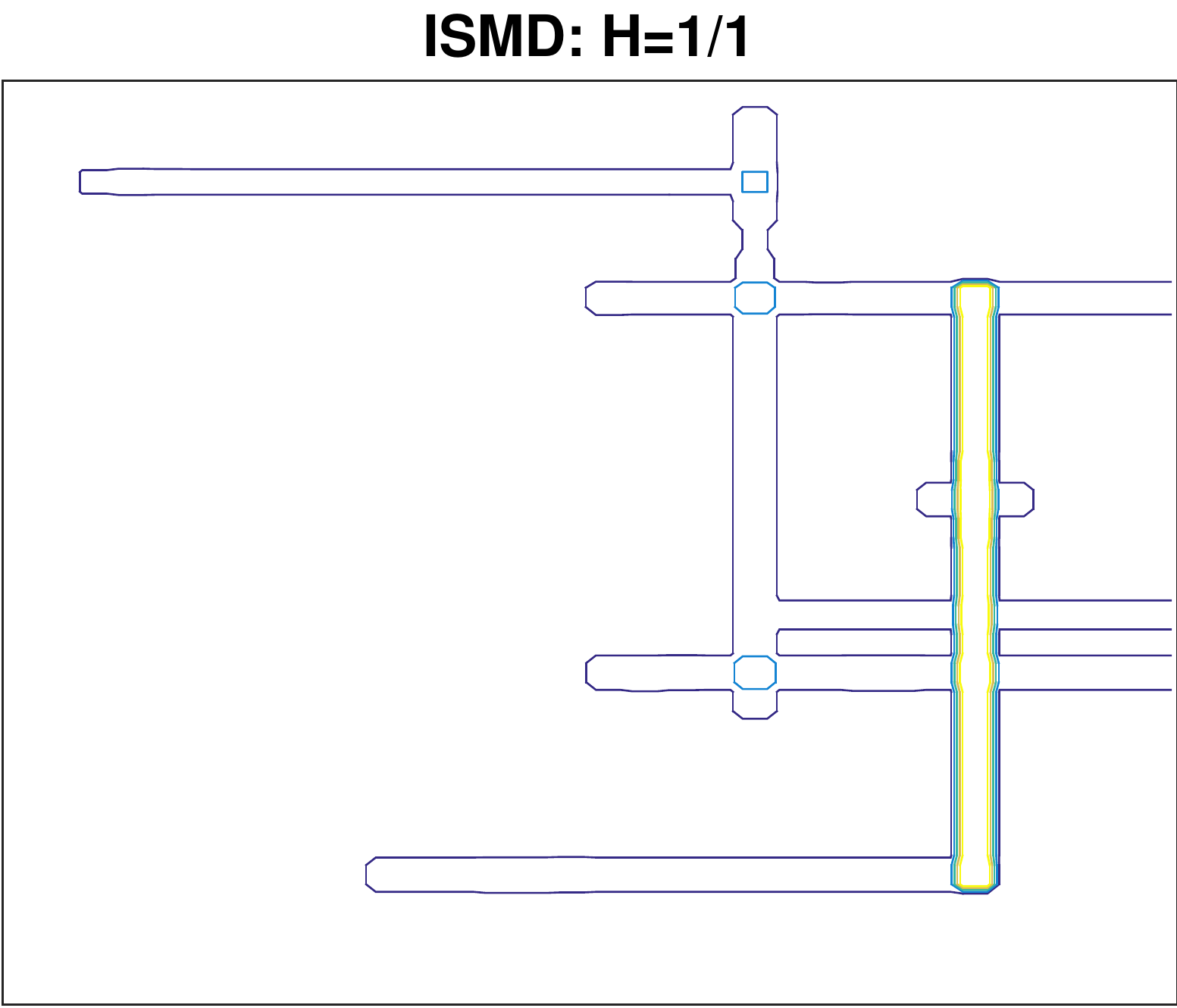} 
\includegraphics[width = 0.15\textwidth,height = 0.15\textheight]{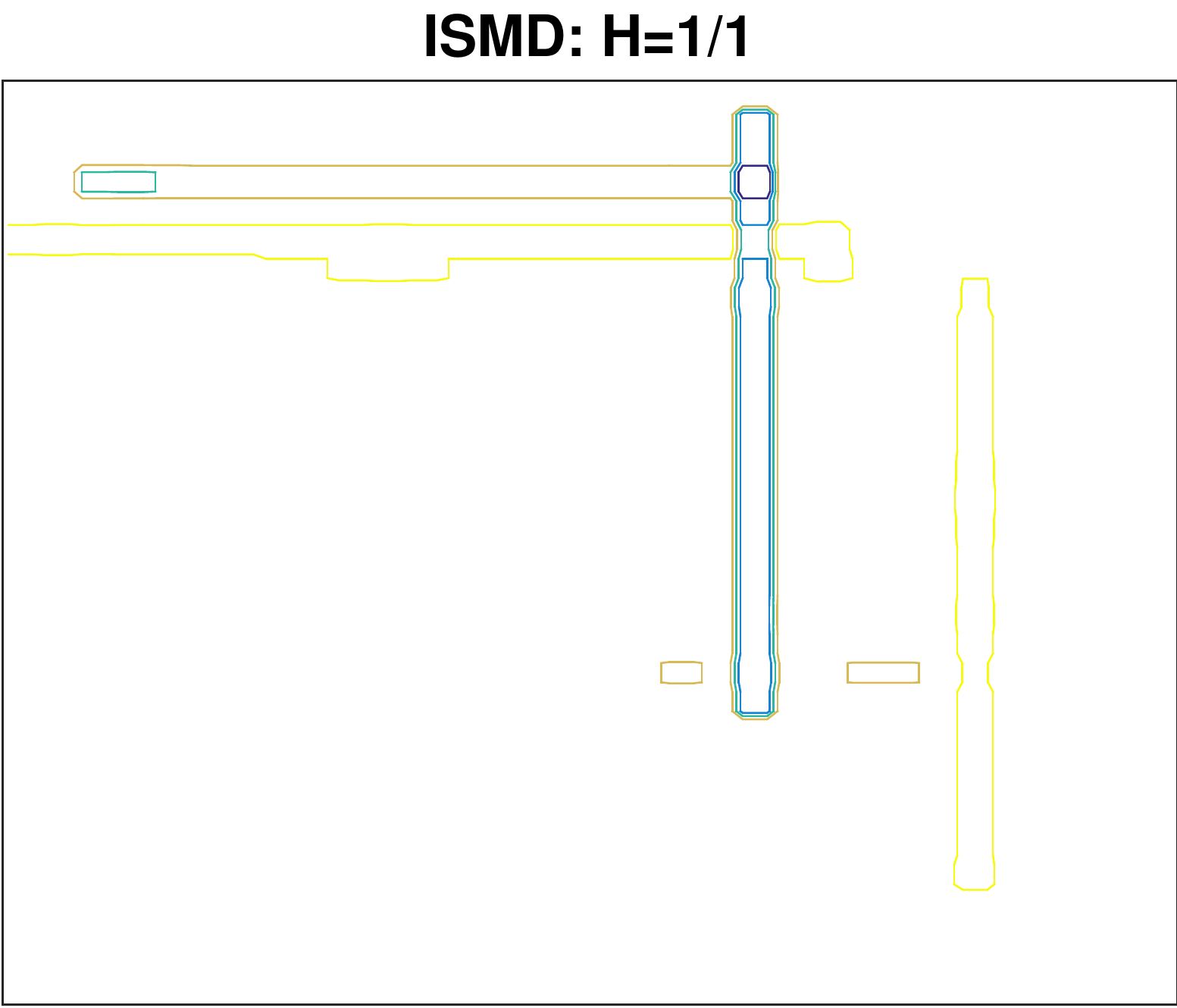}
\includegraphics[width = 0.15\textwidth,height = 0.15\textheight]{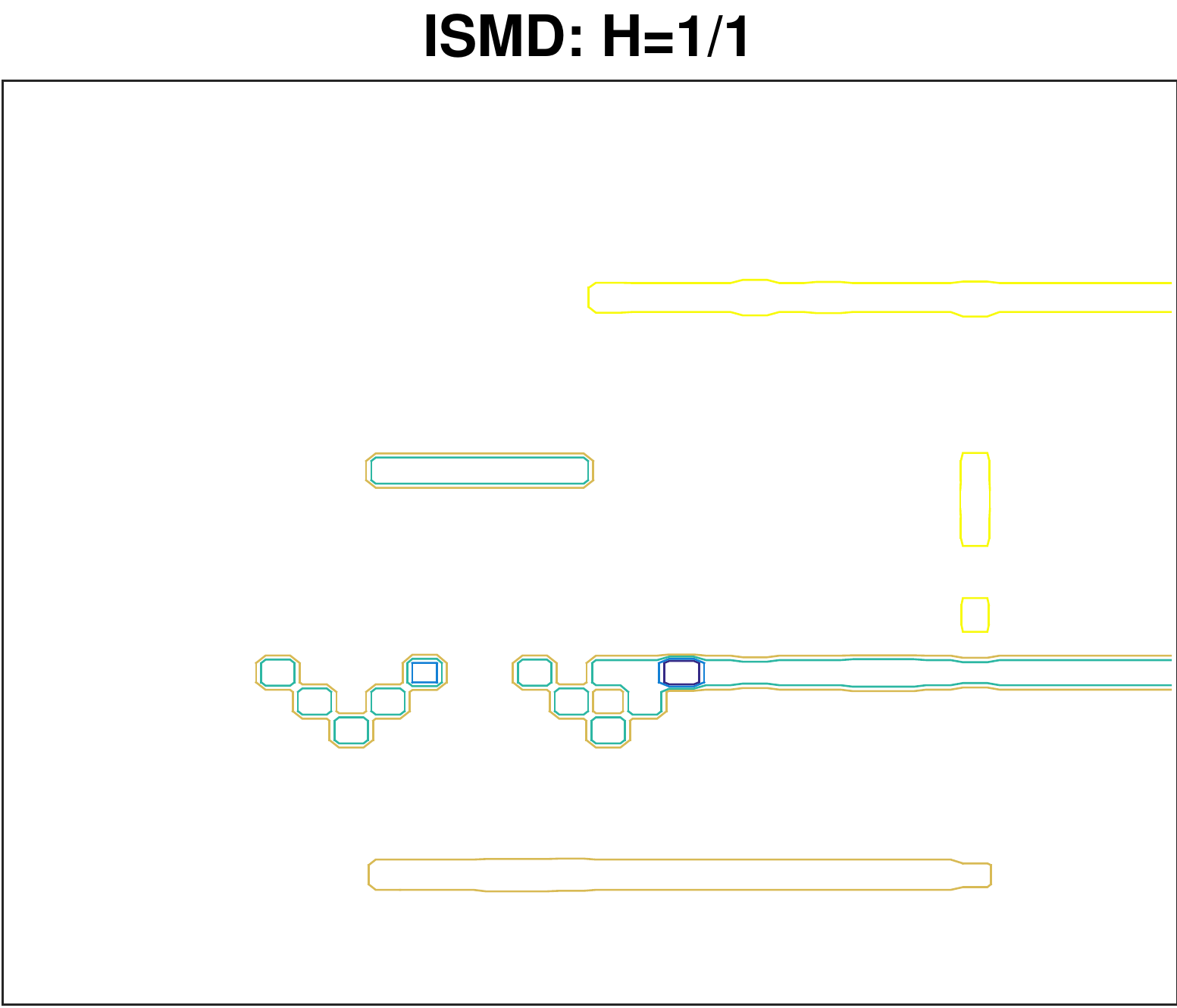}\\
\includegraphics[width = 0.15\textwidth,height = 0.15\textheight]{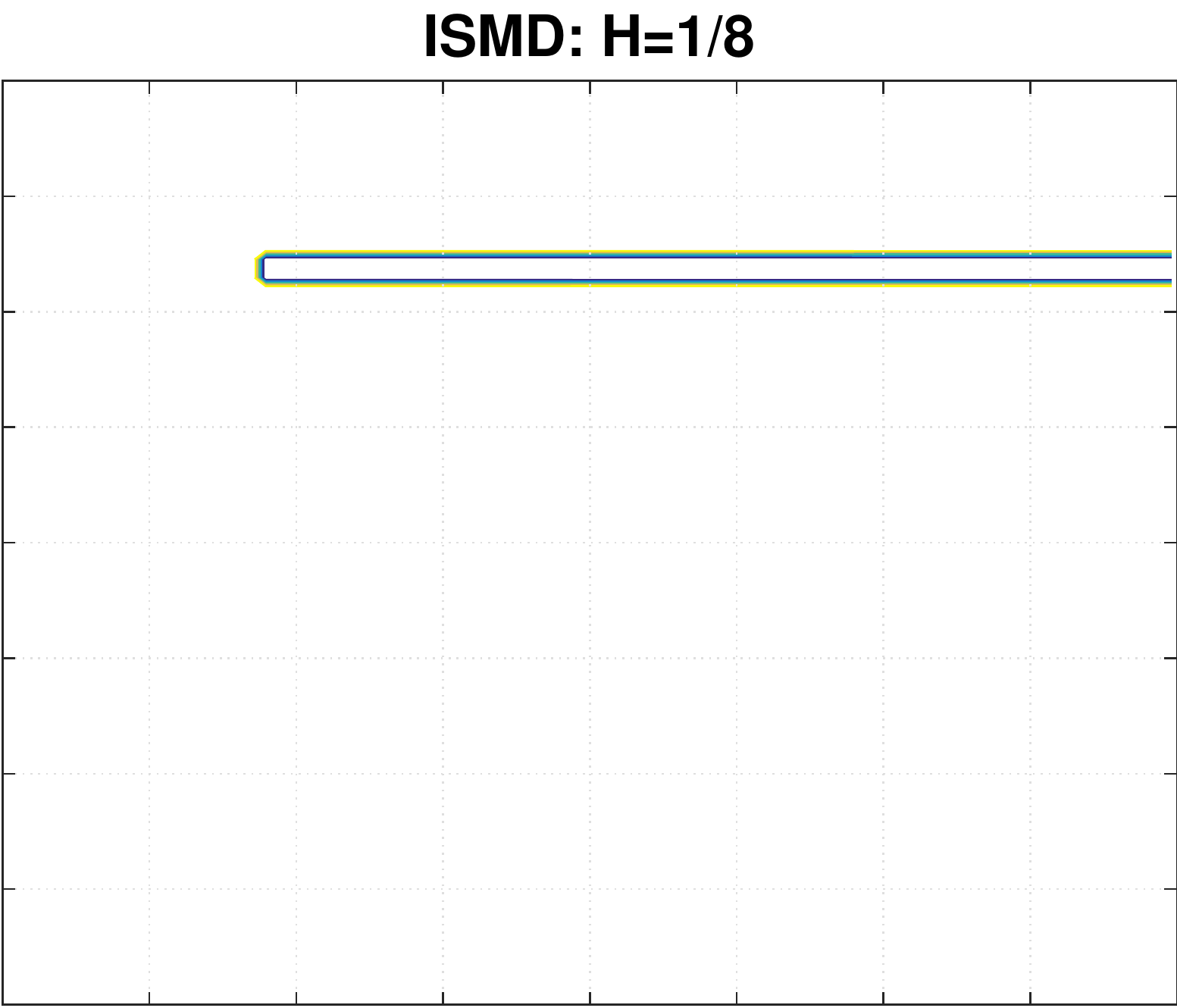}
\includegraphics[width = 0.15\textwidth,height = 0.15\textheight]{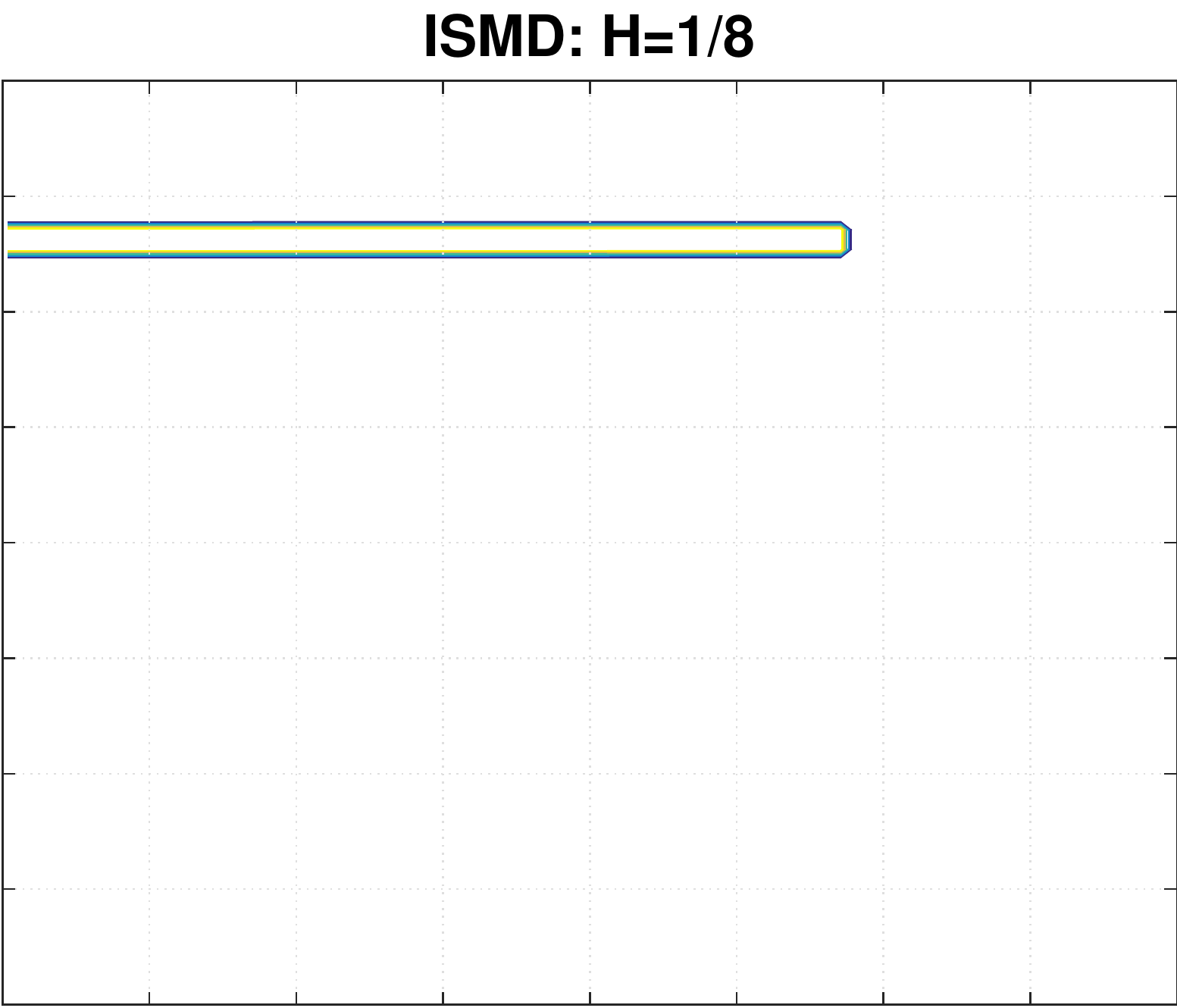}
\includegraphics[width = 0.15\textwidth,height = 0.15\textheight]{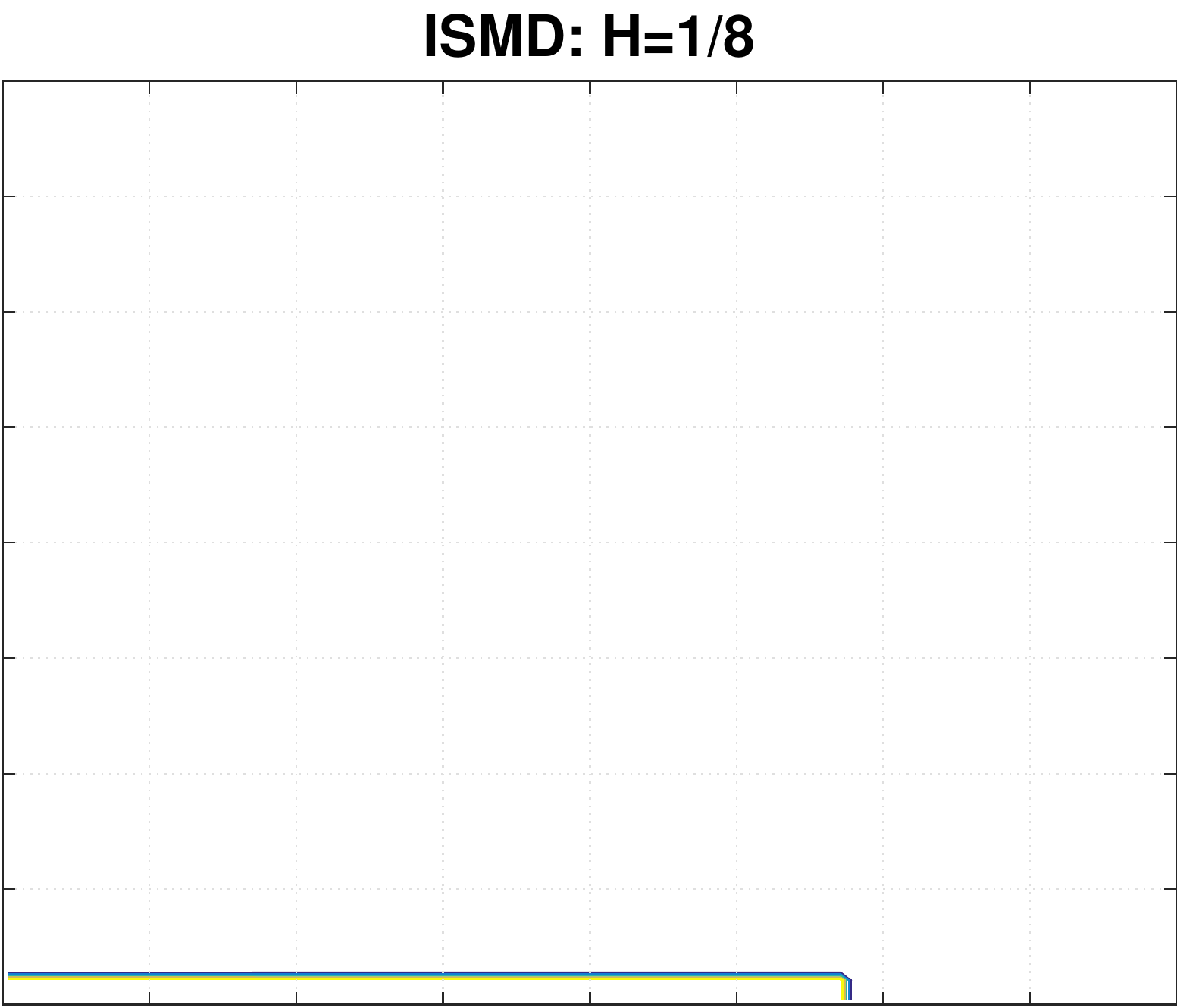}
\includegraphics[width = 0.15\textwidth,height = 0.15\textheight]{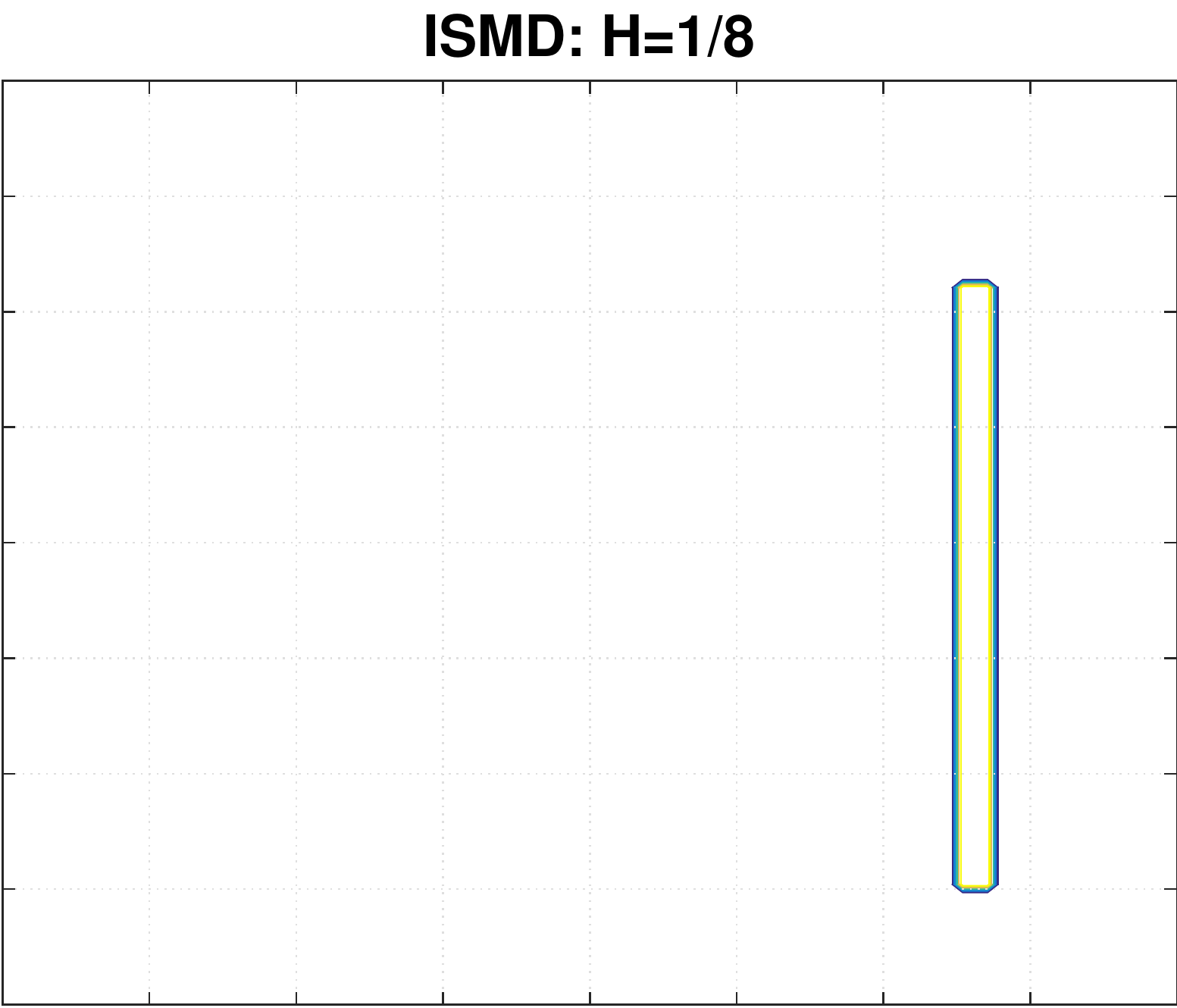}
\includegraphics[width = 0.15\textwidth,height = 0.15\textheight]{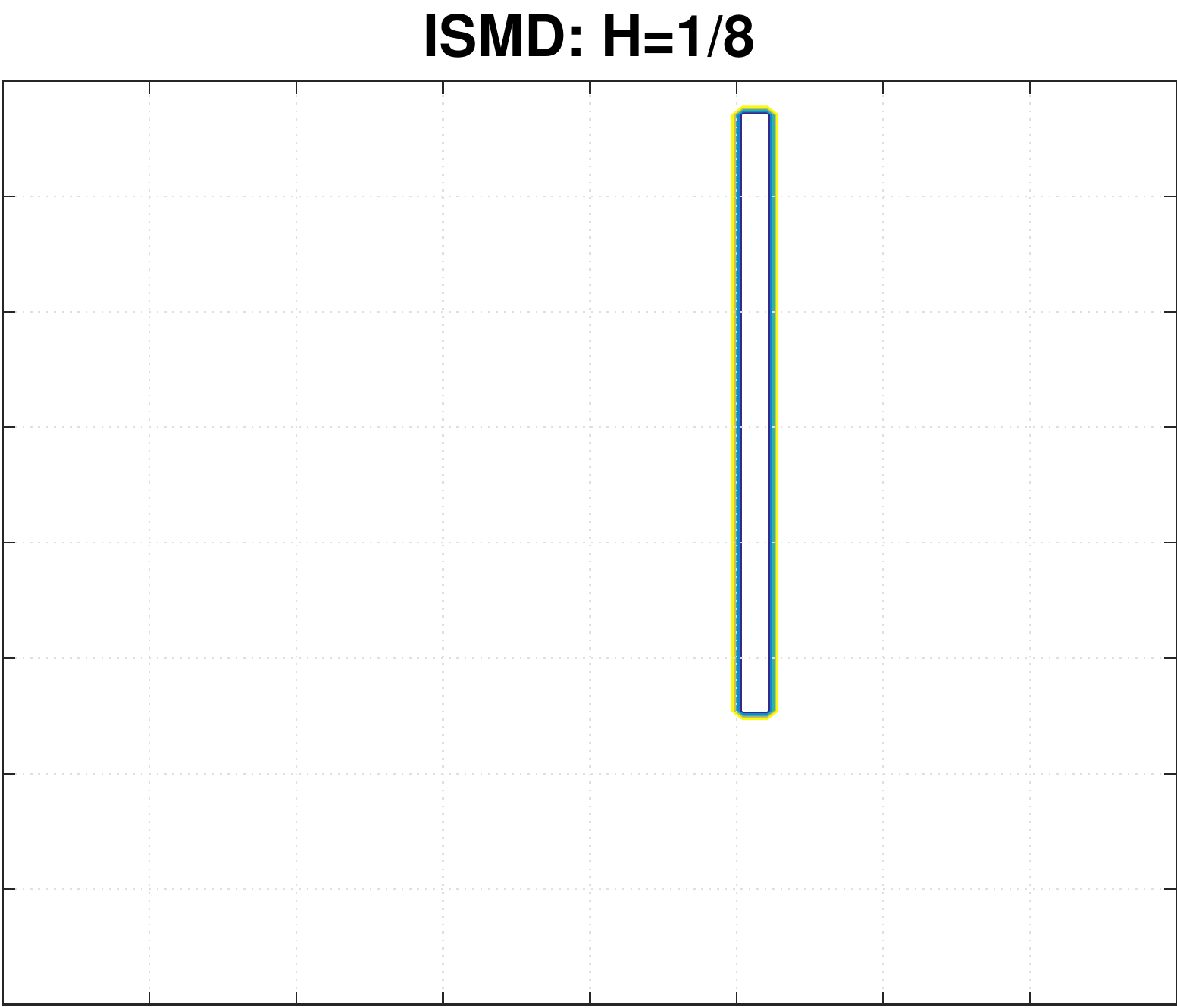}
\includegraphics[width = 0.15\textwidth,height = 0.15\textheight]{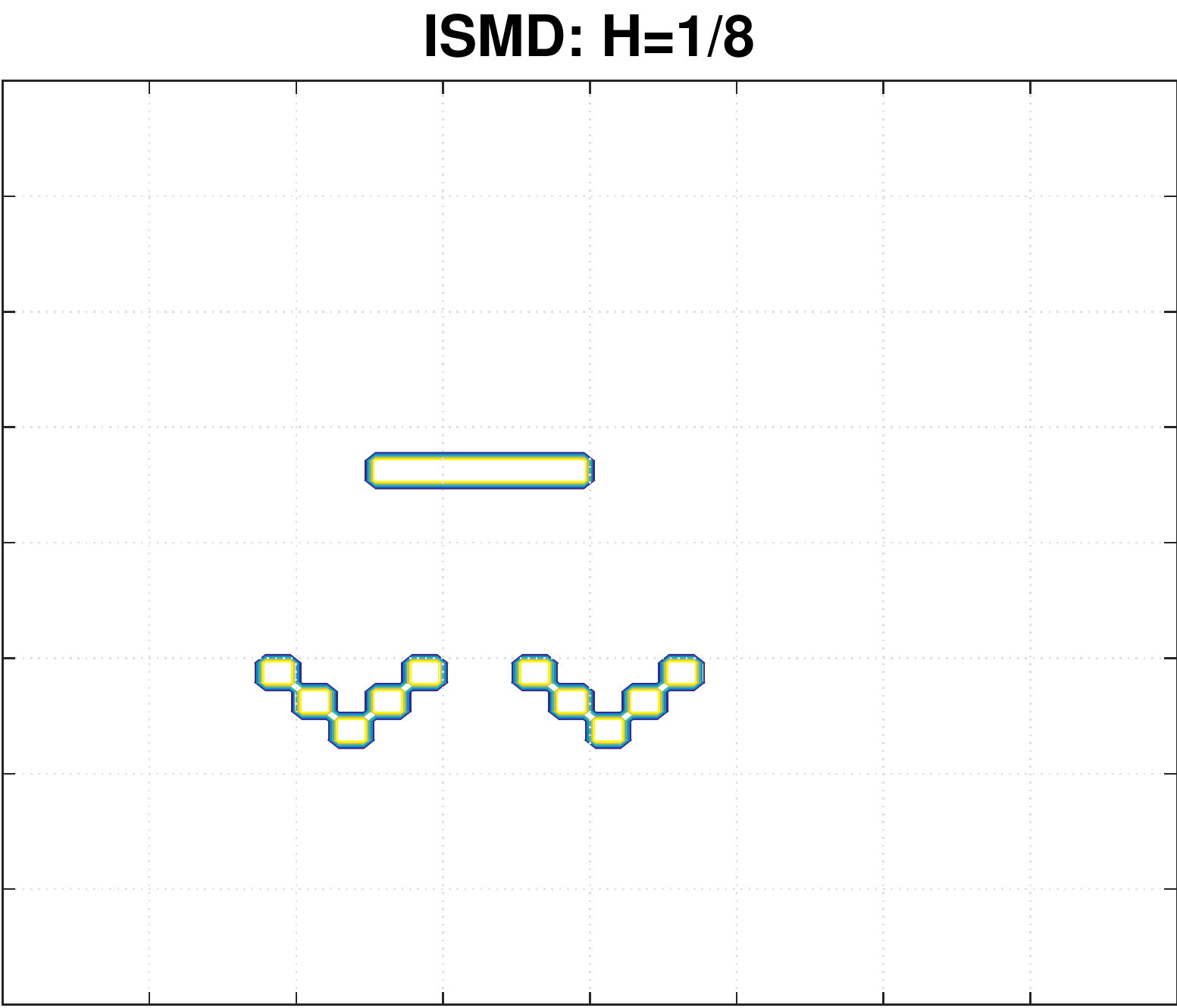}\\
\includegraphics[width = 0.15\textwidth,height = 0.15\textheight]{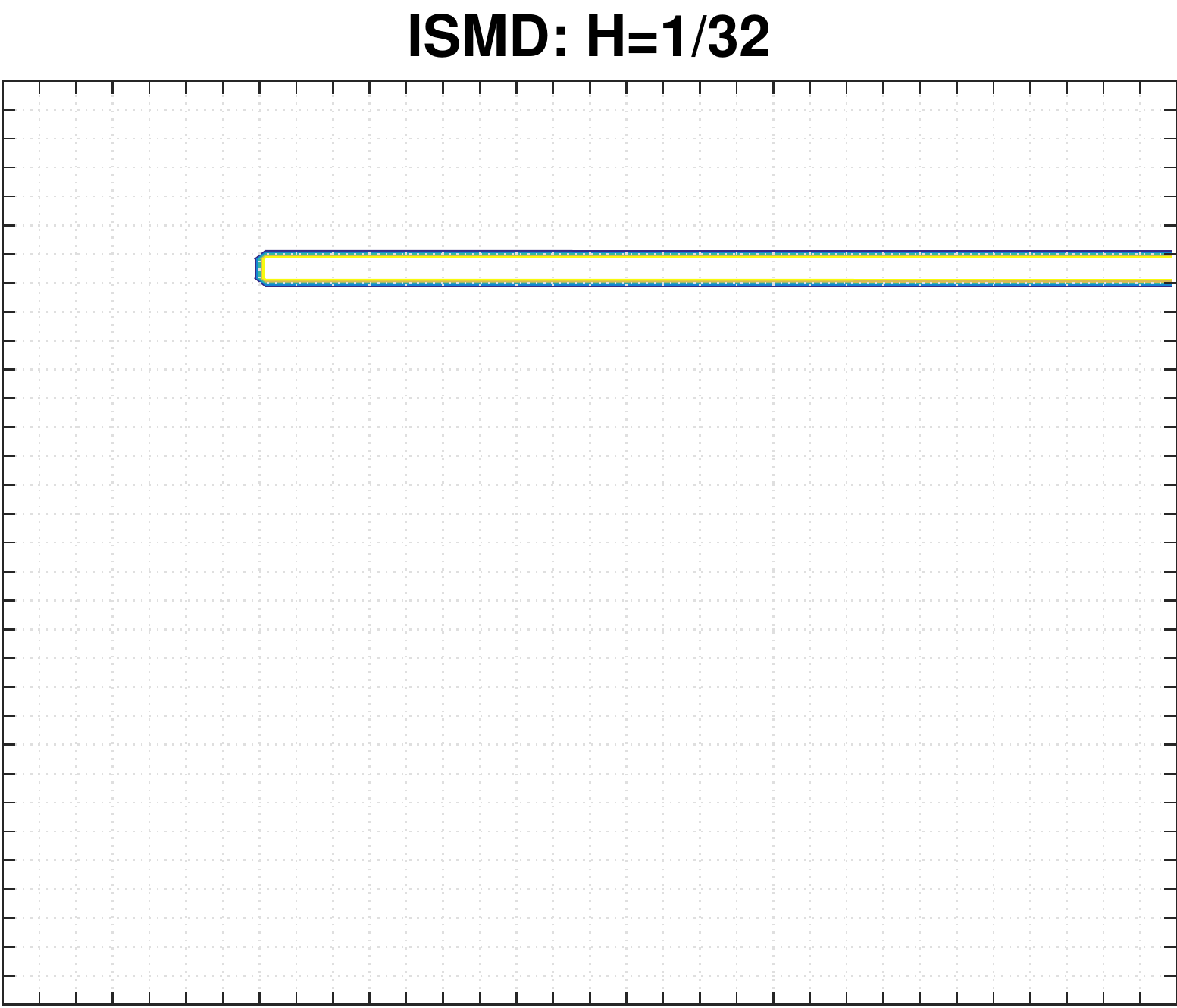}
\includegraphics[width = 0.15\textwidth,height = 0.15\textheight]{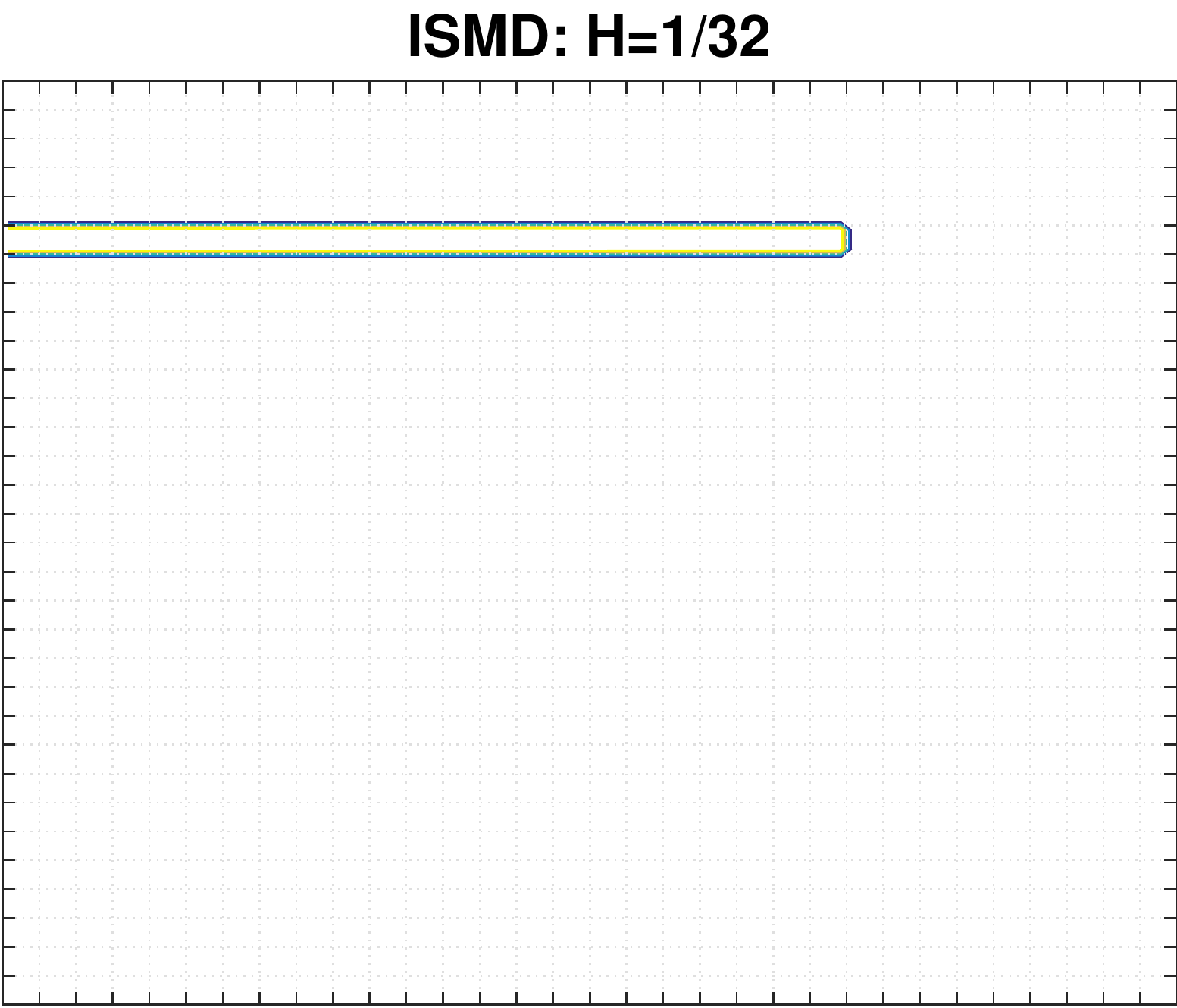}
\includegraphics[width = 0.15\textwidth,height = 0.15\textheight]{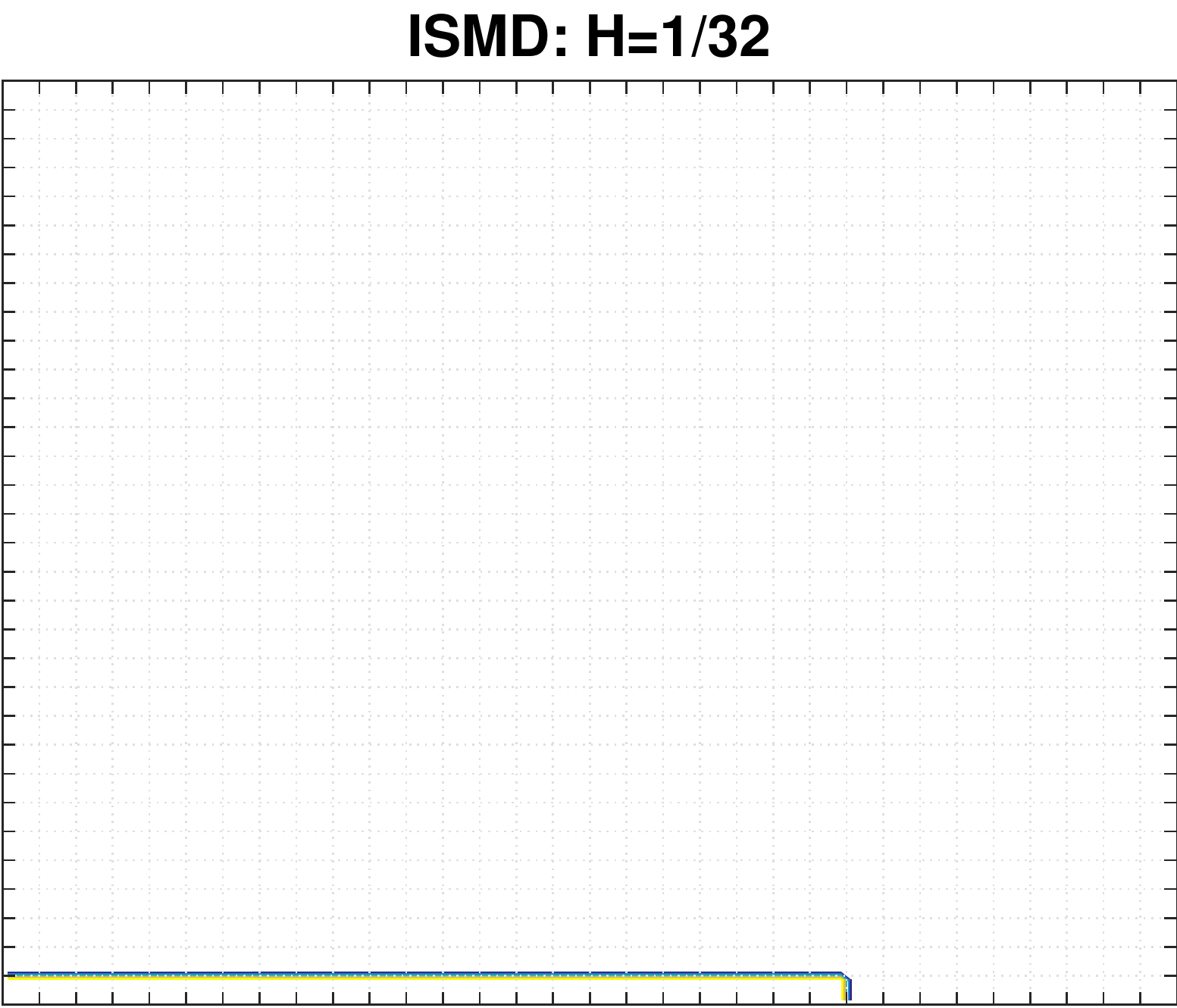}
\includegraphics[width = 0.15\textwidth,height = 0.15\textheight]{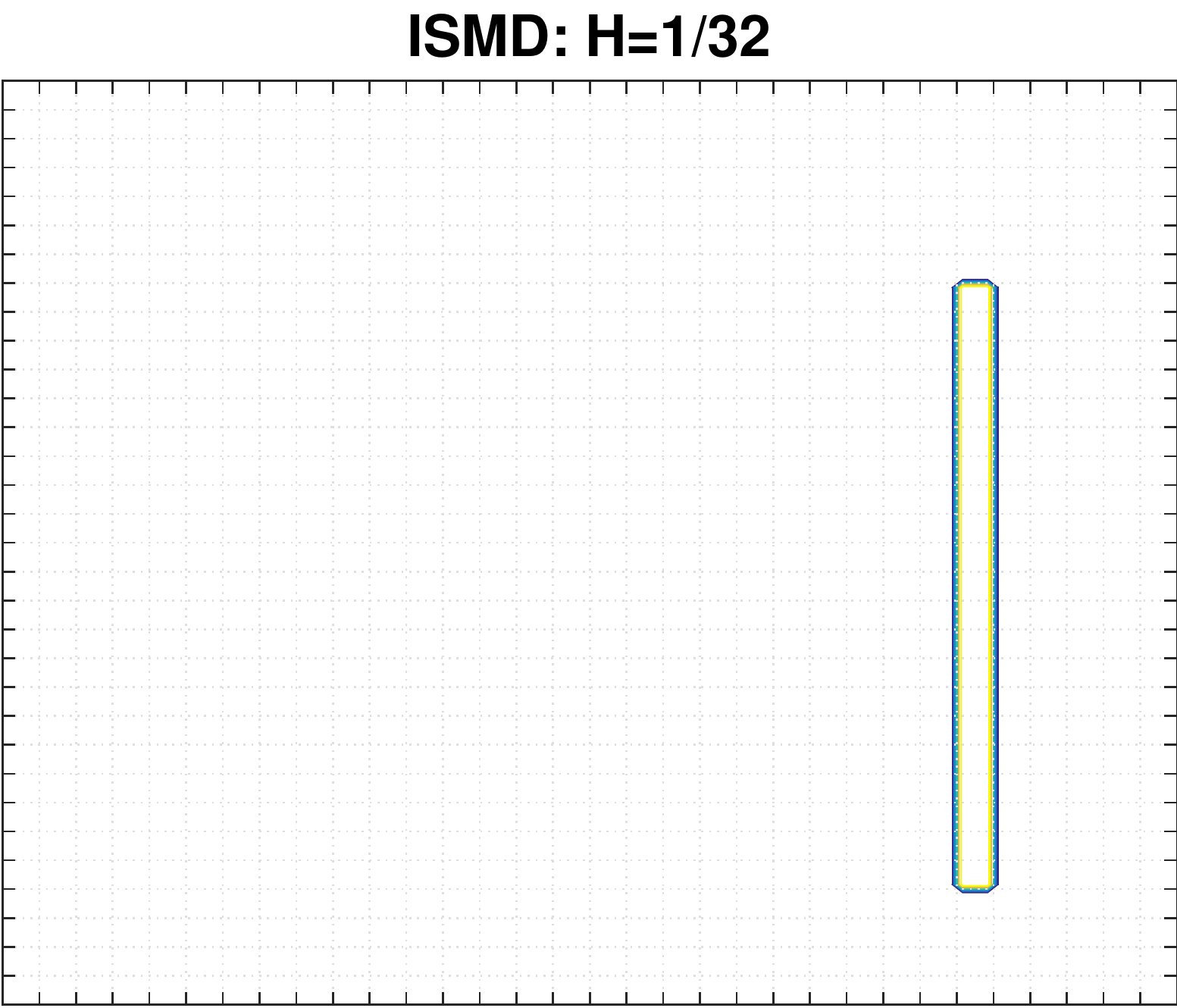}
\includegraphics[width = 0.15\textwidth,height = 0.15\textheight]{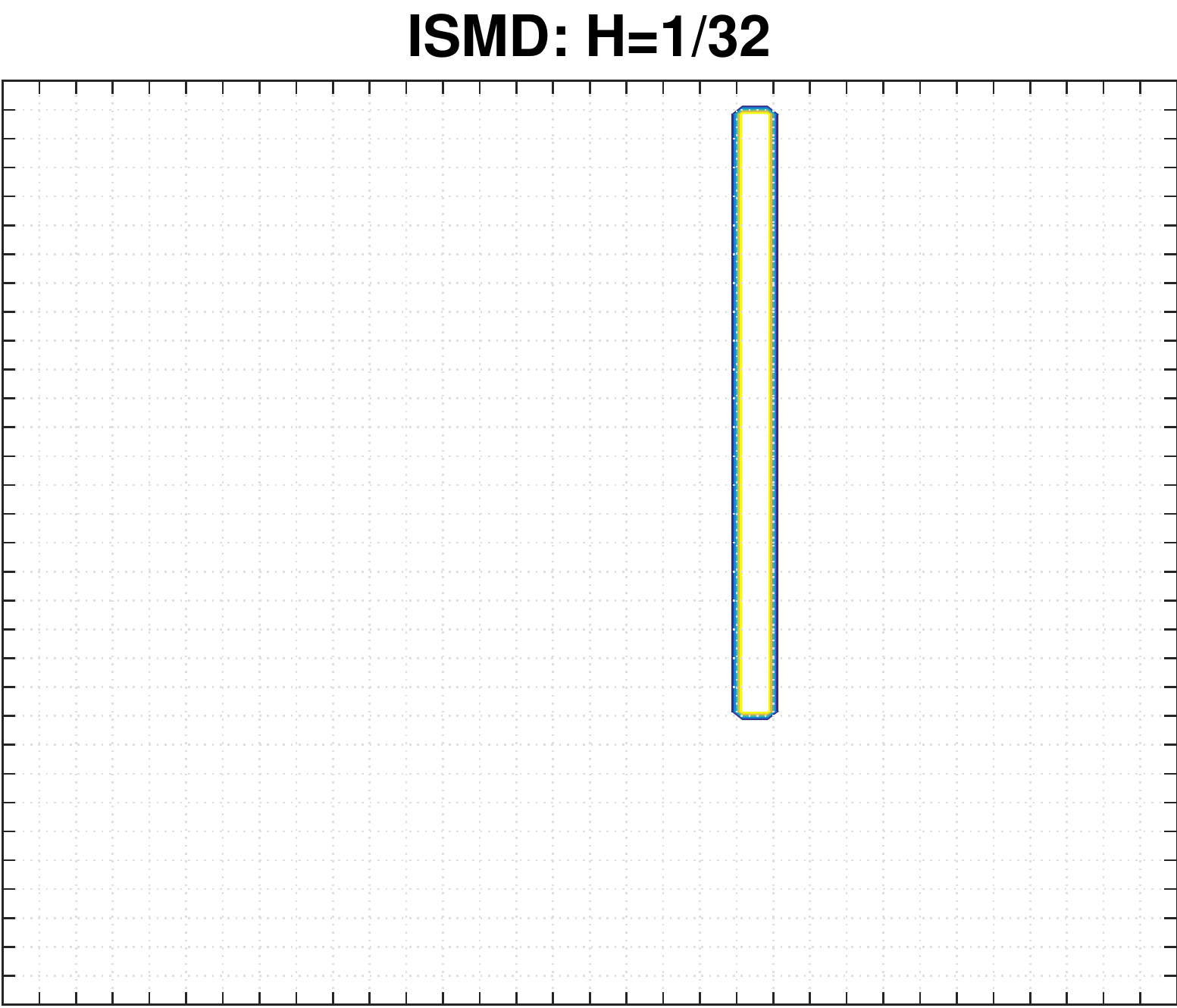}
\includegraphics[width = 0.15\textwidth,height = 0.15\textheight]{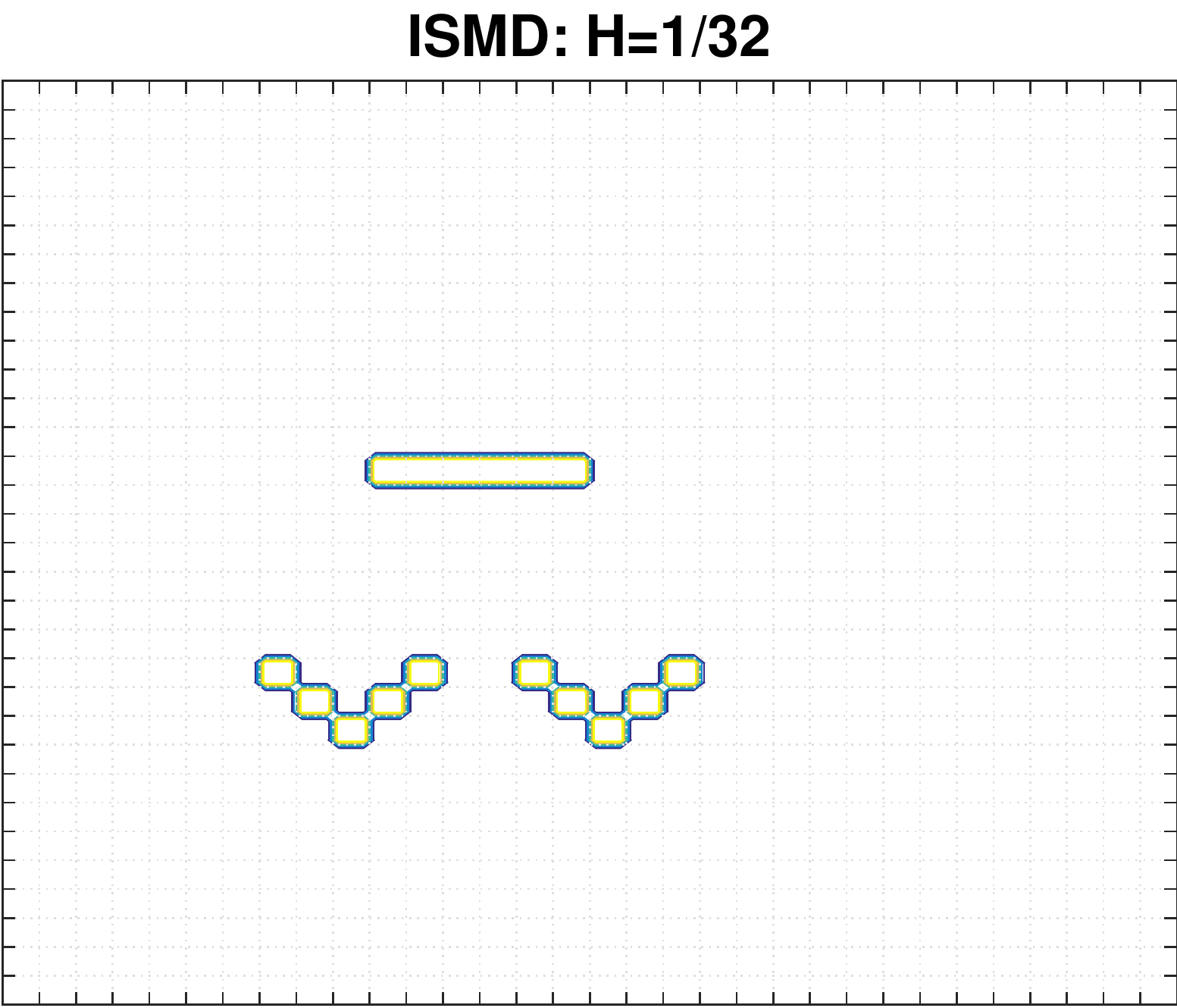}\\\includegraphics[width = 0.15\textwidth,height = 0.15\textheight]{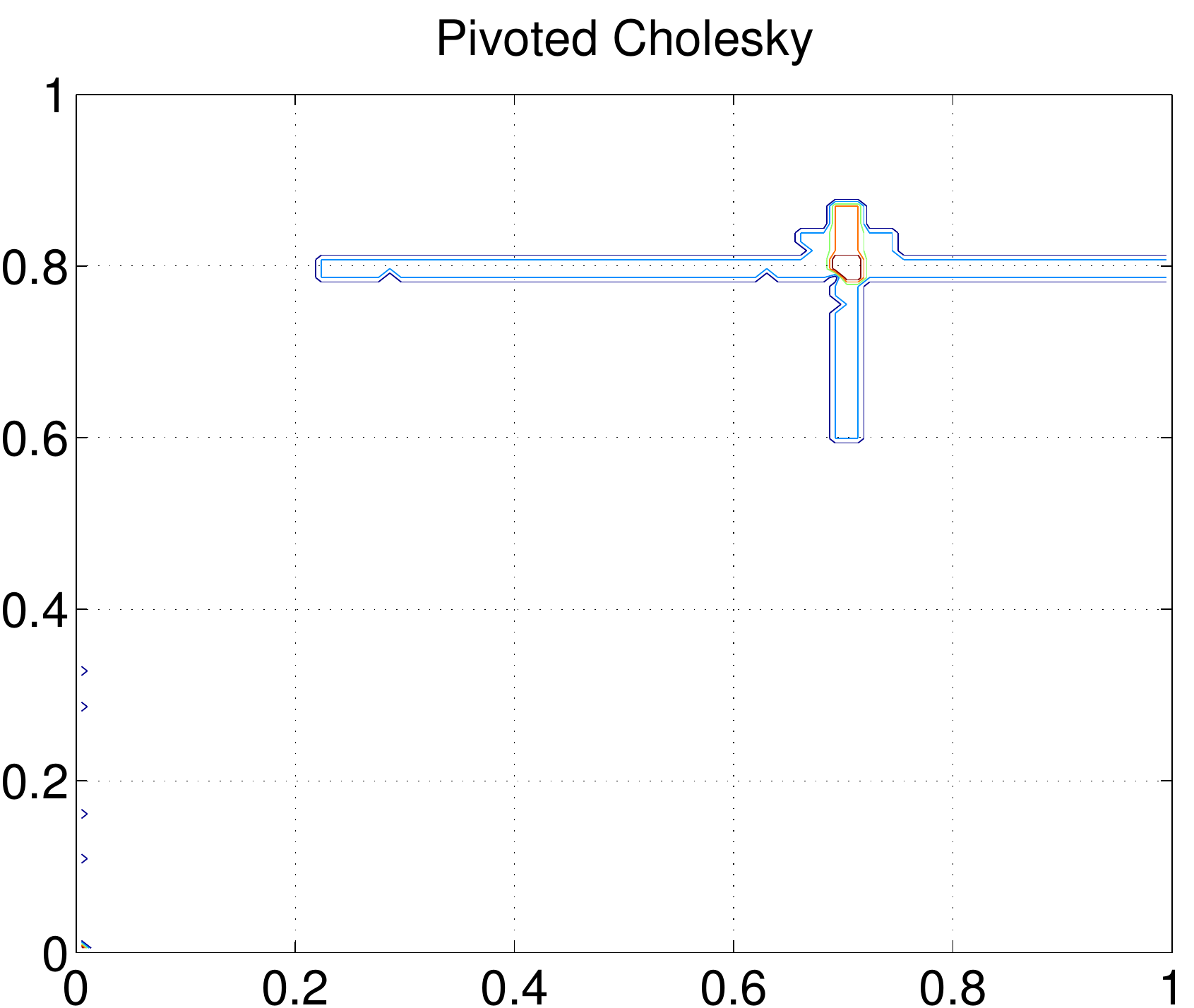}
\includegraphics[width = 0.15\textwidth,height = 0.15\textheight]{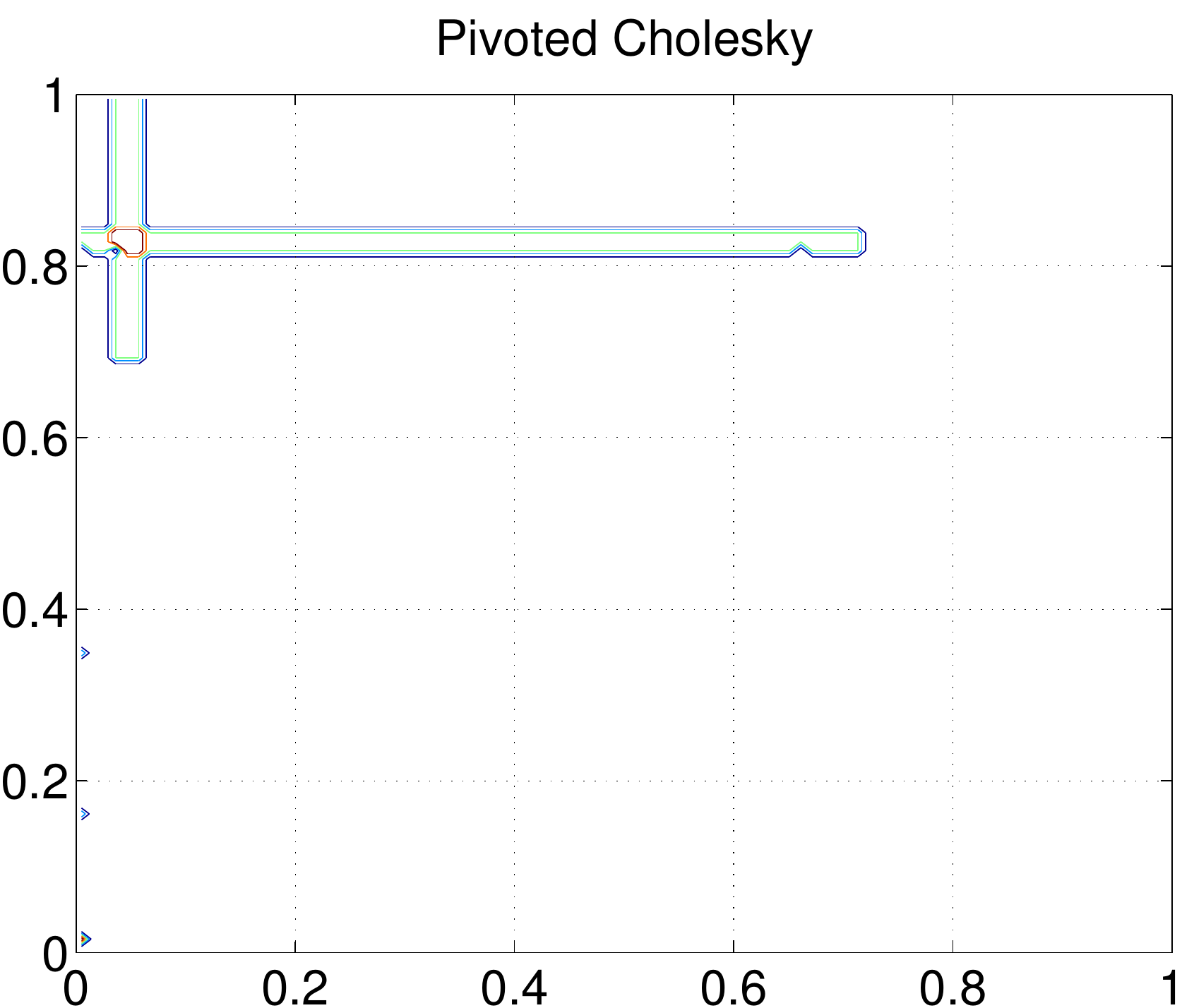}
\includegraphics[width = 0.15\textwidth,height = 0.15\textheight]{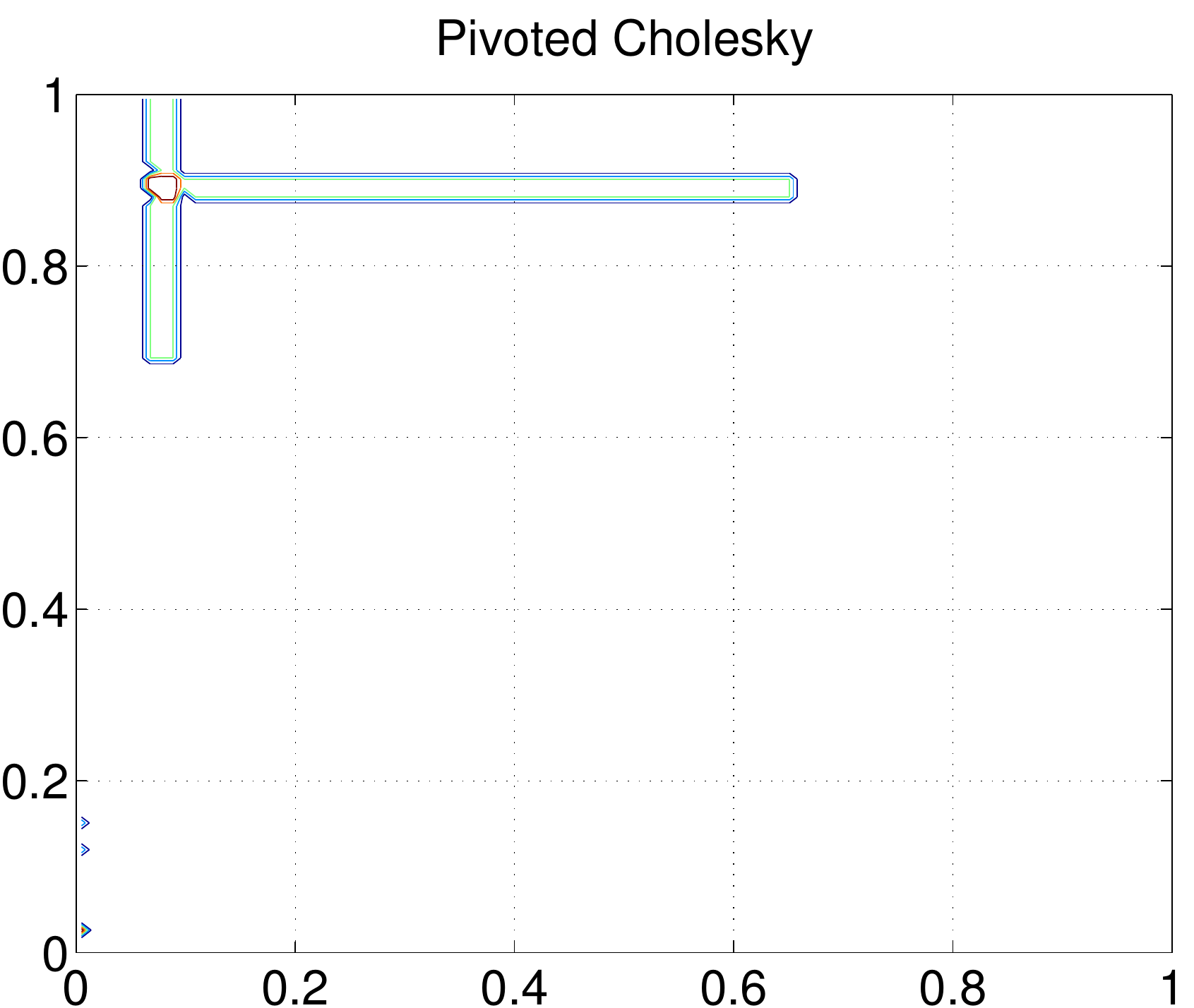}
\includegraphics[width = 0.15\textwidth,height = 0.15\textheight]{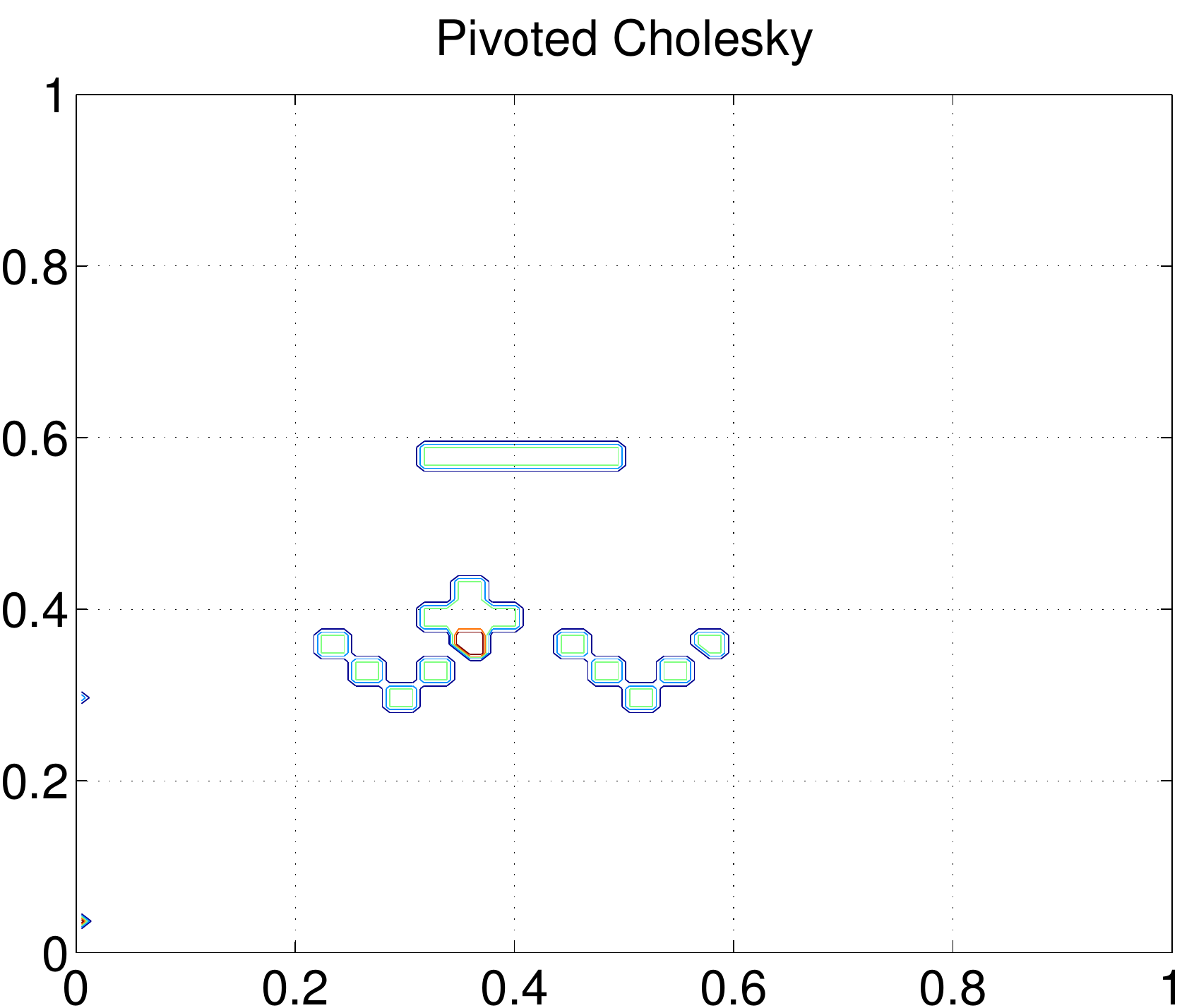}
\includegraphics[width = 0.15\textwidth,height = 0.15\textheight]{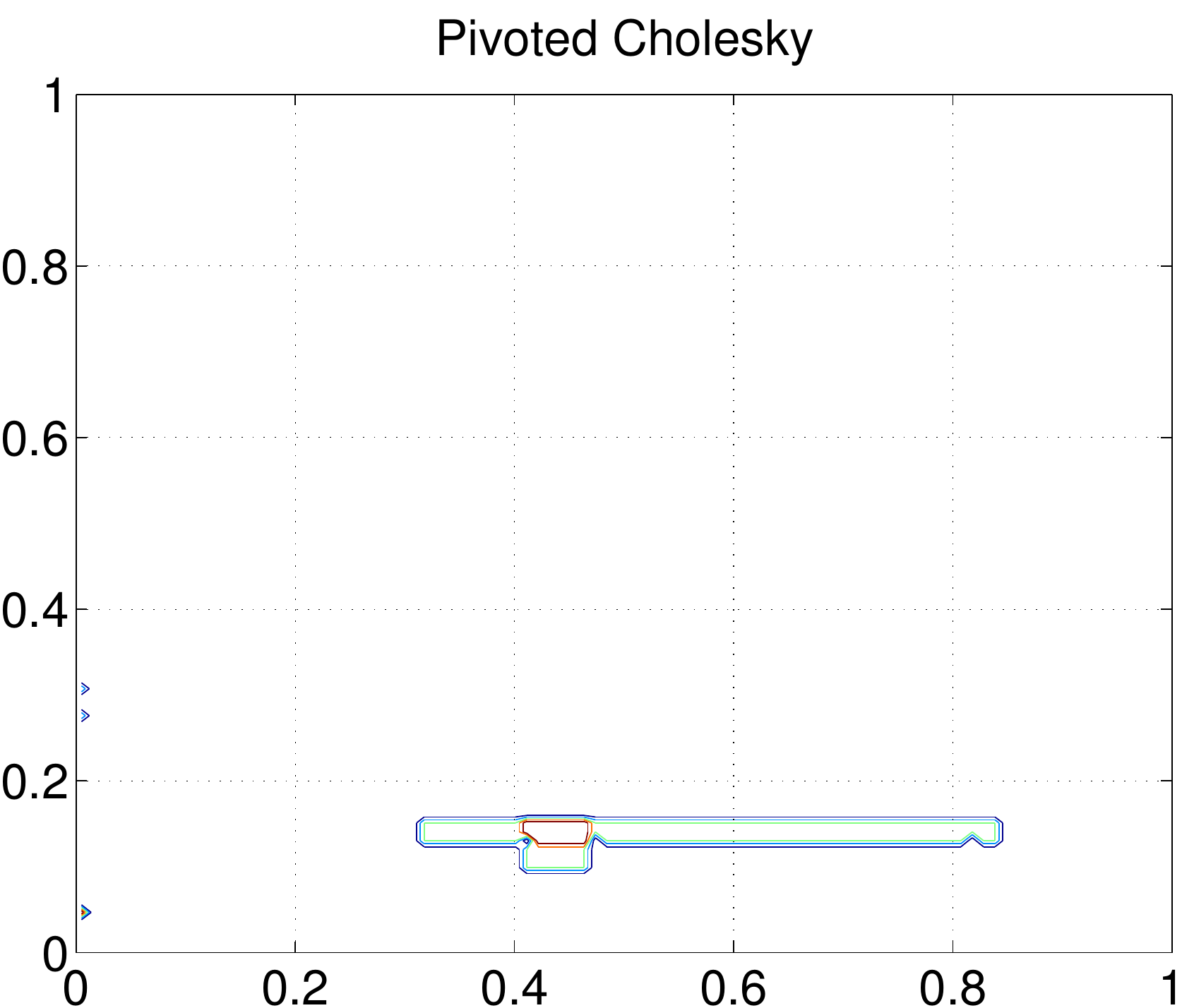} 
\includegraphics[width = 0.15\textwidth,height = 0.15\textheight]{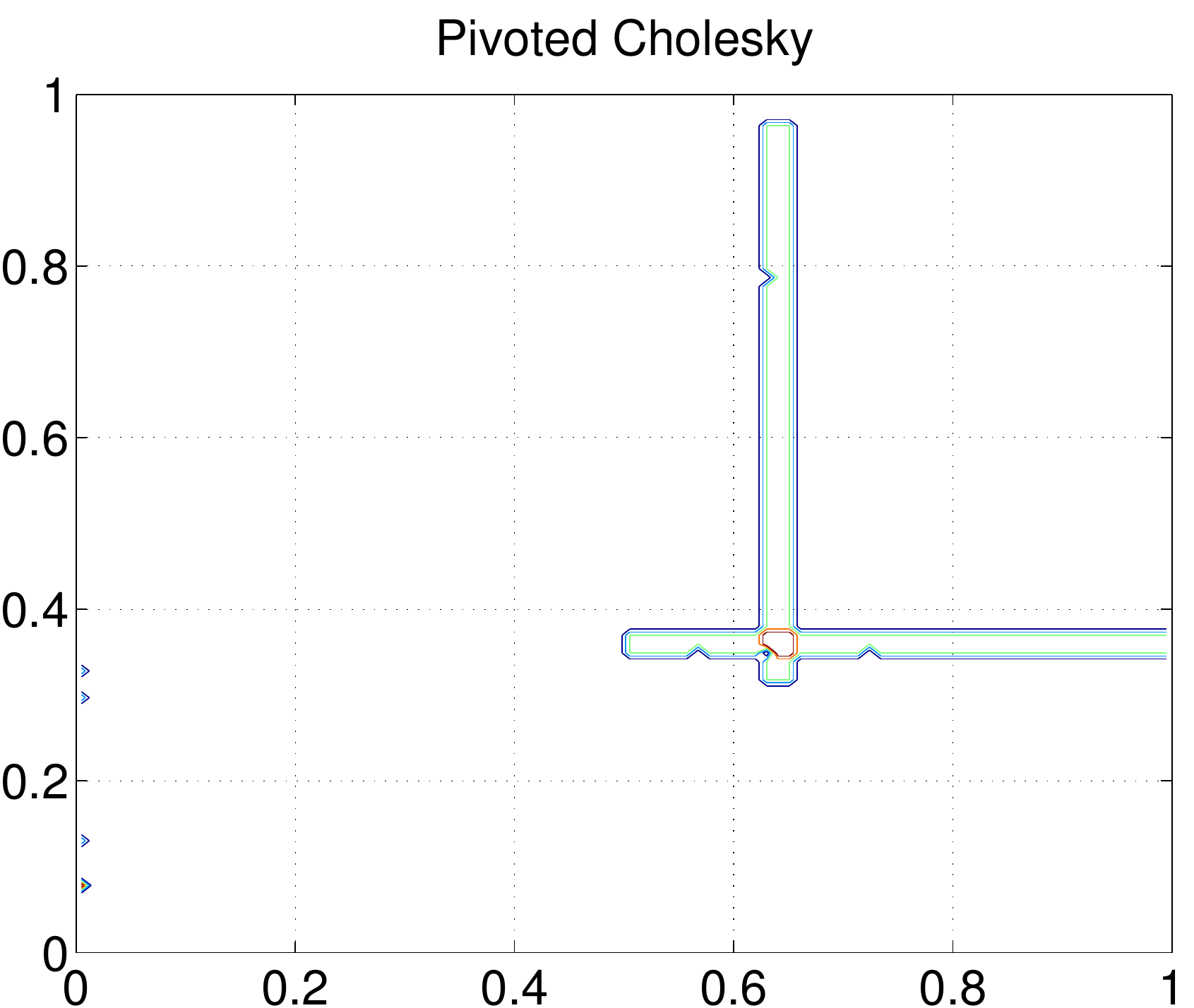}
\caption{First 6 eigenvectors (H=1); First 6 intrinsic sparse modes (H=1/8, regular-sparse); First 6 intrinsic sparse modes (H=1/32; not regular-sparse); First 6 modes from the pivoted Cholesky decomposition of $A$}\label{fig:2d_compare1}
\end{figure}

We use Lemma~\ref{lem:necessaryD} to check when the regular-sparse property fails. It turns out that for $H\ge 1/16$ the regular-sparse property holds and for $H \le 1/24$ it fails. The eigenvalues of $\Lambda$'s for $H = 1, 1/8$ and $1/32$ are plotted in Figure~\ref{fig:2d_compare2} on the left side. The eigenvalues of $\Lambda$ when $H=1$ are all 1's, since every eigenvector has patch-wise sparseness 1 in this trivial case. The eigenvalues of $\Lambda$ when $H=1/16$ are all integers, corresponding to patch-wise sparseness of the intrinsic sparse modes. The eigenvalues of $\Lambda$ when $H=1/32$ are not all integers any more, which indicates that this partition is not regular-sparse with respect to $A$ according to Lemma~\ref{lem:necessaryD}.

\begin{figure}
\centering
\includegraphics[width = 0.45\textwidth]{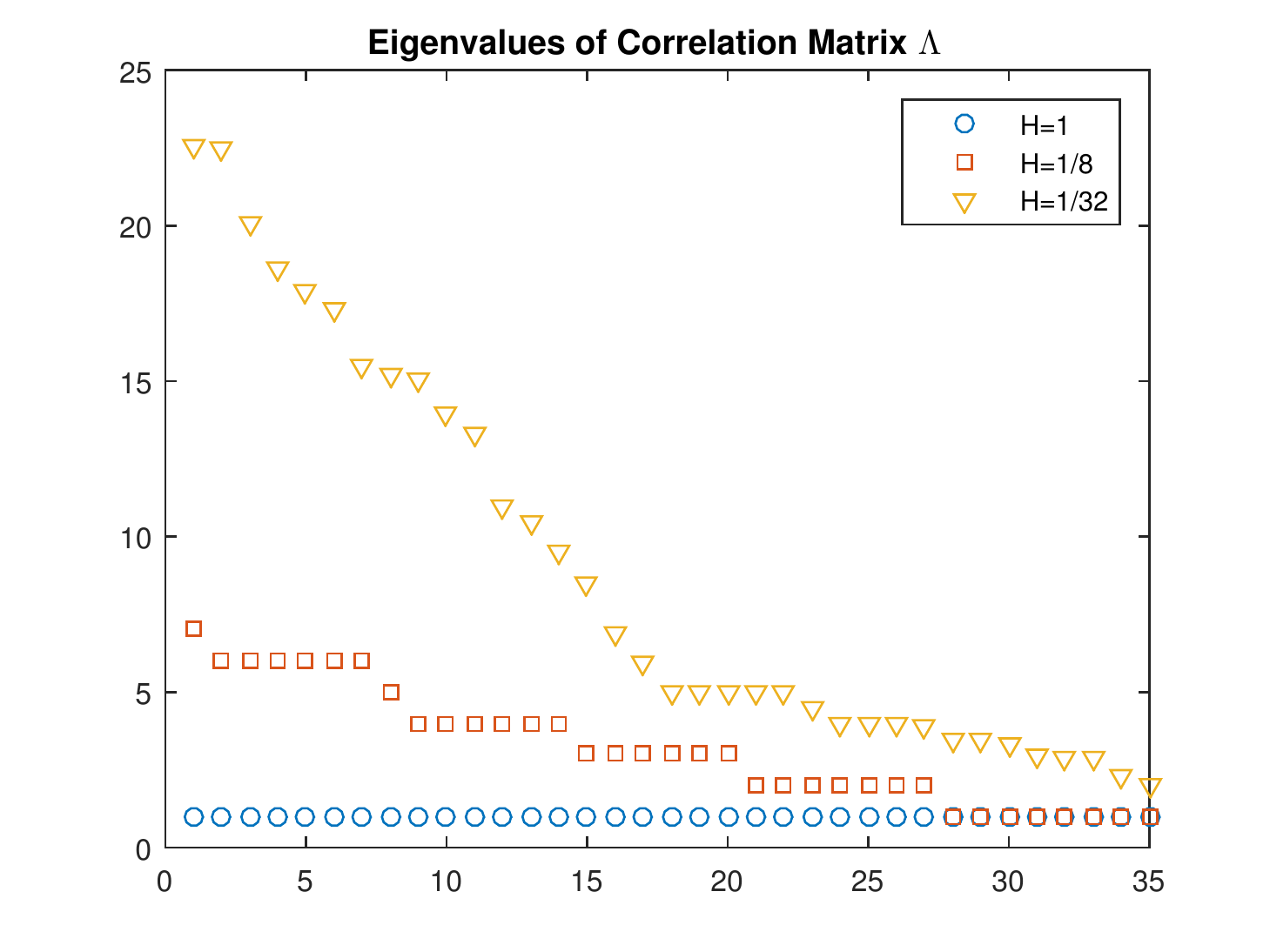}
\includegraphics[width = 0.45\textwidth]{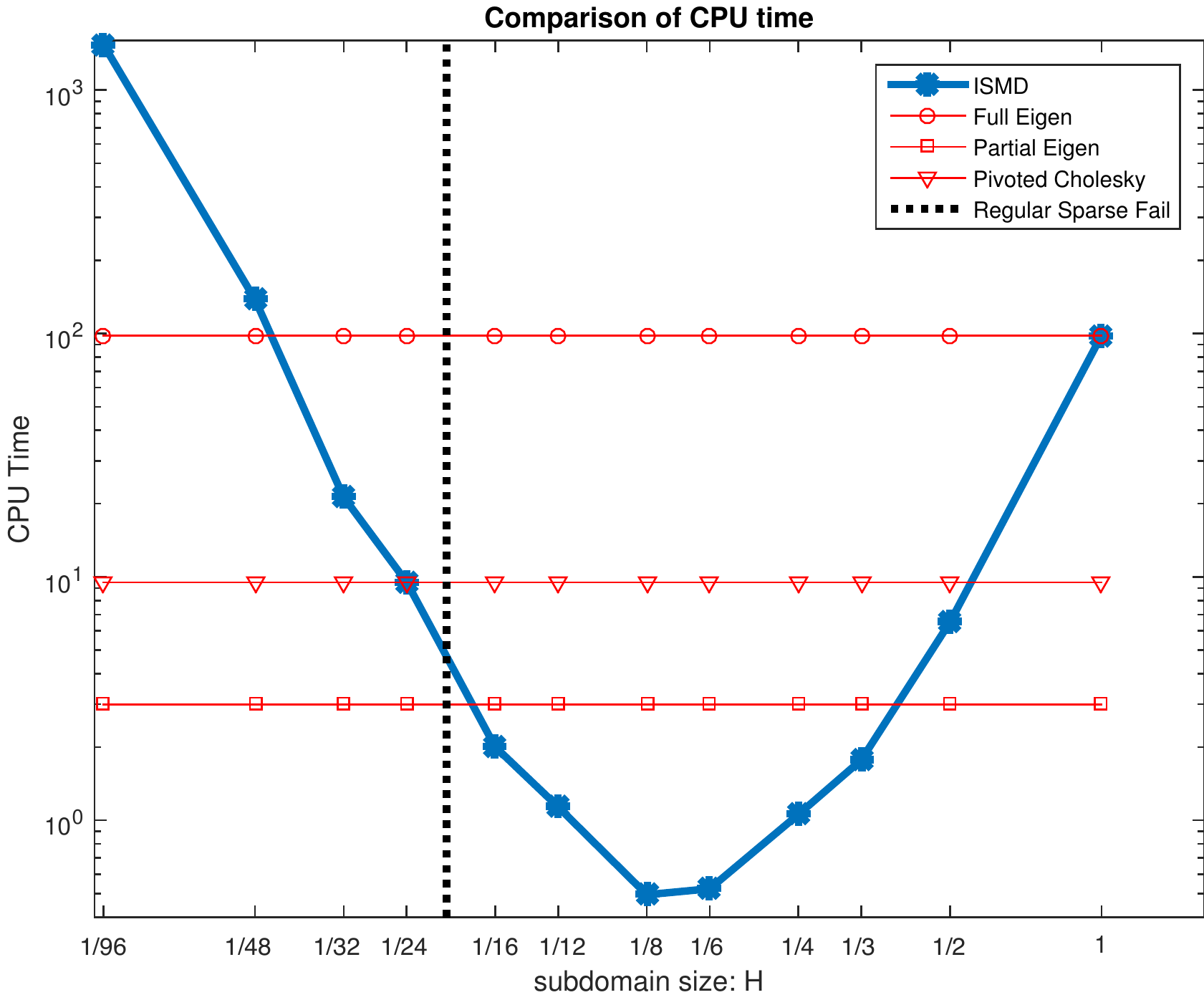}
\caption{Left: Eigen values of $\Lambda$ for $H = 1, 1/8, 1/32$. By Lemma~\ref{lem:necessaryD}, the partition with $H=1/32$ is not regular-sparse. Right: CPU time (unit: second) for different partition sizes $H$.}\label{fig:2d_compare2}
\end{figure}

The consistency of the ISMD (Theorem~\ref{thm:ISMDconsistency}) manifests itself from $H=1$ to $H=1/8$ in Figure~\ref{fig:2d_compare1}. As Theorem~\ref{thm:ISMDconsistency} states, the supports of the intrinsic sparse modes on a coarser partition contain those on a finer partition. In other words, we get sparser modes when we refine the partition as long as the partition is regular-sparse. After checking all the 35 recovered modes, we see that the intrinsic sparse modes get sparser and sparser from $H=1$ to $H=1/6$. When $H \le 1/6$, all the 35 intrinsic sparse modes are identifiable with each other and these intrinsic modes remain the same for $H = 1/8, 1/12, 1/16$. When $H\le 1/24$, the regular-sparse property fails, but we still get the sparsest decomposition (the same decomposition with $H=1/8$). For $H=1/32$, we exactly recover 33 intrinsic sparse modes but get the other two mixed together. This is not surprising since the partition is not regular-sparse any more. For $H=1/48$, we exactly recover all the 35 intrinsic sparse modes again. Table~\ref{tab:exactrecover} lists the cases when we exactly recover the sparse decomposition~\eqref{eqn:example2dsparseA} from which we construct $A$. From Theorem~\ref{thm:ISMD}, this decomposition is the optimal sparse decomposition (defined by problem~\eqref{opt:minsparseness}) for $H \ge 1/16$. We suspect that this decomposition is also optimal in the $L^0$ sense (defined by problem~\eqref{opt:minsupport}).

\begin{table}[ht]
\centering 
\begin{tabular}[ht]{c | c | c | c | c | c | c | c | c | c | c | c | c} 
\hline\hline 
   $H$      & 1 & 1/2 & 1/3 & 1/4 & 1/6 & 1/8 & 1/12 & 1/16 & 1/24 & 1/32 & 1/48 & 1/96 \\ [0.5ex] 
\hline 
regular-sparse & \ding{52}  & \ding{52}  & \ding{52}  & \ding{52}  & \ding{52}  & \ding{52} & \ding{52}  & \ding{52}  & \ding{55}  & \ding{55}  & \ding{55}  & \ding{55}  \\
Exact Recovery & \ding{55}  & \ding{55}  & \ding{55}  & \ding{55}  & \ding{52}  & \ding{52} & \ding{52}  & \ding{52}  & \ding{52}  & \ding{55}  & \ding{52}  & \ding{55} \\
\hline 
\end{tabular}
\caption{Cases when the ISMD gets exact recovery of the sparse decomposition~\eqref{eqn:example2dsparseA}} 
\label{tab:exactrecover} 
\end{table}

The CPU time of the ISMD for different $H$'s is showed in Figure~\ref{fig:2d_compare2} on the right side. We compare the CPU time for the full eigen decomposition \texttt{eig(A)}, the partial eigen decomposition \texttt{eigs(A, 35)}, and the pivoted Cholesky decomposition. For $1/16 \le H \le 1/3$, the ISMD is even faster than the partial eigen decomposition. Specifically, the ISMD is ten times faster for the case $H=1/8$. Notice that the ISMD performs the local eigen decomposition by \texttt{eig} in Matlab, and thus does not need any prior information about the rank $K$. If we also assume prior information on the local rank $K_m$, the ISMD would be even faster. The CPU time curve has a V-shape as predicted by our computational estimation~\eqref{eqn:cost3}. The cost first decreases as we refine the mesh because the cost of local eigen decompositions decreases. Then it increases as we refine further because there are $M$ joint diagonalization problem~\eqref{opt:jointDm} to be solved. When $M$ is very large, i.e., $H=1/48$ or $H=1/96$, the 2 layer for-loops from Line 5 to Line 10 in Algorithm~\ref{alg:ISMD} become extremely slow in Matlab. When implemented in other languages that have little overhead cost for multiple for-loops, e.g. C or C++, the actual CPU time for $H=1/96$ would be roughly the same with the CPU time for the pivoted Cholesky decomposition.

\subsection{Comparison with the semi-definite relaxation of sparse PCA}\label{subsec:compareL1ISMD}
In comparison, the semi-definite relaxation of sparse PCA (Problem \eqref{eqn:eigvariationalL1_relax}) gives poor results in this example. We have tested several values of $\mu$, and found that parameter $\mu = 0.0278$ gives the best performance in the sense that the first 35 eigenvectors of $W$ capture most variance in $A$. The first 35 eigenvectors of $W$, shown in Figure~\ref{fig:Weigenvectors}, explain $95\%$ of the variance, but all of them mix several intrinsic modes like what the eigen decomposition does in Figure~\ref{fig:2d_compare1}. For this example, it is not clear how to choose the best 35 columns out of all the 9216 columns in $W$, as proposed in \cite{LaiLuOsher_2014}. If columns of $W$ are ordered by $l^2$ norm in descending order, the first 35 columns can only explain $31.46\%$ of the total variance, although they are indeed localized. Figure~\ref{fig:Wcolumns} shows the first 6 columns of $W$ with largest norms.

We also compare the CPU time of the ISMD with that of the semi-definite relaxation of sparse PCA~\eqref{eqn:eigvariationalL1_relax}. The sparse PCA is computed using the split Bregman iteration. Each split Bregman iteration requires an eigen-decomposition of a matrix of size $N\times N$. In comparison, the ISMD is cheaper than a single eigen-decomposition, as shown in Figure~\ref{fig:2d_compare2}. It has been observed that the split Bregman iteration converges linearly. If we set the error tolerance to be $O(\delta)$, the number of iterations needed is about $\Or(1/\delta)$. In our implementation, we set the error tolerance to be $10^{-3}$ and we need to perform 852 iterations. Overall, to solve the convex optimization problem~\eqref{eqn:eigvariationalL1_relax} with split Bregman iteration takes over 1000 times more CPU time than the ISMD with $H=1/8$. 

It is expected that the ISMD is much faster than sparse PCA since the sparse PCA needs to perform many times of partial eigen decomposition to solve problem~\eqref{eqn:eigvariationalL1_relax}, but the ISMD has computational cost comparable to one single partial eigen decomposition. As we discussed in Section~\ref{subsec:connections}, sparse PCA is designed and works reasonably well for problem~\eqref{eqn:matrix_factorize}. When sparse PCA is applied to our sparse decomposition problem~\eqref{opt:minsparseness}, it does not work well. However, it is not always the case that the ISMD gives a sparser and more accurate decomposition of $A$ than sparse PCA. In subsection~\ref{sec:exponential}, we will present another example in which sparse PCA gives a better performance than the ISMD.

\begin{figure}[htbp]
\begin{center}
\includegraphics[width = 0.15\textwidth,height = 0.15\textheight]{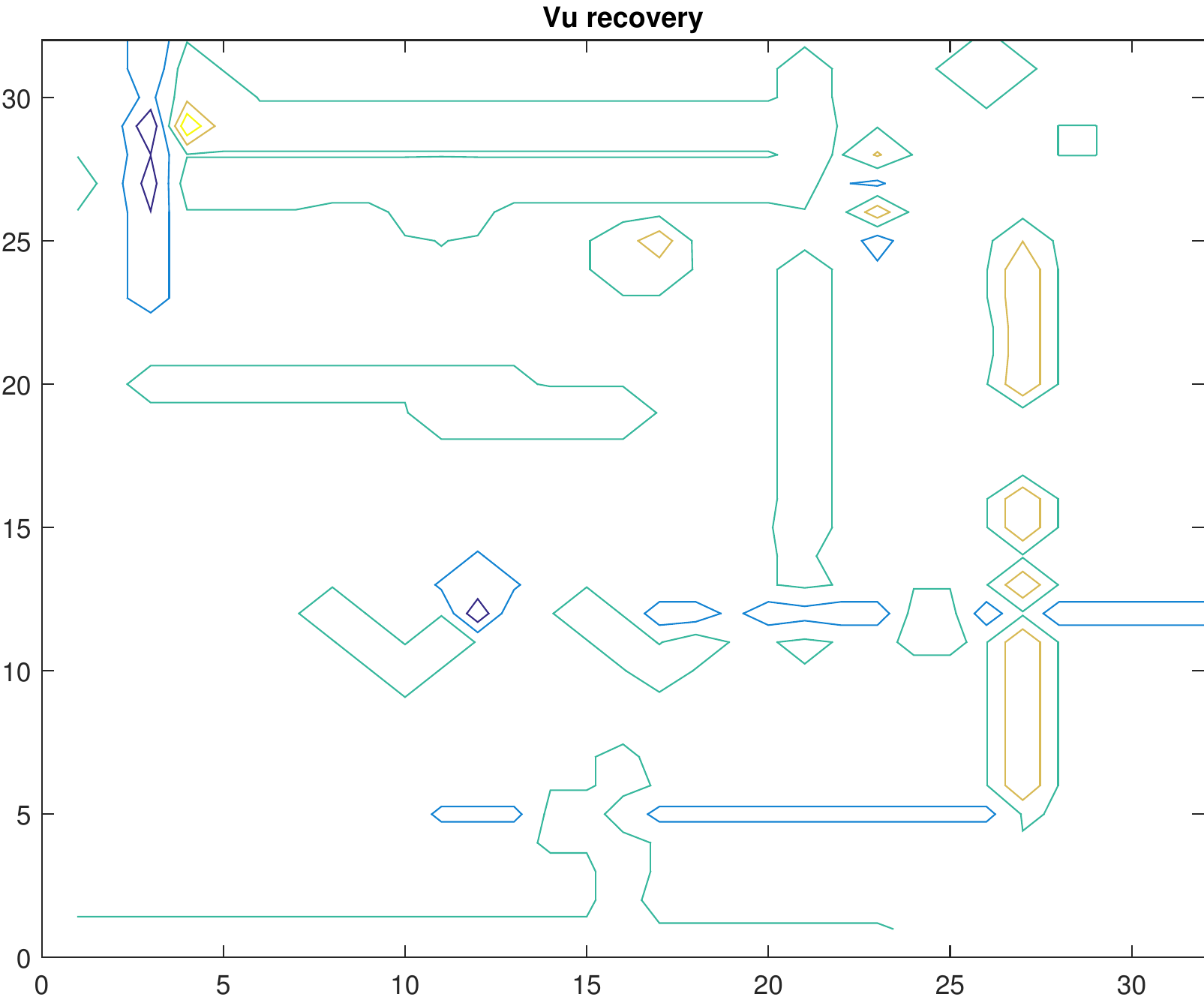}
\includegraphics[width = 0.15\textwidth,height = 0.15\textheight]{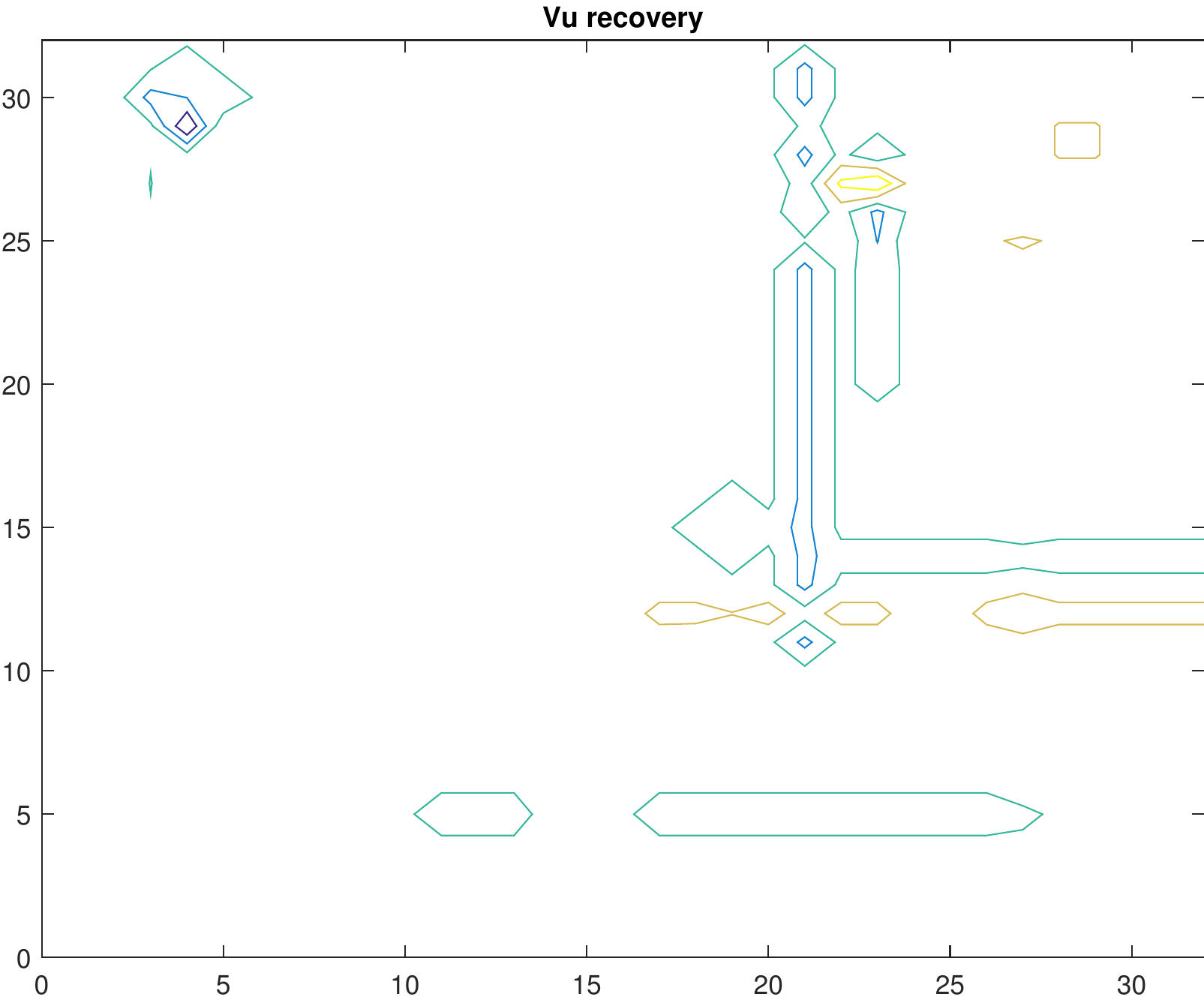}
\includegraphics[width = 0.15\textwidth,height = 0.15\textheight]{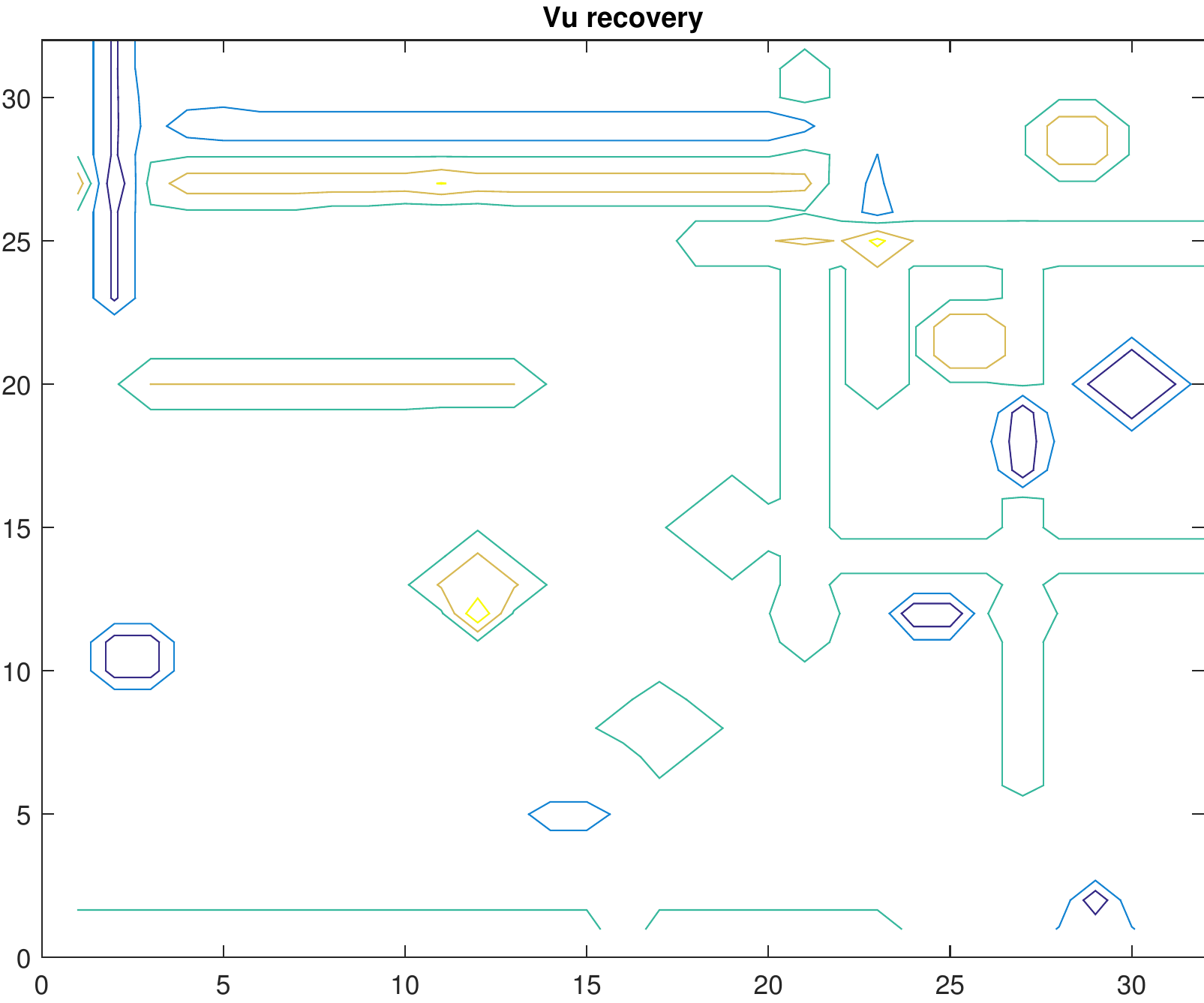}
\includegraphics[width = 0.15\textwidth,height = 0.15\textheight]{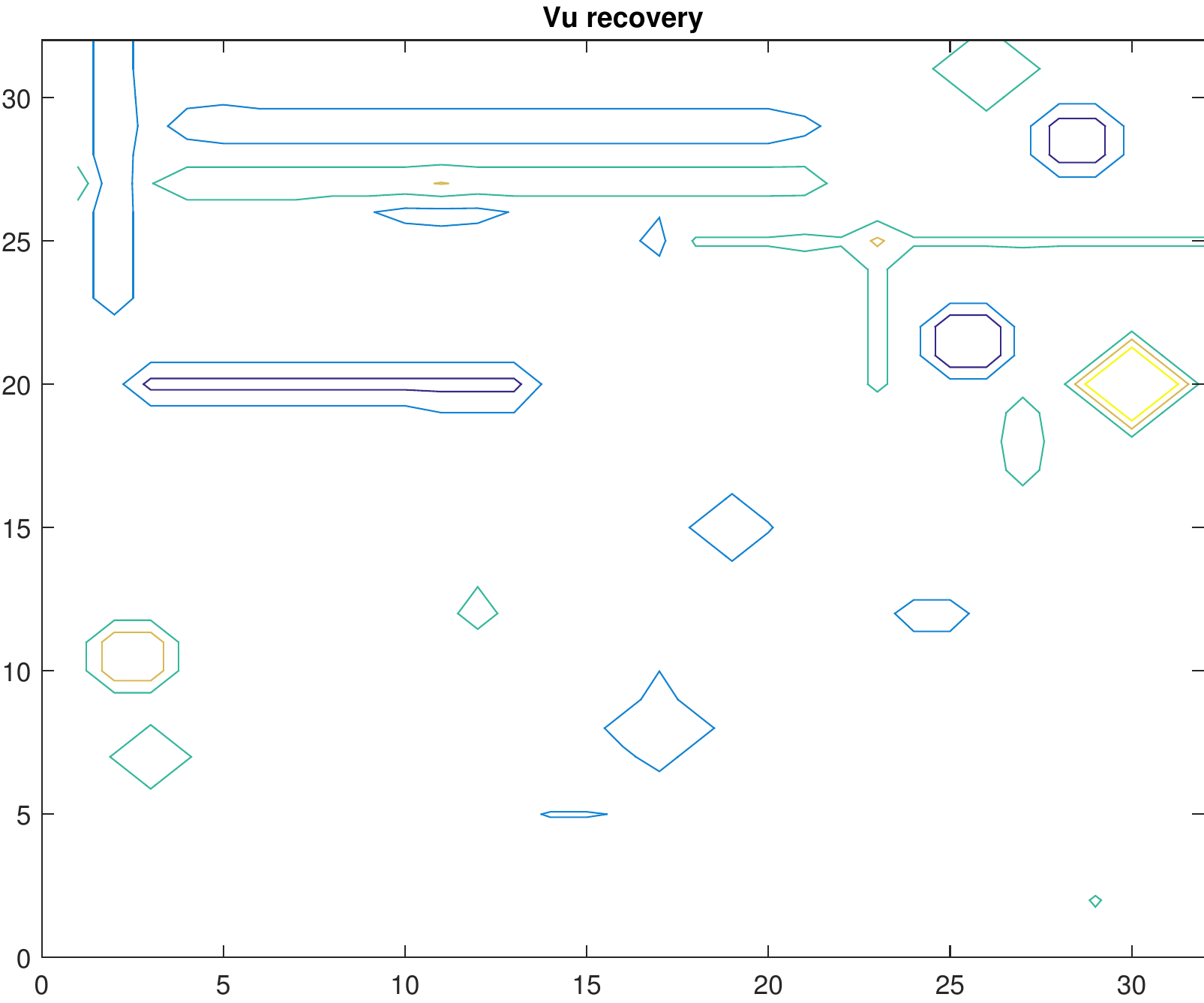}
\includegraphics[width = 0.15\textwidth,height = 0.15\textheight]{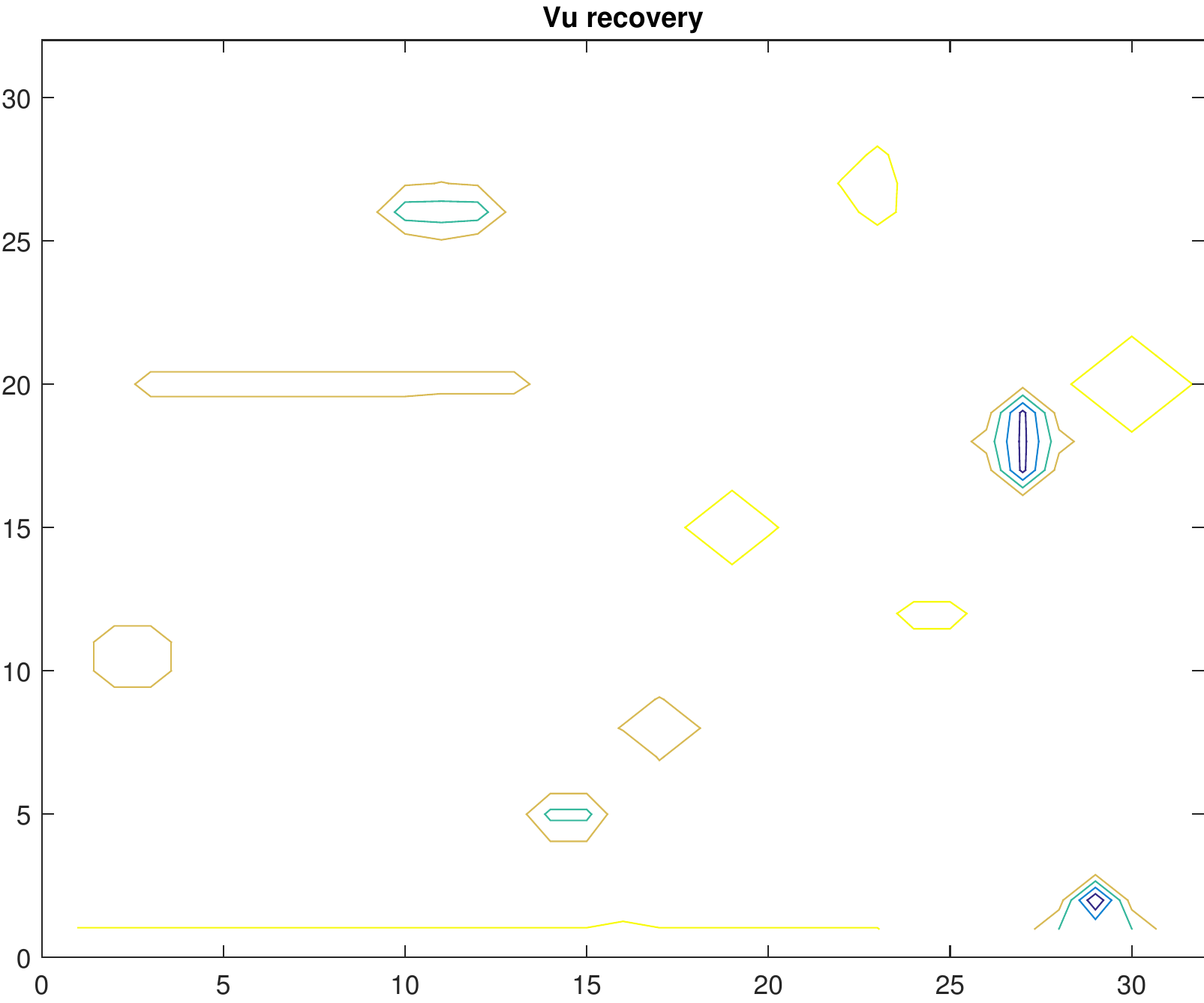}
\includegraphics[width = 0.15\textwidth,height = 0.15\textheight]{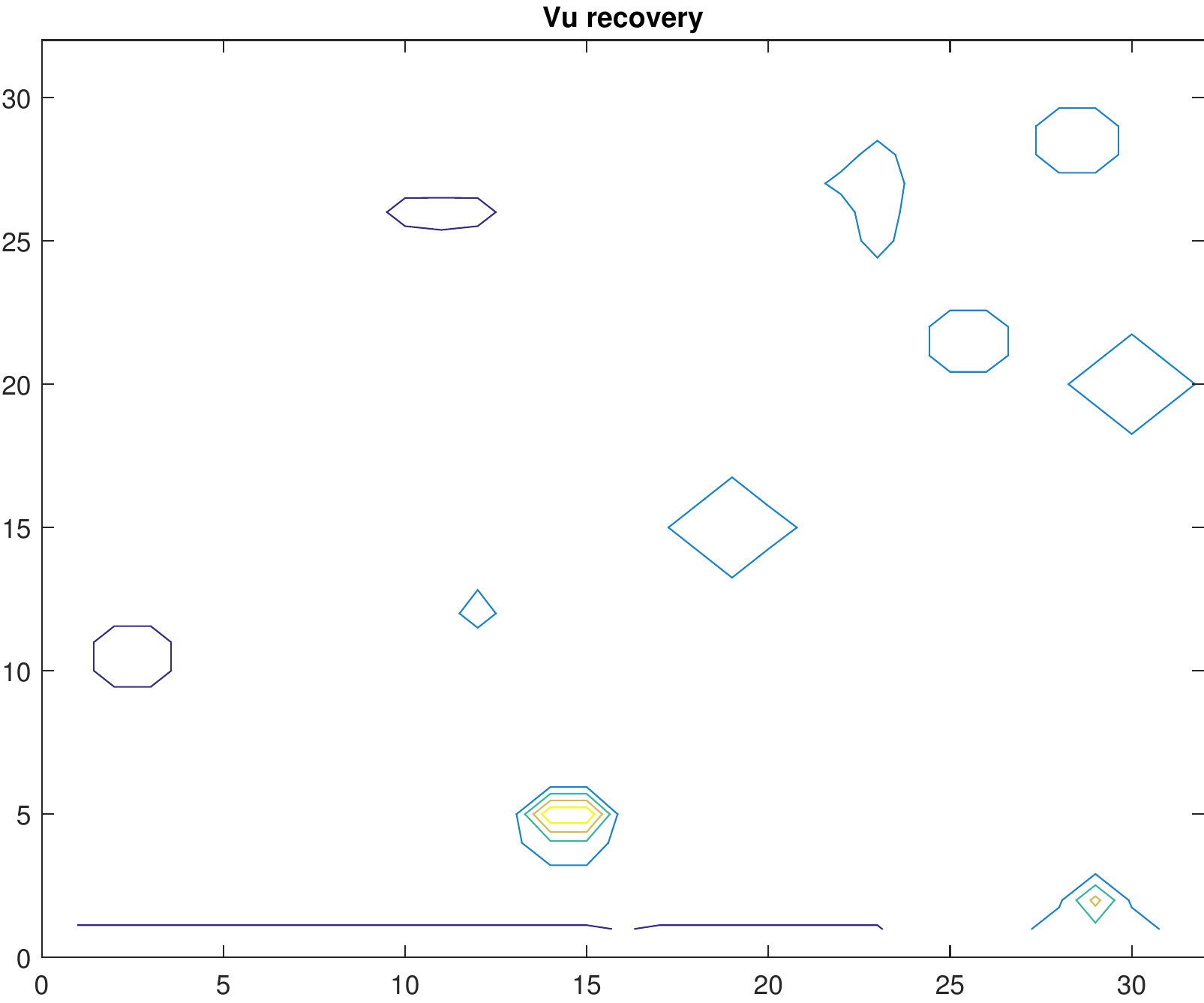}
\caption{Sparse PCA: The first 6 eigenvectors of $W$. The first 35 eigenvectors of $W$ explain $95\%$ of the variance.} \label{fig:Weigenvectors}
\end{center}
\end{figure}

\begin{figure}[htbp]
\begin{center}
\includegraphics[width = 0.15\textwidth,height = 0.15\textheight]{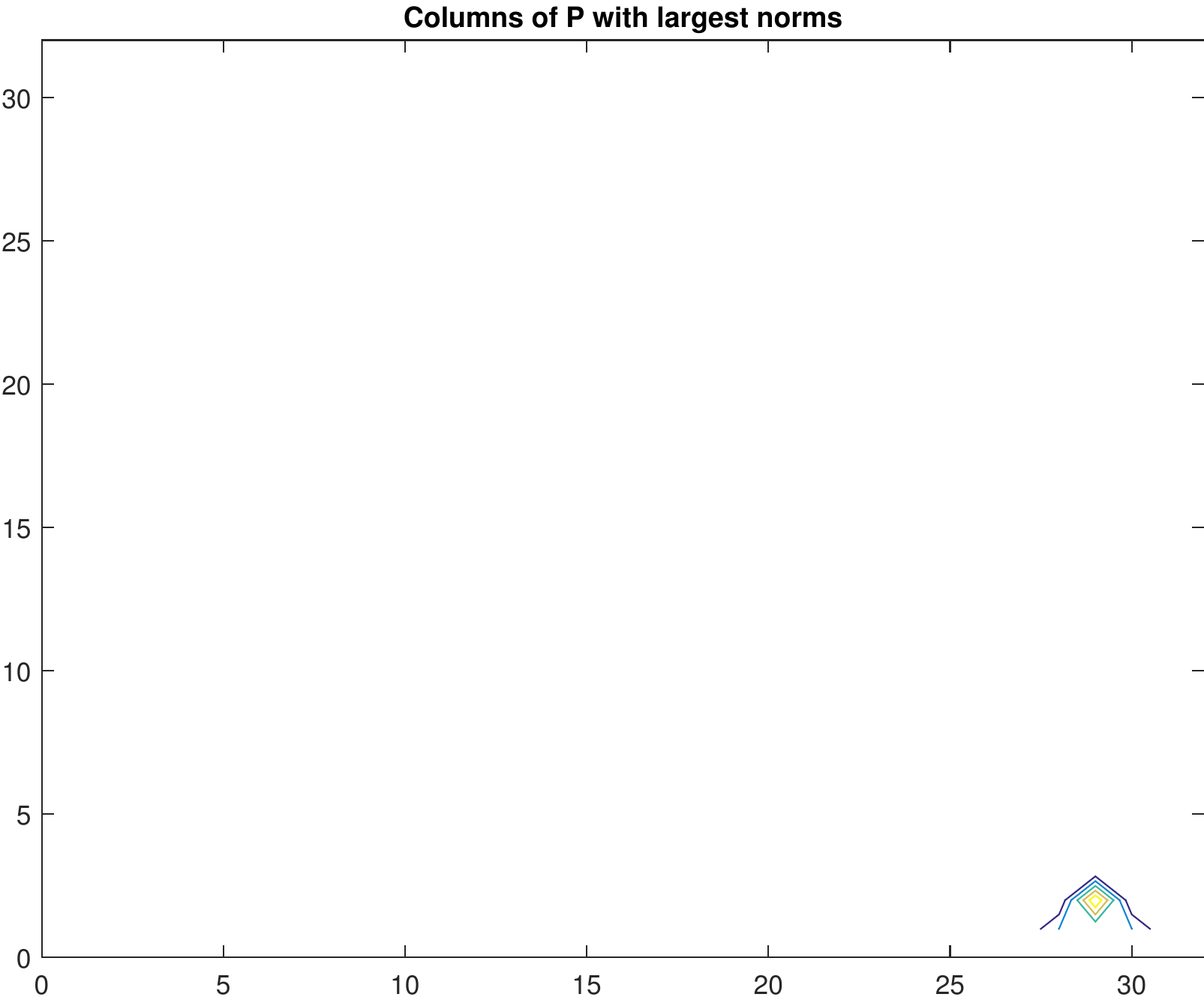}
\includegraphics[width = 0.15\textwidth,height = 0.15\textheight]{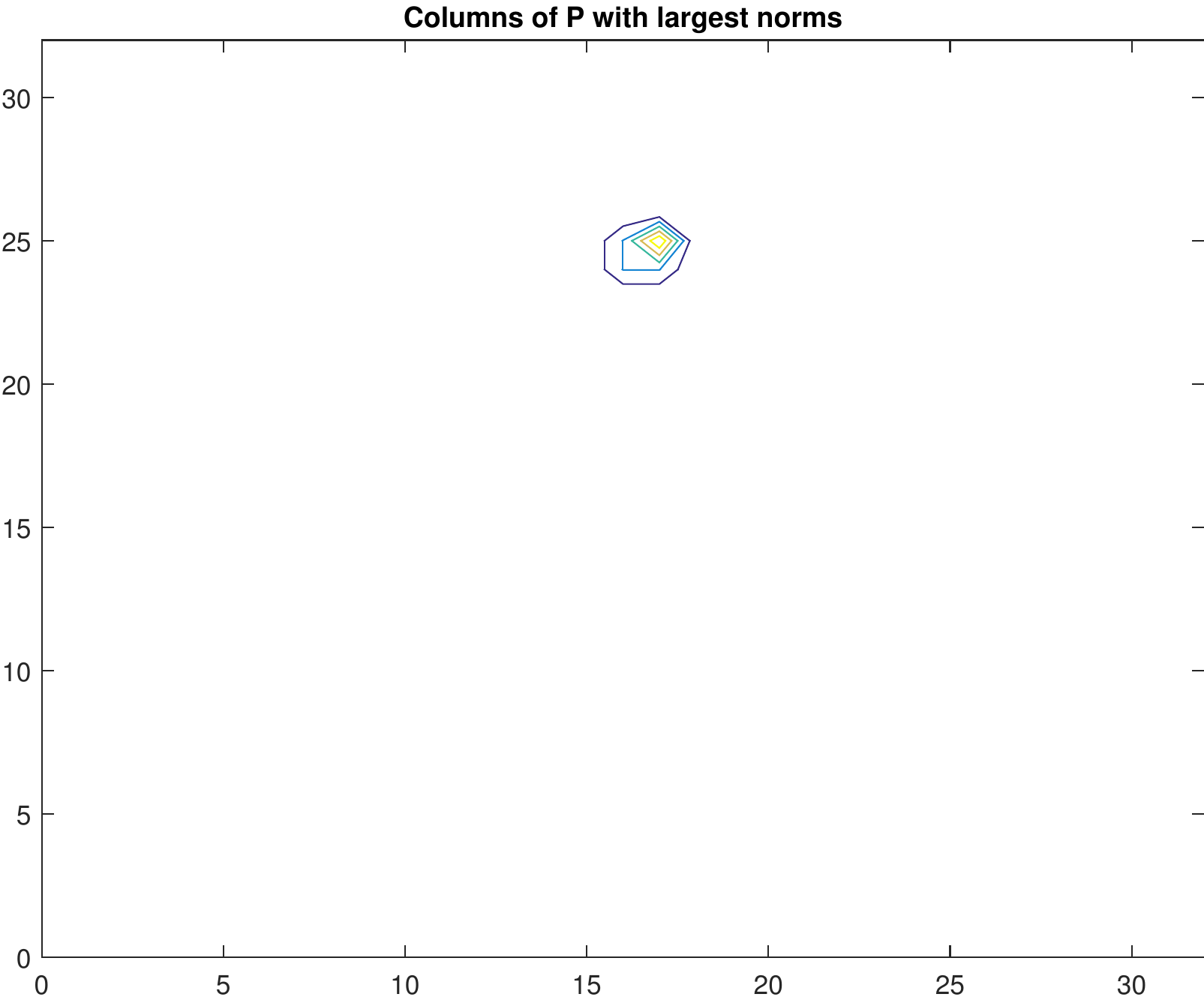}
\includegraphics[width = 0.15\textwidth,height = 0.15\textheight]{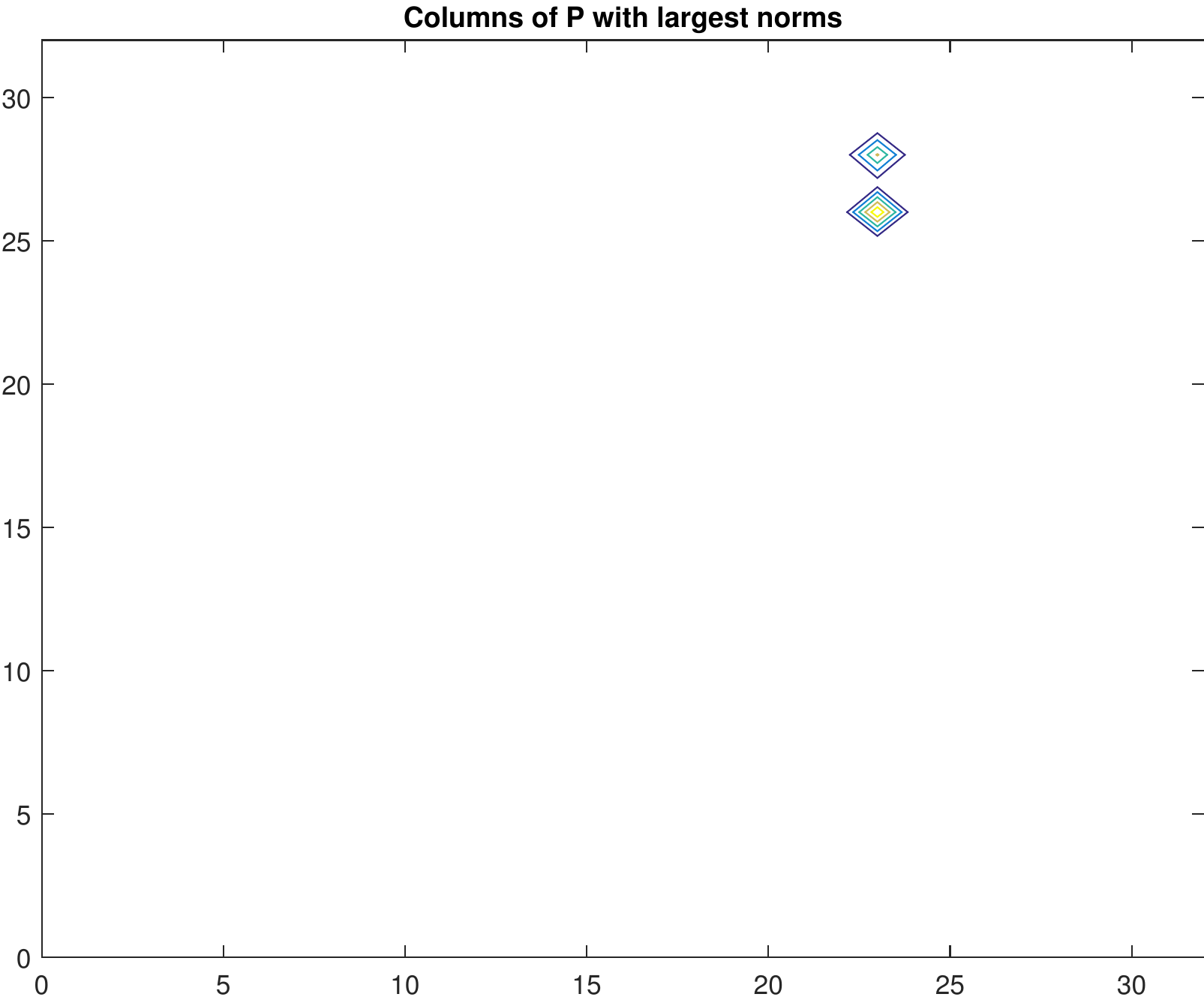}
\includegraphics[width = 0.15\textwidth,height = 0.15\textheight]{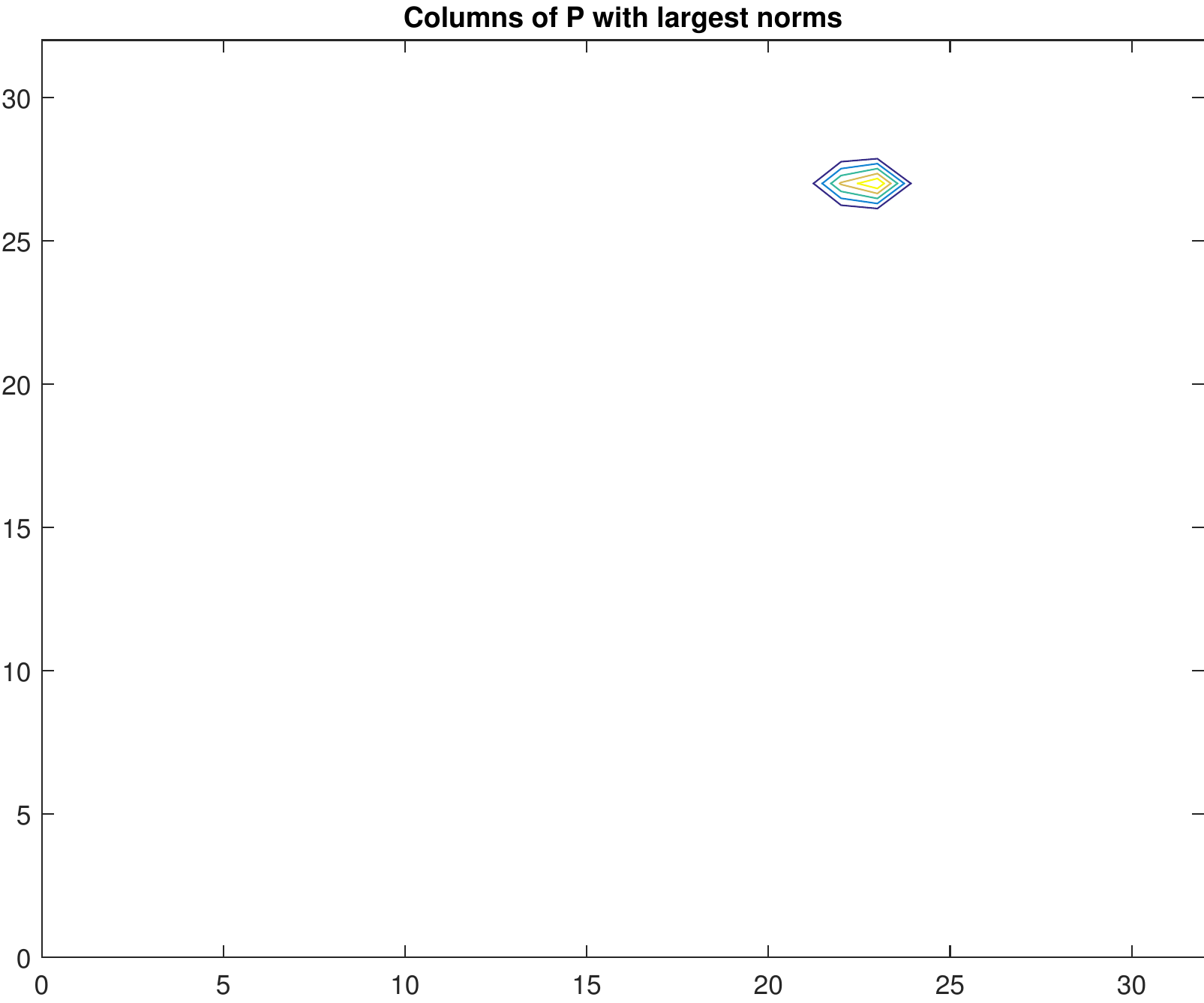}
\includegraphics[width = 0.15\textwidth,height = 0.15\textheight]{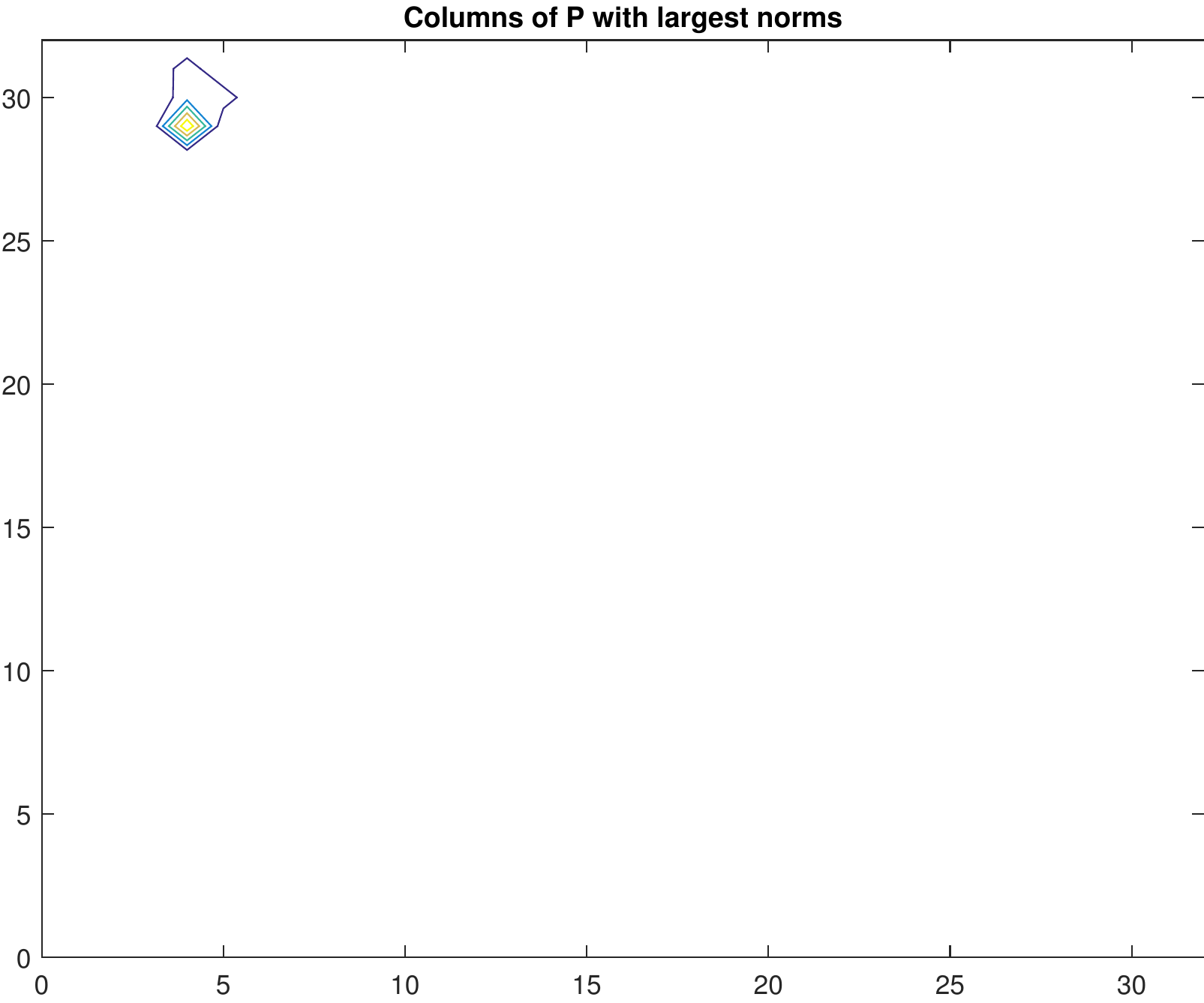}
\includegraphics[width = 0.15\textwidth,height = 0.15\textheight]{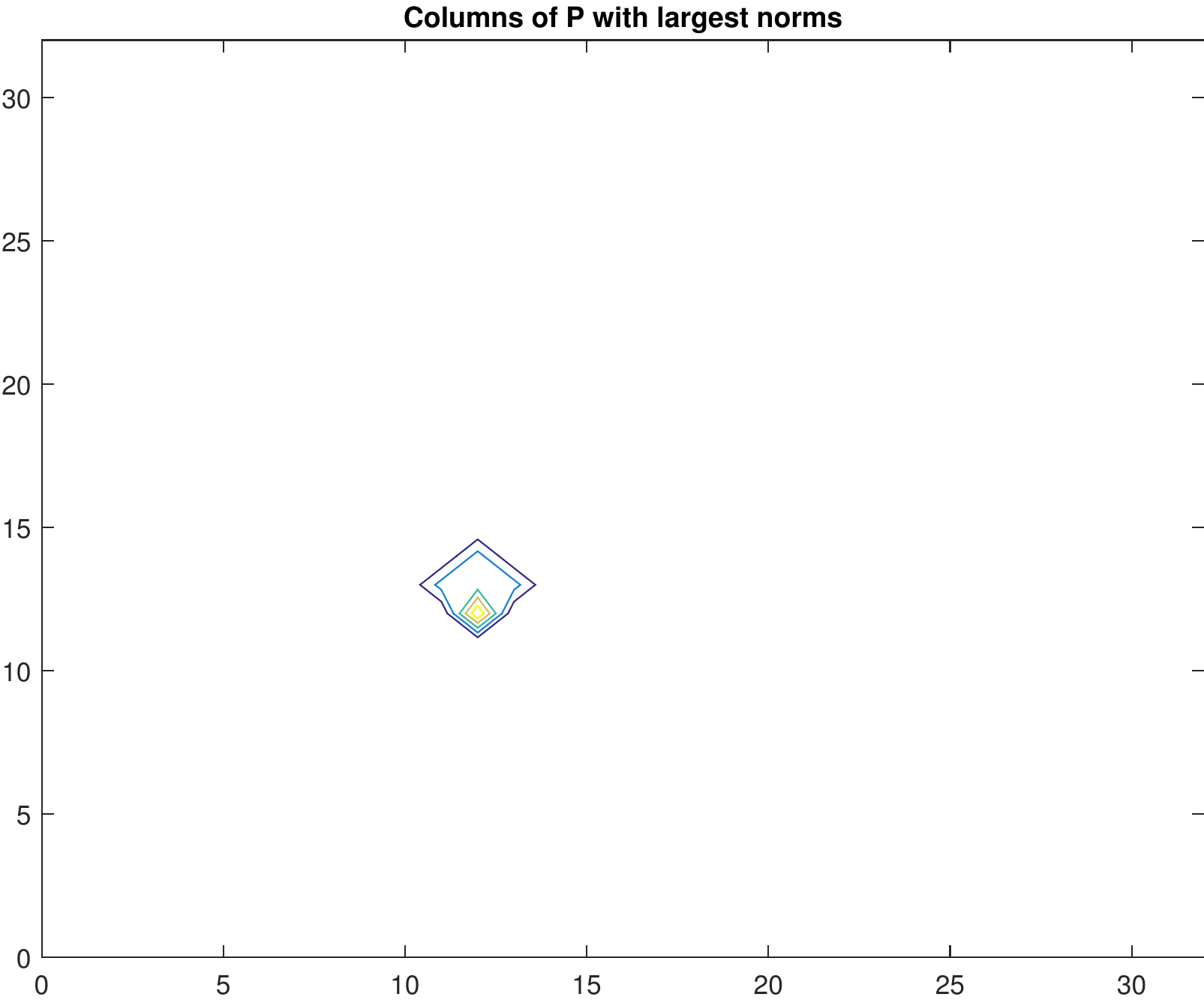}
\caption{Sparse PCA: 6 columns of $W$ with largest norms. The first 35 columns with largest norms only explain $31.46\%$ of the variance.} \label{fig:Wcolumns}
\end{center}
\end{figure}

We point out that unlike the structured sparse PCA~\cite{jenatton2010structured}, the ISMD does not take advantage of the specific (rectangular) structure of the physical modes. The ``smiling face" mode shows that the ISMD can recover non-convex and non-local sparse modes. Therefore, the ISMD is expected to perform equally well even when there is no such structures known.

\subsection{ISMD with small noises}\label{exam:noise1}
In this subsection we report the test on the robustness of the ISMD. In the following test, we perturb the rank-35 covariance matrix $A \in \RR^{9216\times 9216}$ with a random matrix:
\begin{equation*}
\hat{A} = A + \epsilon \tilde{A}\,,
\end{equation*}
where $\epsilon$ is the noise level and $\tilde{A} $ is a random matrix with i.i.d. elements uniformly distributed in $[-1,1]$. Notice that all elements in $A$ are uniformly bounded by 1, and thus $\epsilon$ is a relative noise level. Since all the intrinsic sparse modes are identifiable with each other for the partition with patch size $H=1/16$, we perform ISMD with simple thresholding~\eqref{eqn:threshold1} on $\hat{A}$ to get the perturbed intrinsic sparse modes $\hat{G} \equiv [\hat{g}_1, \dots, \hat{g}_K]$. The $l^{\infty}$ and $l^2$ error are defined as below:
\begin{equation*}
Err_{\infty} = \max\limits_{k=1,2,\cdots,K} \frac{\|\hat{g}_k - g_k\|_2}{\|g_k\|_2}, \quad Err_{2} = \sqrt{\sum_{k=1}^K \frac{\|\hat{g}_k - g_k\|_2^2}{\|g_k\|_2^2}}.
\end{equation*}
Figure \ref{fig:stability} shows that $Err_\infty$ and $Err_2$ depend linearly on the noise level $\epsilon$, which validates our stability analysis in Section \ref{sec:perturbation}. 
\begin{figure}
\centering
\includegraphics[width = 0.45\textwidth,height = 0.2\textheight]{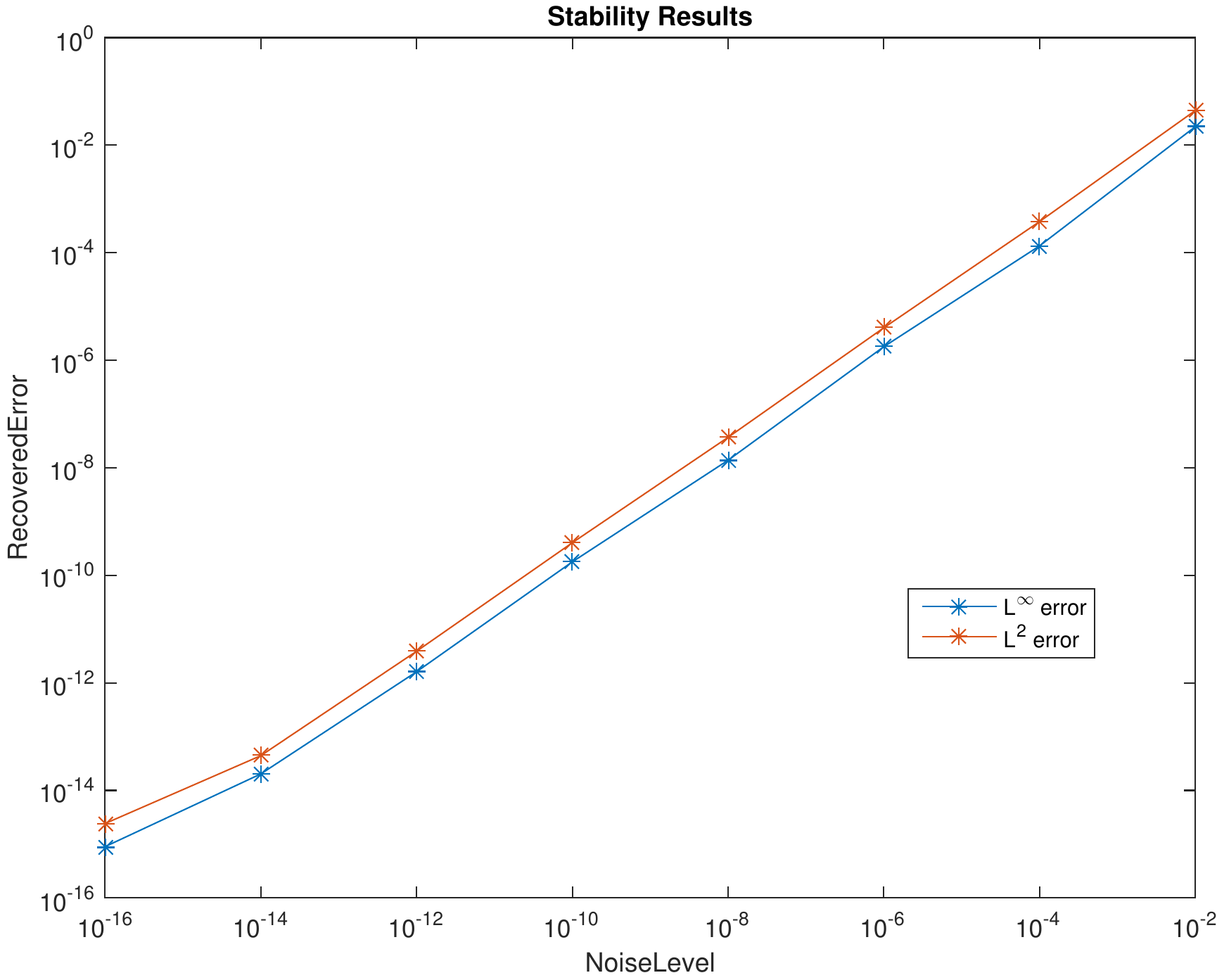}
\caption{$L^{\infty}$ and $L^2$ error increases linearly as the noise level increases.}\label{fig:stability}
\end{figure}

\subsection{Separate global and localized modes with ISMD}
In this example, we consider a more sophisticated model in which the media contain several global modes, i.e.,
\begin{equation} \label{eqn:example2dglobal}
\kappa(x,\omega) = \sum_{k = 1}^{K_1}\xi_k(\omega) f_k(x) + \sum_{k = 1}^{K_2}\eta_k(\omega) g_k(x), \quad x \in [0,1]^2,
\end{equation}
where $\{g_k\}_{k=1}^{K_2}$ and $\eta_k$ models the localized features like channels and inclusions as above, $\{f_k\}_{k=1}^{K_1}$ are functions with support on the entire domain $D=[0,1]^2$ and $\xi_k$ are the associated latent variables with global influence on the entire domain. Here, we keep the $35$ localized features as before, but add $2$ two global features with $f_1(x) = \sin(2 \pi x_1 + 4 \pi x_2)/2$, $f_2(x) = \sin(4 \pi x_1 + 2 \pi x_2)/2$. $\xi_1$ and $\xi_2$ are set to be uncorrelated and have variance 1. For this random medium, the covariance function is
\begin{equation} \label{eqn:example2dglobalA}
	a(x,y) = \sum_{k = 1}^{K_1} f_k(x) f_k(y) + \sum_{k = 1}^{K_2} g_k(x) g_k(y), \quad x, y \in [0,1]^2.
\end{equation}
As before, we discretize the covariance function with $h_x = h_y = 1/96$ and represent $A$ by a matrix of size $9216 \times 9216$. One sample of the random field (and the bird's-eye view) and the covariance matrix are plotted in Figure~\ref{fig:2dglobal}. It can be seen that the covariance matrix is dense now because we have two global modes.
\begin{figure}[h]
\centering
\includegraphics[width = 0.30\textwidth]{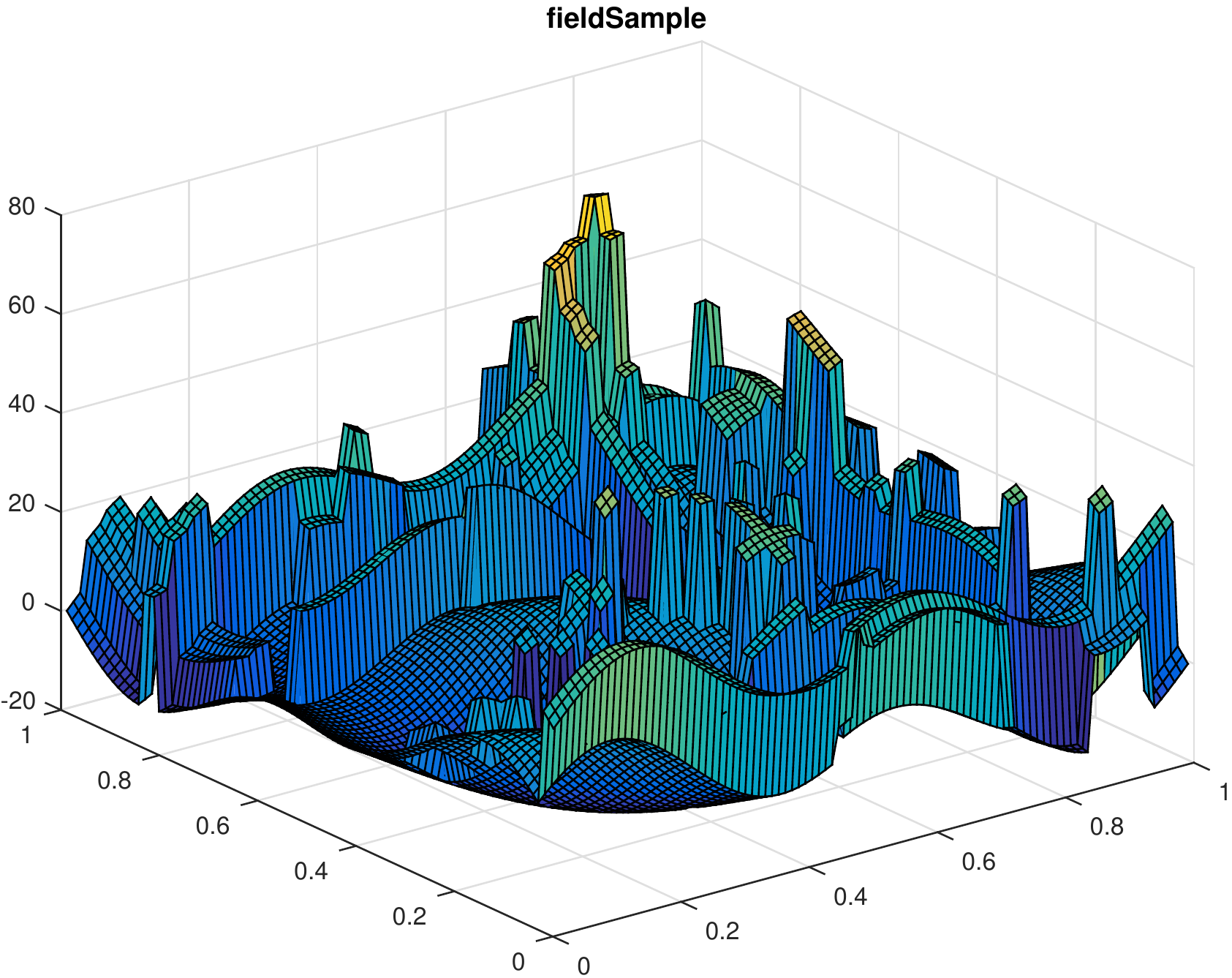}
\includegraphics[width = 0.30\textwidth]{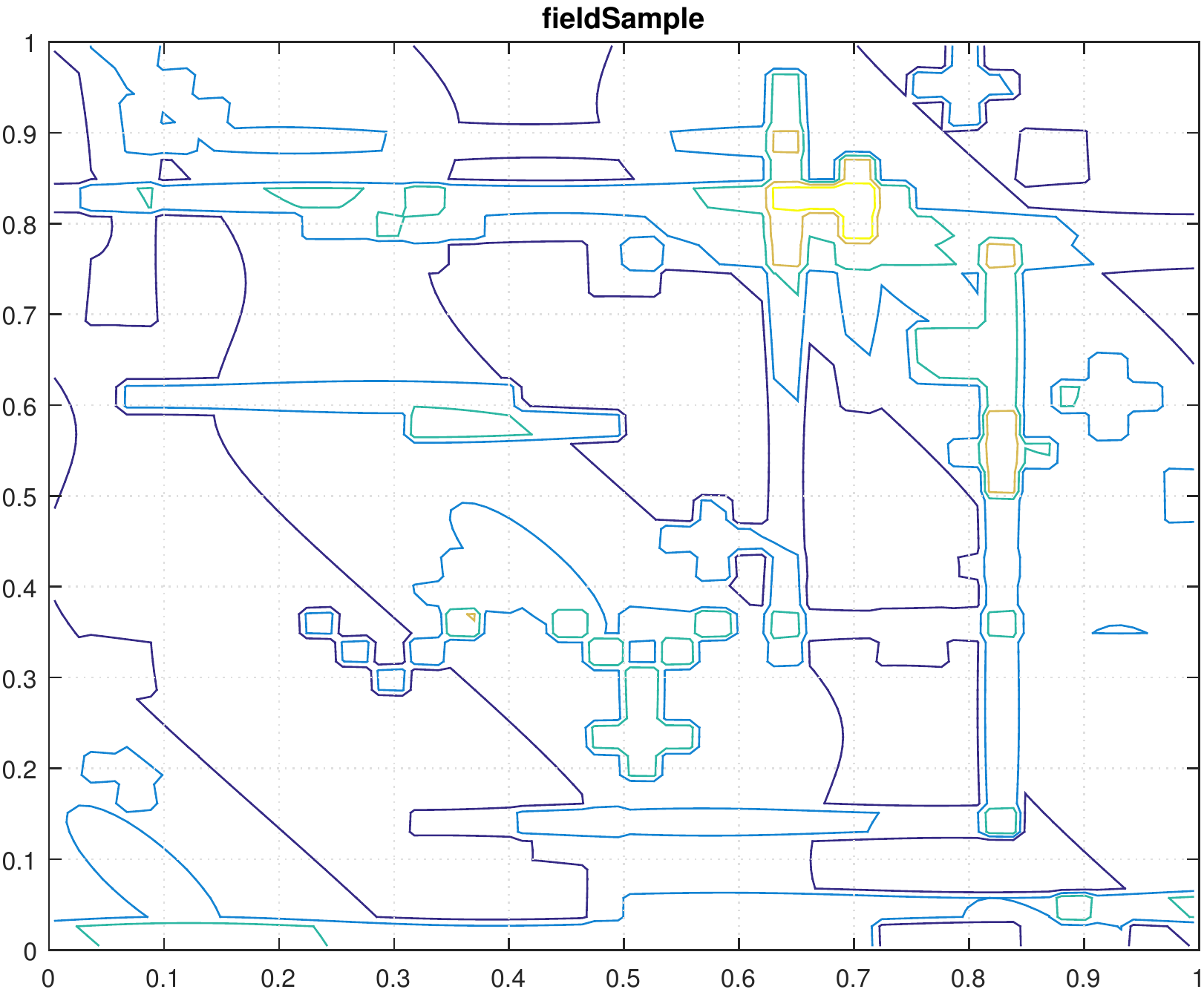}
\includegraphics[width = 0.30\textwidth]{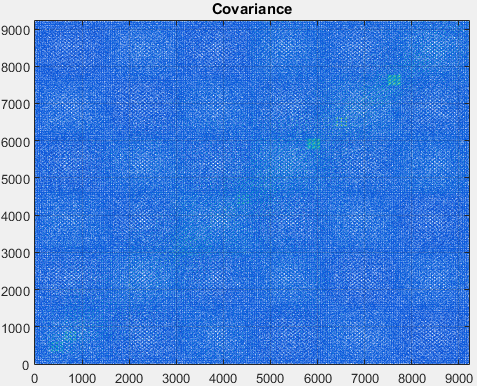}
\caption{One sample and the bird's-eye view. The covariance matrix is plotted on the right.}\label{fig:2dglobal}
\end{figure}

We apply the ISMD with patch size $H = 1/16$ on $A$ and get 37 intrinsic sparse modes as expected. Moreover, two of them are rotations of $[f_1, f_2]$ and the other 35 are exactly the 35 localized modes in the construction~\eqref{eqn:example2dglobalA}. We plot the first 6 intrinsic sparse modes in Figure~\ref{fig:2dglobal_modes}. As we can see, the ISMD separates the global modes and localized modes in $A$, or equivalently we separate the low rank dense part and sparse part of $A$. The reason why we can achieve this separation is that the representation~\eqref{eqn:example2dglobalA} in fact solves the patch-wise sparseness minimization problem~\eqref{opt:minsparseness}. The low-rank-plus-sparse decomposition (also known as Robust PCA, see \cite{chandrasekaran_rank-sparsity_2011, candes_robust_2011, luo_high_2011}) can also separate the low rank dense part and the sparse part in $A$. However, the computational cost of robust PCA is much more expensive than the ISMD.
\begin{figure}
\centering
\includegraphics[width = 0.15\textwidth,height = 0.15\textheight]{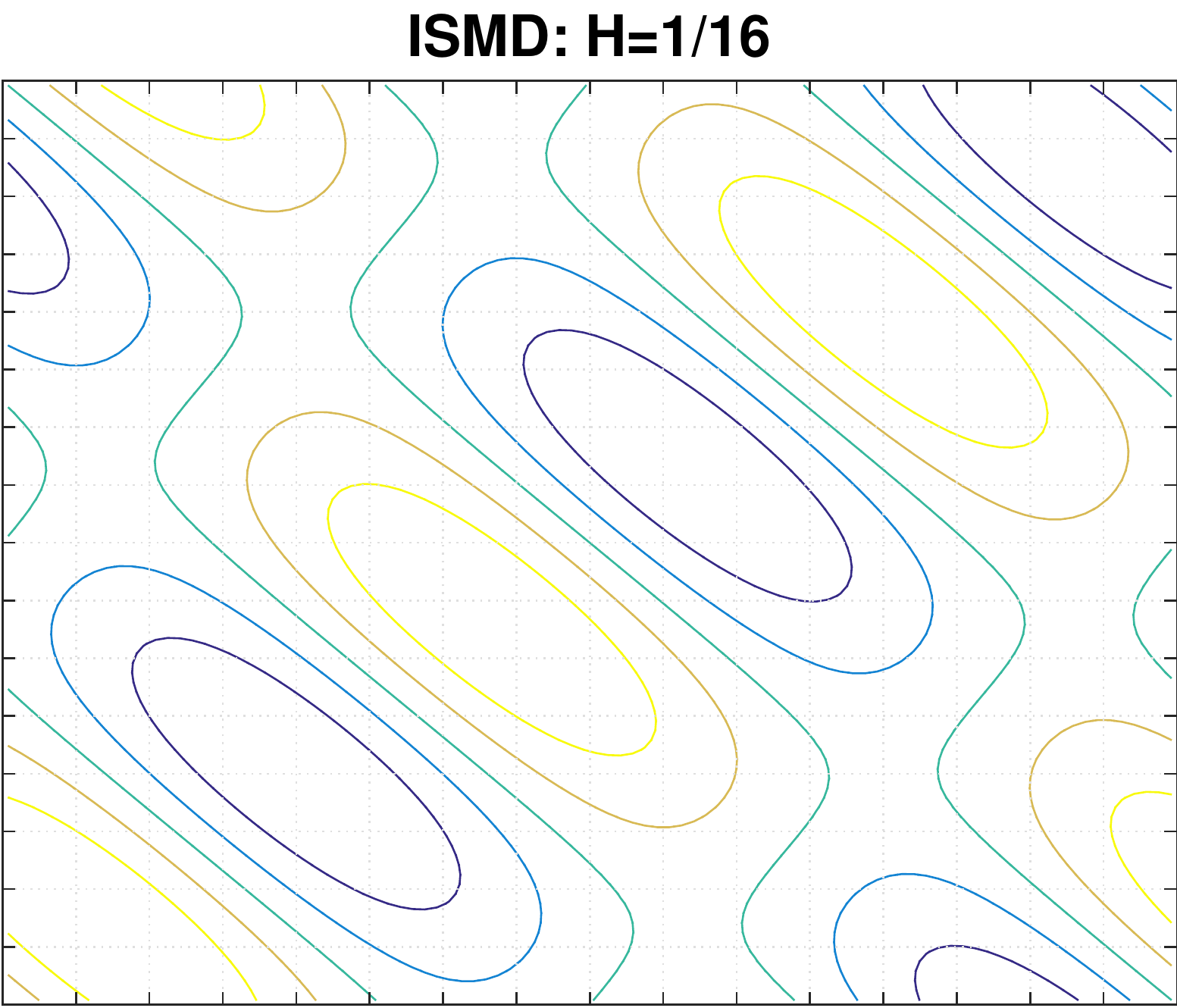}
\includegraphics[width = 0.15\textwidth,height = 0.15\textheight]{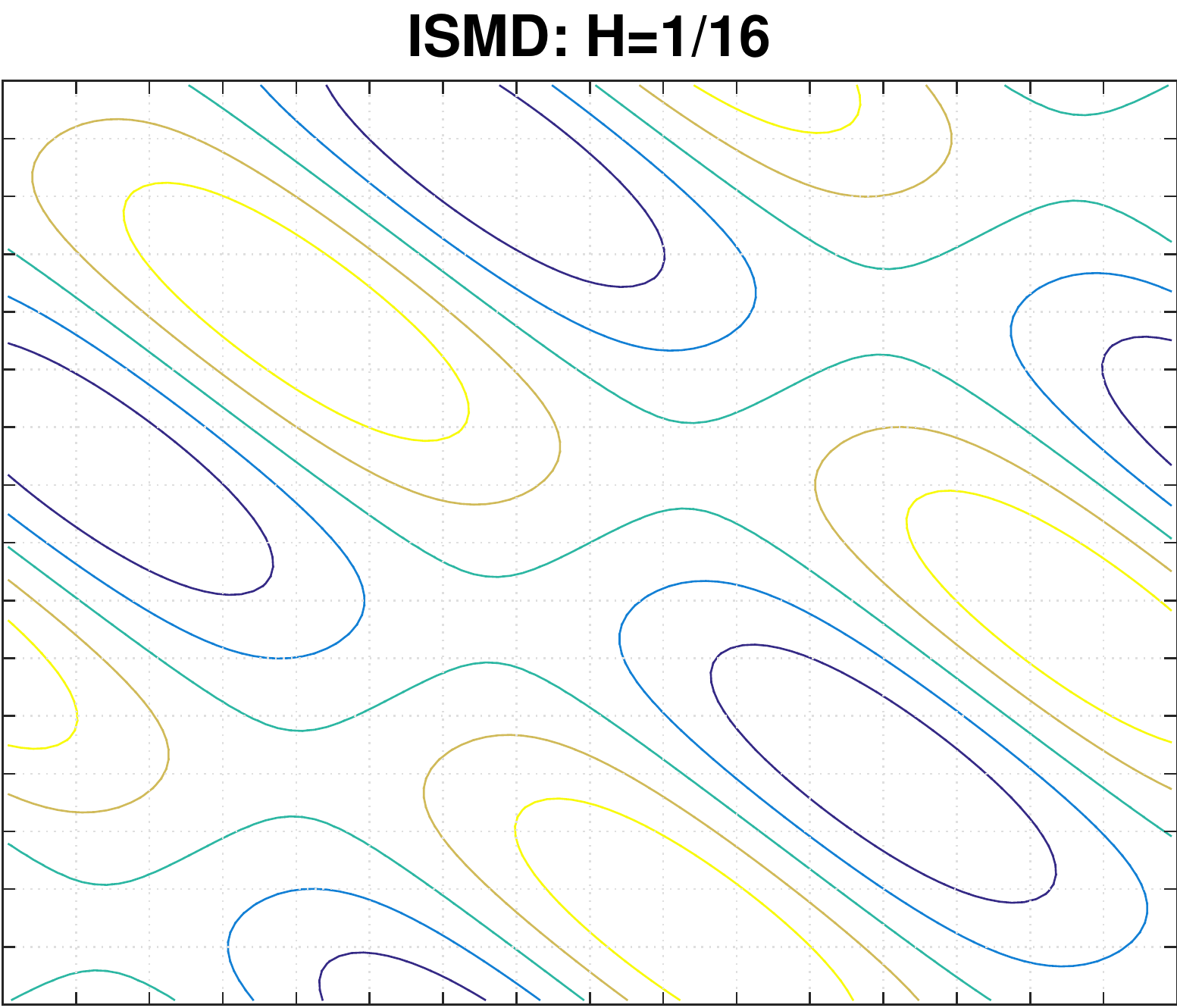}
\includegraphics[width = 0.15\textwidth,height = 0.15\textheight]{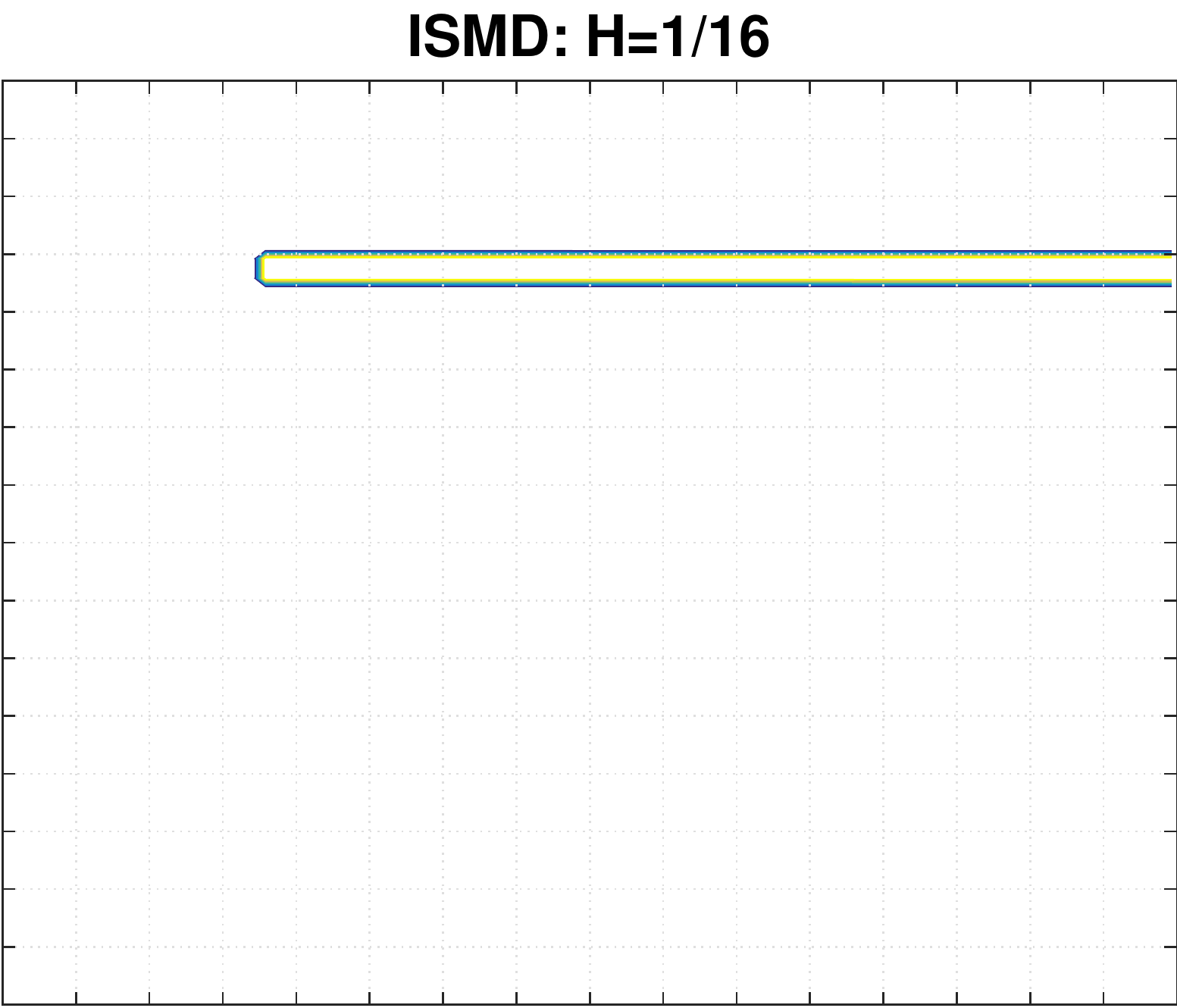}
\includegraphics[width = 0.15\textwidth,height = 0.15\textheight]{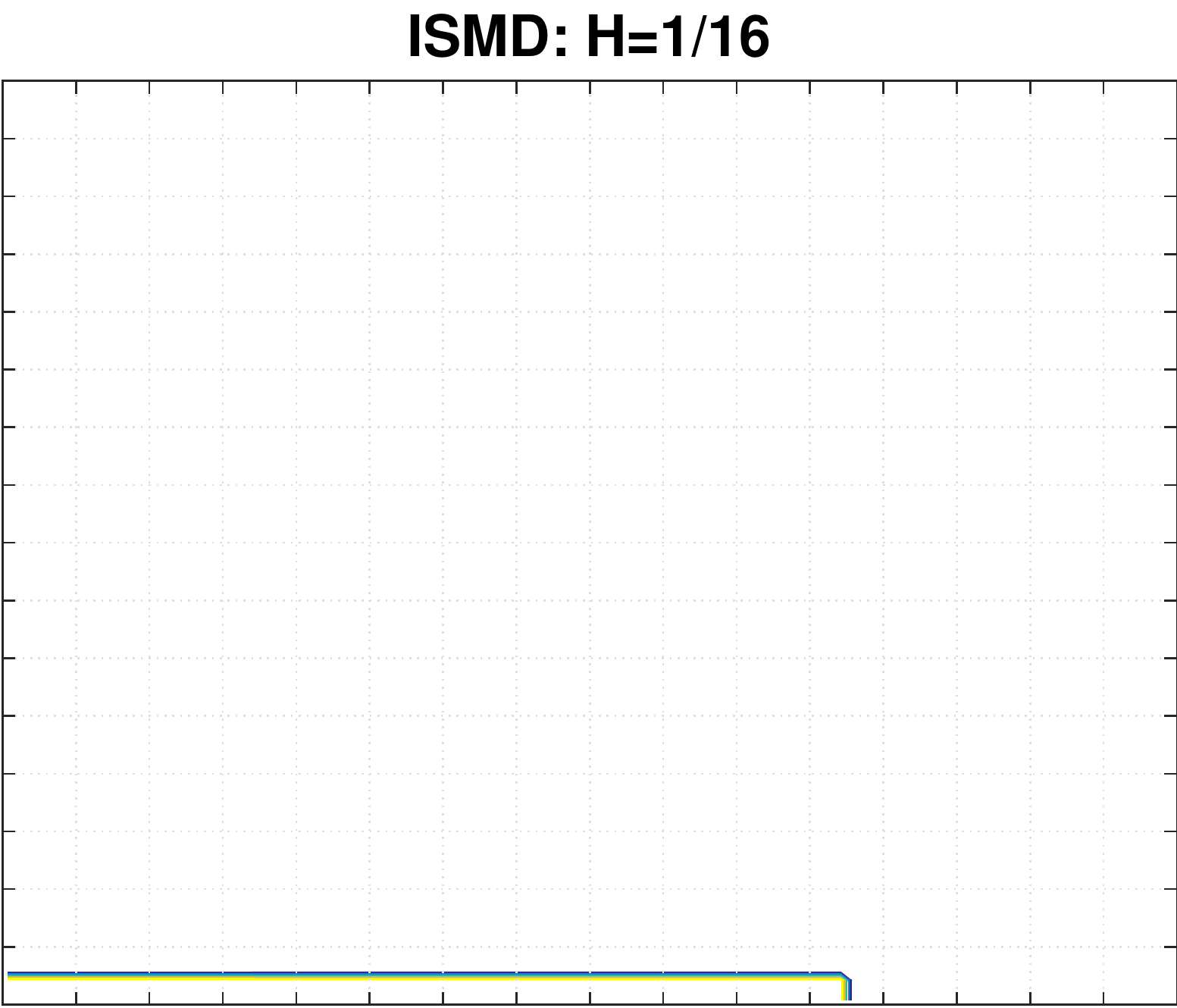} 
\includegraphics[width = 0.15\textwidth,height = 0.15\textheight]{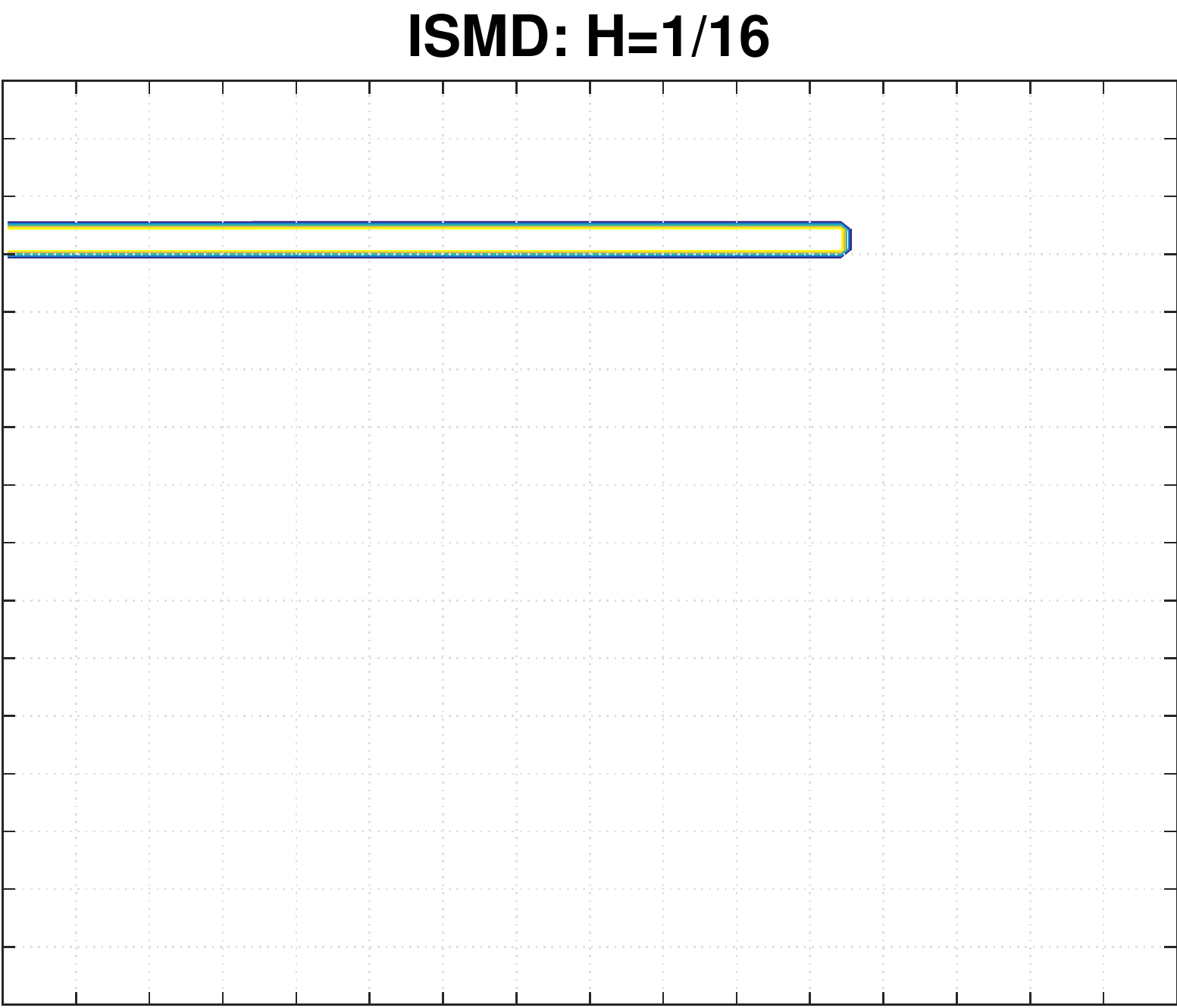}
\includegraphics[width = 0.15\textwidth,height = 0.15\textheight]{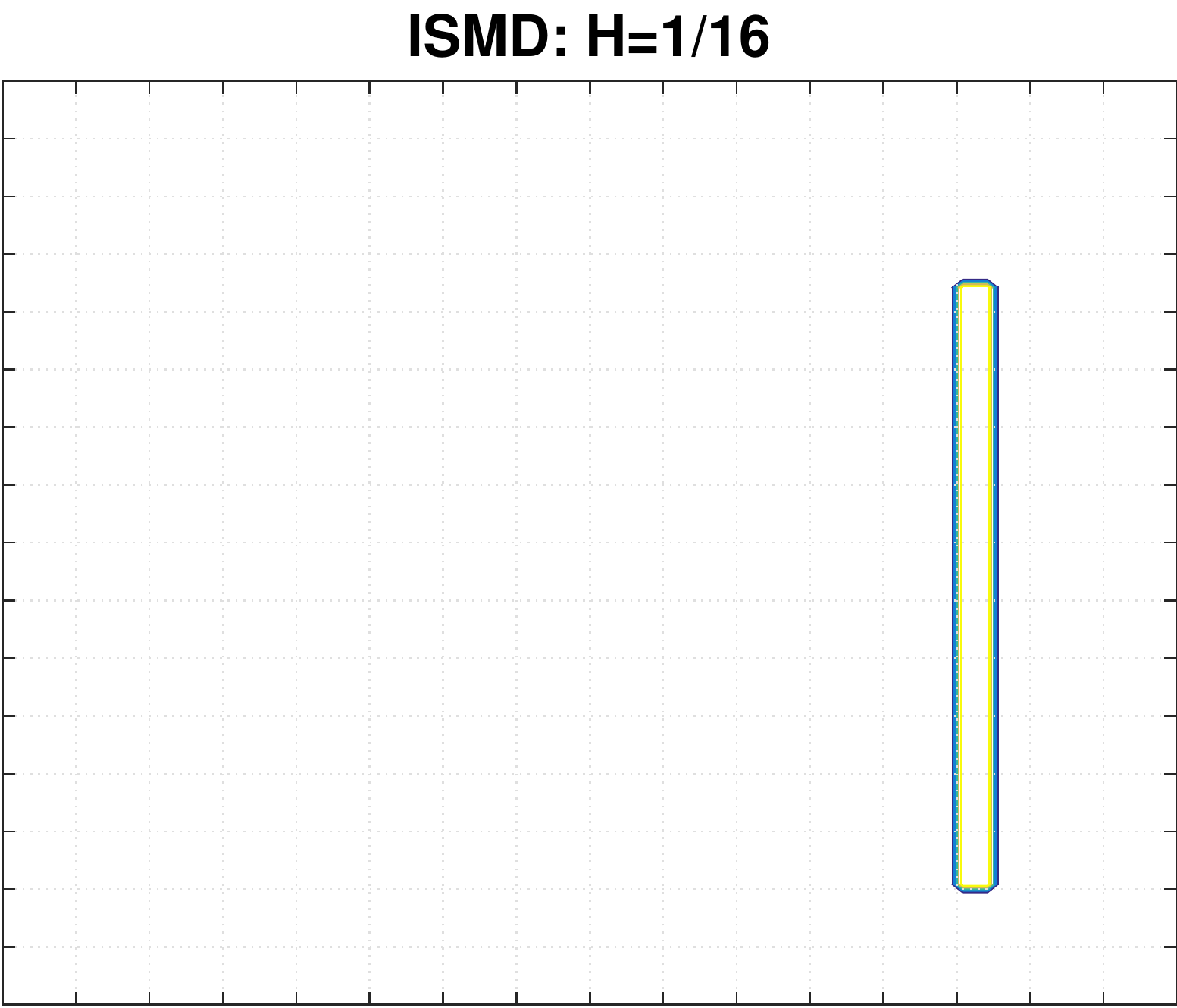}
\caption{First 6 intrinsic sparse modes (H=1/16, regular-sparse)}\label{fig:2dglobal_modes}
\end{figure}

\subsection{Application of Algorithm~\ref{alg:ISMDm1}}
When $A$ is constructed from model~\eqref{eqn:example2dglobalA} but is mixed with small noises as in Section~\ref{exam:noise1}, we cannot simply apply the thresholding~\eqref{eqn:threshold1} any more. In this case, we have unidentifiable modes $f_1$ and $f_2$ and thus $\Omega$ may contain nonzero values other than $\pm 1$. For the noise level $\epsilon = 10^{-6}$, Figure~\ref{fig:hatOmega} (left) shows the histogram of absolute values of entries in $\hat{\Omega}$. We can clearly see a gap between $\Or(\epsilon)$ entries and $\Or(1)$ entries from Figure~\ref{fig:hatOmega}(left). Therefore we choose a threshold $\epsilon_{th} = 10^{-3}$ and apply the modified ISMD algorithm~\ref{alg:ISMDm1} on $\hat{A}$. The first 6 perturbed intrinsic sparse modes $\hat{g}_k$ are shown in Figure~\ref{fig:2dglobal_modes_noise}. We can see that their supports are exactly the same as those of the unperturbed intrinsic sparse modes $g_k$ in Figure~\ref{fig:2dglobal_modes}. In fact, the first 37 perturbed intrinsic sparse modes $\{\hat{g}_k\}_{k=1}^{37}$ exactly capture the supports of the unperturbed intrinsic sparse modes $\{g_k\}_{k=1}^{37}$. However, we have several extra perturbed intrinsic sparse modes with very small $l^2$ error since $\hat{\Omega}$ has rank more than $37$.

When we raise the noise level $\epsilon$ to $10^{-4}$, the histogram of the absolute values in $\hat{\Omega}$ is shown in Figure~\ref{fig:hatOmega}(right). In this case, we cannot identify a gap any more. From Figure~\ref{fig:hatOmega}(left), we see that the exact $\Omega$ has entries in the order of $10^{-3}$. Therefore, the noise level $\epsilon=10^{-4}$ is large enough to mix the true nonzero values and noisy null values in $\hat{\Omega}$ together. In Figure~\ref{fig:hatOmega} the total counts are different because only values between $10^{-16.5}$ and $10^{0.5}$ are counted.
\begin{figure}
\centering
\includegraphics[width = 0.45\textwidth,height = 0.2\textheight]{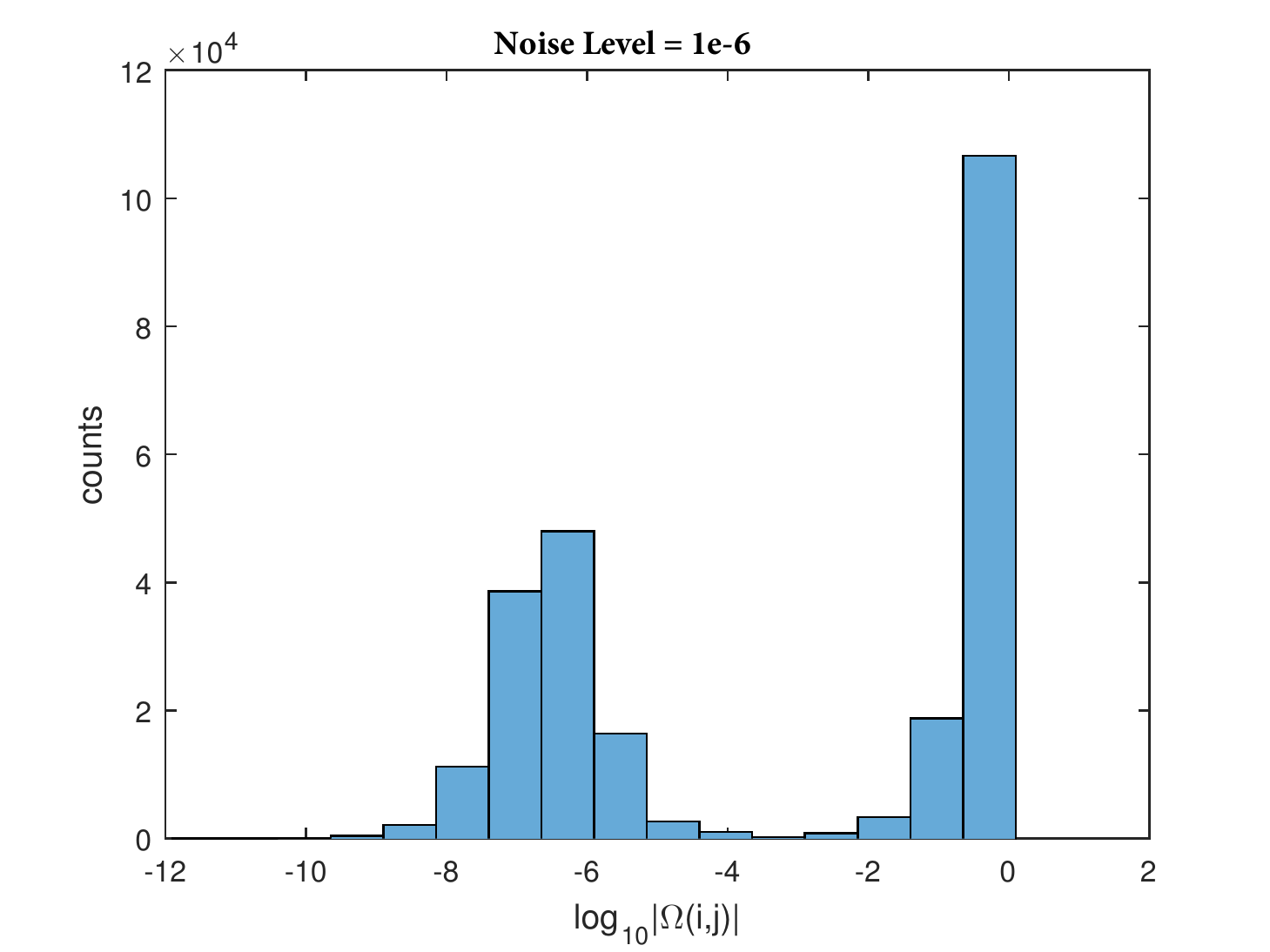}
\includegraphics[width = 0.45\textwidth,height = 0.2\textheight]{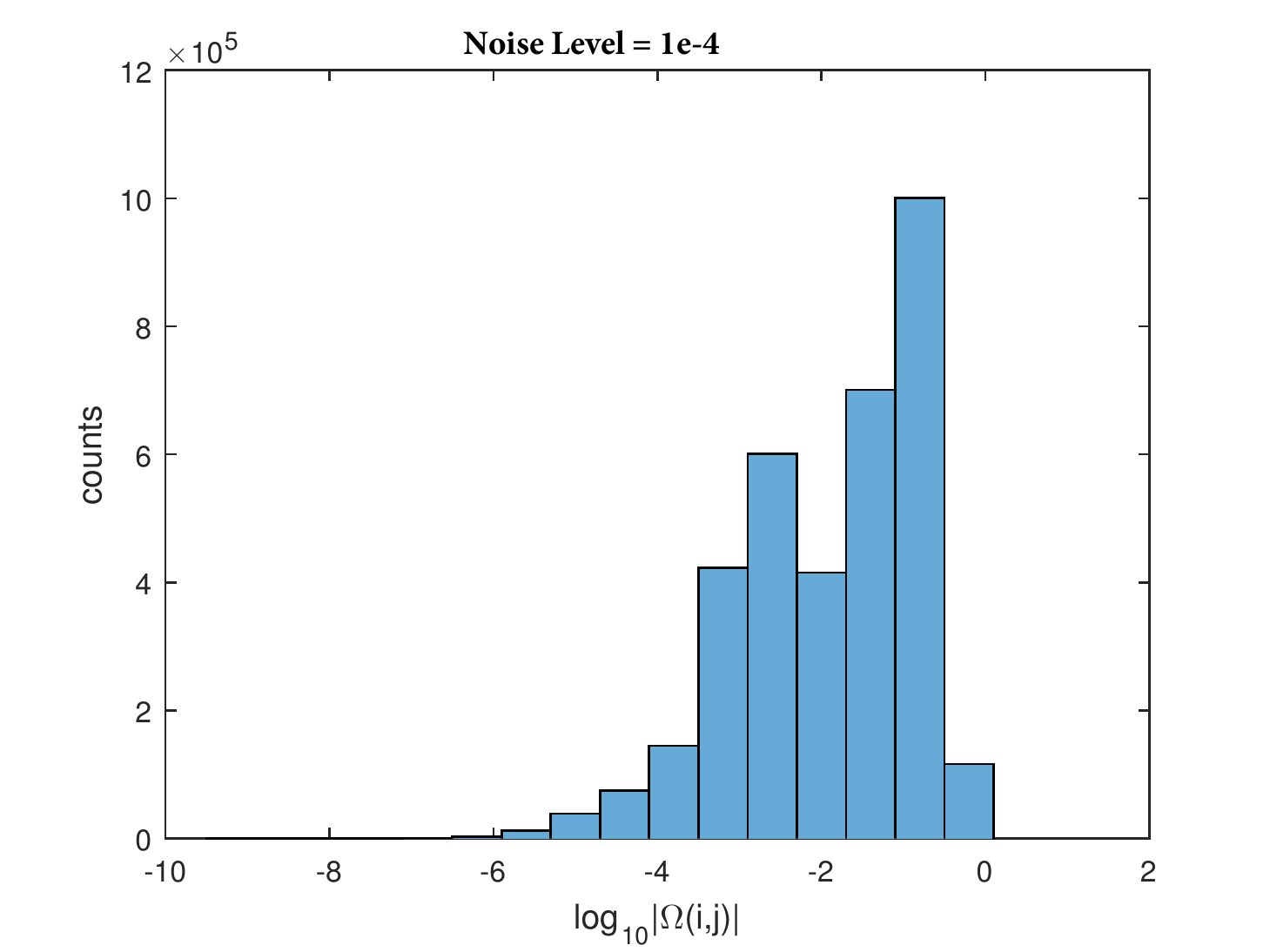}
\caption{Histogram of absolute values of entries in $\hat{\Omega}$.}\label{fig:hatOmega}
\end{figure}

\begin{figure}
\centering
\includegraphics[width = 0.15\textwidth,height = 0.15\textheight]{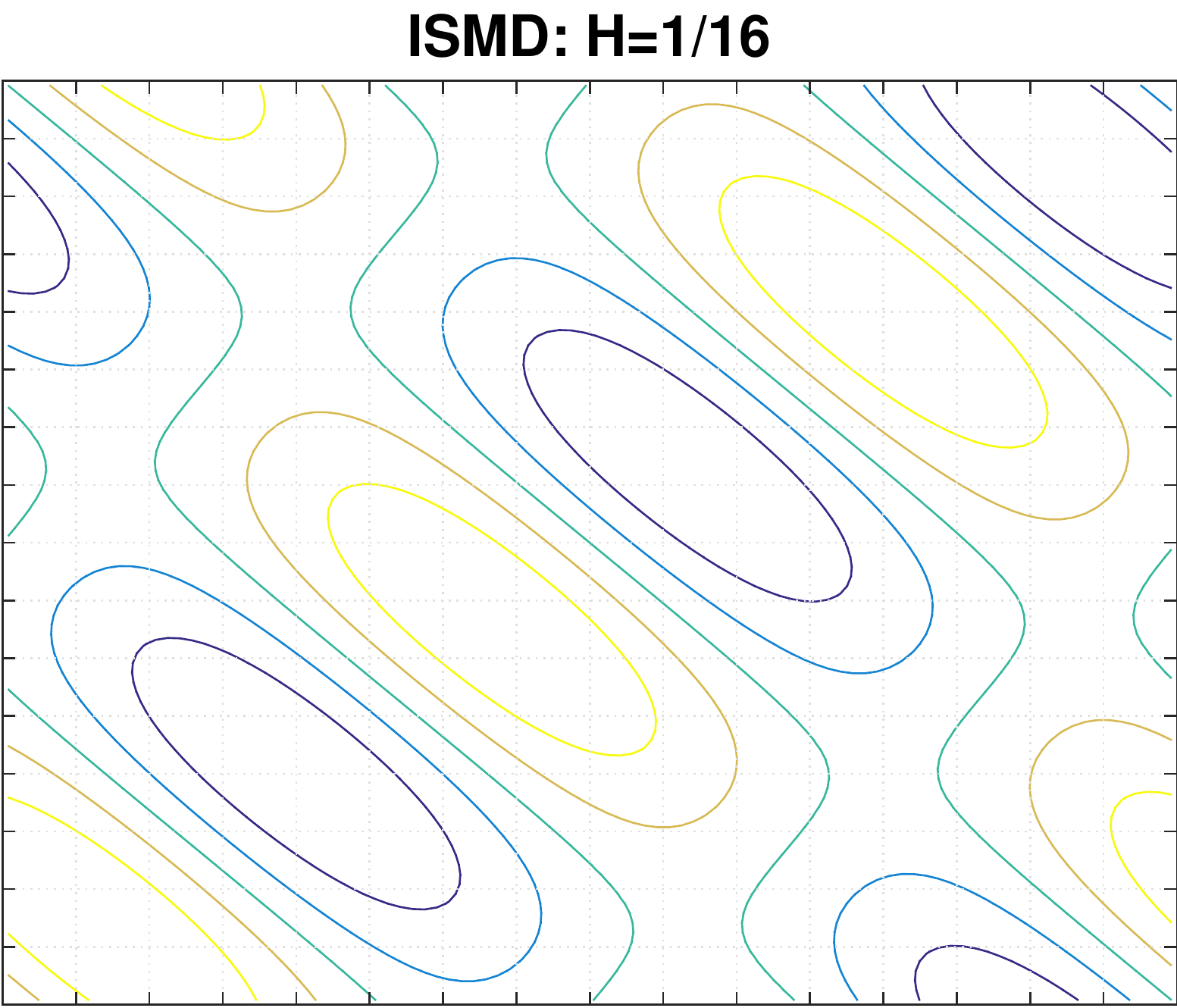}
\includegraphics[width = 0.15\textwidth,height = 0.15\textheight]{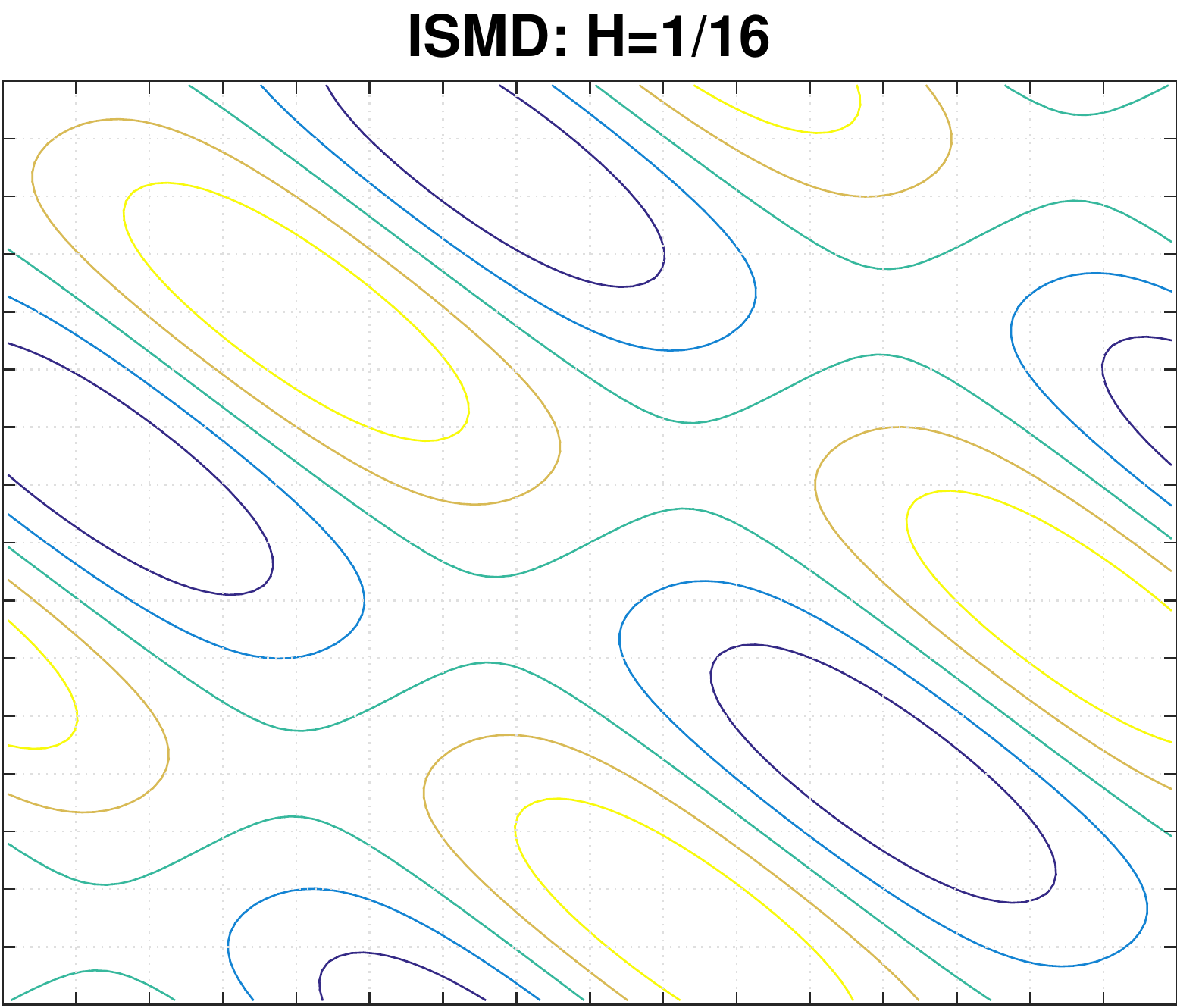}
\includegraphics[width = 0.15\textwidth,height = 0.15\textheight]{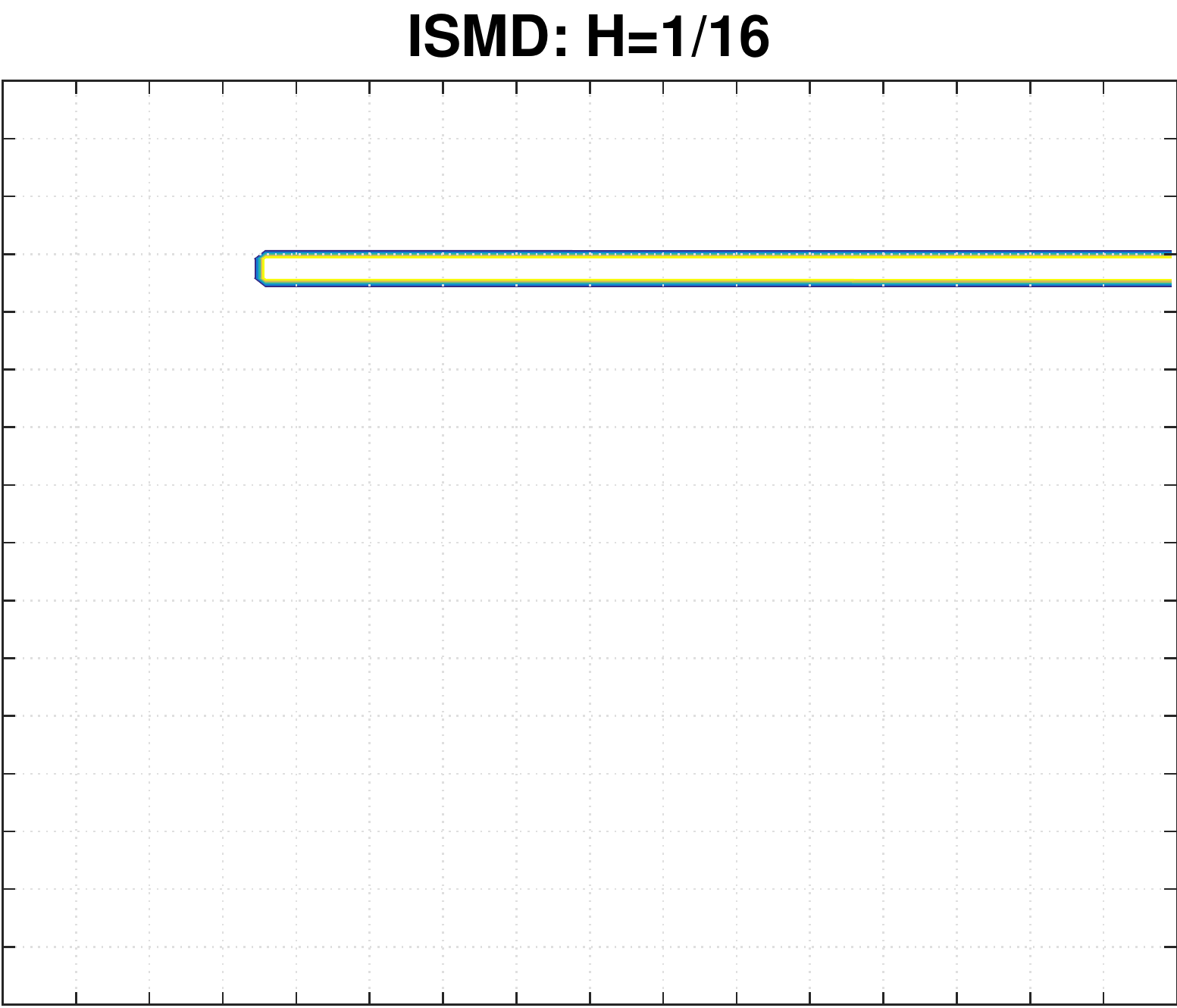}
\includegraphics[width = 0.15\textwidth,height = 0.15\textheight]{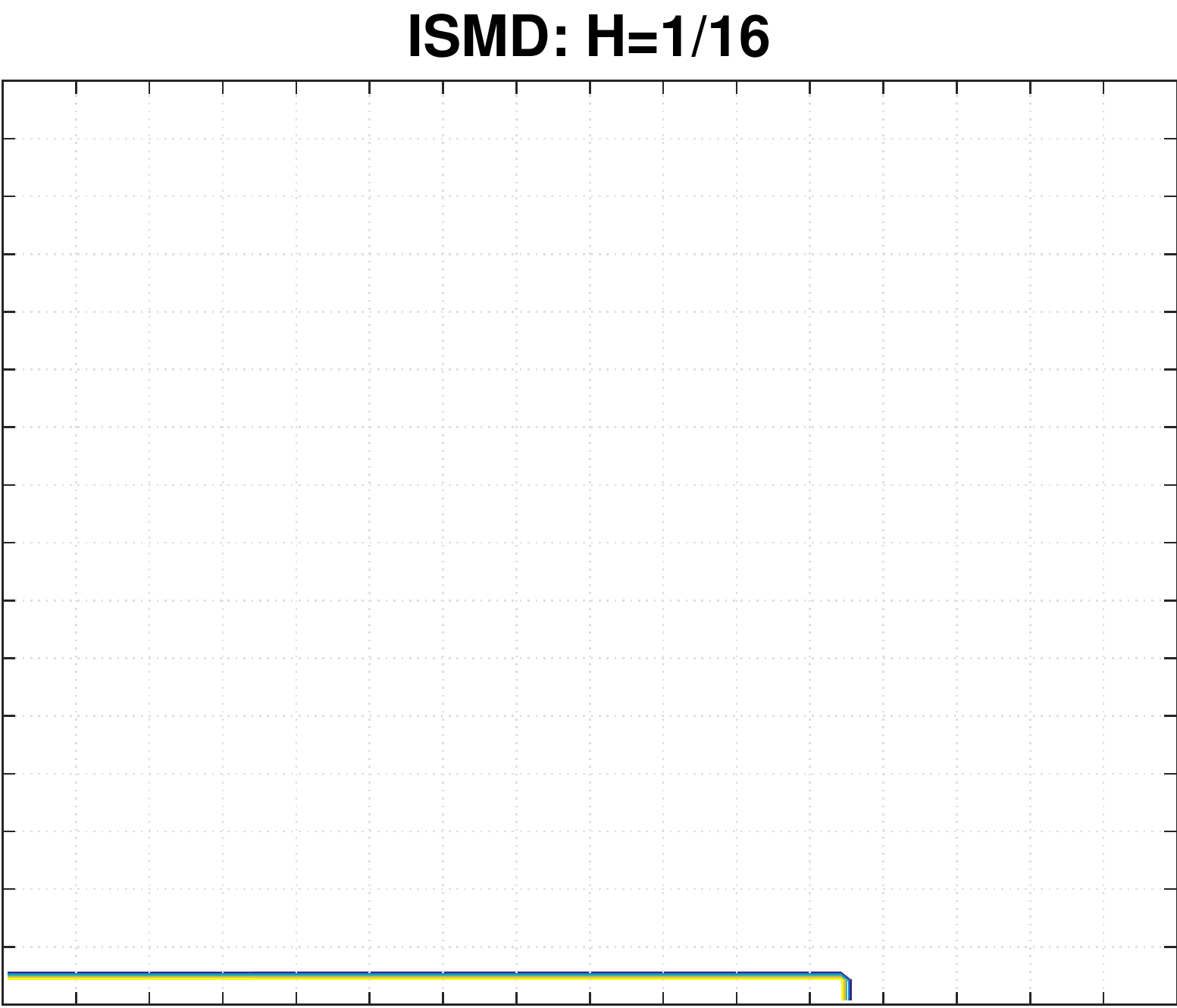} 
\includegraphics[width = 0.15\textwidth,height = 0.15\textheight]{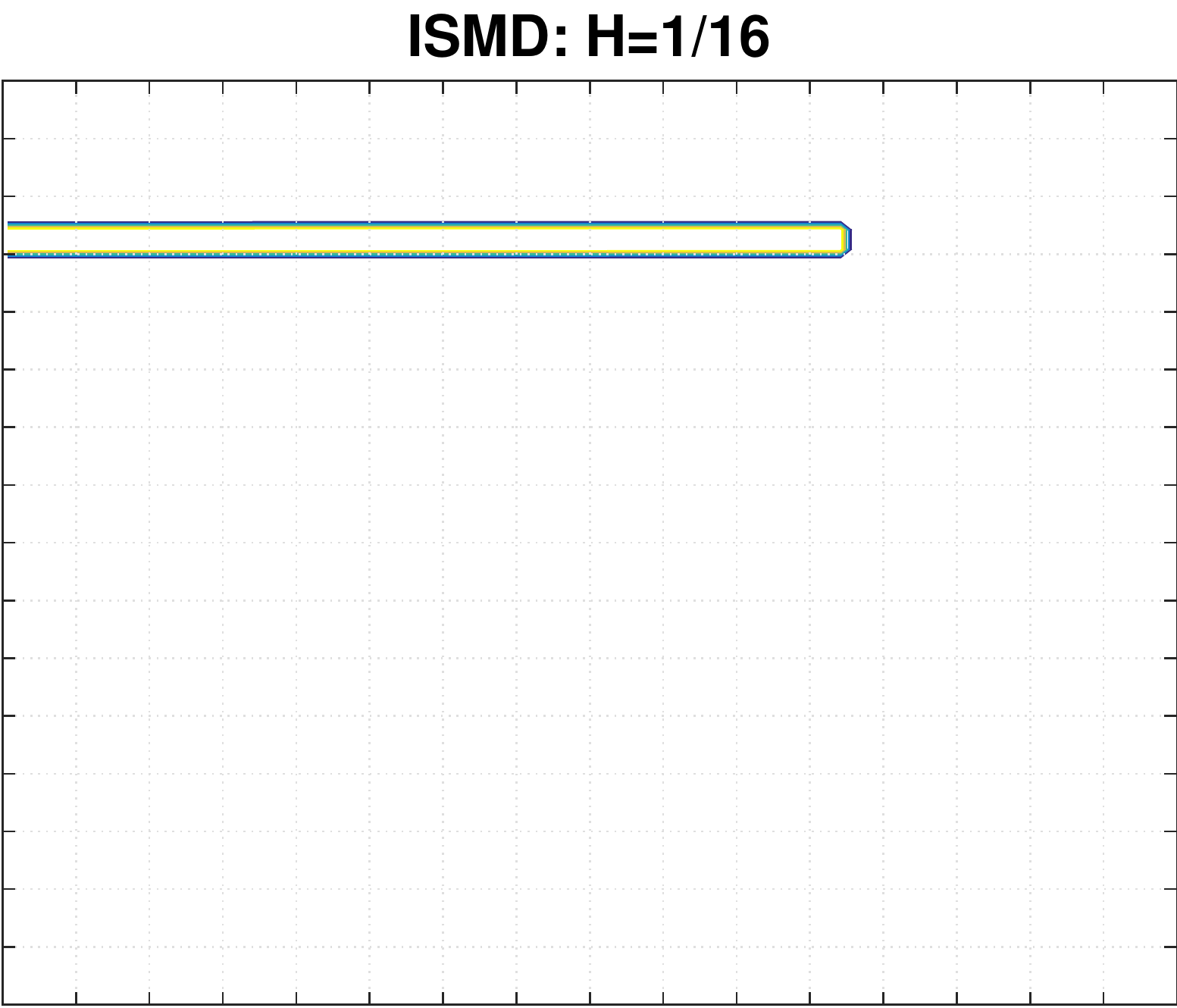}
\includegraphics[width = 0.15\textwidth,height = 0.15\textheight]{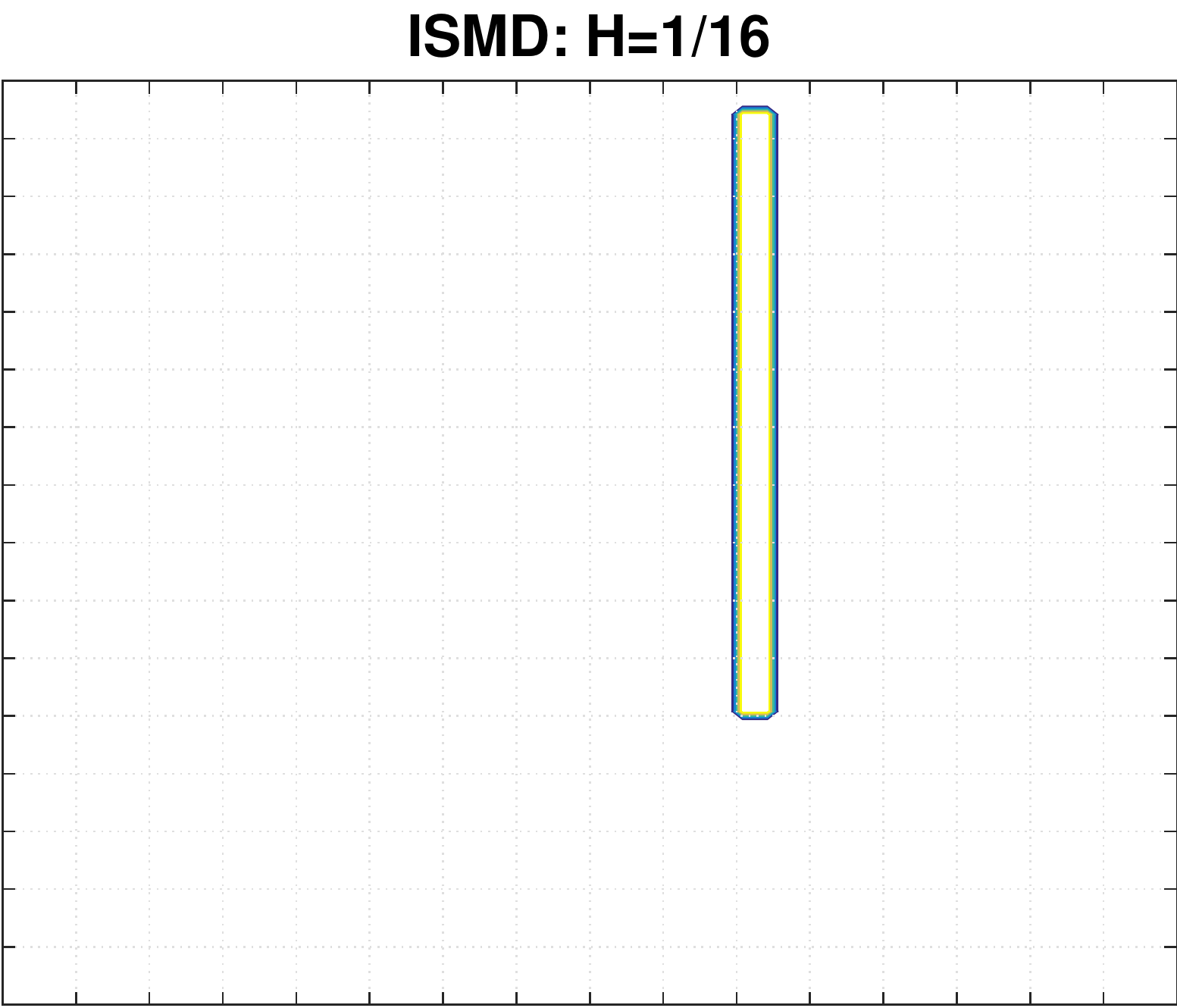}
\caption{Application of Algorithm~\ref{alg:ISMDm1} (H=1/16, approximately regular-sparse): first 6 intrinsic sparse modes}\label{fig:2dglobal_modes_noise}
\end{figure}

\subsection{Application of Algorithm~\ref{alg:ISMDm2}}\label{sec:exponential}
In this section, we consider the one-dimensional Poisson kernel:
\begin{equation*}
a(x,y) = e^{-\frac{|x-y|}{l}}\,,\quad x,y\in[-1,1]\,.
\end{equation*}
where $l = 1/16$. To refine the small scale, $a(x,y)$ is discretized by a uniform grid with $h = 1/512$, resulting in $A \in \RR^{1024\times 1024}$. In Figure~\ref{fig:1dcovKL} we plot the covariance matrix. By truncating the eigen decomposition with 45 modes, we can approximate $A$ with spectral norm error $5\%$, and these 45 KL modes are plotted on the right panel of the figure. As one can see, they are all global functions. 
\begin{figure}[h]
\centering
\includegraphics[width = 0.45\textwidth,height = 0.2\textheight]{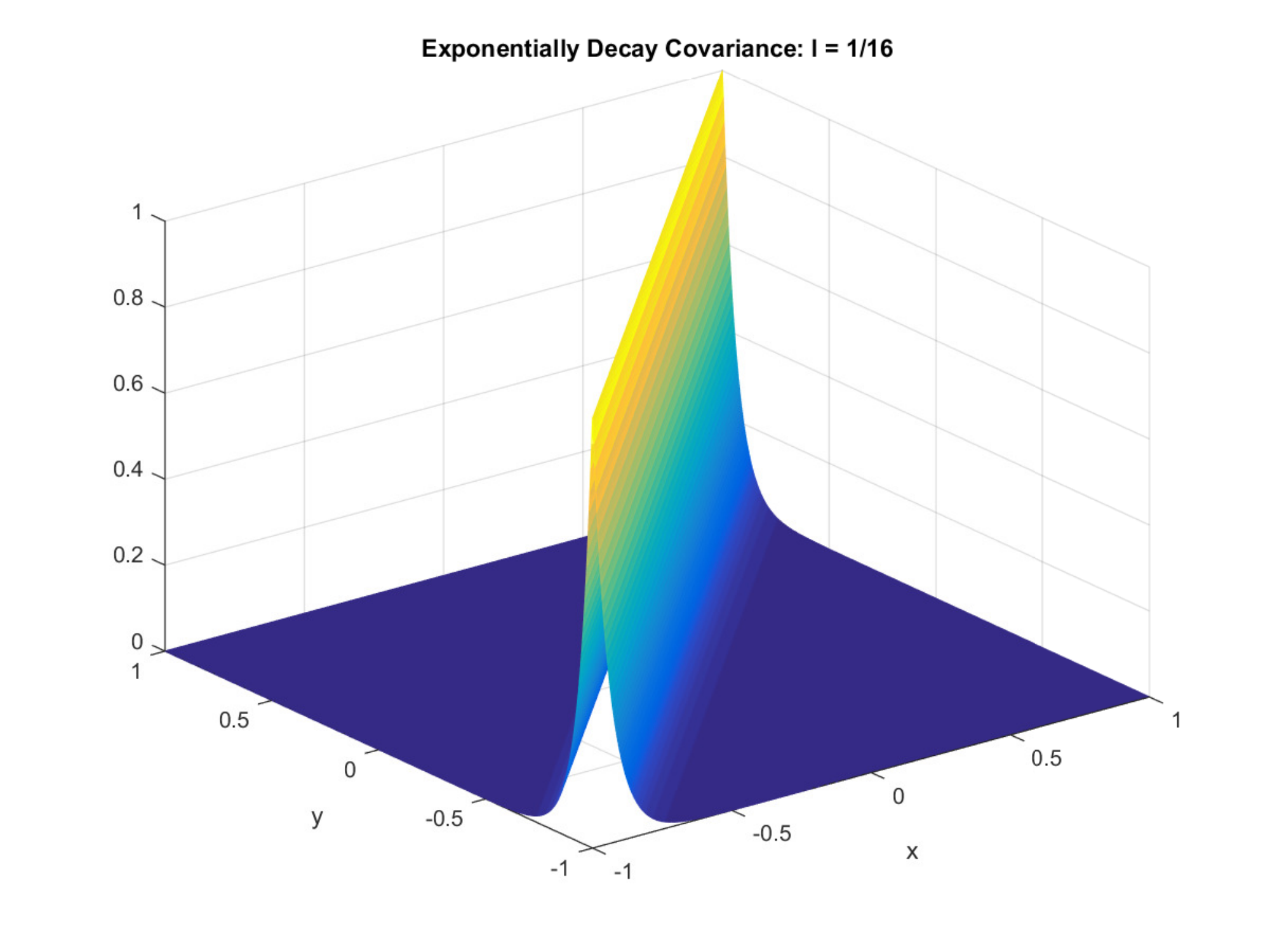}
\includegraphics[width = 0.45\textwidth,height = 0.2\textheight]{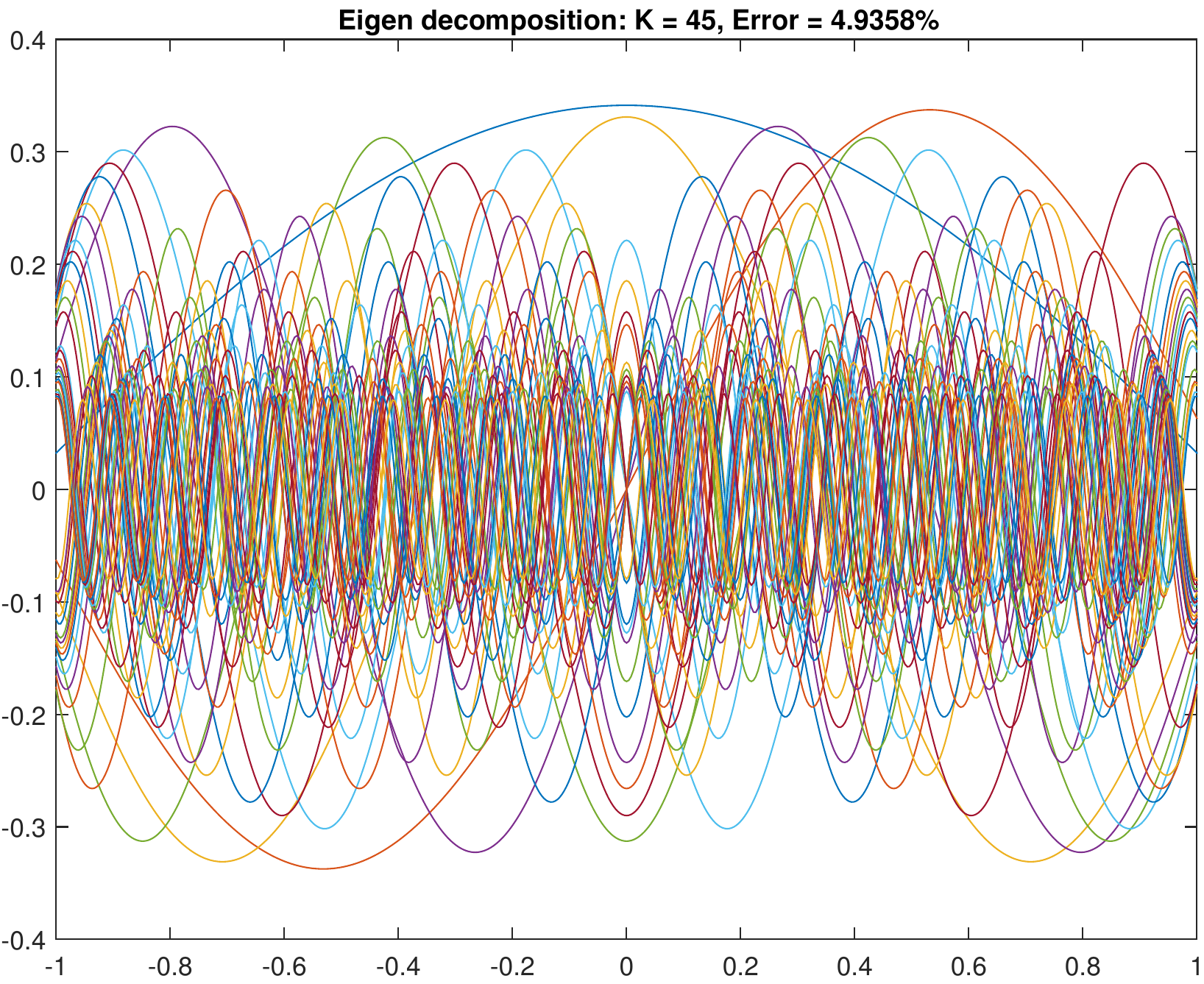}
\caption{Eigen-Decomposition: Covariance function and its first 45 KL modes. Error is $4.936\%$. Both local and global dimension are 45.}\label{fig:1dcovKL}
\end{figure}

We decompose the domain into $2$, $4$ and $8$ patches respectively and apply the Algorithm~\ref{alg:ISMDm2} with thresholding~\eqref{eqn:threshold1} to each case. For all the three cases, every mode has patch-wise sparseness either 1 or 2. In Figure~\ref{fig:1dnum}, the left panels show the modes that are nonzero on more than one patch, and the right panels collect the modes that are nonzero on only one patch. To achieve the same accuracy with the eigen decomposition, the numbers of modes needed are 45, 47 and 49 respectively. The total number is slightly larger than the number of eigen modes, but most modes are localized. For the two-patch case, each patch contains 23 nonzero modes, and for the four-patch case, each patch contains either 12 or 13 nonzero modes, and for the eight-patch case, each patch contains only 7 nonzero modes.
\begin{figure}[h]
\centering
\includegraphics[width = 0.45\textwidth,height = 0.15\textheight]{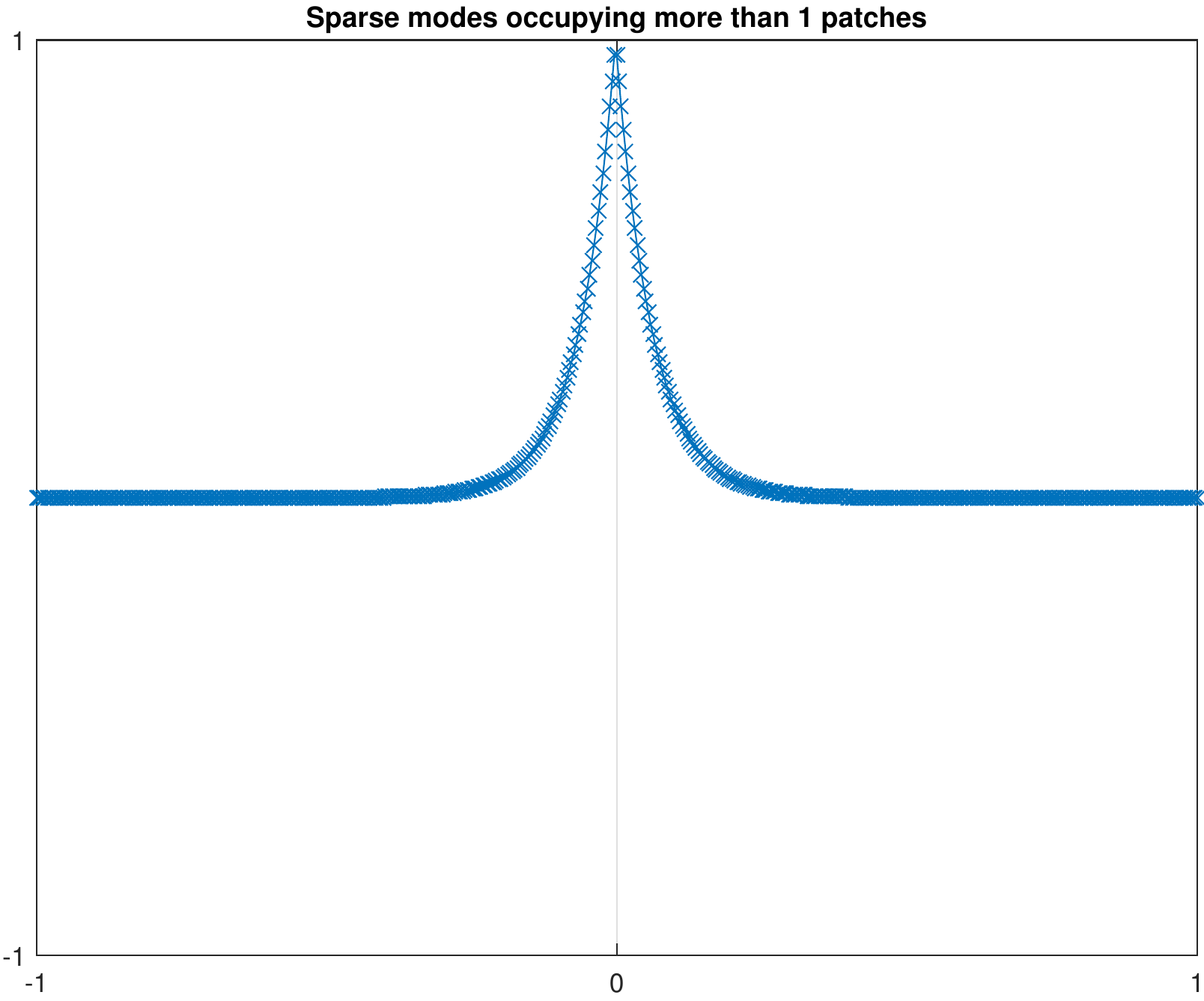}
\includegraphics[width = 0.45\textwidth,height = 0.15\textheight]{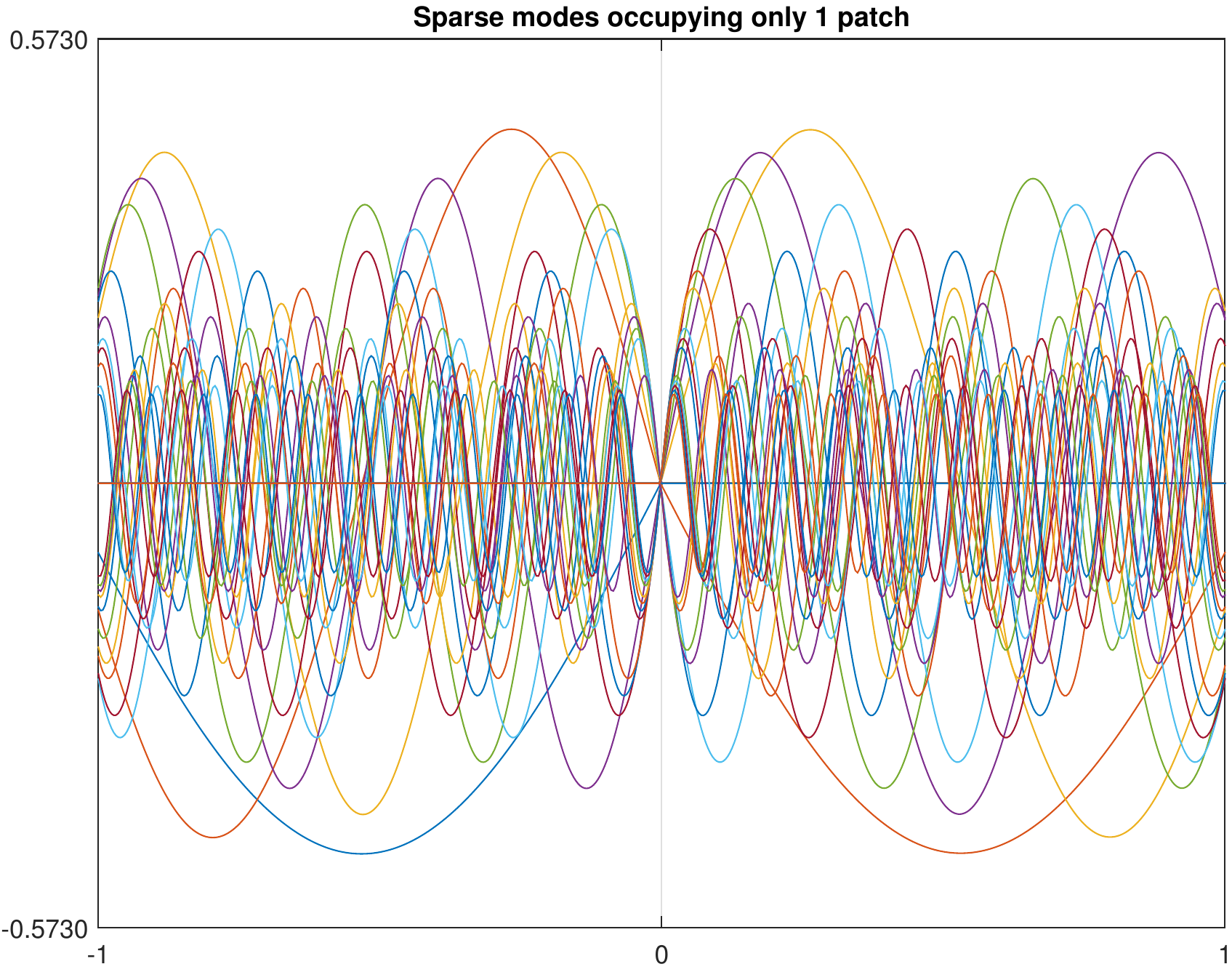}\\
\includegraphics[width = 0.45\textwidth,height = 0.15\textheight]{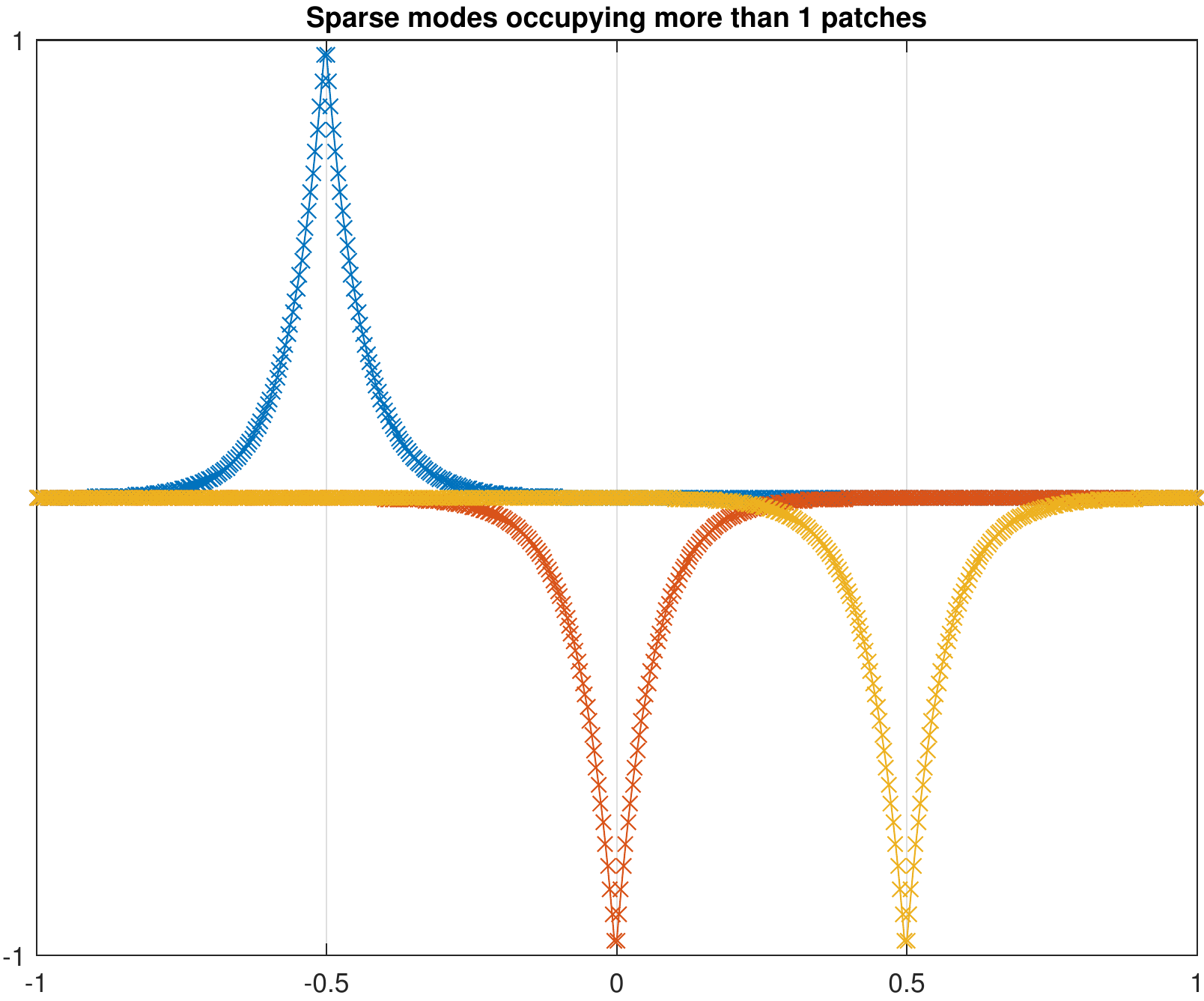}
\includegraphics[width = 0.45\textwidth,height = 0.15\textheight]{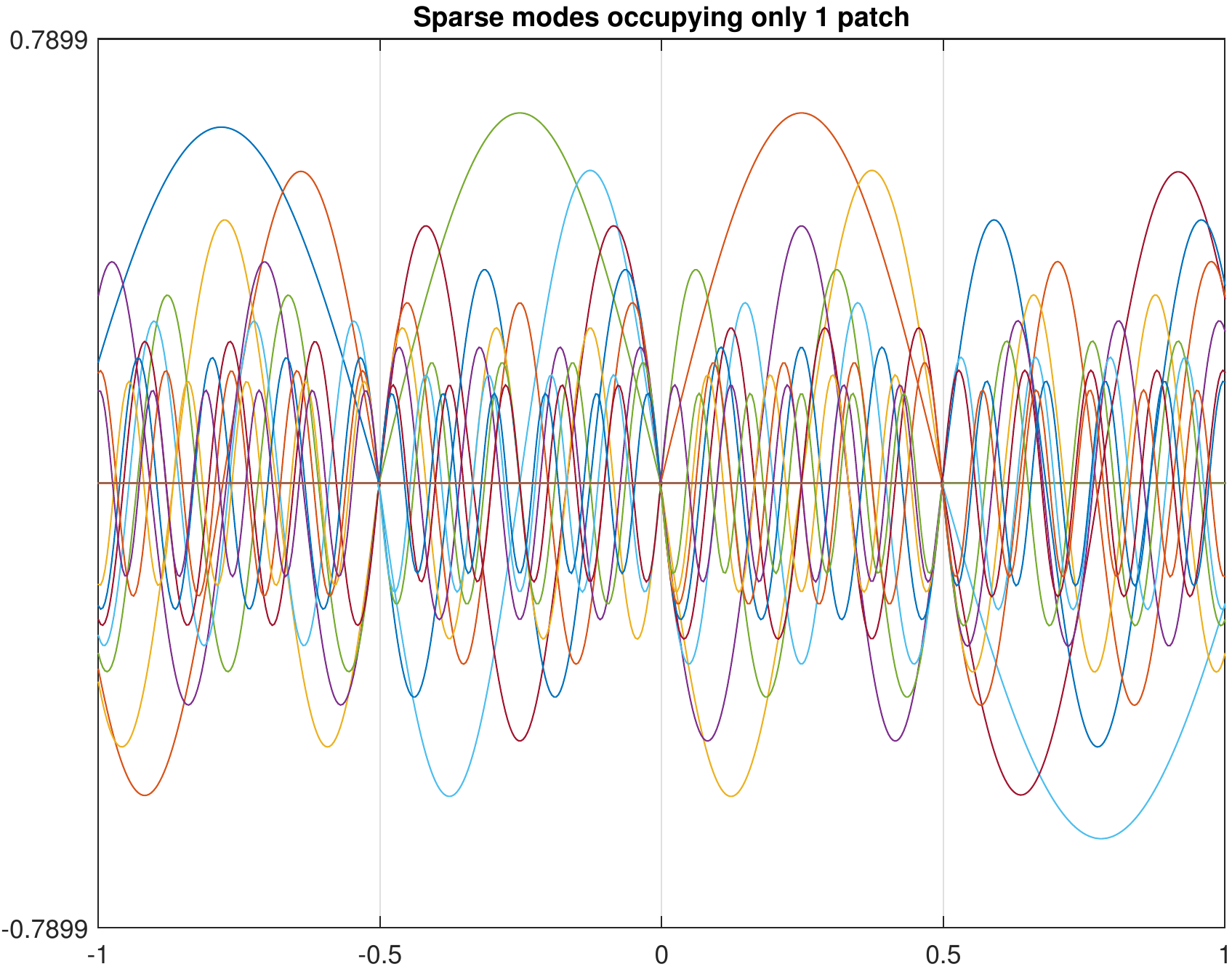}\\
\includegraphics[width = 0.45\textwidth,height = 0.15\textheight]{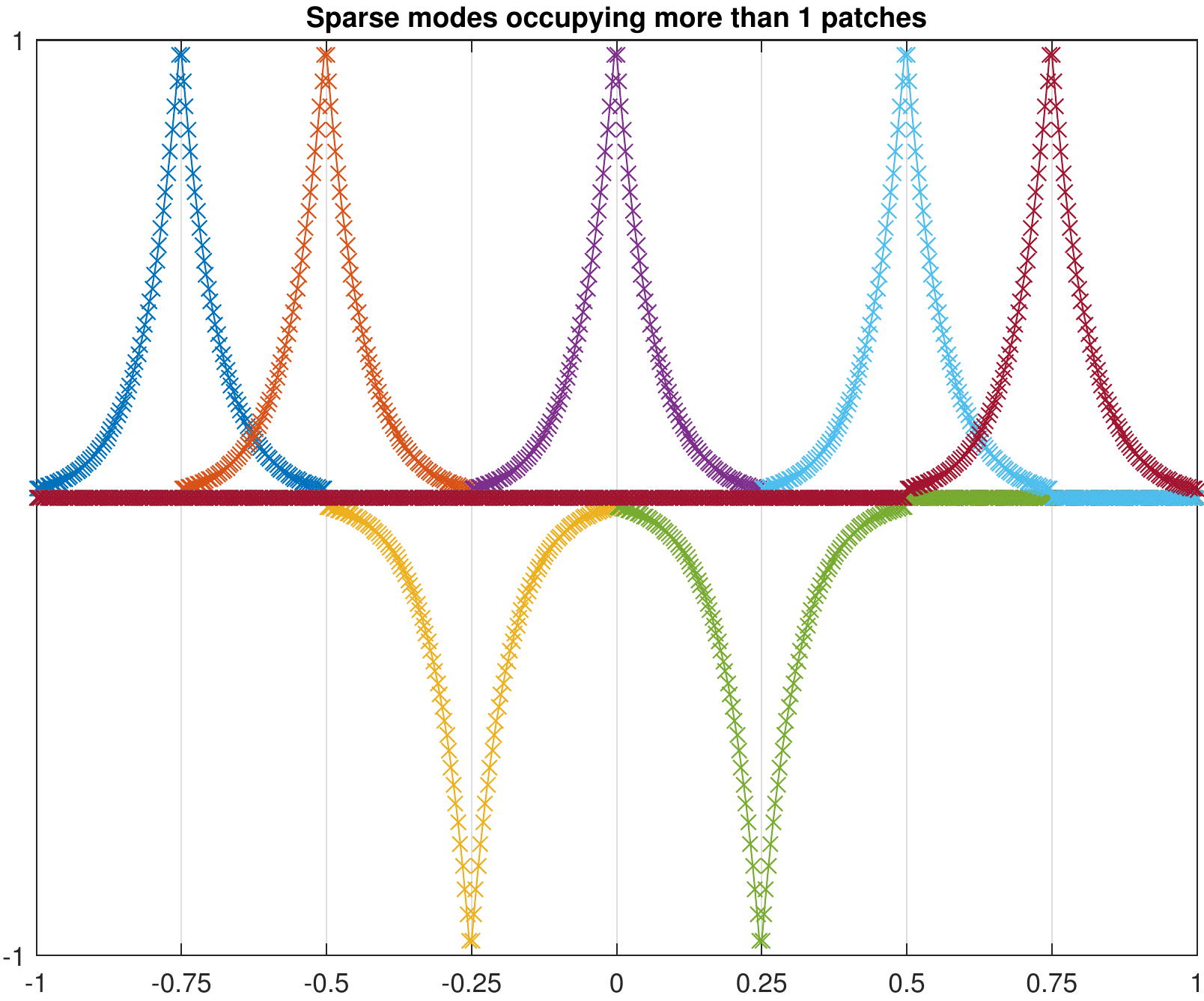}
\includegraphics[width = 0.45\textwidth,height = 0.15\textheight]{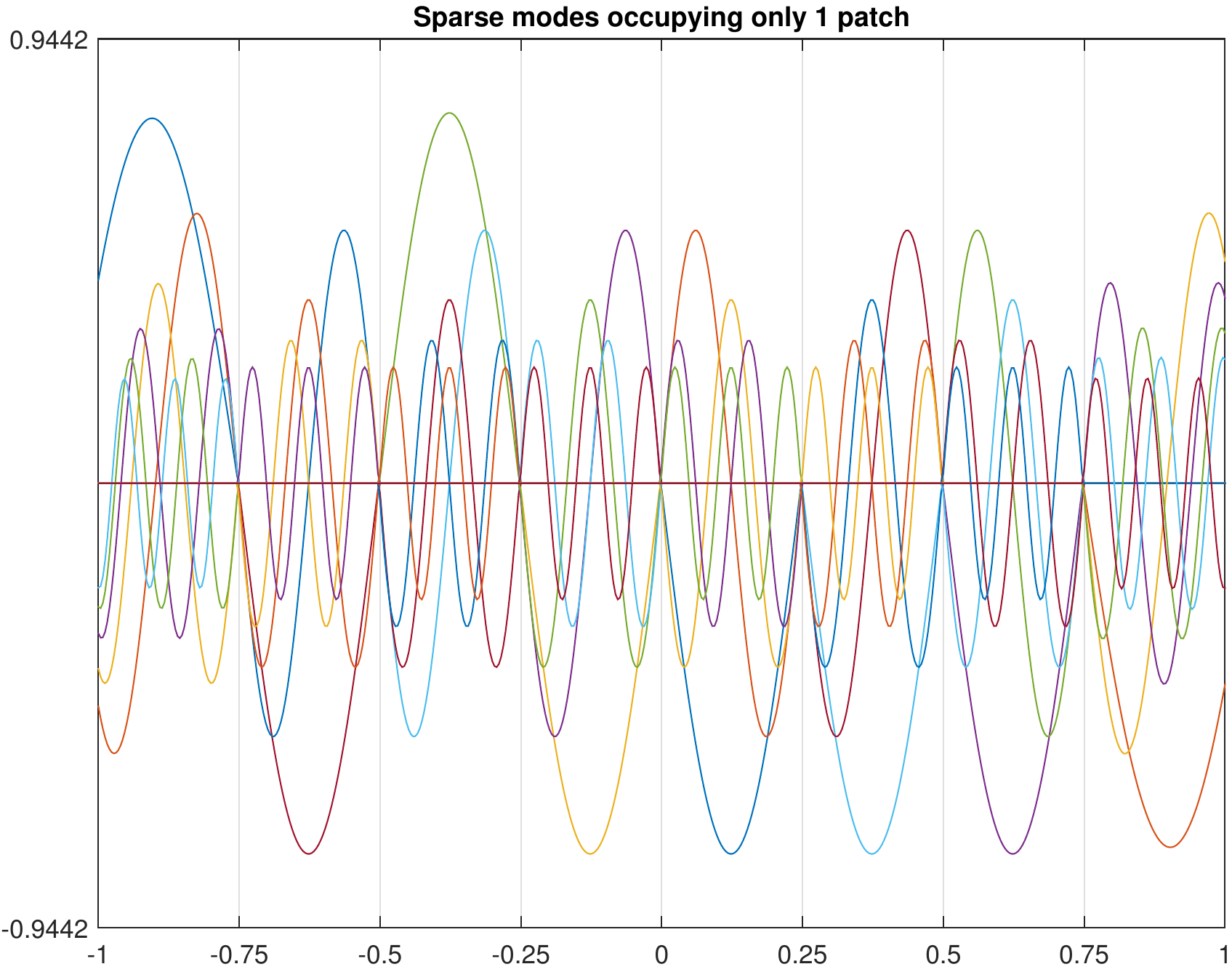}
\caption{Upper: Two patches case. Error is $4.95\%$. Global dimension is 45 and the local dimension is 23 for both patches. 
Middle: Four patches case. Error is $4.76\%$. Global dimension is 47 and the local dimension is 12, 13, 13, 12 respectively.
Bottom: Eight patches case. Error is $4.42\%$. Global dimension is 49 and the local dimension is 7 for all patches.}\label{fig:1dnum}
\end{figure}

For this translational invariant Poisson kernel, the semi-definite relaxation of sparse PCA (problem \eqref{eqn:eigvariationalL1_relax}) also gives satisfactory sparse approximation in the sense of problem~\eqref{eqn:eigvariationalL1_matrix}. Numerical tests show that when $\mu < 2$, sparse PCA tends to put too much weight on the sparsity and it leads to poor approximation to $A$ (over $90\%$ error). In Figure~\ref{fig:sparsePCA1} we plot $47$ physical modes selected out of $513$ columns of $W$, with $\mu = 2.7826$. The error is $4.94\%$. We also show 5 out of them on the right panel. Note that we have used the translation invariance property in selecting the columns of $W$. 
\begin{figure}
\centering
\includegraphics[width = 0.45\textwidth,height = 0.2\textheight]{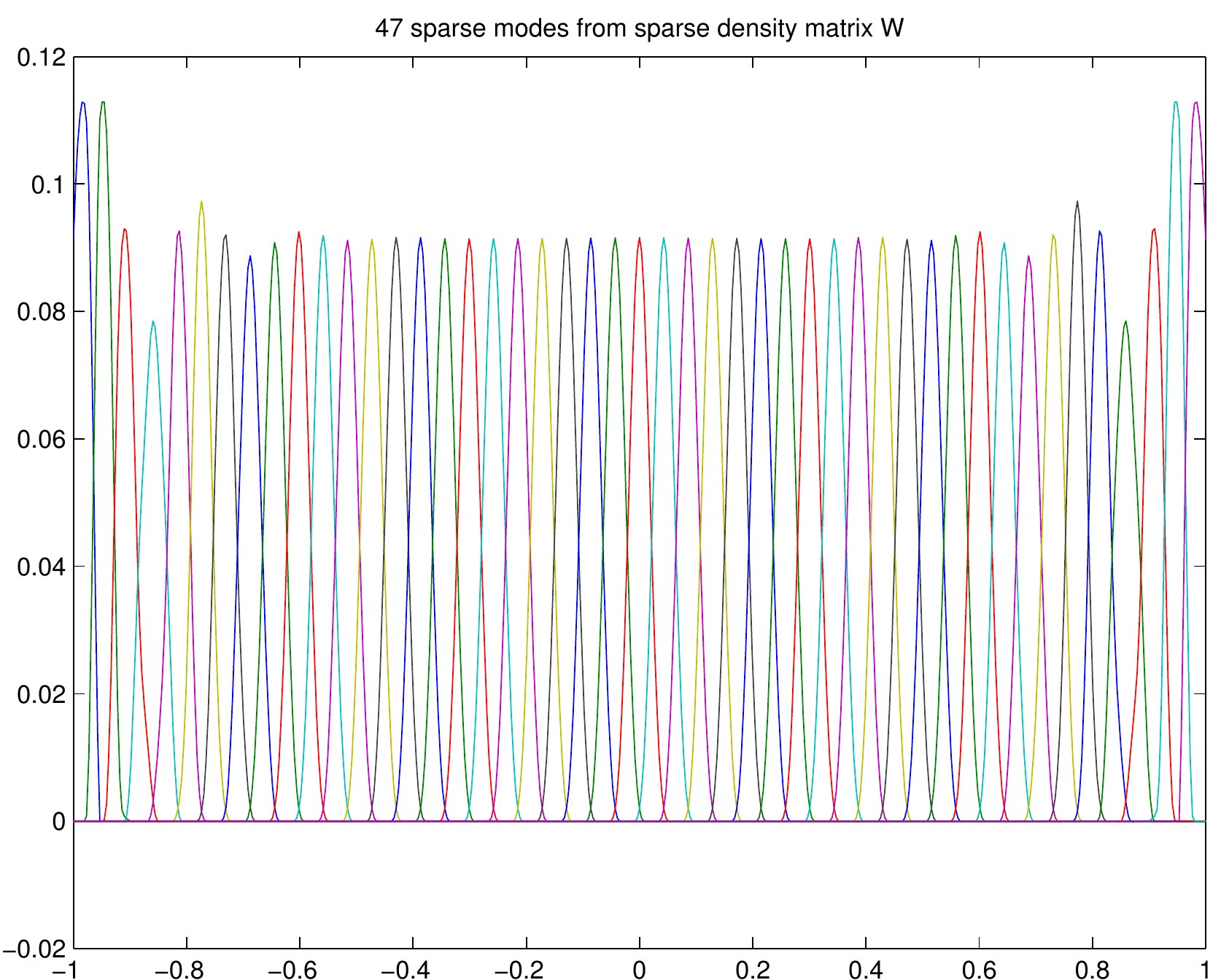}
\includegraphics[width = 0.45\textwidth,height = 0.2\textheight]{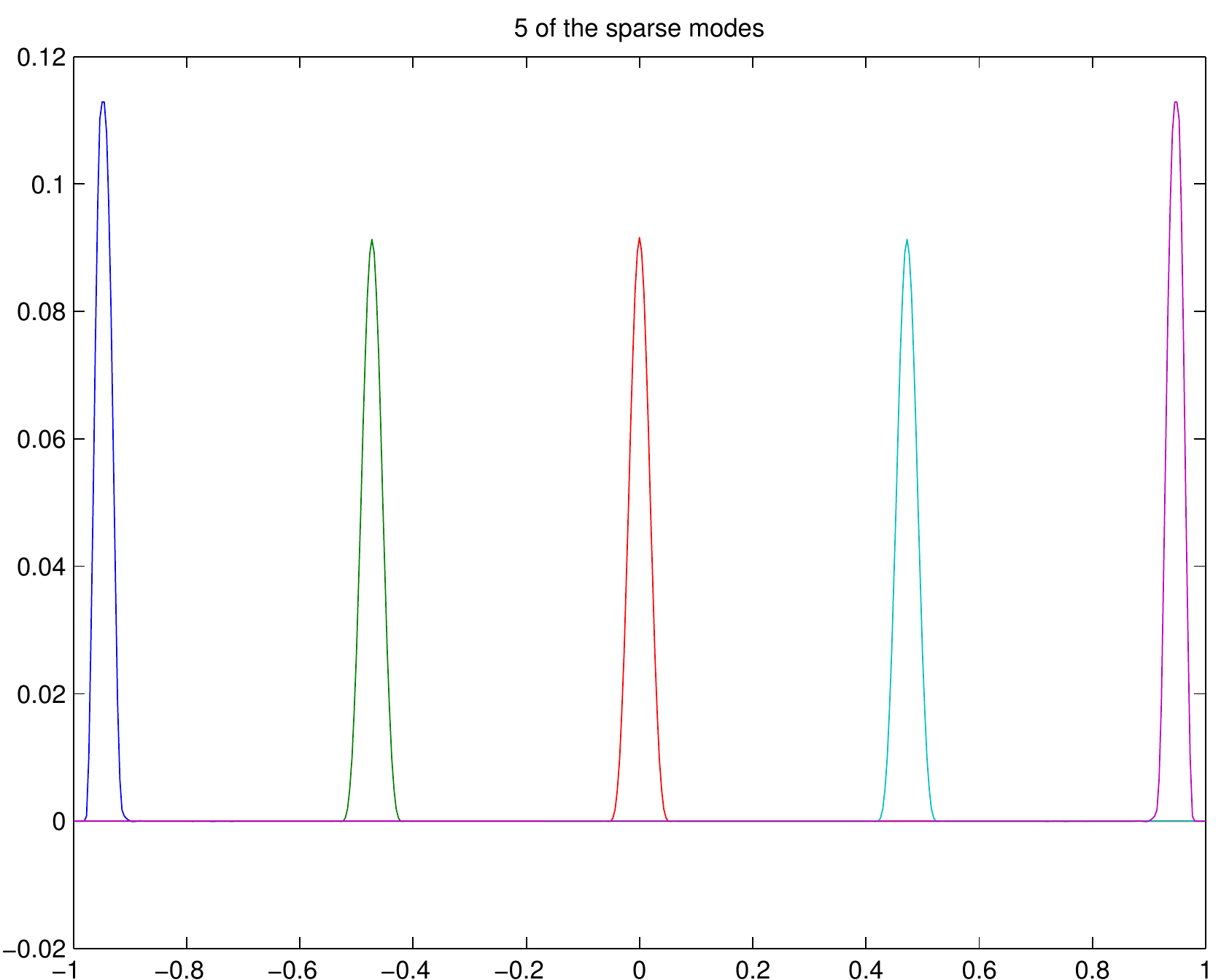}
\caption{Sparse PCA: $\mu = 2.7826$. We specifically choose 47 columns out of $W$ and show all and 5 of them.}\label{fig:sparsePCA1}
\end{figure}

\section{Conclusions and future work}
In this paper, we introduced a new matrix factorization method, the intrinsic sparse mode decomposition (ISMD), to obtain a sparse decomposition of low rank symmetric positive semidefinite matrices. Instead of minimizing the total number of nonzero entries of the decomposed modes, the ISMD minimizes the total patch-wise sparseness with a prescribed partition of index set $[N]$. The decomposed modes from the ISMD are called intrinsic sparse modes for the decomposed matrix with respect to the partition. The ISMD is equivalent to the eigen decomposition for the coarsest partition and recovers the pivoted Cholesky decomposition for the finest partition. If the partition is regular-sparse with respect to the matrix to be decomposed, we prove that the ISMD gives the optimal patch-wise sparse decomposition. We also prove that as long as the partition is regular-sparse, the decomposed modes gets sparser (in the sense of $l^0$ norm) as the partition is refined. Finally, we provide a preliminary results on perturbation analysis of the ISMD based on the assumption that the partition is regular-sparse and the intrinsic sparse modes are identifiable with each other. Numerical examples on synthetic data demonstrate the robustness and efficiency of the ISMD.

Currently, the perturbation analysis is based on an extra assumption that roughly requires that the local eigen decomposition be well conditioned, see Eqn.~\eqref{eqn:Annperturb}. It would be desirable to perform a perturbation analysis without such assumption or propose a more stable version of the ISMD. In the paper, we also discussed the differences between the sparse-orthogonal matrix factorization problem~\eqref{eqn:ismd_svd} and the general sparse matrix factorization problem~\eqref{eqn:matrix_factorize}. We pointed out that the ISMD is not designed to solve the general matrix factorization problem. The ISMD is recommended as a sparse matrix factorization method only if the orthoganality in decomposition coefficients $U$ is required and an exact (or nearly exact) decomposition is desired. Finally, we have provided a heuristic algorithm (e.g. Algorithm~\ref{alg:ISMDm2}) to solve problem~\eqref{opt:minsparseness} for matrix factorization with large noise. Ultimately, the complete resolution of this matrix factorization problem in the presence of large noise requires a better formulation and a more robust algorithm.

\appendix
\section{Proof of Proposition~\ref{prop:localGL}}\label{appendix:proofs}
\begin{enumerate}
\item
$l_k^{(\psi)}$, divided into patches, can be written as $l_k^{(\psi)} = [l_{1,k}; l_{2,k}; \cdots; l_{M,k}]$. From the definition~\eqref{eqn:defLm}, we have $\|l_{m,k}\|_1 = 1$ if $\psi_k|_{_{P_m}} \neq \vct{0}$ and 0 otherwise. Therefore, we obtain
$$\|l_k^{(\psi)}\|_1 = \sum_{m=1}^M \|l_{m,k}\|_1 = s_k(\psi_k; \CalP).$$
Moreover, on patch $P_m$ different $\psi_k$'s correspond to different local pieces in $\Psi_m$ (when they are identical, we keep both when constructing $\Psi_m$), and thus different columns in $L_m^{(\psi)}$ have disjoint supports. Therefore, different columns in $L^{(\psi)}$ have disjoint supports.

\item
From the definition~\eqref{eqn:defLm}, the $j$-th row of $L_n^{(\psi)}$ is equal to $\vct{e}_{k_j^m}^T$, where $\vct{e}_{k_j^m}$ is the $k_j^m$-th column of $\mathbb{I}_K$. Then we have $(L^{(\psi)}_n)^T L^{(\psi)}_n = \sum_{j=1}^{d_n} \vct{e}_{k_j^n} \vct{e}_{k_j^n}^T$. Therefore, we obtain
\begin{equation}\label{eqn:Bnmpsi}
B_{n;m}^{(\psi)} \equiv L_m^{(\psi)}(L^{(\psi)}_n)^T L^{(\psi)}_n (L^{(\psi)}_m)^T = \sum_{j=1}^{d_n} L_m^{(\psi)} \vct{e}_{k_j^n} (L_m^{(\psi)} \vct{e}_{k_j^n})^T =  \sum_{j=1}^{d_n} l_{m,k_j^n} l_{m,k_j^n}^T,
\end{equation} 
where $l_{m,k_j^n}$ is the $k_j^n$-th column of $L_m^{(\psi)}$. 

From the definition~\eqref{eqn:defLm}, $l_{m,k_i^m}$, the $k_i^m$-th column of $L_m^{(\psi)}$, is equal to $\vct{e}_i$ for $i \in [d_m]$ and all other columns are $\vct{0}$. Therefore, 
\begin{equation}\label{eqn:LmLmT}
\sum_{k=1}^{K} l_{m,k} l_{m,k}^T = \sum_{i=1}^{d_m} l_{m,k_i^m} l_{m,k_i^m}^T =  \sum_{i=1}^{d_m} \vct{e}_i \vct{e}_i^T = \mathbb{I}_{d_m}.
\end{equation} 
Eqn.~\eqref{eqn:Bnmpsi} sums over $k \in \{k_j^n\}_{j=1}^{d_n} \subset [K]$ and then we conclude that $B_{n;m}^{(\psi)}$ is diagonal with diagonal entries either 1 or 0. Moreover, if $B_{n;m}^{(\psi)}(i,i) = 1$ the term $\vct{e}_i \vct{e}_i^T$ has to be included in the summation in~\eqref{eqn:Bnmpsi}. Among all terms $\{l_{m,k} l_{m,k}^T\}_{k=1}^K$, only $l_{m,k_i^m} l_{m,k_i^m}^T$ is equal to $\vct{e}_i \vct{e}_i^T$ due to the definition of $L_m^{(\psi)}$. Therefore, the term $l_{m,k_i^m} l_{m,k_i^m}^T$ has to be included in the summation in~\eqref{eqn:Bnmpsi}. Therefore, there exists $j \in [d_n]$ such that $k_j^n = k_i^m$. In other words, there exist $k \in [K]$ and $j \in [d_n]$ such that $\psi_k|_{_{P_m}} = \psi_{m,i}$ and $\psi_k|_{_{P_n}} = \psi_{n,j}$. 
\end{enumerate}

\section{A simple lemma about regular-sparse partitions} \label{supp:refinepartitionlemma}
\begin{lemma}\label{lem:refinepartition}
Suppose that $A \in \RR^{N\times N}$ is symmetric and PSD. Let $\CalP_c$ be a partition of $[N]$ and $\CalP_f$ be a refinement of $\CalP_c$. If the finer partition $\CalP_f$ is regular-sparse with respect to $A$, then the coarser partition $\CalP_c$ is also regular-sparse with respect to $A$.
\end{lemma}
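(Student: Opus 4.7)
The plan is to show that the \emph{same} decomposition witnessing regular-sparseness for $\CalP_f$ also witnesses regular-sparseness for $\CalP_c$. Let $A = \sum_{k=1}^K g_k g_k^T$ be a decomposition such that on each fine patch $P^f \in \CalP_f$ the collection of nonzero restrictions $\{g_k|_{_{P^f}} : g_k|_{_{P^f}} \neq \vct{0}\}$ is linearly independent. Fix an arbitrary coarse patch $P^c \in \CalP_c$ and write it as a disjoint union of fine patches $P^c = \sqcup_{j=1}^r P_j^f$ (this is possible because $\CalP_f$ refines $\CalP_c$). I plan to show that $\{g_k|_{_{P^c}} : g_k|_{_{P^c}} \neq \vct{0}\}$ is linearly independent; since $P^c$ was arbitrary, this will establish that $\CalP_c$ is regular-sparse w.r.t. $A$.

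The core step is a restriction-and-independence argument. Suppose $\sum_{k \in S} c_k\, g_k|_{_{P^c}} = \vct{0}$, where $S = \{k : g_k|_{_{P^c}} \neq \vct{0}\}$. Restricting this identity to any fine sub-patch $P_j^f \subset P^c$ yields $\sum_{k \in S} c_k\, g_k|_{_{P_j^f}} = \vct{0}$, and the terms with $g_k|_{_{P_j^f}} = \vct{0}$ drop out. By the regular-sparse property of $\CalP_f$ applied on the patch $P_j^f$, the remaining restrictions are linearly independent, so $c_k = 0$ for every $k \in S$ with $g_k|_{_{P_j^f}} \neq \vct{0}$. Now, for any fixed $k_0 \in S$, the condition $g_{k_0}|_{_{P^c}} \neq \vct{0}$ together with $P^c = \sqcup_j P_j^f$ forces the existence of some index $j$ with $g_{k_0}|_{_{P_j^f}} \neq \vct{0}$; invoking the previous sentence for this $j$ gives $c_{k_0} = 0$. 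Hence all coefficients vanish, establishing the desired linear independence.

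I do not expect any real obstacle here: the argument is purely set-theoretic bookkeeping combined with the observation that linear independence is preserved under taking supersets of the domain of restriction (in the sense that if the restrictions to a subset are already independent, so are the unrestricted/less-restricted versions). The only thing to be mildly careful about is handling the fact that $g_k|_{_{P^c}} \neq \vct{0}$ need not imply $g_k|_{_{P_j^f}} \neq \vct{0}$ for every $j$, but it does imply nonvanishing on at least one $j$, which is exactly what the last step uses.
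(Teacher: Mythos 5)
Your proof is correct and follows essentially the same route as the paper's: both use the decomposition witnessing regular-sparseness of $\CalP_f$ as the witness for $\CalP_c$, take a vanishing linear combination on a coarse patch, and kill each coefficient by restricting to a fine sub-patch where the corresponding mode is nonzero. The subtlety you flag at the end (a mode nonzero on $P^c$ need only be nonzero on \emph{some} fine sub-patch) is exactly the point the paper's argument also hinges on, and you handle it the same way.
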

\begin{proof}
By the definition of regular-sparseness, suppose that $A = \sum_{k=1}^K g_k^{(f)} \left(g_k^{(f)}\right)^T$ and that on every patch in $\CalP^{(f)}$ the nontrivial modes $\{g_k^{(f)}\}_{k=1}^K$ on this patch are linearly independent. For any $P_m^{(c)} \in \CalP_c$, assume
\begin{equation} \label{eqn:coarse}
	\sum_{i=1}^{d_m} \alpha_i g_{k_i^m}^{(f)} \equiv 0 \qquad \text{on patch } P_m^{(c)},
\end{equation}
where $d_m$ is the local dimension of decomposition $A = \sum_{k=1}^K g_k^{(f)} \left(g_k^{(f)}\right)^T$ on $P_m^{(c)}$ and $\{g_{k_i^m}^{(f)}\}_{i=1}^{d_m}$ are the modes which are non zero there. Since $\CalP_f$ is a refinement of $\CalP_c$, for any $i \in [d_m]$, there exists one patch $P_n^{(f)} \subset P_m^{(c)}$ such that $g_{k_i^m}^{(f)} \neq 0$ on this smaller patch. Restricting Eqn.~\eqref{eqn:coarse} to $P_n^{(f)}$, we get $\alpha_i = 0$ due to regular-sparse property of $\CalP_f$. Therefore, $\{g_{k_i^m}^{(f)} \}_{i=1}^{d_m}$ are linearly independent on $P_m^{(c)}$. Since the patch $P_m^{(c)}$ is arbitrarily chosen, we conclude that $\CalP_c$ is regular-sparse.
\end{proof}

\section{Joint diagonalization of matrices}\label{appendix:jointDiagonalization}
Joint diagonalization is often used in Blind Source Separation (BSS) and Independent Component Analysis (ICA), and it has been well studied. We adopt its algorithm and sensitivity analysis in the ISMD. Suppose a series of $n$-dimensional symmetric matrices $\{M_k\}_{k=1}^K$ can be decomposed into:
\begin{equation}\label{eqn:jointM}
	M_k = D \Lambda_k D^T,
\end{equation} 
where $D$ is an $n$-dimensional unitary matrix that jointly diagonalizes $\{M_k\}_{k=1}^K$ and the eigenvalues are stored in diagonal matrices $\Lambda_k = \text{diag}\{\lambda_1(k), \lambda_2(k), \cdots, \lambda_n(k)\}$. Denote $\vct{\lambda}_i \equiv [\lambda_i(1), \lambda_i(2), \dots, \lambda_i(K)]^T \in \RR^K$. To find the joint eigenvectors $D$, we solve the following optimization problem:
\begin{equation}\label{opt:jointDapp}
\min_{V\in \OO(n)}\quad \sum_{k=1}^{K} \sum_{i \neq j} |(V^T M_k V)_{i,j}|^2.
\end{equation}

Obviously the minimum of problem~\eqref{opt:jointDapp} is 0 and $D$ is an minimizer. However, the minimizer is not unique. The so-called unicity assumption, i.e., $\vct{\lambda}_i \neq \vct{\lambda}_j$ for any $i \neq j$, is widely used in existing literatures and guarantees that $D$ is unique up to column permutation and sign flips. In general, we assume that there are $m$ ($m \le n$) distinct eigenvalues $\{\vct{\lambda}_i\}_{i=1}^m$ with multiplicity $\{q_i\}_{i=1}^m$ respectively. Minimizers of problem~\eqref{opt:jointDapp} are characterized by the following theorem.
\begin{theorem}\label{thm:jointDunique}
Suppose that $\{M_k\}_{k=1}^K$ are generated by \eqref{eqn:jointM} and that $V$ is a global minimizer of problem~\eqref{opt:jointDapp}. There exists a permutation matrix $\Pi\in\RR^{n\times n}$ and block diagonal matrix $R$ such that 
\begin{equation}
V \Pi = D R \,, \qquad R = \text{diag}\{R_1, \dots, R_m\}\,,
\end{equation}
in which $R_i \in \OO(q_i)$.
\end{theorem}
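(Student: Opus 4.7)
The plan is to reduce the problem to classifying the unitary matrices that simultaneously diagonalize the $\Lambda_k$'s, and then to use the block structure induced by the distinct eigenvalue vectors $\vct{\lambda}_i$.

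First I would observe that by~\eqref{eqn:jointM} the choice $V=D$ makes the objective equal to $0$, so the global minimum is $0$ and any global minimizer $V$ satisfies: $V^T M_k V$ is diagonal for every $k \in [K]$. Setting $W := D^T V \in \OO(n)$ and substituting $M_k = D\Lambda_k D^T$, the condition becomes that $W^T \Lambda_k W$ is diagonal for every $k$. Equivalently, each column $w_j$ of $W$ is a common eigenvector of all $\Lambda_k$'s, with some eigenvalue vector $d_j \equiv (d_j^{(1)},\dots,d_j^{(K)}) \in \RR^K$.

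The core step is to use the diagonal form of $\Lambda_k$. Writing $w_j = \sum_l w_{lj} e_l$, the eigenvector relation $\Lambda_k w_j = d_j^{(k)} w_j$ forces $w_{lj}\bigl(\lambda_l(k)-d_j^{(k)}\bigr)=0$ for all $l$ and $k$. Hence the support of $w_j$ is contained in the index set $G_\alpha := \{l : \vct{\lambda}_l = \vct{\lambda}_\alpha\}$ for the unique $\alpha$ satisfying $\vct{\lambda}_\alpha = d_j$; uniqueness uses exactly that the $m$ distinct eigenvalue vectors are pairwise different. Consequently, every column of $W$ lies in one of the coordinate subspaces $E_\alpha := \mathrm{span}\{e_l : l\in G_\alpha\}$, which has dimension $q_\alpha$.

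The final step is to invoke orthonormality. Since the columns of $W$ form an orthonormal basis of $\RR^n$ and split according to which $E_\alpha$ they belong to, exactly $q_\alpha$ columns lie in $E_\alpha$ (they must form an orthonormal basis of $E_\alpha$ for the full set to span $\RR^n$). Let $\Pi$ be the column permutation that re-orders the columns of $W$ so that those lying in $E_1,\dots,E_m$ are grouped together in that order; then $W\Pi$ is block diagonal, $W\Pi = \mathrm{diag}(R_1,\dots,R_m)$ with $R_i \in \OO(q_i)$. Setting $R := W\Pi$ and recalling $V = DW$ gives $V\Pi = DR$, which is the desired conclusion. There is no real obstacle in this argument; the one conceptual point to get right is that the minimizer condition $W^T\Lambda_k W$ diagonal is truly a simultaneous-eigenvector condition, after which distinctness of the $\vct{\lambda}_\alpha$'s and unitarity of $W$ do all the work.
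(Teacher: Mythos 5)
Your proof is correct and follows essentially the same route as the paper's: the substitution $W = D^T V$ simply re-expresses the paper's key observation that $v_i^T d_j = 0$ whenever the associated eigenvalue vectors differ (since $W_{ji} = d_j^T v_i$, your support condition on the columns of $W$ is literally that orthogonality statement), and the subsequent counting/spanning argument and block-orthogonal conclusion match the paper's. No gaps.
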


Theorem~\ref{thm:jointDunique} is the generalization of eigen decomposition of a single symmetric matrix to the case with multiple matrices. Although it is elementary, we provide the sketch of its proof here for completeness.

\begin{proof}
Since $V$ is a global minimizer and thus achieves zero in its objective function, $V^T M_k V$ is diagonal for any $k\in [K]$. Denote $\Gamma \equiv V^T M_k V = \text{diag}\{\gamma_1(k), \gamma_2(k), \cdots, \gamma_n(k)\}$ and $\vct{\gamma}_i \equiv [\gamma_i(1), \gamma_i(2), \dots, \gamma_i(K)]^T \in \RR^K$. Define $D = [d_1, d_2, \dots, d_n]$ and $V = [v_1, v_2, \dots, v_n]$. If $\vct{\gamma}_i \neq \vct{\lambda}_j$, then $v_i^T d_j = 0$ since they belong to different eigen spaces for at least one $M_k$. Both $D$ and $V$ span the full space $\RR^n$, and thus there is a one-to-one mapping between $\{\vct{\gamma}_i\}_{i=1}^n$ to $\{\vct{\lambda}_i\}_{i=1}^m$ with multiplicity $\{q_i\}_{i=1}^m$. Therefore, there exists a permutation matrix $\Pi$ such that
\begin{equation*}
	\left[\vct{\gamma}_1, \vct{\gamma}_2, \dots, \vct{\gamma}_n \right] \Pi = \left[ \vct{\lambda}_1, \vct{\lambda}_2, \dots, \vct{\lambda}_n \right].
\end{equation*}
Correspondingly, denoting $\tilde{D} = [\tilde{d}_1, \tilde{d}_2, \dots, \tilde{d}_n] \equiv V \Pi$, we have
\begin{equation*}
	M_k d_{i,j} = \lambda_i(k) d_{i,j}\, , \qquad M_k \tilde{d}_{i,j} = \lambda_i(k) \tilde{d}_{i,j}\, ,
\end{equation*}
where $\{d_{i,j}\}_{j=1}^{q_i}$ and $\{\tilde{d}_{i,j}\}_{j=1}^{q_i}$ are the eigenvectors in $D$ and $\tilde{D}$ respectively corresponding to the eigenvalue $\vct{\lambda}_i$. By orthogonality between eigenspaces and completeness of $D$ and $\tilde{D}$, $\{d_{i,j}\}_{j=1}^{q_i}$ and $\{\tilde{d}_{i,j}\}_{j=1}^{q_i}$ must span the same $q_i$-dimensional subspace. Since both  $\{d_{i,j}\}_{j=1}^{q_i}$ and $\{\tilde{d}_{i,j}\}_{j=1}^{q_i}$ are orthonormal, there exists $R_i \in \OO(q_i)$ such that $\tilde{d}_{i,j} = R_i d_{i,j}$ for $j \in [q_i]$. 
\end{proof}

The sensitivity analysis of the joint diagonalization problem~\eqref{opt:jointDapp} is studied in \cite{cardoso_perturbation_1994}, and we directly quote its main results below.
\begin{proposition}\label{prop:jointDsensitivity}
Suppose that $\{\hat{M}_k\}_{k=1}^K$ are generated as follows:
\begin{equation*}
	\hat{M}_k = M_k + \epsilon \tilde{M}_k,\quad M_k = D \Lambda_k D^T,
\end{equation*} 
where $D$ is unitary, $\epsilon$ is a real scalar, matrices $\tilde{M}_k$ are arbitrary and matrices $\Lambda_k$ are diagonal as in \eqref{eqn:jointM}. Suppose that the unicity assumption, i.e., $\vct{\lambda}_i \neq \vct{\lambda}_j$ for any $i \neq j$, holds true. Then any solution of the joint diagonalization problem~\eqref{opt:jointDapp} with the perturbed input $\{\hat{M}_k\}_{k=1}^K$, denoted by $\hat{D}$, is in the form
\begin{equation*}
\hat{D} = D(\mathbb{I} + \epsilon E +o(\epsilon)) J
\end{equation*}
where $J$ is the product of a permutation matrix with a diagonal matrix having only $\pm 1$ on its diagonal. Matrix $E$ has a null diagonal and is antisymmetric, i.e., $E + E^T = 0$. Its off-diagonal entries $E_{ij}$ are give by
\begin{equation*}
E_{ij} = \frac{1}{2}\sum_{k=1}^K f_{ij}(k) d^T_i(\tilde{M}_k+\tilde{M}_k^T)d_j\,,\quad\text{with}\quad f_{ij}(k) = \frac{\lambda_j(k) - \lambda_i(k)}{\sum_{l=1}^K(\lambda_j(l) - \lambda_i(l))^2}\,.
\end{equation*}
\end{proposition}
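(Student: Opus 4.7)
The plan is to parametrize the perturbed minimizer $\hat{D}$ as $\hat{D} = D(I + \epsilon E + o(\epsilon))J$ — which is precisely the claim — and pin down $E$ from the first-order optimality condition of the joint diagonalization objective. The unicity assumption plays two roles. First, by Theorem~\ref{thm:jointDunique}, the unperturbed minimizer is isolated up to the finite group generated by column permutations and sign flips, so by a standard continuity/compactness argument on $\OO(n)$, for $\epsilon$ small enough any global minimizer $\hat{D}$ of the perturbed problem lies in a small neighborhood of exactly one branch $DJ$. After fixing this $J$, we write $\hat{D}J^{-1} = D\hat{U}$ for an orthogonal $\hat{U}$ close to $I$, and Taylor expand $\hat{U} = I + \epsilon E + o(\epsilon)$. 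Orthogonality $\hat{U}^T\hat{U} = I$ at first order forces $E + E^T = 0$, giving the antisymmetry and zero diagonal.

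Next, I substitute into the cost. Let $N_k := D^T \hat{M}_k D = \Lambda_k + \epsilon T_k$ with $T_k := D^T \tilde{M}_k D$. Since the cost $\sum_k \sum_{i \neq j} |(\hat{D}^T\hat{M}_k\hat{D})_{ij}|^2$ is invariant under the right action of $J$, it equals $\sum_k \sum_{i\neq j} |(\hat{U}^T N_k \hat{U})_{ij}|^2$. Expanding gives
\[
\hat{U}^T N_k \hat{U} = \Lambda_k + \epsilon\bigl(T_k + [\Lambda_k, E]\bigr) + o(\epsilon),
\]
whose $(i,j)$ off-diagonal entry is $\epsilon\bigl(T_k(i,j) + (\lambda_i(k) - \lambda_j(k))E_{ij}\bigr) + o(\epsilon)$. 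The cost thus becomes, up to $o(\epsilon^2)$,
\[
\epsilon^2 \sum_k \sum_{i\neq j} \bigl|T_k(i,j) + (\lambda_i(k)-\lambda_j(k))E_{ij}\bigr|^2.
\]

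Grouping the $(i,j)$ and $(j,i)$ contributions, using $E_{ji} = -E_{ij}$, and setting $\partial/\partial E_{ij} = 0$ for each pair $i<j$ yields the linear equation
\[
\sum_k (\lambda_i(k) - \lambda_j(k))\bigl[T_k(i,j) + T_k(j,i)\bigr] + 2 E_{ij}\sum_k (\lambda_i(k) - \lambda_j(k))^2 = 0.
\]
Solving for $E_{ij}$ and noting that $T_k(i,j) + T_k(j,i) = d_i^T (\tilde{M}_k + \tilde{M}_k^T) d_j$ reproduces exactly the formula in the statement. The denominator $\sum_k(\lambda_i(k)-\lambda_j(k))^2$ is strictly positive thanks to the unicity assumption $\vct{\lambda}_i \neq \vct{\lambda}_j$, so $E_{ij}$ is well-defined.

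The main obstacle I expect is the rigorous justification that a perturbed minimizer $\hat{D}$ admits such a smooth $\epsilon$-expansion around $D$ — the joint diagonalization cost is non-convex on $\OO(n)$, so smoothness of the selection is not automatic. The argument I would use is that the unicity assumption makes the Hessian of the cost at the unperturbed solution positive definite in the directions transverse to the orbit of the symmetry group $\{J\}$ (these are exactly the directions parametrized by antisymmetric $E$, and the Hessian restricted there is $2\operatorname{diag}_{i<j}\sum_k(\lambda_i(k)-\lambda_j(k))^2$, which is positive). The implicit function theorem then produces a smooth branch of critical points for each $J$, and a compactness argument over the finite set of branches identifies any global minimizer at level $\epsilon$ with the nearest such branch. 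Once this is in hand, the explicit formula for $E_{ij}$ is routine algebra as above.
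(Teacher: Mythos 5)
Your derivation is correct, but note that the paper does not actually prove Proposition~\ref{prop:jointDsensitivity} at all: it is quoted verbatim from Cardoso's perturbation analysis (reference \cite{cardoso_perturbation_1994}), so there is no in-paper argument to compare against. Your blind proof supplies exactly the standard first-order perturbation argument behind that cited result, and the algebra checks out: writing $\hat{D}J^{-1}=D\hat{U}$ with $\hat{U}=\mathbb{I}+\epsilon E+o(\epsilon)$, antisymmetry of $E$ follows from orthogonality at first order; the expansion $\hat{U}^T(\Lambda_k+\epsilon T_k)\hat{U}=\Lambda_k+\epsilon(T_k+[\Lambda_k,E])+o(\epsilon)$ with $T_k=D^T\tilde{M}_kD$ turns the cost into $\epsilon^2\sum_k\sum_{i\neq j}|T_k(i,j)+(\lambda_i(k)-\lambda_j(k))E_{ij}|^2+o(\epsilon^2)$; and pairing the $(i,j)$ and $(j,i)$ terms and solving the stationarity equation reproduces the stated $E_{ij}=\frac{1}{2}\sum_k f_{ij}(k)\,d_i^T(\tilde{M}_k+\tilde{M}_k^T)d_j$ with the correct sign, the denominator being nonzero precisely by unicity. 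You also correctly identified the only genuinely delicate point --- justifying that a global minimizer of the perturbed, non-convex problem admits a smooth $\epsilon$-expansion near one of the finitely many branches $DJ$ --- and your resolution (uniform convergence of the cost on the compact $\OO(n)$ to localize $\hat{D}$ near some $DJ$, positive definiteness of the Hessian in the antisymmetric directions thanks to unicity, then the implicit function theorem to select a unique smooth critical branch) is the right one; the only cosmetic quibble is that the symmetry group $\{J\}$ is finite, so its orbit is discrete and \emph{all} tangent directions of $\OO(n)$ are ``transverse,'' which only makes your Hessian argument easier. In short: the proposal is a complete and correct proof of a statement the paper merely imports from the literature.
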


In this paper, we solve problem~\eqref{opt:jointDapp} using a Jacobi-like algorithm proposed in~\cite{cardoso_blind_1993,bunse-gerstner_numerical_1993}. The idea is to perform 2-dimensional rotation to reduce the amplitude of the off-diagonal pairs one by one. Denote by $R = R(p,q,c,s)$ the 2-dimensional rotation that deals with $(p,q)$ entries of $M_k$:
\begin{equation} \label{eqn:planerotation}
	R = R(p,q,c,s) = I + (c-1) \vct{e}_p \vct{e}_p^T - s \vct{e}_q \vct{e}_q^T + s \vct{e}_q \vct{e}_p^T + (c-1) \vct{e}_p \vct{e}_q^T,
\end{equation}
where $c^2 + s^2 = 1$ for unitarity. A simple calculation shows that
\begin{equation}\label{eqn:offchange}
\begin{split}
	\sum_{k=1}^{K} \sum_{i \neq j} |(R^T M_k R)_{i,j}|^2 = & \sum_{k=1}^{K} \sum_{i \neq j} |M_k(i,j)|^2 - \sum_{k=1}^{K} \left( |M_k(p,q)|^2 + |M_k(q,p)|^2 \right) \\
	   & + \sum_{k=1}^{K} \left( s c (M_k(q,q) - M_k(p,p)) + c^2 M_k(p,q) - s^2 M_k(q,p)\right)^2 \\
	   & + \sum_{k=1}^{K} \left( s c (M_k(q,q) - M_k(p,p)) - s^2 M_k(p,q) + c^2 M_k(q,p)\right)^2\,.
\end{split}
\end{equation}
It can be shown that the choice of $c$ and $s$ that minimizes~\eqref{eqn:offchange} also minimizes $\|L_{pq}z\|_2$ where $z = \left[c^2-s^2,2cs\right]^T$ is a $2\times 1$ vector, and
\begin{equation} \label{eqn:defineL}
	L_{pq} : = \begin{bmatrix} M_1(p,q) & \frac{M_1(q,q)-M_1(p,p)}{2} \\ \vdots & \vdots \\ M_K(p,q) & \frac{M_K(q,q)-M_K(p,p)}{2} \end{bmatrix}\,,
\end{equation}
is a $K\times 2$ matrix. It is apparent that the singular vector corresponding to the smallest singular value does the job. Denote this singular vector by $\vct{w}$ with $\vct{w}(1)\ge 0$. The optimizer of Eqn.~\eqref{eqn:offchange} is given by:
\begin{equation} \label{eqn:choiceCS}
	c = \sqrt{\frac{1+\vct{w}(1)}{2}}, \quad s = \frac{\vct{w}(2)}{2 c}.
\end{equation}

We perform such rotation for each pair of $(p,q)$ until the algorithm converges, as shown in Algorithm~\ref{alg:jointD}.
\begin{algorithm}
\caption{Jacobi-like Joint Diagonalization}\label{alg:jointD}
\begin{algorithmic}[1]
\Require $\epsilon > 0$; $\{M_k\}_{k=1}^K$, which are symmetric and jointly diagonalizable.
\Ensure $V\in \OO(n)$ such that $\sum_{k=1}^{K} \sum_{i \neq j} |(V^T M_k V)_{i,j}|^2 \le \epsilon \sum_{k=1}^K \|M_k\|_F^2$.
\State $V \leftarrow I$
\While{$\sum_{k=1}^{K} \sum_{i \neq j} |(V^T M_k V)_{i,j}|^2 > \epsilon \sum_{k=1}^K \|M_k\|_F^2$}
	\For{$p = 1, 2, \cdots, n$} 
		\For{$q = p+1, p+2, \cdots, n$} 
			\State define $L_{pq}$ as in~\eqref{eqn:defineL}
			\State compute $\vct{w}$, the normalized singular vector corresponding to the smallest singular value
			\State set $c = \sqrt{\frac{1+\vct{w}(1)}{2}}, \quad s = \frac{\vct{w}(2)}{2 c}$ and $R = R(p,q,c,s)$
			\State set $V \leftarrow V R$; $M_k \leftarrow V^T M_k V$ for $k=1, 2, \cdots, K$
		\EndFor
	\EndFor
\EndWhile
\end{algorithmic}
\end{algorithm}
The algorithm has been shown to have quadratic asymptotic convergence rate and is numerically stable, see~\cite{bunse-gerstner_numerical_1993}.

\section{Proof of Lemma~\ref{lem:Gmperturb}} \label{appendix:lemproof}
We point out that for the noiseless case, the ISMD in fact solves the following optimization problem to obtain $G_m$:
\begin{equation}\label{opt:jointGm}
\boxed{\begin{split}
	\min_{G_m \in \RR^{|P_m| \times K_m}} \quad & \sum_{n=1}^{M} \sum_{i \neq j} |B_{n;m}(i,j)|^2 \, \\
	\text{s.t.}   \quad & G_m G_m^T = A_{mm}\,, \\
			& B_{n;m} = G_m^{\dagger} A_{mn} A_{nn}^{\dagger} A_{mn}^T \left(G_m^{\dagger} \right)^T,
\end{split}}
\end{equation}
in which
\begin{equation}\label{eqn:Ainverse}
	G_m^{\dagger} = (G_m^T G_m)^{-1} G_m^T, \quad A_{nn}^{\dagger} = \sum_{i=1}^{K_n} \gamma_{n,i}^{-1} h_{n,i} h_{n,i}^T
\end{equation}
is the (Moore-Penrose) pseudo-inverse of $G_m$ and $A_{nn}$ respectively. The ISMD solves this optimization problem in two steps:
\begin{enumerate}
\item Perform eigen decomposition $A_{mm} = H_m H_m^T$. Then the feasible $G_m$ can be written as $H_m D_m$ with unitary matrix $D_m$.
\item Find the rotation $D_m$ which solves the joint diagonalization problem~\eqref{opt:jointDm}.
\end{enumerate}
Similarly, one can check that for the noisy case, the ISMD (with truncated eigen decomposition~\eqref{eqn:localeigtruncate}) solves the same optimization problem with perturbed input to obtain $\hat{G}_m$:
\begin{equation}\label{opt:jointGmhat}
\boxed{\begin{split}
	\min_{G_m \in \RR^{|P_m| \times K_m}} \quad & \sum_{n=1}^{M} \sum_{i \neq j} |B_{n;m}(i,j)|^2 \, \\
	\text{s.t.}   \quad & G_m G_m^T = \hat{A}_{mm}^{(t)}\,, \\
			& B_{n;m} = G_m^{\dagger} \hat{A}_{mn} \left(\hat{A}_{nn}^{(t)}\right)^{\dagger} \hat{A}_{mn}^T \left(G_m^{\dagger} \right)^T,
\end{split}}
\end{equation}
where, $\hat{A}_{nn}^{(t)}$ is the truncated $\hat{A}_{nn}$ defined in Eqn.~\eqref{eqn:truncatedAmm} and 
\begin{equation}\label{eqn:Ainversehat}
	\left(\hat{A}_{nn}^{(t)}\right)^{\dagger} = \sum_{i=1}^{K_n} \hat{\gamma}_{n,i}^{-1} \hat{h}_{n,i} \hat{h}_{n,i}^T
\end{equation}
is the pseudo-inverse of $\hat{A}_{nn}^{(t)}$.

Since $G_m$ is a minimizer of problem~\eqref{opt:jointGm}, the identity matrix $\mathbb{I}_{K_m}$ is one minimizer of the following joint diagonalization problem:
\begin{equation}\label{app:jointDm}
\boxed{\begin{split}
	\min_{V \in \OO(K_m)}\quad \sum_{n=1}^{M} \sum_{i \neq j} |(V^T B_{n;m} V)_{i,j}|^2 \,,
\end{split}}
\end{equation}
where 
\begin{equation}\label{app:B}
	B_{n;m} = G_m^{\dagger} A_{mn} A_{nn}^{\dagger} A_{mn}^T \left(G_m^{\dagger} \right)^T = D_m^T \Sigma_{n;m} D_m,
\end{equation}
where $D_m$ and $\Sigma_{n;m}$ are defined in the procedure of the ISMD. Let $\{\psi_k\}_{k=1}^K$ be a set of intrinsic sparse modes of $A$. Combining Lemma~\ref{lem:necessaryD} with Lemma~\ref{lem:sufficientD},  we get
\begin{equation}\label{app:Bdiagonal}
\begin{split}
	B_{n;m} &= D_m^T \Sigma_{n;m} D_m = \Pi_m V_m^T  \left(D^{(\psi)}\right)^T \Sigma_{n;m} D^{(\psi)} V_m \Pi_m = \Pi_m V_m^T B_{n;m}^{(\psi)} V_m \Pi_m = \Pi_m B_{n;m}^{(\psi)} \Pi_m.
\end{split}	
\end{equation}
The last equality is due to the fact that $V_m$ are diagonal matrices with diagonal entries either 1 or -1 in the identifiable case.\footnote{Readers can verify that Eqn.~\eqref{app:Bdiagonal} is still true in the non-identifiable case.} If $\Psi_m$ is reordered by $\Pi_m$, we simply have $B_{n;m} = B_{n;m}^{(\psi)}$ for all $n \in [M]$. Therefore, there exists such a set of intrinsic sparse modes $\{\psi_k\}_{k=1}^K$ that for all $n\in [M]$
\begin{equation}\label{app:Bdiagonal2} 
	B_{n;m} = B_{n;m}^{(\psi)}.
\end{equation}
One can easily verify that the unicity assumption holds true for the joint diagonalization problem~\eqref{app:jointDm} because the intrinsic sparse modes $\{\psi_k\}_{k=1}^K$ are pair-wisely identifiable. 

Combining the equality constraints in problem~\eqref{opt:jointGm} and problem~\eqref{opt:jointGmhat} and the assumption~\eqref{eqn:Annperturb}, we have
\begin{equation*}
	\hat{G}_m \hat{G}_m^T = \left((I + \epsilon E_m^{(eig)}) G_m\right) \left((I + \epsilon E_m^{(eig)}) G_m \right)^T.
\end{equation*}
Define 
\begin{equation}\label{app:defF}
	F_m \equiv (I + \epsilon E_m^{(eig)}) G_m.
\end{equation}
Then, there exists $U_m \in \OO(K_m)$ such that $\hat{G}_m = F_m U_m$. Since $\hat{G}_m$ is a minimizer of problem~\eqref{opt:jointGmhat}, $U_m$ is one minimizer of the following joint diagonalization problem:
\begin{equation}\label{app:jointDmhat}
\boxed{\begin{split}
	\min_{V \in \OO(K_m)}\quad \sum_{n=1}^{M} \sum_{i \neq j} |(V^T \hat{B}_{n;m} V)_{i,j}|^2 \,,
\end{split}}
\end{equation}
where
\begin{equation}\label{app:Bhat}
	\hat{B}_{n;m} = F_m^{\dagger} \hat{A}_{mn} \left(\hat{A}_{nn}^{(t)}\right)^{\dagger} \hat{A}_{mn}^T \left(F_m^{\dagger} \right)^T.
\end{equation}
From standard perturbation analysis of pseudo-inverse, for instance see Theorem 3.4 in~\cite{stewart1977perturbation}, we have
\begin{equation}\label{app:Ginvperturb}
	F_m^{\dagger} = G_m^{\dagger} + \epsilon E_m^{(ginv)}, \quad \|E_m^{(ginv)}\|_2 \le \mu \sigma_{min}^{-2}(G_m) \|E_m^{(eig)} G_m\|_2 \le \mu C_{eig} \sigma_{min}^{-2}(G_m) \|G_m\|_2
\end{equation}
and
\begin{equation*}
	\left(\hat{A}_{nn}^{(t)}\right)^{\dagger} = A_{nn}^{\dagger} + \epsilon E_n^{(ainv)}, \quad \|E_n^{(ainv)}\|_2 \le \mu \gamma_{n,K_n}^{-2} \|\hat{A}_{nn}^{(t)} - A_{nn}\|_2/\epsilon.
\end{equation*}
Here, $\sigma_{min}(G_m)$ is the smallest nonzero singular value of $G_m$ and $\gamma_{n,K_n}$ is the $K_n$-th eigenvalue of $A_{nn}$ as defined in~\eqref{eqn:localeig}. Denote the $(K_n+1)$-th eigenvalue of $\hat{A}_{nn}$ as $\hat{\gamma}_{n,K_n+1}$. From Corollary 8.1.6 in~\cite{golub_matrix_2012}, we have $\hat{\gamma}_{n,K_n+1} \le \epsilon \|\tilde{A}_{nn}\|_2$. Then, we get
\begin{equation*}
	\|\hat{A}_{nn}^{(t)} - A_{nn}\|_2 \le \|\hat{A}_{nn}^{(t)} - \hat{A}_{nn}\|_2 + \|\hat{A}_{nn} - A_{nn}\|_2 \le 2 \epsilon \|\tilde{A}_{nn}\|_2 \le 2 \epsilon,
\end{equation*}
where $\|\tilde{A}\|_2 \le 1$ has been used in the last inequality. Therefore, we obtain 
\begin{equation}\label{app:Ainvperturb}
	\left(\hat{A}_{nn}^{(t)}\right)^{\dagger} = A_{nn}^{\dagger} + \epsilon E_n^{(ainv)}, \quad \|E_n^{(ainv)}\|_2 \le 2 \mu \gamma_{n,K_n}^{-2}.
\end{equation}
When $\epsilon \ll 1$, the constant $\mu$ can be taken as 2 in both \eqref{app:Ginvperturb} and \eqref{app:Ainvperturb}. Combining \eqref{eqn:noisyCov}, \eqref{app:Ginvperturb} and \eqref{app:Ainvperturb}, we get
\begin{equation}\label{app:Bperturb}
\begin{split}
	\hat{B}_{n;m} =& B_{n;m} + \epsilon \tilde{B}_{n;m}\, , \\
	\tilde{B}_{n;m} =& E_m^{(ginv)} A_{mn} A_{nn}^{\dagger} A_{mn}^T \left(G_m^{\dagger} \right)^T + G_m^{\dagger} \tilde{A}_{mn} A_{nn}^{\dagger} A_{mn}^T \left(G_m^{\dagger}\right)^T + G_m^{\dagger} A_{mn} E_n^{(ainv)} A_{mn}^T \left(G_m^{\dagger} \right)^T \\
	& + G_m^{\dagger} A_{mn} A_{nn}^{\dagger} \tilde{A}_{mn}^T \left(G_m^{\dagger} \right)^T + G_m^{\dagger} A_{mn} A_{nn}^{\dagger} A_{mn}^T \left(E_m^{(ginv)} \right)^T.
\end{split}
\end{equation}
By Proposition~\ref{prop:jointDsensitivity}, there exists $E_m^{(jd)} \in \RR^{K_m \times K_m}$ such that 
\begin{equation*}
	U_m = (\mathbb{I}_{K_m} + \epsilon E_m^{(jd)} +o(\epsilon)) J_m,
\end{equation*}
where $J_m$ is the product of a permutation matrix with a diagonal matrix having only $\pm 1$ on its diagonal. Matrix $E_m^{(jd)}$ has a null diagonal and is antisymmetric, i.e., $E_m^{(jd)} + \left(E_m^{(jd)}\right)^T = 0$. Its off-diagonal entries $E_m^{(jd)}(i,j)$ are given by
\begin{equation*}
E_m^{(jd)}(i,j) = \sum_{n=1}^M f(n) \circ \tilde{B}_{n;m}\,,\quad\text{with}\quad f_{ij}(n) = \frac{B_{n;m}(j,j) - B_{n;m}(i,i)}{\sum_{n=1}^M(B_{n;m}(j,j) - B_{n;m}(i,i))^2}\,.
\end{equation*}
Here, $f(n)$ is the matrix with entries $f_{ij}(n)$ and $f(n) \circ \tilde{B}_{n;m}$ is the matrix point-wise product (also known as the Hadamard product). Notice that we take advantage of the fact that $\tilde{B}_{n;m}$ is symmetric to simplify $E_m^{(jd)}(i,j)$. Since $B_{n;m}(j,j) - B_{n;m}(i,i)$ is either $\pm 1$ or 0, $|f_{ij}(n)| \le 1$ for any $i$,$j$ and $n$, and thus we have $\|f(n)\|_F \le K_m$. Therefore, we conclude
\begin{equation}\label{app:Emestimate}
\|E_m^{(jd)}\|_F \le  \sum_{n=1}^M \| f(n) \circ \tilde{B}_{n;m} \|_F \le \sum_{n=1}^M \|f(n)\|_F \|\tilde{B}_{n;m}\|_F \le K_m^{3/2} \sum_{n=1}^M \|\tilde{B}_{n;m}\|_2,
\end{equation}
where we have used triangle inequality,  $\| f(n) \circ \tilde{B}_{n;m} \|_F \le \|f(n)\|_F \|\tilde{B}_{n;m}\|_F$ and $\|\tilde{B}_{n;m}\|_F \le K_m^{1/2} \|\tilde{B}_{n;m}\|_2$ in deriving the above inequalities. Combining \eqref{app:Bperturb}, \eqref{eqn:noisyCov}, \eqref{app:Ginvperturb} and \eqref{app:Ainvperturb}, we know that $\|\tilde{B}_{n;m}\|_2$ are bounded by a constant, denoted by $C_{jd}$, which only depends on $A$ and $C_{eig}$. From the assumption~\eqref{eqn:Annperturb}, $C_{eig}$ is a constant depending on $A$ but not on $\epsilon$ or $\tilde{A}$. Therefore, $C_{jd}$ depends only on $A$ but not on $\epsilon$ or $\tilde{A}$.

\section*{Acknowledgments}
This research was in part supported by Air Force MURI Grant FA9550-09-1-0613, DOE grant DE-FG02-06ER257, and NSF Grants No. DMS-1318377, DMS-1159138.

\bibliographystyle{plain}
\bibliography{ISMD}

\begin{thebibliography}{10}

\bibitem{agarwal2009regression}
Deepak Agarwal and Bee-Chung Chen.
\newblock Regression-based latent factor models.
\newblock In {\em Proceedings of the 15th ACM SIGKDD international conference
  on Knowledge discovery and data mining}, pages 19--28. ACM, 2009.

\bibitem{bunse-gerstner_numerical_1993}
Angelika Bunse-Gerstner, Ralph Byers, and Volker Mehrmann.
\newblock Numerical methods for simultaneous diagonalization.
\newblock {\em SIAM Journal on Matrix Analysis and Applications},
  14(4):927--949, 1993.

\bibitem{candes_robust_2011}
Emmanuel~J. Cand\`{e}s, Xiaodong Li, Yi~Ma, and John Wright.
\newblock Robust principal component analysis?
\newblock {\em Journal of the {ACM}}, 58(3):11:1--11:37, June 2011.

\bibitem{cardoso_blind_1993}
J.~F. Cardoso and A.~Souloumiac.
\newblock Blind beamforming for non-gaussian signals.
\newblock {\em IEE Proceedings F (Radar and Signal Processing)},
  140(6):362--370(8), December 1993.

\bibitem{cardoso_perturbation_1994}
Jean-Francois Cardoso.
\newblock Perturbation of joint diagonalizers.
\newblock Technical Report 94D023, Signal Department, Telecom Paris, Paris,
  1994.

\bibitem{chandrasekaran_rank-sparsity_2011}
Venkat Chandrasekaran, Sujay Sanghavi, Pablo~A. Parrilo, and Alan~S. Willsky.
\newblock Rank-sparsity incoherence for matrix decomposition.
\newblock {\em {SIAM} Journal on Optimization}, 21(2):572--596, April 2011.

\bibitem{xiu2015}
Yi~Chen, John Jakeman, Claude Gittelson, and Dongbin Xiu.
\newblock Local polynomial chaos expansion for linear differential equations
  with high dimensional random inputs.
\newblock {\em SIAM Journal on Scientific Computing}, 37(1):A79--A102, 2015.

\bibitem{dAspremont_sparsePCA}
A.~d'Aspremont, L.~El~Ghaoui, M.~Jordan, and G.~Lanckriet.
\newblock A direct formulation for sparse pca using semidefinite programming.
\newblock {\em SIAM Review}, 49(3):434--448, 2007.

\bibitem{efendiev2011multiscale}
Yalchin Efendiev, Juan Galvis, and Xiao-Hui Wu.
\newblock Multiscale finite element methods for high-contrast problems using
  local spectral basis functions.
\newblock {\em Journal of Computational Physics}, 230(4):937--955, 2011.

\bibitem{galvis_domain_2010}
Juan Galvis and Yalchin Efendiev.
\newblock Domain decomposition preconditioners for multiscale flows in high
  contrast media: Reduced dimension coarse spaces.
\newblock {\em Multiscale Modeling \& Simulation}, 8(5):1621--1644, January
  2010.

\bibitem{gao2012sparse}
Chuan Gao and Barbara~E Engelhardt.
\newblock A sparse factor analysis model for high dimensional latent spaces.
\newblock In {\em NIPS: Workshop on Analysis Operator Learning vs. Dictionary
  Learning: Fraternal Twins in Sparse Modeling}, 2012.

\bibitem{ghommem_mode_2013}
Mehdi Ghommem, Michael Presho, Victor~M. Calo, and Yalchin Efendiev.
\newblock Mode decomposition methods for flows in high-contrast porous media.
  global--local approach.
\newblock {\em Journal of Computational Physics}, 253:226--238, November 2013.

\bibitem{golub_matrix_2012}
G.H. Golub and C.F. Van~Loan.
\newblock {\em Matrix Computations}.
\newblock Matrix Computations. Johns Hopkins University Press, 2012.

\bibitem{hou_multiscale_1997}
Thomas~Y. Hou and Xiao-Hui Wu.
\newblock A multiscale finite element method for elliptic problems in composite
  materials and porous media.
\newblock {\em Journal of Computational Physics}, 134(1):169 -- 189, 1997.

\bibitem{hou_PetrovGalerkin_2004}
Thomas~Y. Hou, Xiao-Hui Wu, and Yu~Zhang.
\newblock Removing the cell resonance error in the multiscale finite element
  method via a petrov-galerkin formulation.
\newblock {\em Communications in Mathematical Sciences}, 2(2):185--205, 06
  2004.

\bibitem{hou_LocalgPC_2016}
Y.~Thomas Hou, Qin Li, and Pengchuan Zhang.
\newblock Exploring the locally low dimensional structure in solving random
  elliptic pdes.
\newblock {\em Multiscale Modeling \& Simulation}, 2016.

\bibitem{hou_sparseOC1_2016}
Y.~Thomas Hou and Pengchuan Zhang.
\newblock Sparse operator compression of elliptic operators -- part i : Second
  order elliptic operators.
\newblock {\em preprint}, 2016.

\bibitem{hou_sparseOC2_2016}
Y.~Thomas Hou and Pengchuan Zhang.
\newblock Sparse operator compression of elliptic operators -- part ii : High
  order elliptic operators.
\newblock {\em preprint}, 2016.

\bibitem{jenatton2010structured}
Rodolphe Jenatton, Guillaume Obozinski, and Francis~R Bach.
\newblock Structured sparse principal component analysis.
\newblock In {\em AISTATS}, pages 366--373, 2010.

\bibitem{Jolliffe_2003}
Ian~T. Jolliffe, Nickolay~T. Trendafilov, and Mudassir Uddin.
\newblock A modified principal component technique based on the lasso.
\newblock {\em Journal of Computational and Graphical Statistics}, 12(3):pp.
  531--547, 2003.

\bibitem{jolliffe2002simplified}
IT~Jolliffe, M~Uddin, and SK~Vines.
\newblock Simplified eofs three alternatives to rotation.
\newblock {\em Climate Research}, 20(3):271--279, 2002.

\bibitem{karhunen_uber_1947}
K.~Karhunen.
\newblock {\em \H{U}ber lineare Methoden in der Wahrscheinlichkeitsrechnung}.
\newblock Annales Academiae scientiarum Fennicae: Mathematica - Physica.
  Universitat Helsinki, 1947.

\bibitem{kohn1959image}
W~Kohn.
\newblock Image of the fermi surface in the vibration spectrum of a metal.
\newblock {\em Physical Review Letters}, 2(9):393, 1959.

\bibitem{krzanowski1995multivariate}
W.J. Krzanowski and F.H.C. Marriott.
\newblock {\em Multivariate Analysis: Kendall's Library of Statistics, Volume
  2}.
\newblock Kendall's advanced theory of statistics. Wiley, 1995.

\bibitem{LaiLuOsher_2014}
R.~{Lai}, J.~{Lu}, and S.~{Osher}.
\newblock {Density matrix minimization with $L_1$ regularization}.
\newblock {\em Communications in Mathematical Sciences}, to appear.

\bibitem{lee1999learning}
Daniel~D Lee and H~Sebastian Seung.
\newblock Learning the parts of objects by non-negative matrix factorization.
\newblock {\em Nature}, 401(6755):788--791, 1999.

\bibitem{lee2006efficient}
Honglak Lee, Alexis Battle, Rajat Raina, and Andrew~Y Ng.
\newblock Efficient sparse coding algorithms.
\newblock In {\em Advances in neural information processing systems}, pages
  801--808, 2006.

\bibitem{lehoucq1996deflation}
Richard~B Lehoucq and Danny~C Sorensen.
\newblock Deflation techniques for an implicitly restarted arnoldi iteration.
\newblock {\em SIAM Journal on Matrix Analysis and Applications},
  17(4):789--821, 1996.

\bibitem{loeve_probability_1977}
Michel Lo\`{e}ve.
\newblock {\em Probability Theory I}.
\newblock Comprehensive Manuals of Surgical Specialties. Springer, 1977.

\bibitem{lucas2004lapack}
Craig Lucas.
\newblock Lapack-style codes for level 2 and 3 pivoted cholesky factorizations.
\newblock {\em LAPACK Working}, 2004.

\bibitem{luo_high_2011}
Xi~Luo.
\newblock High dimensional low rank and sparse covariance matrix estimation via
  convex minimization.
\newblock {\em {arXiv} preprint {arXiv}:1111.1133}, 2011.

\bibitem{mairal2010online}
Julien Mairal, Francis Bach, Jean Ponce, and Guillermo Sapiro.
\newblock Online learning for matrix factorization and sparse coding.
\newblock {\em Journal of Machine Learning Research}, 11(Jan):19--60, 2010.

\bibitem{marzari2012maximally}
Nicola Marzari, Arash~A Mostofi, Jonathan~R Yates, Ivo Souza, and David
  Vanderbilt.
\newblock Maximally localized wannier functions: Theory and applications.
\newblock {\em Reviews of Modern Physics}, 84(4):1419, 2012.

\bibitem{marzari1997maximally}
Nicola Marzari and David Vanderbilt.
\newblock Maximally localized generalized wannier functions for composite
  energy bands.
\newblock {\em Physical review B}, 56(20):12847, 1997.

\bibitem{owhadi2015multi}
Houman Owhadi.
\newblock Multi-grid with rough coefficients and multiresolution operator
  decomposition from hierarchical information games.
\newblock {\em arXiv preprint arXiv:1503.03467}, 2015.

\bibitem{owhadi_polyharmonic_2014}
Houman Owhadi, Lei Zhang, and Leonid Berlyand.
\newblock Polyharmonic homogenization, rough polyharmonic splines and sparse
  super-localization.
\newblock {\em {ESAIM}: Mathematical Modelling and Numerical Analysis},
  48(02):517--552, 2014.

\bibitem{ozolins_compressed_2013}
Vidvuds Ozoli\c{n}\v{s}, Rongjie Lai, Russel Caflisch, and Stanley Osher.
\newblock Compressed modes for variational problems in mathematics and physics.
\newblock {\em Proceedings of the National Academy of Sciences},
  110(46):18368--18373, 2013.

\bibitem{sorensen1992implicit}
Danny~C Sorensen.
\newblock Implicit application of polynomial filters in ak-step arnoldi method.
\newblock {\em Siam journal on matrix analysis and applications},
  13(1):357--385, 1992.

\bibitem{stewart1977perturbation}
GW~Stewart.
\newblock On the perturbation of pseudo-inverses, projections and linear least
  squares problems.
\newblock {\em SIAM review}, 19(4):634--662, 1977.

\bibitem{vu2013fantope}
Vincent~Q Vu, Juhee Cho, Jing Lei, and Karl Rohe.
\newblock Fantope projection and selection: A near-optimal convex relaxation of
  sparse pca.
\newblock In {\em Advances in Neural Information Processing Systems}, pages
  2670--2678, 2013.

\bibitem{wagner2012toward}
Andrew Wagner, John Wright, Arvind Ganesh, Zihan Zhou, Hossein Mobahi, and
  Yi~Ma.
\newblock Toward a practical face recognition system: Robust alignment and
  illumination by sparse representation.
\newblock {\em Pattern Analysis and Machine Intelligence, IEEE Transactions
  on}, 34(2):372--386, 2012.

\bibitem{wannier1937structure}
Gregory~H Wannier.
\newblock The structure of electronic excitation levels in insulating crystals.
\newblock {\em Physical Review}, 52(3):191, 1937.

\bibitem{weinan2010localized}
E~Weinan, Tiejun Li, and Jianfeng Lu.
\newblock Localized bases of eigensubspaces and operator compression.
\newblock {\em Proceedings of the National Academy of Sciences},
  107(4):1273--1278, 2010.

\bibitem{witten2009penalized}
Daniela~M Witten, Robert Tibshirani, and Trevor Hastie.
\newblock A penalized matrix decomposition, with applications to sparse
  principal components and canonical correlation analysis.
\newblock {\em Biostatistics}, page kxp008, 2009.

\bibitem{zhang2012sparse}
Youwei Zhang, Alexandre d'Aspremont, and Laurent El~Ghaoui.
\newblock Sparse pca: Convex relaxations, algorithms and applications.
\newblock In {\em Handbook on Semidefinite, Conic and Polynomial Optimization},
  pages 915--940. Springer, 2012.

\bibitem{Zou_PCA_06}
Hui Zou, Trevor Hastie, and Robert Tibshirani.
\newblock Sparse principal component analysis.
\newblock {\em Journal of Computational and Graphical Statistics}, 15:265--286,
  2004.

\end{thebibliography}

\end{document}